\newcommand{\weq}{\ = \ }
\newcommand{\wle}{\ \le \ }
\numberwithin{equation}{section}
\theoremstyle{plain}
\newtheorem{theorem}{Theorem}[section]
\newtheorem{proposition}[theorem]{Proposition}
\newtheorem{corollary}[theorem]{Corollary}
\newtheorem{lemma}[theorem]{Lemma}
\newtheorem{definition}[theorem]{Definition}
\theoremstyle{remark}
\newtheorem{remark}[theorem]{Remark}
\newtheorem{example}[theorem]{Example}
\newtheorem{examples}[theorem]{Examples}
\newtheorem{assumption}[theorem]{Assumption}
\DeclareMathOperator*{\esssup}{ess\,sup}
\DeclareMathOperator*{\essinf}{ess\,inf}
\DeclareMathOperator{\dist}{dist}
\DeclareMathOperator{\supp}{supp}
\DeclareMathOperator{\lip}{Lip}
\DeclareMathOperator{\loc}{loc}
\DeclareMathOperator{\id}{id}
\DeclareMathOperator{\tr}{Tr}
\DeclareMathOperator{\diverg}{div}
\begin{document}

\renewcommand{\thefootnote}{\fnsymbol{footnote}}

\author{Michael Hinz\footnotemark[2] \, and \, Jonas M. T\"olle\footnotemark[3] \, and \, Lauri Viitasaari\footnotemark[4]}

\title{Variability of paths and differential equations with $BV$-coefficients\footnotemark[1]}

\footnotetext[1]{All three authors would like to thank the anonymous referees for their kind interest and their helpful suggestions.}
\footnotetext[2]{Universit\"at Bielefeld, Fakult\"at f\"ur Mathematik, Postfach 100131, 33501 Bielefeld, Germany,\\ {\url{mhinz@math.uni-bielefeld.de}}.\\
Research supported in part by the DFG IRTG 2235 `Searching for the regular in the irregular: Analysis of singular and random systems' and by the DFG CRC 1283, `Taming uncertainty and profiting from randomness and low regularity in analysis, stochastics and their applications'.}
\footnotetext[3]{Aalto University, Department of Mathematics and Systems Analysis, PO Box 11100 (Otakaari 1, Espoo), 00076 Aalto, Finland, {\url{jonas.tolle@aalto.fi}}.\\
JMT would like to thank Panu Lahti and Mario Santilli for sharing their insight into the world of $BV$-functions. JMT was supported by the Academy of Finland and the European Research Council (ERC) under the European Union's Horizon 2020 research and innovation programme (grant agreements no. 741487 and no. 818437). Financial support by the Dean's Office, Faculty of Mathematics and Economics, Universit\"at Ulm is gratefully acknowledged.}
\footnotetext[4]{Uppsala University, Department of Mathematics, 751 06, Uppsala, Sweden,\\ {\url{lauri.viitasaari@math.uu.se}}.}
\maketitle


\begin{abstract}
We define compositions $\varphi(X)$ of H\"older paths $X$ in $\mathbb{R}^n$ and functions of bounded variation $\varphi$ under a relative condition involving the path and the gradient measure of $\varphi$. We show the existence and properties of generalized Lebesgue-Stieltjes integrals of compositions $\varphi(X)$ with respect to a given H\"older path $Y$. These results are then used, together with Doss' transform, to obtain existence and, in a certain sense, uniqueness results for differential equations in $\mathbb{R}^n$ driven by H\"older paths and involving coefficients of bounded variation. Examples include equations with discontinuous coefficients driven by paths of two-dimensional fractional Brownian motions.
\end{abstract}

\medskip\noindent
{\bf Mathematics Subject Classification (2010)}: 31B10, 34A12, 34A34 (primary); 26A33, 26A42, 26B30, 26B35, 28A78, 31B99, 60G22 (secondary).

\medskip\noindent
{\bf Keywords:} functions of bounded variation; generalized Lebesgue-Stieltjes integrals; occupation measure; H\"older path; Riesz potential; systems of nonlinear differential equations.

{\small
\tableofcontents
}

\section{Introduction}

We prove new results on the existence and regularity of generalized Lebesgue-Stieltjes integrals 
\begin{equation}\label{E:eva}
\int_0^t \varphi(X_u)dY_u,\quad t\in [0,T],
\end{equation}
as in \cite{NualartRascanu, Zahle98, Zahle01}, where $X:[0,T]\to \mathbb{R}^n$ and $Y:[0,T]\to \mathbb{R}$ are H\"older continuous functions with sum of H\"older orders greater than one and $\varphi:\mathbb{R}^n\to \mathbb{R}$ is a function locally of bounded variation, \cite{AFP, Ziemer}, possibly discontinuous. We then employ these results to study equations in $\mathbb{R}^n$ of form
\begin{equation}\label{E:differentialeq}
X_t=\mathring{x}+\int_0^t \sigma(X_u)dY_u,\quad t\in [0,T],
\end{equation}
where $Y$ is a given path in $\mathbb{R}^n$, H\"older of order $\gamma>\frac12$, and $\sigma$ is a (bounded) matrix valued function of locally bounded variation. We implement a Doss transform, \cite{Doss, Yamato}, and use it to construct H\"older continuous solutions $X$ to (\ref{E:differentialeq}), unique in a certain class. This produces novel first results for discontinuous coefficients $\sigma$ in dimensions $n\geq 2$. The main difficulties are to provide a meaningful definition of the compositions $\varphi(X)$ (resp. $\sigma(X)$) and to show they are regular enough for the integrals in (\ref{E:eva}) or (\ref{E:differentialeq}), respectively, to make sense and for the Doss transformation method \cite{Doss, Yamato} to work. Our main tool is a quantitative condition which ensures that $X$ spends little time in regions where the gradient measure of $\varphi$ (resp. $\sigma$) is very concentrated. 

\subsection{Related literature}
To study equations of type (\ref{E:differentialeq}) for deterministic integrators $Y$ of low regularity or for probabilistic integrators $Y$ lacking semimartingale or other good distributional properties, the use of Stieltjes type integrals, \cite{NualartRascanu, Young, Zahle98, Zahle01}, and much more generally, the theory of rough paths, \cite{Gubinelli, LyonsQian, Lyons, LLC}, have become established tools. However, rather little is known about equations with irregular diffusion coefficients $\sigma$, and we are only aware of the few references mentioned below. In view of possible applications it seems particularly desirable to obtain results for discontinuous diffusion coefficients. They become necessary if one wants to model sharp interfaces between different media at which the solution $X$ abruptly changes its speed. If $n=1$ and $X$ solves (\ref{E:differentialeq}) with $\sigma=a+b\mathbf{1}_{(-c,c)}$, where $a>0$, $b>0$ and $c\in\mathbb{R}$, then the movement of $X$, dictated by the driver $Y$, is faster inside the sharply bounded strip $(-c,c)$ than outside. If $n=2$ and $Y=(Y^1,Y^2)$, then a $(2\times 2)$-matrix $\sigma$, with each entry being such a discontinuous function, could be used to determine polygonal regions inside of which the accelerating effect of $Y^1$ or $Y^2$ on the respective components $X^1$ and $X^2$ of $X=(X^1,X^2)$ is amplified or damped; this may be of interest for mixed market models \cite{BSV2008, BV2016, Ch2001}.

Stochastic differential equations with respect to Brownian motion involving non-Lipschitz (drift or diffusion) coefficients can be discussed in several different ways, \cite{RevuzYor}. Most classical and recent results for singular, irregular, or degenerate coefficients, and notions of uniqueness such as \cite{Davie, Figalli, FlandoliGubinelliPriola, KrylovRoeckner, LeeTrutnau, Veretennikov, Zhang10}, and results specific to the one-dimensional case, \cite{EngelbertSchmidt1, EngelbertSchmidt2, LeGall, Nakao}, are built upon the connection to diffusion theory and partial differential equations. For equations (\ref{E:differentialeq}) driven by rough deterministic or fractional Gaussian signals $Y$, such tools are not available.

For an integrator $Y$ that is H\"older of order $\gamma>\frac12$, Peano existence for solutions to (\ref{E:differentialeq}) in $\mathbb{R}^n$ is well known for coefficients $\sigma$ that are $s$-H\"older continuous provided that $\gamma >\frac{1}{1+s}$. Moreover, Picard existence and uniqueness holds if the coefficient is $\mathcal{C}^{1,s}$ (with the same $s$), \cite{Gubinelli, LLC}. See \cite{MaslowskiNualart, NualartRascanu, Zahle98, Zahle01} for the more classical Lipschitz resp. $\mathcal{C}^2$-cases. In \cite{LeonNualartTindel}, new results have been obtained for equations (\ref{E:differentialeq}) in $\mathbb{R}^n$ for the case $\gamma<\frac{1}{1+s}$.
For $n=1$ and continuous coefficients $\sigma$ whose reciprocal is integrable on compact intervals around zero, the authors of \cite{LeonNualartTindel} constructed solutions to (\ref{E:differentialeq}) by means of a Lamperti transform \cite{Lamperti}, see \cite[Theorem 3.7]{LeonNualartTindel}. For $n\geq 1$ they can solve (\ref{E:differentialeq}) if the components of the coefficient $\sigma$ are bounded from below by $|x|^s$, their gradients are H\"older continuous of order $\frac{1}{\gamma}-1$ away from zero, and the integral is understood in terms of Riemann sum approximations, \cite[Theorem 4.15]{LeonNualartTindel}. The first results on the existence of Stieltjes integrals with discontinuous coefficients (in the case $n=1$) were obtained in \cite{CLV16} (see also \cite[Chapter 5]{Chen}). There the authors proved the existence of (\ref{E:eva}) if $\varphi$ is of locally finite variation and $X$ is a sufficiently active path, \cite[Theorem 3.1 and Remark 3.3]{CLV16}. For random $X$ an integrability assumptions on its probability densities ensures this condition. They also prove a change of variable formula and several results on the approximation of (\ref{E:eva}) by Riemann-Stieltjes sums. A first study for differential equations (\ref{E:differentialeq}) was provided in \cite{Johanna}, where the authors prove existence and uniqueness of solutions to (\ref{E:differentialeq}) for $n=1$ if $Y$ is a fractional Brownian motion with Hurst index greater $\frac12$ and $\varphi$ is a (scaled) Heaviside step function. The authors of \cite{Johanna} used a Lamperti transform and smoothing arguments. Merging the assumptions from \cite{CLV16} and the transform used in \cite{Johanna}, the authors of \cite{LauriSole} were able to prove existence and uniqueness for (\ref{E:differentialeq}) in the case $n=1$ and in a probabilistic setup. Finally, we mention \cite{Yaskov}, where an alternative existence proof for integrals of type (\ref{E:eva}) was given using Riemann-Stieltjes approximations and suitable controls (avoiding fractional calculus), extending the results of \cite{CLV16}. 

\subsection{Brief description of our approach}
In the present paper we use a quantitative condition on the given individual path $X$ w.r.t. the given coefficient $\varphi$ (or $\sigma$), which we call \emph{$(s,p)$-variability}, Definition \ref{D:SVp-condition}. It may be seen as an deterministic version of the probabilistic Assumption 2.1 of \cite{LauriSole}, and as a higher dimensional analog of a condition in \cite[Corollary 3]{Yaskov}. 
Our first main result is Theorem \ref{thm:existence}, where we state that the composition $\varphi(X)$ of $\varphi$ with a H\"older path that is $(s,1)$-variable w.r.t. $\varphi$ is well defined and a member of a certain fractional Sobolev space, ensuring the existence of (\ref{E:eva}). The stronger assumption of $(s,p)$-variability with large $p$ guarantees that (\ref{E:eva}) is H\"older continuous. A key step to obtain these results is a multiplicative estimate for Gagliardo seminorms of $\varphi(X)$, Proposition \ref{P:basicest}, that can be viewed as a generalization of \cite[Proposition 4.6]{CLV16} (\cite[Proposition 4.1]{LauriSole}) to higher space dimensions. To obtain this estimate, one bounds differences of type $|\varphi(X_t)-\varphi(X_u)|$ in terms of a fractional maximal functions of the total variation $\left\|D\varphi\right\|$ of the gradient measure of $\varphi$, Proposition \ref{P:meanvalue}; this is a fractional version of a prominent argument, \cite[Lemma A.3]{CrippadeLellis}. Then one estimates further using the fact that the fractional maximal functions of order $1-s$ are trivially bounded by Riesz potentials of $\left\|D\varphi\right\|$ of order $(1-s)$, evaluated at $X_t$ (resp. $X_u$). The $(s,p)$-variability condition just means that these functions have the desired integrability in time. The $(s,1)$-variability of $X$ w.r.t. $\varphi$ is tantamount to saying that the total variation $\left\|D\varphi\right\|$ of the gradient measure of $\varphi$ and the occupation measure $\mu_X^{[0,T]}$ on $[0,T]$ have a finite mutual Riesz energy of  order $1-s$, see Remark \ref{R:SVviaoccu}. Phenomenologically this means that these two measures are sufficiently disperse with respect to each other to make the singular (repulsive) interaction kernel of order $-n+1-s$ integrable, which is a polarized version of well known arguments, see \cite{Falconer, Landkof, Mattila} for background and \cite{Doug} for a related application. Mutual Riesz energies are not necessarily easy to handle, but they encode dimensional properties in a neat way and this permits to easily connect to the well known scaling properties of gradient measures of $BV$-functions, \cite[Section 3.9]{AFP}, and well known scaling properties of fractal curves, \cite{Falconer}, such as realizations of prominent stochastic processes, \cite{Ayache, Falconer, Kahane, Xiao06}. If in a certain region of space $\varphi$ has a jump or strong oscillation so that $\left\|D\varphi\right\|$ is too concentrated, this can be compensated if $X$ is so fast moving in that part of space that the Hausdorff dimension of $\mu_X^{[0,T]}$ is sufficiently high to guarantee sufficient integrability, see Corollary \ref{C:localizeB}. The idea that increased activity of a path can compensate low coefficient regularity is also central in regularization by noise, \cite{CatellierGubinelli, Davie, Flandoli, FlandoliGubinelliPriola, GG20a, HarangPerkowski, KrylovRoeckner, BG2018}, see Remark \ref{R:occu_use}. It is closely related to the notion of irregularity studied in \cite{CatellierGubinelli} and \cite{GG20b}, see Subsection \ref{SS:Gubinelli}. However, while irregularity is a property of the path alone, variability is a property of a path relative to a given coefficient.
We begin our discussion of differential equations (\ref{E:differentialeq}) by showing that a uniform boundedness condition on the Riesz potential of order $1-s$ of the total variations of the gradient measures naturally takes us back to the case of $\gamma$-H\"older coefficients with $\gamma>\frac{1}{1-s}$, so that a well known Peano existence argument applies,  Theorem \ref{T:exnaive}. This is the extreme case, where the activity of the solution path is not used. To obtain existence results taking into account the activity of the solution path, we implement a Doss transform for $BV$-coefficients $\sigma$ under the main assumptions that $\sigma$ is invertible, Assumption \ref{A:main}, its inverse has curl-free columns, Assumption \ref{A:main2}, and an angle condition holds, (\ref{E:angular}). Of course in particular the curl free condition is quite restrictive, but as in classical implementations of the Doss method, \cite{Yamato}, it is inevitable. In lack of other existence results for equations with $BV$-coefficients it seems reasonable to establish a $BV$-variant of Doss' transformation under these assumptions. They  guarantee the existence of a Lipschitz function $f$ so that, roughly speaking, a solution $X$ is obtained as an image of the driver $Y$ under $f$. This uses the fact that dimensional lower bounds for $Y$ are stable under Lipschitz transformations and produces our second main result, Theorem \ref{T:exjoint}, which states the existence of 
H\"older continuous solutions $X$ to (\ref{E:differentialeq}) with $BV$-coefficients $\sigma$. A one-dimensional version of this result, partially under less restrictive assumptions, is formulated in Theorem \ref{T:exonedim}. In Theorem \ref{T:exshift} we assume that the occupation measure of $Y$ satisfies a kind of weighted upper regularity condition and that the gradient measures of the coefficient obey a specific moment condition `at the starting point'. Under these assumptions we can again observe the existence of H\"older solutions. These theorems are purely deterministic, the regularization effect of the irregular path is rather mild. Our third main result is Corollary \ref{C:exshiftrandom}. It is a probabilistic variant of Theorem \ref{T:exshift}, in which we assume that $Y$ is a stochastic process satisfying the weighted upper regularity condition in a mean value sense and obtain H\"older continuous solutions for almost every realization of $Y$. It may be applied to fractional Brownian motions $Y$ in $\mathbb{R}^n$ with $n\geq 2$ and Hurst index $H>\frac12$. One can regard Corollary \ref{C:exshiftrandom} as (a partial) extension of the probabilistic \cite[Theorem 2.1]{LauriSole}. Our fourth main result is a related uniqueness result, Theorem \ref{thm:DE-uniqueness}. It shows that  Assumptions \ref{A:main} and \ref{A:main2} guarantee uniqueness in the class of variability solutions.

It would be desirable to replace the Doss transform by standard fixed point arguments. The main open problem to be settled is to prove that --- under reasonable assumptions --- the integral process itself will be variable. Another goal for future research is to target equations that, in addition to a $BV_{\loc}$-diffusion coefficient, involve a drift vector field of low regularity. First results on variability and compositions involving discontinuous paths can be found in \cite{HTV2022}.

\subsection{Structure of the article}
The structure of the article is as follows: In Section \ref{S:integrals} we introduce the notion of $(s,p)$-variability, define compositions $\varphi(X)$ ($\sigma(X)$, respectively), and state our results on existence and properties of (\ref{E:eva}). We also provide a change of variable formula and a result on Riemann sum approximation. Section \ref{S:SDE} contains our results on existence and uniqueness of variability solutions to (\ref{E:differentialeq}). In Section \ref{S:suffvar} we provide a systematic discussion of $(s,1)$-variability, some of its immediate consequences, conditions sufficient to ensure it, and some probabilistic examples. We briefly compare variability to irregularity, verify the mentioned multiplicative estimate, the properties of (\ref{E:eva}) and the change of variable formula; we also point out links to currents. The Doss transformation and the claimed existence and uniqueness results for (\ref{E:differentialeq}) are proved in Section \ref{S:exandunique}. Basic facts on Riesz kernels, mollification, maximal functions, and fractional calculus are collected in Appendices.

By $\vert \cdot\vert$ we denote the Euclidean norm in $\mathbb{R}^n$. We write $B(x,r)$ for the open ball of radius $r>0$ centered at $x\in \mathbb{R}^n$. The symbol $\mathcal{L}^n$ stands for the $n$-dimensional Lebesgue measure and the symbol $\mathcal{H}^d$ for the $d$-dimensional Hausdorff measure on $\mathbb{R}^n$. For spaces of $\mathbb{R}^m$-valued functions we use notations like  $BV(\mathbb{R}^n)^m$ (to stay close to reference \cite{AFP}) or $L^1_{\loc}(\mathbb{R}^n, \mathbb{R}^m)$ (because it is more practical for other function spaces). For $m=1$, we suppress $\mathbb{R}^m$ from notation and write $L^1_{\loc}(\mathbb{R}^n)$. For a Borel measure $\nu$ on $\mathbb{R}^n$, we denote its (topological) support by $\supp\nu$.

\section{Stieltjes integrals and $BV$-coefficients}\label{S:integrals}

\subsection{Compositions of paths with $BV$-functions}

Recall that a function $\varphi\in L^1_{\loc}(\mathbb{R}^n)$ is of \emph{locally bounded variation}, denoted $\varphi\in BV_{\loc}(\mathbb{R}^n)$, if its distributional partial derivatives $D_i\varphi$ are signed Radon measures, $i=1,...,n$. We write $D\varphi=(D_1\varphi,...,D_n\varphi)$ for its $\mathbb{R}^n$-valued gradient measure, and $\left\|D\varphi\right\|$ for the total variation of $D\varphi$. If $\varphi\in L^1(\mathbb{R}^n)$ and $\left\|D\varphi\right\|(\mathbb{R}^n)<+\infty$, then $\varphi$ is said to be of \emph{bounded variation}, $\varphi\in BV(\mathbb{R}^n)$.

Let $T>0$. We consider continuous paths from $[0,T]$ into $\mathbb{R}^n$, that is, continuous functions $X=(X^1,\ldots,X^n):[0,T]\to \mathbb{R}^n$. The following definition is our key tool to provide a meaningful and sufficiently regular definition of the composition $\varphi\circ X$ of a $BV_{\loc}$-function $\varphi$ and a path $X$. As usual, $L^p(0,T)$ denotes the Lebesgue space of classes of $p$-integrable functions on $(0,T)$. 

\begin{definition}\label{D:SVp-condition}
Let $\varphi \in BV_{\loc}(\mathbb{R}^n)$, $p\in [1,+\infty]$ and $s\in (0,1)$. We say that a path $X=(X^1,\ldots,X^n):[0,T]\to \mathbb{R}^n$ \emph{is $(s,p)$-variable with respect to $\varphi$} if there is a relatively compact open neighborhood $\mathcal{U}$ of $X([0,T])$ such that  
\begin{equation}
\label{eq:SVp-condition}
\int_{\mathcal{U}}|X_{\cdot}-z|^{-n+1-s}\left\|D\varphi\right\|(dz) \in L^p(0,T).
\end{equation} 
We write $V(\varphi,s,p)$ for the class of paths $X$ that are $(s,p)$-variable w.r.t. $\varphi$ and use the short notation $V(\varphi,s):=V(\varphi,s,1)$.
\end{definition}

Note that $V(\varphi,s,p)\subset V(\varphi,s,q)$ for any $q<p$ and $V(\varphi,s,p)\subset V(\varphi,r,p)$ for any $r<s$.
The $(s,p)$-variability condition (\ref{eq:SVp-condition}) is a quantitative and relative condition on the path $X$ and the function $\varphi$. Roughly speaking, it ensures that $X$ varies sufficiently around sites where $\varphi$ has strong oscillations or jumps, encoded in the requirement that the Riesz potential of order $1-s$ of the restriction of $\left\|D\varphi\right\|$ to $\mathcal{U}$ is in $L^p(0,T)$, see Section \ref{S:suffvar} for a systematic discussion. The use of an open neighborhood $\mathcal{U}$ of $X([0,T])$ in (\ref{eq:SVp-condition}) simplifies several arguments (e.g. mollification).
We admit a component-wise point of view upon functions with values in $\mathbb{R}^m$.

\begin{definition}\label{D:SVcomponentwise}
Let $\varphi=(\varphi_1,...,\varphi_m) \in  BV_{\loc}(\mathbb{R}^n)^m$, $p\in [1, +\infty]$ and $s\in (0,1)$. We say that a path $X:[0,T]\to \mathbb{R}^n$ \emph{is $(s,p)$-variable with respect to $\varphi$} if it is $(s,p)$-variable with respect to each component $\varphi_i$ of $\varphi$. That is, $V(\varphi,s,p):=\bigcap_{i=1}^m V(\varphi_i,s,p)$. Similarly as before, we write $V(\varphi,s):=V(\varphi,s,1)$.
\end{definition}

Recall the following classical definition.

\begin{definition}\label{D:geometric-point-sets}
A function $\varphi\in L^1_{\loc}(\mathbb{R}^n,\mathbb{R}^m)$ is said to have an \emph{approximate limit} at $x\in  \mathbb{R}^n$ if there exists $\lambda_\varphi(x)\in\mathbb{R}^m$ such that 
\[\lim_{r\to 0}\frac{1}{\mathcal{L}^n(B(x,r))}\int_{B(x,r)}|\varphi(y)-\lambda_{\varphi}(x)| dy=0.\]
In this situation, the unique value $\lambda_\varphi(x)$ is called the \emph{approximate limit} of $\varphi$ at $x$.
The set of points $x\in  \mathbb{R}^n$ for which this property does not hold is called \emph{approximate discontinuity set} (or \emph{exceptional set}) and is denoted by $S_\varphi$.
\end{definition}

The set $S_\varphi$ does not depend on the choice of the representative for $\varphi$. If $\widetilde{\varphi}$ is a representative of $\varphi\in L^1_{\loc}(\mathbb{R}^n,\mathbb{R}^m)$ then a point $x\not\in S_\varphi$ with $\widetilde{\varphi}(x)=\lambda_\varphi(x)$ is called a \emph{Lebesgue point} of $\widetilde{\varphi}$, and the set of all Lebesgue points of $\widetilde{\varphi}$ is called the \emph{Lebesgue set} of $\varphi$. See for instance \cite[Definition 3.63]{AFP}. The set $S_\varphi$ is Borel and of zero Lebesgue measure, \cite[Proposition 3.64]{AFP}.
If $\varphi=(\varphi_1,...,\varphi_m) \in BV(\mathbb{R}^n)^m$ then by the Federer-Vol'pert theorem, \cite[Theorem 3.78]{AFP}, the set $S_\varphi$ is countably $\mathcal{H}^{n-1}$-rectifiable. 

We say that a Borel function $\widetilde{\varphi}:\mathbb{R}^n\to\mathbb{R}^m$ is a \emph{Lebesgue representative} of $\varphi=(\varphi_1,...,\varphi_m)\in L^1_{\loc}(\mathbb{R}^n,\mathbb{R}^m)$ if 
\begin{equation}\label{E:admissible}
\widetilde{\varphi}(x)=\lambda_{\varphi}(x),\quad x\in \mathbb{R}^n\setminus S_\varphi.
\end{equation}
Using Definition \ref{D:geometric-point-sets} and the equivalence of norms on $\mathbb{R}^n$ it is easy to see that
\begin{equation}\label{E:Sphiisunion}
S_\varphi=\bigcup_{i=1}^m S_{\varphi_i}.
\end{equation}
In particular, if for any $i$ the function $\widetilde{\varphi}_i:\mathbb{R}^n\to\mathbb{R}$ is a Lebesgue representative of $\varphi_i$ then $(\widetilde{\varphi}_1,...,\widetilde{\varphi}_m)$ is a Lebesgue representative of $\varphi$, and we refer to such representatives as \emph{component-wise Lebesgue representatives}.

The following observation will be proved in Section \ref{S:suffvar}. 

\begin{lemma}\label{lem:welldef}
Let $\varphi=(\varphi_1,...,\varphi_m)\in BV_{\loc}(\mathbb{R}^n)^m$ and $X\in V(\varphi,s)$ for some $s\in (0,1)$. Then for any component-wise Lebesgue representatives $\widetilde{\varphi}^{(1)}$ and $\widetilde{\varphi}^{(2)}$ of $\varphi$ we have 
\[\widetilde{\varphi}^{(1)}(X_t)=\widetilde{\varphi}^{(2)}(X_t)\] 
at $\mathcal{L}^1$-a.e. $t\in [0,T]$.
\end{lemma}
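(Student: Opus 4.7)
Since two component-wise Lebesgue representatives agree pointwise on $\mathbb{R}^n \setminus S_\varphi$, the conclusion is equivalent to showing $\mathcal{L}^1(\{t\in[0,T] : X_t \in S_\varphi\}) = 0$. The plan is first to reduce to the scalar case $m=1$ via (\ref{E:Sphiisunion}) and Definition \ref{D:SVcomponentwise}, and then to show $\mathcal{L}^1(\{t : X_t \in S_\varphi\}) = 0$ for each scalar $\varphi \in BV_{\loc}(\mathbb{R}^n)$ with $X \in V(\varphi,s)$.

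I would apply Fubini to the $(s,1)$-variability condition (\ref{eq:SVp-condition}) to conclude that the Riesz potential $\int_\mathcal{U} |X_t - z|^{-n+1-s}\, \|D\varphi\|(dz)$ is finite for $\mathcal{L}^1$-a.e.\ $t \in [0,T]$. The crux will then be the pointwise statement
\[
\int_V |x - z|^{-n+1-s}\, \|D\varphi\|(dz) \weq +\infty \quad \text{for every } x \in S_\varphi \text{ and every open } V \ni x.
\]
Since $\mathcal{U}$ is an open neighborhood of every $X_t$, combining this with the previous step forces $X_t \notin S_\varphi$ at $\mathcal{L}^1$-a.e.\ $t$, finishing the proof. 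To establish the pointwise claim, I would combine the fine structure of $BV$-functions with an elementary estimate: the BV-Poincar\'e inequality $\mathcal{L}^n(B(x,r))^{-1}\int_{B(x,r)}|\varphi-\bar{\varphi}_{B(x,r)}|\,dy \leq C\,r^{1-n}\|D\varphi\|(B(x,r))$, together with the non-existence of an approximate limit at $x \in S_\varphi$, forces $\limsup_{r\to 0} r^{-(n-1)}\|D\varphi\|(B(x,r)) > 0$. Extracting a sequence $r_k\downarrow 0$ with $\|D\varphi\|(B(x,r_k)) \geq c\,r_k^{n-1}$ and either exploiting an atom of $\|D\varphi\|$ at $x$ directly or passing to a subsequence along which the mass at least halves, a dyadic-annuli lower bound would yield $\int_V |x-z|^{-n+1-s}\,\|D\varphi\|(dz) \geq \sum_k (c/2)\,r_k^{-s} = +\infty$.

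The hardest step will be establishing the density lower bound $\limsup_{r \to 0} r^{-(n-1)}\|D\varphi\|(B(x,r)) > 0$ at \emph{every} $x \in S_\varphi$: at $\mathcal{H}^{n-1}$-a.e.\ such point this is immediate from the Federer--Vol'pert theorem \cite[Thm.~3.78]{AFP} and the explicit form of the jump part of $D\varphi$, but the remaining $\mathcal{H}^{n-1}$-null subset of $S_\varphi$ requires the Poincar\'e argument above and a careful treatment of the possibly oscillating dyadic averages $\bar{\varphi}_{B(x,2^{-k}r_0)}$ in order to upgrade absence of an approximate limit into a quantitative $(n-1)$-density bound on $\|D\varphi\|$.
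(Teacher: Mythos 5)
Your overall reduction is the same as the paper's: show $\mathcal{L}^1(\{t:X_t\in S_\varphi\})=0$ by first reducing to the scalar case via (\ref{E:Sphiisunion}) and then combining a.e.\ finiteness in $t$ of the Riesz potential $\int_{\mathcal U}|X_t-z|^{-n+1-s}\|D\varphi\|(dz)$ (which indeed follows directly from membership in $L^1(0,T)$) with a pointwise implication about $S_\varphi$. The paper packages exactly this as Corollary~\ref{C:key}. However, the pointwise step as you propose it has a real gap.

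Your key claim is that $x\in S_\varphi$ forces $\limsup_{r\to 0}r^{-(n-1)}\|D\varphi\|(B(x,r))>0$, derived from the BV-Poincar\'e inequality plus failure of the approximate limit. This is false in general for $n\ge 2$. The Poincar\'e inequality bounds the mean oscillation of $\varphi$ on $B(x,r)$ from \emph{above} by $c\,r^{-(n-1)}\|D\varphi\|(B(x,r))$, but non-existence of an approximate limit does not force the mean oscillation to be bounded below: the ball averages $\bar{\varphi}_{B(x,2^{-k})}$ can drift without converging even when the oscillation on each ball goes to $0$. Concretely, in $\mathbb{R}^2$ take the radial function with $\varphi(x)=\sin\bigl(\log\log(1/|x|)\bigr)$ near the origin. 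It lies in $BV_{\loc}\cap L^\infty$, the averages $\bar{\varphi}_{B(0,r)}$ track $\sin(\log\log(1/r))$ and therefore oscillate through $[-1,1]$ without converging, so $0\in S_\varphi$; yet $\|D\varphi\|(B(0,r))=O\bigl(r/\log(1/r)\bigr)$, giving $\Theta^\ast_{1}\|D\varphi\|(0)=0$. There is no ``careful treatment'' of the dyadic averages that can repair a bound that is simply not true.

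What the paper needs, and obtains by citing \cite[Remark 3.82]{AFP}, is a \emph{two-part} criterion: $\Theta^\ast_{n-1}\|D\varphi\|(x)=0$ \emph{together with} $\int_0^{r_0}r^{-n}\|D\varphi\|(B(x,r))\,dr<\infty$ imply that $\varphi$ has an approximate limit at $x$. Both conditions are consequences of $\Theta^\ast_{n-1+s}\|D\varphi\|(x)=0$, and that in turn follows from finiteness of the order-$(1-s)$ Riesz potential at $x$; this is the paper's direct route in the proof of Corollary~\ref{C:key}. Contrapositively, $x\in S_\varphi$ forces either positive $(n-1)$-density \emph{or} divergence of $\int_0^{r_0}r^{-n}\|D\varphi\|(B(x,r))\,dr$; your dyadic-annuli estimate handles only the first alternative, while the second (which is precisely what the radial example exhibits) is missing. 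It too makes $U^{1-s}\|D\varphi\|(x)=+\infty$, since $r^{-n-s}\ge r^{-n}$ for small $r$, so the lemma can be rescued, but not along the route you describe.
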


Lemma \ref{lem:welldef} could be rephrased by saying that under the $(s,1)$-variability condition the equivalence class $\varphi$ has a well defined trace on the range $X([0,T])\subset \mathbb{R}^n$ of $X$, endowed with a suitable measure. See \cite{AH96} or \cite{JW84} for pointwise redefinitions of functions and traces to closed subsets of $\mathbb{R}^n$ in other contexts.

\begin{definition}\label{D:comp}
Let $\varphi \in BV_{\loc}(\mathbb{R}^n)^m$ and suppose that $X\in V(\varphi,s)$ for some $s\in (0,1)$. We define the \emph{composition} 
$\varphi\circ X$
to be the $\mathcal{L}^1$-equivalence class of $t\mapsto \widetilde{\varphi}(X_t)$ on $[0,T]$, where $\widetilde{\varphi}$ is a component-wise Lebesgue representative of $\varphi$. Given $p\in [1,+\infty]$ we say that $\varphi$ is \emph{$p$-integrable w.r.t. $X$}, in symbols $\varphi\in L^p(X, \mathbb{R}^m)$, if $\varphi\circ X$  is an element of $L^p(0,T,\mathbb{R}^m)$. In the case $n=1$ we write $L^p(X)$ instead of $L^p(X,\mathbb{R})$.
\end{definition}

Thanks to Lemma \ref{lem:welldef} the composition and the notion of $p$-integrability w.r.t. $X$ are well defined. The component-wise choice of representatives is not essential, but it is convenient in conjunction with Definitions \ref{D:SVp-condition} and \ref{D:SVcomponentwise}.

We discuss $(s,1)$-variability in some examples.

\begin{example}\label{Ex:Lipschitz}
If $\varphi:\mathbb{R}^n\to \mathbb{R}$ is locally Lipschitz, then for any $s\in (0,1)$ any path $X$ is in $V(\varphi,s,\infty)$.
This follows from the fact that $\left\|D\varphi\right\|=|\nabla f|\cdot\mathcal{L}^n$ 
with $|\nabla f|\in L^\infty(\mathcal{U})$ on any relatively compact open set $\mathcal{U}\subset \mathbb{R}^n$.
\end{example}

\begin{example}\label{Ex:Cantor}
Let $\mathcal{C}\subset [0,1]$ be the classical middle third Cantor set and $\nu_\mathcal{C}$ the unique self-similar probability measure with support $\mathcal{C}$, see \cite{Falconer}. Let $\varphi_{\mathcal{C}}:\mathbb{R}^n\to [0,1]$ be the function 
that satisfies $\varphi_{\mathcal{C}}(x)=\nu_{\mathcal{C}}((0,x_1))$ for all $x=(x_1,x_2,...,x_n)\in [0,1]\times \mathbb{R}^{n-1}$, $\varphi_{\mathcal{C}}(x)=0$ for all $x\in (-\infty,0)\times \mathbb{R}^{n-1}$ and $\varphi_{\mathcal{C}}(x)=1$ for all $x\in (1,+\infty)\times \mathbb{R}^{n-1}$. Then $\varphi_{\mathcal{C}}\in BV_{\loc}(\mathbb{R}^n)$, and on $[0,1]^n$ we have $\left\|D\varphi_{\mathcal{C}}\right\|=D\varphi_{\mathcal{C}}=\nu_{\mathcal{C}}\otimes \mathcal{H}^{n-1}$. 
Writing $d_\mathcal{C}=\frac{\log 2}{\log 3}$ for the Hausdorff dimension of $\mathcal{C}$, we find that for $s\in (0,d_\mathcal{C})$ any path $X$ in $\mathbb{R}^n$ is in $V(\varphi_{\mathcal{C}},s,\infty)$. Now suppose $s\in (d_\mathcal{C},1)$. The constant path $X\equiv (\frac{1}{2},0,...,0)$ in $\mathbb{R}^n$ is in $V(\varphi_{\mathcal{C}},s,\infty)$, but the constant path $X\equiv (0,0,...,0)$ is not in $V(\varphi_{\mathcal{C}},s)$. For $n=1$ any smooth function $X:(0,T)\to (0,1)$ with a finite number of critical points is in $V(\varphi_{\mathcal{C}},s,\infty)$. For $n=2$ a smooth curve $X:(0,T)\to (0,1)^2$, parametrized to have unit speed, does not have to be in $V(\varphi_{\mathcal{C}},s)$. On the other hand, a path of Brownian motion is in $V(\varphi_{\mathcal{C}},s,\infty)$ with probability one. For $n\geq 3$ paths of fractional Brownian motions with Hurst index $H\in (0,\frac{1}{n-1+s})$ are in $V(\varphi_{\mathcal{C}},s,\infty)$ with probability one. See Example \ref{ex:detex} and Subsection \ref{SS:probab} for details.
\end{example}

The function $\varphi$ in Example \ref{Ex:Cantor} is H\"older continuous. The next example discussed variability with respect to discontinuous functions.

\begin{example}\label{Ex:jump}
Suppose that $\mathcal{O} \subset \mathbb{R}^n$ is a smooth domain with $\mathcal{H}^{n-1}(\partial \mathcal{O})<+\infty$. By \cite[Proposition 3.62]{AFP} the function $\mathbf{1}_\mathcal{O}$ is in $BV(\mathbb{R}^n)$ and $\mathcal{O}$ has finite perimeter $\mathcal{P}(\mathcal{O})=\left\|D\mathbf{1}_\mathcal{O}\right\|(\mathbb{R}^n)<+\infty$.
Let $s\in (0,1)$ be arbitrary.
If a smooth unit speed curve $X:[0,T]\to \mathbb{R}^n$ hits $\partial \mathcal{O}$ in finitely many points then we have $X\in V(\mathbf{1}_\mathcal{O},s)$, but if $n\geq 2$ and $X$ spends $\mathcal{L}^1$-positive time in $\partial \mathcal{O}$ then it cannot be an element of $V(\mathbf{1}_\mathcal{O},s)$, see Example \ref{ex:detexjump}. For $n=1$ or $n=2$ the path of a Brownian motion is in $V(\mathbf{1}_\mathcal{O},s,\infty)$ with probability one. For arbitrary $n\geq 1$, the path of a fractional Brownian motion with Hurst index $H\in (0,\frac{1}{n-1+s})$ is in $V(\mathbf{1}_\mathcal{O},s,\infty)$ with probability one. For arbitrary $n\geq 1$ it also follows that if $H\in (0,\frac{1}{s})$ and the fractional Brownian motion is started in $(\partial \mathcal{O})^c$ then it is in $V(\mathbf{1}_\mathcal{O},s)$ with probability one, see Subsection \ref{SS:probab}.
\end{example}

\subsection{Existence and properties of Stieltjes integrals}

As mentioned, we are interested in generalized Lebesgue-Stieltjes integrals defined in terms of fractional calculus, \cite{SKM}, introduced in \cite{Zahle98, Zahle01} and used e.g. in \cite{NualartRascanu}. 

We introduce suitable function spaces to discuss the existence and the continuity properties of the integral. Let $p\in [1,+\infty)$ and $0<\theta<1$. The Gagliardo seminorm of order $\theta$ with exponent $p$ of a measurable function $f: (0,T) \to \mathbb{R}^m$ is defined as
\begin{equation}
 \label{eq:Gagliardo}
 [f]_{\theta, p}
 \weq \left(\int_0^T \int_0^T \frac{|f(t) - f(u)|^p}{|t-u|^{1+\theta p}}  du  dt \right)^{\frac{1}{p}}.
\end{equation}
By $W^{\theta,p}(0,T, \mathbb{R}^m)$ we denote the space of measurable functions $f: (0,T) \to \mathbb{R}^m$ such that
\[\left\|f\right\|_{W^{\theta,p}(0,T,\mathbb{R}^m)}:=\left\|f\right\|_{L^p(0,T,\mathbb{R}^m)}+ [f]_{\theta, p}<+\infty.\]
Recall that for $m=1$ we agreed to suppress $\mathbb{R}^m$ from notation and simply write $W^{\theta,p}(0,T)$, which we do similarly for the spaces in the sequel. The H\"older seminorm of order $0<\theta<1$ of a measurable function $f: [0,T] \to \mathbb{R}^m$ is denoted by
\[
 [f]_{\theta,\infty}
 \weq \sup_{0 \le u < t \le T} \frac{|f(t) - f(u)|}{|t-u|^\theta},
\]
and we write $\mathcal{C}^{\theta}([0,T],\mathbb{R}^m)$ to denote the space of H\"older continuous functions $f:[0,T]\to\mathbb{R}^m$, endowed with the norm
\[\left\|f\right\|_{\mathcal{C}^\theta([0,T],\mathbb{R}^m)}:=\sup_{t\in [0,T]}|f(t)|+ [f]_{\theta,\infty}.\]

\begin{remark}
The spaces $W^{\theta,p}(0,T)$ and $\mathcal{C}^{\theta}([0,T])$ are classical Besov spaces of type $B_{p,p}^\theta$ and $B_{\infty,\infty}^\theta$, see for instance \cite{Triebel}.
\end{remark}

Because they appear naturally in connection with Stieltjes integrals we also consider the following more specific types of spaces, which (in this or a similar form) were introduced in \cite{NualartRascanu}. Accepting a slight abuse of notation we use the symbol $W^{\theta,\infty}(0,T,\mathbb{R}^m)$ to denote the space of all measurable $f:(0,T)\to \mathbb{R}^m$ such that 
\[\left\|f\right\|_{W^{\theta,\infty}(0,T,\mathbb{R}^m)}:=\left\|f\right\|_{L^\infty(0,T,\mathbb{R}^m)}+\esssup_{t\in [0,T]}\int_0^t\frac{|f(t)-f(u)|}{|t-u|^{1+\theta}}\:du<+\infty.\]
By $W^{\theta,\infty}_T(0,T,\mathbb{R}^m)$ we denote the space of measurable functions $f: (0,T) \to \mathbb{R}^m$ such that $f(T-) \in \mathbb{R}^m$ exists and
\[
 \|f\|_{W^{\theta,\infty}_T(0,T,\mathbb{R}^m)}
 \weq \sup_{t \in (0,T)} \frac{|f(T-) - f(t)|}{(T-t)^\theta} + \sup_{t \in (0,T)} \int_t^T\frac{|f(t)-f(u)|}{|t-u|^{1+\theta}}  du < \infty.
 \]
 It is well known and easily seen that 
\begin{equation}\label{E:HolderinSobo}
\mathcal{C}^{\theta+\varepsilon}([0,T], \mathbb{R}^m)\subset W^{\theta,\infty}(0,T, \mathbb{R}^m)\quad \text{and}\quad \mathcal{C}^{\theta+\varepsilon}([0,T], \mathbb{R}^m)\subset W^{\theta,\infty}_T(0,T,\mathbb{R}^m).
\end{equation}
We write $W^{\theta,p}_0(0,T,\mathbb{R}^m)$ for the space of measurable functions $f: (0,T) \to \mathbb{R}^m$ such that
\[\Vert f\Vert_{W^{\theta,p}_0(0,T,\mathbb{R}^m)}:= \int_0^T \frac{|f(t)|^p}{t^{\theta p}} dt + [f]_{\theta, p}<+\infty.\]
We emphasize that in the present paper the symbols $W^{\theta,\infty}(0,T)$ and $W^{\theta,p}_0(0,T)$ do not have the standard meaning.

The following definition is due to \cite{Zahle98, Zahle01}, see also \cite{NualartRascanu}. By $D^{\theta}_{0+}$ and $D^{1-\theta}_{T-}$ we denote the (left and right sided) fractional Weyl-Marchaud derivatives of orders $\theta$ and $1-\theta$, respectively, see formulas (\ref{E:WMforward}) and (\ref{E:WMbackward}) in Appendix \ref{S:fraccalc}. Background information on fractional derivatives can be found in \cite{SKM}.

\begin{definition}
\label{def:integral}
Let  $f \in W^{\theta,1}_0(0,T)$ and $g \in W^{1-\theta,\infty}_T(0,T)$ for some $\theta \in (0,1)$. Then we define the integral by 
\begin{equation}
 \label{eq:ZSIntegralSimple-definition}
 \int_0^T f_t  dg_t
 \weq (-1)^\theta \int_0^T D^{\theta}_{0+} f(t)\: D^{1-\theta}_{T-} (g-g(T-))(t)  dt.
\end{equation}
\end{definition}

The right hand side of (\ref{eq:ZSIntegralSimple-definition}) is a real number; the complex prefactor (used in \cite{Zahle98} to ensure natural formulas) compensates with another complex prefactor in the right sided Weyl-Marchaud derivative, cf. (\ref{E:WMbackward}). The definition is correct: for $f$ and $g$ that satisfy the respective hypotheses, the value of the integral in (\ref{eq:ZSIntegralSimple-definition}) is independent of the choice of $\theta$, \cite[Proposition 2.1]{Zahle98}.
If $f$ and $g$ are as in the definition and $g$ has bounded variation, then \eqref{eq:ZSIntegralSimple-definition} equals to the classical Lebesgue-Stieltjes integral of $f$ w.r.t. $g$, \cite[Theorem 2.4]{Zahle98}. The following duality estimate and restriction property are well known, see \cite{Zahle98,Zahle01} or \cite{NualartRascanu}. By $\Gamma$ we denote the Euler gamma function.

\begin{proposition}
\label{the:ZSIntegralBound}
Assume that $f \in W^{\theta,1}_0(0,T)$ and $g \in W^{1-\theta,\infty}_T(0,T)$ for some $\theta \in (0,1)$. Then the integral in (\ref{eq:ZSIntegralSimple-definition})  admits the bound 
\begin{equation}
 \label{eq:ZSIntegralBound}
 \left|\int_0^T f_t  dg_t \right|
 \wle \frac{\|f\|_{W^{\theta,1}_0}  \|g\|_{W^{1-\theta,\infty}_T}}{\Gamma(\theta) \Gamma(1-\theta)}.
\end{equation}
Hence, for every $t\in[0,T]$ the restriction $\mathbf{1}_{[0,t]}f$ belongs  to $W^{\theta,1}_0(0,T)$ and 
the integral
$$
\int_0^t f_u  dg_u = \int_0^T \mathbf{1}_{[0,t]}(u)f_u  dg_u
$$
is well-defined.
\end{proposition}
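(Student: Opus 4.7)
The plan is to unfold Definition \ref{def:integral}, take absolute values of the integrand, and combine two one-sided estimates via $L^1$--$L^\infty$ duality. Since $|(-1)^\theta|=1$, (\ref{eq:ZSIntegralBound}) reduces to the pair of bounds
\begin{equation*}
\|D^{\theta}_{0+} f\|_{L^1(0,T)} \leq \frac{\|f\|_{W^{\theta,1}_0(0,T)}}{\Gamma(1-\theta)}\quad\text{and}\quad \|D^{1-\theta}_{T-}(g-g(T-))\|_{L^\infty(0,T)} \leq \frac{\|g\|_{W^{1-\theta,\infty}_T(0,T)}}{\Gamma(\theta)}.
\end{equation*}
Both follow by inserting the Marchaud representations (recalled in Appendix \ref{S:fraccalc}) and applying the triangle inequality. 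For the first, integrating the pointwise Marchaud bound against $dt$ and using Fubini identifies the resulting double integral with $[f]_{\theta,1}/2$; together with the trivial bound $\theta/2 \le 1$ this produces the required $L^1$-estimate. For the second, taking the essential supremum in the Marchaud bound for $g-g(T-)$ reproduces precisely the two terms defining $\|g\|_{W^{1-\theta,\infty}_T}$ after using $1-\theta \le 1$. Pairing the $L^1$ and $L^\infty$ estimates then yields (\ref{eq:ZSIntegralBound}).

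For the restriction statement, the task is to verify $\mathbf{1}_{[0,t]}f \in W^{\theta,1}_0(0,T)$, since Definition \ref{def:integral} applied to $\mathbf{1}_{[0,t]}f$ and $g$ then yields the integral denoted $\int_0^t f_u\, dg_u$. The weighted $L^1$-part of the norm is trivially dominated by the corresponding part of $\|f\|_{W^{\theta,1}_0}$. For the Gagliardo seminorm $[\mathbf{1}_{[0,t]}f]_{\theta,1}$, split the square $(0,T)^2$ into $(0,t)^2$ (contribution bounded by $[f]_{\theta,1}$), $(t,T)^2$ (contribution zero), and the cross-region $\{(u,r)\colon u\in(0,t),\, r\in(t,T)\}$ (together with its reflection), on which the integrand equals $|f(u)|/|r-u|^{1+\theta}$. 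Carrying out the $r$-integration leads, up to a factor $2/\theta$, to
\begin{equation*}
\int_0^t |f(u)|\,\bigl[(t-u)^{-\theta}-(T-u)^{-\theta}\bigr]\, du,
\end{equation*}
which is controlled by a combination of $[f]_{\theta,1}$ and the weighted $L^1$-term via standard manipulations; the detailed computation is essentially that of \cite[Proposition 4.1]{NualartRascanu}.

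The main obstacle is this cross-term estimate: the naive bound produces $|f(u)|(t-u)^{-\theta}$, which is not dominated by $\|f\|_{W^{\theta,1}_0}$ alone. The standard resolution uses the inequality $|f(u)| \leq |f(u)-f(r)| + |f(r)|$ together with an averaging of $r$ over a subinterval of $(t,T)$, transferring control to the Gagliardo seminorm and the weighted $L^1$-term simultaneously; this trick is by now classical in the theory of generalized Stieltjes integrals, see \cite{NualartRascanu,Zahle98,Zahle01}.
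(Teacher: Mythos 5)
The paper does not itself prove this proposition; it is cited as well known from \cite{Zahle98,Zahle01,NualartRascanu}, so there is no in-text proof to compare against. Your derivation of the bound (\ref{eq:ZSIntegralBound}) is correct and is the standard argument: unfold Definition \ref{def:integral}, apply $L^1$--$L^\infty$ duality, and bound $\|D^{\theta}_{0+} f\|_{L^1(0,T)}$ and $\|D^{1-\theta}_{T-}(g-g(T-))\|_{L^\infty(0,T)}$ termwise from the Marchaud formulas, with the harmless simplifications $\theta/2\le 1$ and $1-\theta\le 1$.

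The restriction step, however, has a real gap. Your cross-term reduces to showing that $\int_0^t |f(u)|(t-u)^{-\theta}\,du<+\infty$, but the proposed fix --- insert $|f(u)|\le|f(u)-f(r)|+|f(r)|$ and average $r$ over a subinterval of $(t,T)$ --- does not close. With a fixed-length averaging interval the kernel $(t-u)^{-\theta}$ is not dominated by $(r-u)^{-1-\theta}$ uniformly as $u\to t^-$; with an interval whose length is comparable to $t-u$ (so that the Gagliardo piece is recovered), a Fubini computation shows that the $|f(r)|$-contribution reproduces an integral of $|f(r)|$ against essentially the same singular weight $(t-r)^{-\theta}$ pinned at the interior point $t$, and the weighted $L^1$-term $\int_0^T |f(u)| u^{-\theta}\,du$ gives no control at interior points, so the argument is circular. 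The correct route goes through the identification $W^{\theta,1}_0(0,T)\subset I^{\theta}_{0+}(L^1(0,T))$: once one records that $f = I^{\theta}_{0+}\psi$ with $\psi = D^{\theta}_{0+} f\in L^1(0,T)$ (which is exactly what the first half of your argument establishes), Fubini together with the Beta identity $\int_s^t (u-s)^{\theta-1}(t-u)^{-\theta}\,du = \Gamma(\theta)\Gamma(1-\theta)$ gives
\begin{equation*}
\int_0^t \frac{|f(u)|}{(t-u)^{\theta}}\,du \ \le\ \Gamma(1-\theta)\, \|\psi\|_{L^1(0,T)} \ \le\ \|f\|_{W^{\theta,1}_0(0,T)},
\end{equation*}
and the cross-term estimate follows. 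This Riemann--Liouville detour is the missing ingredient; the averaging trick alone, applied at the level of the Sobolev-type norm, does not suffice.
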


Our first new contribution is the following Theorem \ref{thm:existence} on the regularity and integrability of the composition $\varphi\circ X$ of a H\"older path $X$ with a $BV$-function $\varphi$, the existence of the generalized Lebesgue-Stieltjes integral 
\begin{equation}\label{E:pathwiseint}
\int_0^t \varphi(X_u)d Y_u
\end{equation}
of $\varphi(X)$ with respect to a given H\"older path $Y$ in the sense of \eqref{eq:ZSIntegralSimple-definition}, and the H\"older regularity of this integral, seen as a function of $t\in [0,T]$. 

\begin{theorem}
\label{thm:existence}
Suppose that $\varphi\in BV_{\loc}(\mathbb{R}^n)$ and that $X\in\mathcal{C}^\alpha([0,T],\mathbb{R}^n)$ is $(s,1)$-variable with respect to $\varphi$ for some $s\in (0,1)$.
\begin{enumerate}
\item[(i)] For any $0<\beta<\alpha s$ the composition $\varphi\circ X$ is an element of  $W^{\beta,1}_0(0,T)$. 
\item[(ii)] If in addition 
$Y\in\mathcal{C}^\gamma([0,T],\mathbb{R})$ and $\alpha s+\gamma>1$ then for any $t\in [0,T]$ the integral (\ref{E:pathwiseint})
exists. 
\item[(iii)] Moreover, if $X$ is $(s,p)$-variable with respect to $\varphi$ for some $p\in (1,+\infty]$, then for any $0<\beta<\alpha s$ we have $\varphi\circ X \in W^{\beta,p}(0,T)$. If in addition $Y\in\mathcal{C}^\gamma([0,T],\mathbb{R})$ with $\frac{1}{p}<1-\beta<\gamma$,   where we write $\frac{1}{+\infty}:=0$, then 
\[\int_0^{\cdot} \varphi(X_u)dY_u\in \mathcal{C}^{1-\beta-1/p}([0,T],\mathbb{R})\]
and 
\begin{equation}\label{E:contimultbound}
\left\|\int_0^{\cdot}\varphi(X)dY\right\|_{\mathcal{C}^{1-\beta-1/p}([0,T])}\leq c\left\|\varphi\circ X\right\|_{W^{\beta,p}(0,T)}\left\|Y\right\|_{\mathcal{C}^\gamma([0,T])}.
\end{equation}
\end{enumerate}
\end{theorem}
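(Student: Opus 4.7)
My plan is to derive all three parts from one multiplicative Gagliardo-seminorm estimate for $\varphi\circ X$ (the forthcoming Proposition \ref{P:basicest}), combined with the Z\"ahle integral bound of Proposition \ref{the:ZSIntegralBound}, plus a scaling/fractional-derivative argument on subintervals for the H\"older regularity in (iii). I would begin with Proposition \ref{P:meanvalue}, which supplies a Crippa-De Lellis type pointwise bound $|\varphi(x)-\varphi(y)| \leq c|x-y|^s(R_\varphi(x)+R_\varphi(y))$ for a.e. $x,y\in\mathcal{U}$, where $R_\varphi(z):=\int_\mathcal{U}|z-w|^{-n+1-s}\,\|D\varphi\|(dw)$. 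Setting $x=X_t$, $y=X_u$ and using the H\"older continuity of $X$ yields
\[
|\varphi(X_t)-\varphi(X_u)| \leq c[X]_{\alpha,\infty}^s|t-u|^{\alpha s}(R(t)+R(u)),\qquad R(t):=R_\varphi(X_t).
\]
Inserting this into \eqref{eq:Gagliardo}, applying $(a+b)^p\leq 2^{p-1}(a^p+b^p)$ and Fubini, the $(t,u)$-kernel $|t-u|^{-1-(\beta-\alpha s)p}$ is integrable precisely because $\beta<\alpha s$, and the inner integration contributes a factor of order $T^{(\alpha s-\beta)p}$, leading to
\[
[\varphi\circ X]_{\beta,p} \leq C(T,\alpha,s,\beta,p)\,[X]_{\alpha,\infty}^s\,\|R\|_{L^p(0,T)},
\]
which is finite by the $(s,p)$-variability hypothesis; the $p=\infty$ case is analogous.

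For the remaining norm pieces, the $L^p$-part of $\|\varphi\circ X\|_{W^{\beta,p}(0,T)}$ needed in (iii) follows from a pivot argument: choose $t_0\in(0,T)$ with $R(t_0)<\infty$ (almost every $t_0$ qualifies by $(s,1)$-variability), and use the same pointwise bound to write $|\varphi(X_t)| \leq |\varphi(X_{t_0})|+c[X]_{\alpha,\infty}^s T^{\alpha s}(R(t)+R(t_0))$, whose $L^p$-norm is finite. For (i), to conclude $\varphi\circ X\in W^{\beta,1}_0$ I additionally need $\int_0^T|\varphi(X_t)|\,t^{-\beta}\,dt<\infty$; here I would invoke the fractional Sobolev-Lorentz embedding $W^{\beta,1}(0,T)\hookrightarrow L^{1/(1-\beta),1}(0,T)$ paired with $t^{-\beta}\in L^{1/\beta,\infty}(0,T)$ through the generalized H\"older inequality.

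For (ii), pick $\beta\in(1-\gamma,\alpha s)$, which is non-empty since $1-\alpha s<\gamma$; then part (i) places $\varphi\circ X$ in $W^{\beta,1}_0(0,T)$ and \eqref{E:HolderinSobo} places $Y$ in $W^{1-\beta,\infty}_T(0,T)$, so Proposition \ref{the:ZSIntegralBound} supplies the integral. For the H\"older regularity in (iii) I would work from the Z\"ahle representation
\[
\int_s^t\varphi(X_u)\,dY_u = (-1)^\beta\int_s^t D^{\beta}_{s+}(\varphi\circ X)(u)\cdot D^{1-\beta}_{t-}(Y-Y(t-))(u)\,du.
\]
Since $Y\in\mathcal{C}^\gamma$, the pointwise bound $|D^{1-\beta}_{t-}(Y-Y(t-))(u)|\leq C[Y]_\gamma(t-u)^{\gamma+\beta-1}$ holds, and applying H\"older's inequality in $u$ with exponents $p,p'$ together with the estimate $\|D^\beta_{s+}(\varphi\circ X)\|_{L^p(s,t)} \leq C\|\varphi\circ X\|_{W^{\beta,p}(0,T)}$ yields
\[
\Bigl|\int_s^t\varphi(X_u)\,dY_u\Bigr| \leq C(t-s)^{\gamma+\beta-1/p}\|\varphi\circ X\|_{W^{\beta,p}}\|Y\|_{\mathcal{C}^\gamma}.
\]
Since $\gamma>1-\beta$ forces $\gamma+\beta-1/p\geq 1-\beta-1/p$, and $t-s\leq T$, the polynomial factor absorbs into $C_T(t-s)^{1-\beta-1/p}$, giving the claimed membership in $\mathcal{C}^{1-\beta-1/p}$ and the bound \eqref{E:contimultbound}.

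The main obstacle will be the $L^p(s,t)$-control of the Weyl-Marchaud derivative $D^\beta_{s+}(\varphi\circ X)$ by the global $W^{\beta,p}(0,T)$-norm of $\varphi\circ X$, since the singular boundary term $(u-s)^{-\beta}\varphi(X_u)$ in the Marchaud representation and the Gagliardo-type integral interact non-trivially and require a fractional Hardy-type bound together with careful tracking of constants; a secondary obstacle is the Sobolev-Lorentz step needed in (i) to absorb the weight $t^{-\beta}$ using only $(s,1)$-variability.
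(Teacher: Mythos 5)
Your treatment of (i) and (ii) follows the same skeleton as the paper: Proposition~\ref{P:meanvalue} plus H\"older continuity of $X$ and Fubini give the Gagliardo-seminorm bound (this is precisely Proposition~\ref{P:basicest}), a pivot at a Lebesgue point gives the $L^p$-bound (Lemma~\ref{lemma:Lp-integrability}), and Proposition~\ref{the:ZSIntegralBound} with $\beta\in(1-\gamma,\alpha s)$ produces the integral. For the weighted $L^1$-term in $W^{\beta,1}_0$ you take a genuinely different route, a Besov--Lorentz embedding $W^{\beta,1}(0,T)\hookrightarrow L^{1/(1-\beta),1}(0,T)$ paired with $t^{-\beta}\in L^{1/\beta,\infty}$ and O'Neil's H\"older inequality. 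The paper instead uses a fractional Hardy inequality of Dyda type (Lemma~\ref{L:Dyda}) together with a mollification step (Proposition~\ref{prop:dyda-bound}), the latter being needed because $\varphi\circ X$ is only a measurable class and Dyda's inequality is stated for continuous functions. Your Lorentz route sidesteps the mollification, which is attractive, but you should either verify the sharp Lorentz embedding on the bounded interval carefully or use the more elementary variant: take $\beta'\in(\beta,\alpha s)$, embed $W^{\beta',1}\hookrightarrow L^{1/(1-\beta')}$, and H\"older against $t^{-\beta}\in L^{1/\beta'}$, which works since $\beta/\beta'<1$.

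There is a genuine gap in (iii). You H\"older the Z\"ahle integrand $D^\beta_{s+}(\varphi\circ X)\cdot D^{1-\beta}_{t-}(Y-Y(t-))$ with exponents $(p,p')$ and then need a uniform bound $\|D^\beta_{s+}(\varphi\circ X)\|_{L^p(s,t)}\leq C\|\varphi\circ X\|_{W^{\beta,p}(0,T)}$. This fails in general: the boundary piece of the Marchaud derivative contributes $\bigl(\int_s^t|\varphi(X_u)|^p(u-s)^{-\beta p}\,du\bigr)^{1/p}$, and since $\varphi(X_s)$ need not vanish, integrability near $u=s$ forces $\beta p<1$. But the hypothesis only gives $1/p<1-\beta$, i.e.\ $\beta p<p-1$, which does not imply $\beta p<1$ once $p>2$. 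No fractional Hardy inequality can rescue the $(p,p')$-arrangement, since even $\varphi\equiv\const$ yields a divergent left-hand side when $\beta p\geq 1$.

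The correct H\"older arrangement is the reverse one (as in \cite[Proposition 4.1]{NualartRascanu} and \cite{SchneiderZahle}): bound $D^{1-\beta}_{t-}(Y-Y(t-))$ in $L^\infty$ via $\|Y\|_{W_T^{1-\beta,\infty}}$ (using \eqref{E:HolderinSobo}), leaving
\[
\Bigl|\int_\tau^t\varphi(X_u)\,dY_u\Bigr|\leq \|Y\|_{W_T^{1-\beta,\infty}}\left(\int_\tau^t\frac{|\varphi(X_u)|}{(u-\tau)^\beta}\,du+\int_\tau^t\int_\tau^u\frac{|\varphi(X_u)-\varphi(X_r)|}{(u-r)^{1+\beta}}\,dr\,du\right),
\]
and only then H\"older each piece with $\varphi\circ X$ in $L^p$ and the singular kernel in $L^{p'}$: for the first piece $\|(u-\tau)^{-\beta}\|_{L^{p'}(\tau,t)}\simeq(t-\tau)^{1/p'-\beta}$ is finite exactly because $\beta<1-1/p=1/p'$, and for the second an auxiliary $\beta'\in(\beta,\alpha s)$ is inserted so that the time kernel lands in $L^{p'}$ and the Gagliardo kernel is absorbed into $\|\varphi\circ X\|_{W^{\beta',p}}$. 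The essential point is that H\"older must be applied against the singular time kernels, not against the assembled Marchaud derivative of $\varphi\circ X$.
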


In Subsections \ref{SS:compositions} and \ref{SS:intexandreg} of Section \ref{S:suffvar} we provide a proof of Theorem \ref{thm:existence}, along with quantitative estimates for the integral (\ref{E:pathwiseint}) involving (\ref{eq:SVp-condition}).

Variability permits the following change of variable formula.

\begin{theorem}
\label{thm:ito}
Let $F\in W^{1,1}_{\loc}(\mathbb{R}^n)$ be such that $\partial_k F\in BV_{\loc}(\mathbb{R}^n)$ for $k=1,\ldots,n$. If $X\in\mathcal{C}^\alpha([0,T],\mathbb{R}^n)$ with $\alpha>\frac12$ is a path which is $(s,1)$-variable w.r.t. each $\partial_k F$ for some $s\in \left(\frac{1-\alpha}{\alpha},1\right)$, then we have 
\begin{equation}\label{E:changeofvar}
F(X_t) = F(\mathring{x}) + \sum_{k=1}^n \int_0^t \partial_k F(X_u)dX^k_u
\end{equation}
for $\mathcal{L}^1$-a.e. $t\in [0,T]$, provided that $\mathring{x} \in \mathbb{R}^n\setminus S_F$. If in addition $F$ is continuous, then (\ref{E:changeofvar}) holds for all $t\in [0,T]$ and no matter where $\mathring{x}\in\mathbb{R}^n$ is located. 
\end{theorem}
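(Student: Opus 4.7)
\emph{Strategy.} I would prove the formula by a mollification argument. For a standard smooth mollifier $\rho_\varepsilon$ on $\mathbb{R}^n$, set $F^{(\varepsilon)} := F \ast \rho_\varepsilon$; this is smooth and satisfies $\partial_k F^{(\varepsilon)} = (\partial_k F) \ast \rho_\varepsilon$. Since $F^{(\varepsilon)} \in C^\infty$ and $X \in \mathcal{C}^\alpha$ with $\alpha > \tfrac{1}{2}$, the classical Young chain rule (matching the Z\"ahle integral via \cite[Theorem 2.4]{Zahle98}) yields
\[
F^{(\varepsilon)}(X_t) \weq F^{(\varepsilon)}(\mathring{x}) + \sum_{k=1}^n \int_0^t \partial_k F^{(\varepsilon)}(X_u)\, dX^k_u, \qquad t \in [0,T],
\]
and the proof reduces to passing to the limit $\varepsilon \downarrow 0$ term by term.

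\emph{Convergence of the two sides.} The pointwise statements $F^{(\varepsilon)}(\mathring{x}) \to F(\mathring{x})$ and $F^{(\varepsilon)}(X_t) \to F(X_t)$ hold at each $\mathring{x}\notin S_F$ and at those $t$ for which $X_t\notin S_F$, by definition of the approximate limit. Because $F$ has $BV_{\loc}$-derivatives, $F$ enjoys Sobolev regularity beyond $W^{1,1}_{\loc}$ (in particular $F\in W^{1,n/(n-1)}_{\loc}$), so $S_F$ has small Hausdorff dimension; I would combine this with Proposition \ref{P:SVviaoccu} and the $(s,1)$-variability of $X$ with respect to each $\partial_k F$ to show that the occupation measure $\mu_X^{[0,T]}$ does not charge $S_F$, yielding $F^{(\varepsilon)}(X_t)\to F(X_t)$ at $\mathcal{L}^1$-a.e. $t$. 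For the Stieltjes integrals on the right-hand side I would invoke the duality bound (\ref{eq:ZSIntegralBound}) together with the multiplicative estimate of Proposition \ref{P:basicest}, using the pointwise measure comparison
\[
\|D(\partial_k F \ast \rho_\varepsilon)\| \wle \rho_\varepsilon \ast \|D\partial_k F\|
\]
on $\mathcal{U}$. Dominated convergence then transfers the variability integrals of $\|D\partial_k F^{(\varepsilon)}\|$ to those of $\|D\partial_k F\|$ in $L^1(0,T)$, and through the multiplicative estimate this forces $\partial_k F^{(\varepsilon)}\circ X \to \partial_k F \circ X$ in $W^{\beta,1}_0(0,T)$ for every $\beta<\alpha s$. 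Choosing $\beta$ close enough to $\alpha s$ so that $1-\beta<\alpha$ (possible precisely because $s>\tfrac{1-\alpha}{\alpha}$) and using $X^k\in\mathcal{C}^\alpha\subset W^{1-\beta,\infty}_T(0,T)$, (\ref{eq:ZSIntegralBound}), applied to the restriction $\mathbf{1}_{[0,t]}$, delivers the uniform-in-$t$ convergence of each integral.

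\emph{Continuous case and main obstacle.} If $F$ is continuous then $S_F=\emptyset$ and $F^{(\varepsilon)}\to F$ locally uniformly, so both pointwise terms converge uniformly in $t$ and for every $\mathring{x}\in\mathbb{R}^n$; combined with the uniform convergence of the Stieltjes integrals the $\mathcal{L}^1$-a.e. identity promotes to an identity valid for all $t\in[0,T]$. The principal technical obstacle I anticipate is the quantitative control of the multiplicative estimate under mollification, namely extracting a common integrable majorant in (\ref{eq:SVp-condition}) uniformly in $\varepsilon$ and identifying $\partial_k F\circ X$ as the strong $W^{\beta,1}_0$-limit of $\partial_k F^{(\varepsilon)}\circ X$; the occupation-measure argument establishing $X_t\notin S_F$ for $\mathcal{L}^1$-a.e. $t$ is a secondary but also nontrivial point, best treated by combining the $(s,1)$-variability with the rectifiable structure of $S_F$ afforded by the $BV$ regularity of $\nabla F$.
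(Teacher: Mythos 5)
Your strategy is essentially the paper's: mollify $F$, invoke the smooth chain rule for $F_\varepsilon$ and $X\in\mathcal{C}^\alpha$ with $\alpha>\tfrac12$, then pass to the limit by showing pointwise convergence of the boundary terms at non-discontinuity points and $W^{\beta,1}_0(0,T)$-convergence of $\partial_k F_\varepsilon\circ X\to\partial_k F\circ X$, finishing with the duality bound (\ref{eq:ZSIntegralBound}) applied to restrictions $\mathbf{1}_{[0,t]}$. The convergence of the composed derivatives, including the measure comparison $\|D(\partial_k F\ast\eta_\varepsilon)\|\le(\|D\partial_k F\|)\ast\eta_\varepsilon$, the potential bounds, dominated convergence, and the choice of $\beta\in(1-\alpha,\alpha s)$ enabled by $s>\tfrac{1-\alpha}{\alpha}$, all match Lemma \ref{lem:mollified-gagliardo-convergence} and the proof in Subsection \ref{SS:changeofvarproof}. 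The continuous case is also handled identically.

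The one genuine gap is in your argument that $X_t\in\mathbb{R}^n\setminus S_F$ for $\mathcal{L}^1$-a.e.\ $t$. You propose to derive this from the embedding $F\in W^{1,n/(n-1)}_{\loc}$, which bounds the Hausdorff dimension of $S_F$, and then to ``combine this with Proposition \ref{P:SVviaoccu} and the $(s,1)$-variability.'' This does not close: $(s,1)$-variability of $X$ w.r.t.\ $\partial_k F$ controls $\mu_X^{[0,T]}$ only \emph{relative to the measures $\|D\partial_k F\|$}. Through Corollary \ref{C:key} it gives $\mu_X^{[0,T]}(S_{\partial_k F})=0$, but it carries no information about $\mu_X^{[0,T]}$ on arbitrary sets of small Hausdorff dimension -- indeed nothing in the hypotheses forces $\mu_X^{[0,T]}$ to have any dimensional lower bound (the zero path is $(s,1)$-variable w.r.t.\ Lipschitz $\varphi$). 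What is actually needed, and what the paper proves in Lemma \ref{lem:mollifier-result-new} using \cite[Remark 3.82 and Exercise 3.14]{AFP}, is the set inclusion $S_F\subset\bigcup_{k=1}^n S_{\partial_k F}$; once you have this, Corollary \ref{C:key} applied to each $\partial_k F$ immediately gives $\mu_X^{[0,T]}(S_F)=0$. Your closing remark about ``the rectifiable structure of $S_F$'' gestures toward this, but without stating and proving the inclusion the mollified boundary term $F_\varepsilon(X_t)$ is not known to converge to $F(X_t)$ at a.e.\ $t$, and the argument does not go through.
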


The proof of this theorem can be found in Subsection \ref{SS:changeofvarproof} of Section \ref{S:suffvar}.

The following result on the coincidence of (\ref{E:pathwiseint}) and the corresponding Riemann-Stieltjes integral under the $(s,p)$-variability condition 
with large enough $p$ is immediate from Proposition \ref{P:basicest} and Lemma \ref{lemma:Lp-integrability} below together with \cite[Theorems 4.1.1 and 4.2.1]{Zahle98}. We provide this result for systematic reasons while it will not be used in the sequel.

\begin{theorem}
\label{thm:RS-integral}
Let $\varphi\in BV_{\loc}(\mathbb{R}^n)$, let $X\in\mathcal{C}^\alpha([0,T],\mathbb{R}^n)$ be a path which is $(s,p)$-variable with respect to $\varphi$ for some $s\in (0,1)$ and $p\in (\frac{1}{\alpha s},+\infty]$. Then  $\varphi\circ X$ has a (unique) Borel version which is H\"older continuous of any order less than $\alpha s-\frac{1}{p}$. If in addition $Y\in\mathcal{C}^{\gamma}([0,T],\mathbb{R})$ for some $\gamma>1-\alpha s+\frac{1}{p}$, where $\frac{1}{+\infty}:=0$, then both the generalized Lebesgue-Stieltjes integral $\int_0^T \varphi(X_u)\:dY_u$ as in (\ref{E:pathwiseint}) and the Riemann-Stieltjes integral of $\varphi(X)$ w.r.t. $Y$ over $[0,T]$ exist and agree. 

Moreover, if in this case we are given $0<\varepsilon <\alpha s-1/p-1+\gamma$, a refining sequence $(\pi_k)_{k\geq 1}$ of finite partitions $\pi_k = \{0=t_0^{(k)} < t_1^{(k)}<\ldots <t_{N_k}^{(k)}=T\}$
of $[0,T]$ and points $\xi_i^{(k)} \in [t_{i-1}^{(k)},t_i^{(k)}]$, then we have 
\[\left|\int_0^T \varphi(X)\:dY - \sum_{i=1}^{N_k} \varphi(X_{\xi_i^{(k)}})(Y_{t_i^{(k)}}-Y_{t_{i-1}^{(k)}})\right| \leq c|\pi_k|^{\alpha s-1/p-1+\gamma-\varepsilon}\]
for all $k$, where $|\pi_k|=\max_i (t_i^{(k)}-t_{i-1}^{(k)})$ denotes the mesh of the partition $\pi_k$ and $c>0$ is a constant depending on $\alpha$, $\gamma$, $s$, $p$ and $Y$.
\end{theorem}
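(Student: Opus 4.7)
I would derive the three conclusions by combining the regularity already afforded to $\varphi\circ X$ by Theorem \ref{thm:existence} with the classical Sobolev embedding on the line and Zähle's identification of the generalized integral with the Young/Riemann-Stieltjes integral under sufficient Hölder regularity.

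First, I would upgrade the fractional-Sobolev statement to Hölder continuity. Theorem \ref{thm:existence}(iii) gives $\varphi\circ X\in W^{\beta,p}(0,T)$ for every $\beta<\alpha s$, with quantitative control coming from the multiplicative estimate of Proposition \ref{P:basicest} applied to $[X]_{\alpha,\infty}$ and the $L^p$-norm of the Riesz potential $u\mapsto \int_{\mathcal U}|X_u-z|^{-n+1-s}\|D\varphi\|(dz)$; here Lemma \ref{lemma:Lp-integrability} is what ensures this Riesz potential actually lies in $L^p(0,T)$. Since $p>1/(\alpha s)$, one can pick $\beta$ with $1/p<\beta<\alpha s$, and the one-dimensional Morrey embedding $W^{\beta,p}(0,T)\hookrightarrow \mathcal{C}^{\beta-1/p}([0,T])$ produces a continuous representative of $\varphi\circ X$ of any prescribed Hölder order strictly below $\alpha s-1/p$. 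Uniqueness of this Borel version is automatic, since two continuous functions agreeing $\mathcal{L}^1$-a.e.\ coincide pointwise.

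Second, both integrals exist and coincide. The generalized Lebesgue-Stieltjes integral is given by Theorem \ref{thm:existence}(ii), since $\gamma>1-\alpha s+1/p\ge 1-\alpha s$. For the Riemann-Stieltjes integral I would invoke Young: fixing $\varepsilon>0$ small, one has $\varphi\circ X\in\mathcal{C}^{\alpha s-1/p-\varepsilon}([0,T])$ and $Y\in\mathcal{C}^\gamma([0,T])$ with $(\alpha s-1/p-\varepsilon)+\gamma>1$, so the Riemann-Stieltjes integral is defined as the limit of Riemann sums over any refining sequence. Equality with the Zähle integral then follows from \cite[Theorem 4.2.1]{Zahle98}, which identifies the two whenever the integrand and integrator are Hölder continuous with exponents summing to more than one.

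Third, the mesh rate is a direct application of \cite[Theorem 4.1.1]{Zahle98}, which for Hölder exponents $\theta_1$ and $\theta_2$ with $\theta_1+\theta_2>1$ gives a convergence rate of order $|\pi_k|^{\theta_1+\theta_2-1-\varepsilon'}$ for arbitrary tagging. Choosing $\theta_1=\alpha s-1/p-\varepsilon/2$ and $\theta_2=\gamma$ yields exactly the exponent $\alpha s-1/p-1+\gamma-\varepsilon$ stated, with constant depending on $[Y]_{\gamma,\infty}$ and $\|\varphi\circ X\|_{W^{\beta,p}}$. The only delicate point in the whole argument is to choose $\beta$ close enough to $\alpha s$ that $\beta-1/p$ both exceeds zero and combines with $\gamma$ to exceed one; the hypothesis $\gamma>1-\alpha s+1/p$ is precisely what keeps this window nonempty, so no serious obstacle arises beyond invoking the cited inputs.
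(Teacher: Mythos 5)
Your proposal is correct and follows essentially the same route the paper indicates: the paper states this theorem is immediate from Proposition \ref{P:basicest}, Lemma \ref{lemma:Lp-integrability}, and \cite[Theorems 4.1.1 and 4.2.1]{Zahle98}, which are exactly the ingredients you invoke (the Hölder statement for $\varphi\circ X$ being precisely the final assertion of Lemma \ref{lemma:Lp-integrability}). One small misattribution: it is the $(s,p)$-variability hypothesis itself, not Lemma \ref{lemma:Lp-integrability}, that places the Riesz potential $t\mapsto\int_{\mathcal U}|X_t-z|^{-n+1-s}\|D\varphi\|(dz)$ in $L^p(0,T)$; the lemma uses that hypothesis (together with Proposition \ref{P:basicest} and the pinning argument) to conclude $\varphi\circ X\in W^{\beta,p}(0,T)$ and the Hölder bound, but this does not affect the validity of the argument.
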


\section{Systems of differential equations}\label{S:SDE}

In this section we discuss equations of the form 
\begin{equation}\label{eq:SDE}
X_t=\mathring{x}+\int_0^t \sigma(X_u)dY_u, \quad t\in [0,T],
\end{equation}
where $T>0$, $\mathring{x}\in\mathbb{R}^n$, $\sigma: \mathbb{R}^n \to \mathbb{R}^{n\times n}$ is a coefficient function $\sigma=(\sigma_{jk})_{1\leq j,k\leq n}$ and $Y=(Y^1,...,Y^n):[0,T]\to\mathbb{R}^n$ is a given H\"older path. As usual, (\ref{eq:SDE}) is to be understood in the sense that all components $X^j$ of $X=(X^1,...,X^n):[0,T]\to\mathbb{R}^n$ satisfy the equations 
\begin{equation}\label{eq:SDEcomp}
X_t^j=\mathring{x}_j+\sum_{k=1}^n \int_0^t\sigma_{jk}(X_u^1,\ldots,X_u^n)dY_u^k,
\end{equation}
where $\mathring{x}=(\mathring{x}_1,...,\mathring{x}_n)$. Each integral in these sums is a generalized Lebesgue-Stieltjes integral as in (\ref{E:pathwiseint}). 

We employ Definition \ref{D:SVcomponentwise} for matrix coefficients: Given 
$\sigma=(\sigma_{jk})_{1\leq j,k\leq n}$ with $\sigma_{jk}\in BV_{\loc}(\mathbb{R}^n)$ for all $j$ and $k$ and parameters $s\in (0,1)$ and $p\in [1,+\infty]$, a path $X =(X^1,\ldots,X^n):[0,T]\to \mathbb{R}^n$ is called $(s,p)$-variable with respect to $\sigma$, in symbols $X\in V(\sigma,s,p)$, if it is $(s,p)$-variable with respect to all $\sigma_{jk}$. We consider the following notion of solution.

\begin{definition}\label{D:variabilitysolutions}
Let $\sigma=(\sigma_{jk})_{1\leq j,k\leq n}$ be such that $\sigma_{jk}\in BV_{\loc}(\mathbb{R}^n)\cap L^\infty(\mathbb{R}^n)$ for all $j$ and $k$ and let $Y\in\mathcal{C}^\gamma([0,T],\mathbb{R}^n)$. A path $X:[0,T]\to \mathbb{R}^n$ is called a \emph{variability solution for $\sigma$ and $Y$ started at $\mathring{x}\in\mathbb{R}^n$} if 
\begin{enumerate}
\item[(i)] $X_0=\mathring{x}$
\item[(ii)] the path $X$ is in $\mathcal{C}^\alpha([0,T],\mathbb{R}^n)$ and also in $V(\sigma,s)$
for some $s\in (\frac{1-\gamma}{\alpha},1)$ 
\item[(iii)] $X$ satisfies (\ref{eq:SDE}).
\end{enumerate}
\end{definition}

\begin{examples}
Let $n=1$ and let $\sigma=\varphi_\mathcal{C}$ be as in Example \ref{Ex:Cantor} (with $n=1$). If $Y$ is a typical realization of fractional Brownian motion with Hurst index $H>\frac{\log 3}{\log 3+\log 2}$ then by Theorem \ref{T:exnaive} below, variability solutions for $\sigma$ and $Y$ started at zero exist. Also the zero function $X\equiv 0$ is a variability solution. 
\end{examples}

Not every solution is a variability solution as the following example shows.

\begin{examples}
If $n=1$, $\kappa\in (0,1)$, $\sigma(x)=|x|^\kappa$ for $x\in (-1,1)$ and $\sigma(x)\equiv 1$ for $x\in\mathbb{R}\setminus (-1,1)$ and $Y\in\mathcal{C}^\gamma([0,T])$ is nowhere constant then by Theorem \ref{T:exonedim} there are (non-constant) variability solutions for $\sigma$ and $Y$ started at zero. Also the zero function $X\equiv 0$ is a solution, but it can be a variability solution only if $\gamma + \kappa>1$: Since in a neighborhood of zero $\Vert D\sigma\Vert(dx) = |x|^{\kappa-1}dx$ the zero solution is a variability solution only if we can find $s\in (1-\gamma,1)$ such that $|x|^{-s+\kappa-1}$ is integrable around the origin. 
\end{examples}

\begin{remark}
In \cite{LeonNualartTindel} the authors considered monotone and continuous $\sigma$ having power-type non-linearity at the origin similarly as in the preceding example. The assumptions of \cite[Theorem 3.6]{LeonNualartTindel} ensure that their solution candidate is actually an $(s,1)$-variable solution. As pointed out in \cite{LeonNualartTindel}, one cannot expect unique solutions, because also the zero function is a solution. 
\end{remark}

\subsection{Upper regularity and H\"older continuity of coefficients}

To put our results in perspective we begin with a case that encodes a known result. 

Recall that, given $d\geq 0$, a Borel measure $\mu$ on $\mathbb{R}^n$ is said to be \emph{upper $d$-regular on a Borel set $B\subset \mathbb{R}^n$} if there are constants $c>0$ and $r_0>0$ such that 
\[\mu(B(x,r))\leq cr^d,\quad x\in B\cap \supp \mu,\quad 0<r<r_0.\]
We call a function $\varphi\in BV_{\loc}(\mathbb{R}^n)$ \emph{upper $d$-regular on $B$} if $\left\|D\varphi\right\|$ is \emph{upper $d$-regular on $B$}. We say that a function $\varphi=(\varphi_1,...,\varphi_m)\in (BV_{\loc}(\mathbb{R}^n))^m$ is upper $d$-regular on $B$ if each of its components $\varphi_i$ is. If $B=\mathbb{R}^n$ we simply say \emph{upper $d$-regular}. 

The following is verified in Proposition \ref{P:naive2}.

\begin{proposition}\label{P:Hoelderanyway}
Let $\sigma=(\sigma_{jk})_{1\leq j,k\leq n}$ be such that $\sigma_{jk}\in BV_{\loc}(\mathbb{R}^n)$ for all $j$ and $k$, let $s\in (0,1)$, and assume that $\sigma$ is upper $d$-regular with $d>n-1+s$. Then $\sigma$ has a (unique) Borel version which is  H\"older continuous of order $s$ and extends any component-wise Lebesgue representative. Moreover, any path is $(s,\infty)$-variable w.r.t. $\sigma$.
\end{proposition}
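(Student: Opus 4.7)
The plan is to deduce both claims from the fact that upper $d$-regularity with $d>n-1+s$ forces the Riesz potential of order $1-s$ of $\|D\sigma_{jk}\|$ to be locally uniformly bounded. First, for each component $\varphi=\sigma_{jk}$ and each relatively compact open $\mathcal{U}\subset\mathbb{R}^n$, I would estimate $\int_{\mathcal{U}}|x-z|^{-(n-1+s)}\|D\varphi\|(dz)$ uniformly in $x\in\mathcal{U}$. Decomposing $B(x,r_0)$ into dyadic annuli $B(x,2^{-k}r_0)\setminus B(x,2^{-k-1}r_0)$ and applying upper $d$-regularity bounds the $k$-th contribution by $c\,(2^{-k-1}r_0)^{-(n-1+s)}(2^{-k}r_0)^{d}$, yielding a convergent geometric series because $d-(n-1+s)>0$. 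The complementary piece $\int_{\mathcal{U}\setminus B(x,r_0)}$ is at most $r_0^{-(n-1+s)}\|D\varphi\|(\overline{\mathcal{U}})<\infty$ since $\|D\varphi\|$ is Radon. A short argument extends the regularity bound off the support of $\|D\varphi\|$: any $B(x,r)$ with $r<r_0/2$ that meets $\supp\|D\varphi\|$ sits inside $B(z,2r)$ for some $z$ in the support, which reproduces the same growth rate with an adjusted constant.

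From this uniform bound the $(s,\infty)$-variability of an arbitrary path $X:[0,T]\to\mathbb{R}^n$ is immediate, since the compact set $X([0,T])$ can always be enclosed in such a $\mathcal{U}$ and the integrand in $(\ref{eq:SVp-condition})$ is then bounded uniformly in $t$; applying the conclusion componentwise handles matrix-valued $\sigma$.

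For the H\"older statement I would invoke Proposition \ref{P:meanvalue}, which at two Lebesgue points $x,y$ of $\varphi$ controls $|\widetilde{\varphi}(x)-\widetilde{\varphi}(y)|$ by a constant times $|x-y|^s$ multiplied by the sum of the fractional maximal functions of order $1-s$ of $\|D\varphi\|$ at $x$ and $y$. Because these fractional maximal functions are trivially dominated by the corresponding Riesz potentials, the uniform estimate of the first step yields
\[
|\widetilde{\varphi}(x)-\widetilde{\varphi}(y)| \ \le\ C\,|x-y|^{s}
\]
for every pair of Lebesgue points $x,y$ in any prescribed compact set, with $C$ depending only on $s$, $d$, $r_0$ and $\|D\varphi\|(\overline{\mathcal{U}})$. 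Since $S_\varphi$ has zero Lebesgue measure, the Lebesgue set is dense, so this H\"older estimate extends by uniform continuity to a genuinely H\"older continuous (hence Borel) function on $\mathbb{R}^n$ that coincides with any component-wise Lebesgue representative off $S_\varphi$. Uniqueness is automatic because two continuous functions agreeing on a dense set are equal; applying the scalar conclusion to each $\sigma_{jk}$ produces the required Borel version of $\sigma$.

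The only delicate point I foresee is the uniform control of the Riesz potential on $\mathcal{U}$, particularly at points $x\notin\supp\|D\varphi\|$ where upper $d$-regularity is not directly available; once that is in place, everything else reduces to bookkeeping around Proposition \ref{P:meanvalue} and a standard density argument.
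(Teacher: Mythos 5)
Your proposal is correct and follows essentially the same route as the paper: bound the order $(1-s)$ Riesz potential of $\|D\sigma_{jk}\|$ uniformly using upper $d$-regularity, then invoke Proposition \ref{P:meanvalue} together with the trivial bound of the truncated fractional maximal function by the Riesz potential to obtain an $s$-H\"older estimate on Lebesgue points, and extend by density; the $(s,\infty)$-variability is then immediate. This is exactly the chain the paper uses through Propositions \ref{P:samething}, \ref{P:naive2}, and Corollary \ref{C:justHoelder}.

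The one genuine (small) difference is in how you pass from a Riesz-potential bound at points of $\supp\|D\sigma_{jk}\|$ to a bound at arbitrary points. The paper invokes the first maximum principle from potential theory, while you give a direct geometric covering argument: if $B(x,r)$ meets $\supp\|D\sigma_{jk}\|$ then it is contained in $B(z,2r)$ for some $z$ in the support, so the mass bound $\|D\sigma_{jk}\|(B(x,r))\le c(2r)^d$ persists with an adjusted constant, while if $B(x,r)$ misses the support the mass is zero. Your argument is more elementary and self-contained — it avoids a nontrivial potential-theoretic tool — at the cost of a constant that doubles the radius; the paper's route is shorter given that the maximum principle is already being used elsewhere in Section~\ref{SS:suff}. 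Both are valid, and either establishes the uniform local bound needed for the rest of the argument.
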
 

Concerning (\ref{eq:SDE}) we are led back to a well known Peano type existence result.
\begin{theorem}\label{T:exnaive}
Let $\sigma=(\sigma_{jk})_{1\leq j,k\leq n}$ be such that $\sigma_{jk}\in BV_{\loc}(\mathbb{R}^n)$ for all $j$ and $k$, let $s\in (0,1)$, and assume that $\sigma$ is upper $d$-regular with $d>n-1+s$. If $Y\in\mathcal{C}^\gamma([0,T],\mathbb{R}^n)$ for some $\gamma \in (\frac{1}{1+s},1)$, then for any $\mathring{x}\in\mathbb{R}^n$ there exists a variability solution $X\in \mathcal{C}^{\gamma}([0,T],\mathbb{R}^n)$ for $\sigma$ and $Y$ starting at $\mathring{x}$, and $X \in V(\sigma, s,\infty)$. 
\end{theorem}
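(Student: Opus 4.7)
The plan is to reduce the problem to the classical Peano theorem for Young-type differential equations with Hölder continuous coefficients, as surveyed in \cite{Gubinelli, LLC}. By Proposition \ref{P:Hoelderanyway}, the upper $d$-regularity hypothesis ensures that $\sigma$ admits an $s$-Hölder continuous representative $\tilde\sigma$ on $\mathbb{R}^n$ extending the component-wise Lebesgue representatives, and that every continuous path is already in $V(\sigma, s, \infty)$. The hypothesis $\gamma > \frac{1}{1+s}$ rewrites as $\gamma + \gamma s > 1$, which is the Young sum-of-orders condition needed for the integral $\int \tilde\sigma(X)\,dY$ to make sense whenever $X \in \mathcal{C}^\gamma([0,T], \mathbb{R}^n)$; by Theorem \ref{thm:existence}(iii) applied with $p = \infty$, the map $X \mapsto \int_0^{\cdot} \tilde\sigma(X_u)\,dY_u$ then sends $\mathcal{C}^\gamma$ back into $\mathcal{C}^\gamma$.

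Next I would construct a solution by smoothing and compactness. Let $\rho_\varepsilon$ be a standard mollifier and set $\sigma_\varepsilon := \tilde\sigma \ast \rho_\varepsilon$, truncated outside a large ball so that each $\sigma_\varepsilon$ is smooth and bounded, with an $s$-Hölder seminorm controlled uniformly in $\varepsilon$ on any fixed compact. For each $\varepsilon$ the smooth-coefficient Young theory yields a solution $X^\varepsilon \in \mathcal{C}^\gamma([0,T], \mathbb{R}^n)$ of (\ref{eq:SDE}) with $\sigma$ replaced by $\sigma_\varepsilon$. Combining Proposition \ref{the:ZSIntegralBound}, the embedding (\ref{E:HolderinSobo}), and the multiplicative estimate (\ref{E:contimultbound}) produces an inequality of the shape
\[
|X^\varepsilon_t - X^\varepsilon_u| \leq c\,\bigl(1 + [X^\varepsilon]_{\gamma,\infty;[u,t]}^s\,(t-u)^{\gamma s}\bigr)\,[Y]_{\gamma,\infty}\,(t-u)^\gamma
\]
on each subinterval $[u,t] \subset [0,T]$. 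Because $\gamma + \gamma s > 1$, on sufficiently short intervals the nonlinear term is absorbed, yielding a local $\gamma$-Hölder bound that can be patched over finitely many pieces to produce a uniform bound on $[X^\varepsilon]_{\gamma,\infty}$ over $[0,T]$. Arzelà-Ascoli then delivers a subsequence $X^\varepsilon \to X$ uniformly, with $X \in \mathcal{C}^\gamma([0,T], \mathbb{R}^n)$ and $X_0 = \mathring{x}$.

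The final step is to pass to the limit in the integral equation and to check the items of Definition \ref{D:variabilitysolutions}. Since $\sigma_\varepsilon \to \tilde\sigma$ locally uniformly and $X^\varepsilon \to X$ uniformly with a uniform $\gamma$-Hölder bound, an interpolation argument shows $\sigma_\varepsilon(X^\varepsilon) \to \tilde\sigma(X)$ in $\mathcal{C}^\beta([0,T])$ for some $\beta$ with $1-\gamma < \beta < \gamma s$, and then (\ref{E:contimultbound}) transfers this convergence to the integrals, so that $X$ satisfies (\ref{eq:SDE}). Item (i) holds by construction; Proposition \ref{P:Hoelderanyway} supplies $X \in V(\sigma, s, \infty)$, and the equivalence $\gamma > \frac{1}{1+s} \Leftrightarrow s > \frac{1-\gamma}{\gamma}$ confirms item (ii) with $\alpha = \gamma$. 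The main obstacle I anticipate is the patching step for the a priori Hölder bound: because $\tilde\sigma$ is only $s$-Hölder and not Lipschitz, a direct Gronwall argument fails, and one must instead iterate the local estimate a controlled number of times, using the $L^\infty$ bound on $\sigma_\varepsilon$ (available thanks to the truncation) to prevent escape to infinity.
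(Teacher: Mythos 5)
Your proposal is correct and takes essentially the same route as the paper: reduce to an $s$-H\"older continuous coefficient via Proposition \ref{P:Hoelderanyway}, note that $\gamma>\frac{1}{1+s}$ is exactly the Peano threshold for Young-type equations, and read off $X\in V(\sigma,s,\infty)$ and $s>\frac{1-\gamma}{\gamma}$ from the same proposition. The only difference is that the paper simply cites the classical existence result (\cite[Proposition 5]{Gubinelli}, \cite[Theorem 1.20]{LLC}) at this point, whereas you sketch a mollification-and-compactness argument that reproduces it.
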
 
For a proof, see for instance \cite[Proposition 5]{Gubinelli} or, in a slightly different formulation, \cite[Theorem 1.20]{LLC}.

We also consider upper regularity conditions for paths. If for a given number $s>0$, a Borel set $B\subset \mathbb{R}^n$ and a path $Y:[0,T]\to \mathbb{R}^n$ we have
\begin{equation}\label{E:strict}
\sup_{x\in Y([0,T])\cap B}\int_0^T |Y_t-x|^{-s}dt<+\infty,
\end{equation}
then there are constants $c>0$ and $r_0>0$ such that 
\begin{equation}\label{E:Xupperreg}
\mathcal{L}^1(\{t\in [0,T]: |Y_t-Y_u|<r\})\leq cr^s, \quad u\in \{\tau\in [0,T]: Y_\tau\in B\}, \quad 0<r<r_0. 
\end{equation}
If (\ref{E:Xupperreg}) holds with some $d>s$ in place of $s$, then (\ref{E:strict}) holds. These facts are standard, see Proposition \ref{P:samething}. Condition (\ref{E:Xupperreg}) says that the occupation measure of $Y$ on $[0,T]$ is upper $s$-regular on $B$, and for $B=\mathbb{R}^n$ this encodes a lower bound on its Hausdorff dimension, see Remark \ref{R:Hausdorfflowerbounded}. If (\ref{E:Xupperreg}) holds, we also say that \emph{$Y$ is upper $s$-regular on $B$} and \emph{upper $s$-regular} if $B=\mathbb{R}^n$.

\begin{remark}\mbox{}
\begin{enumerate}
\item[(i)] No non-constant $\varphi\in BV_{\loc}(\mathbb{R}^n)$ can be upper $d$-regular with $d>n$, as a Lebesgue differentiation argument shows.
\item[(ii)] Suppose $n=1$. Then we trivially have $\dim_H Y([0,T])\leq 1$ for any path $Y$, with equality if $Y$ is non-constant. For a constant path clearly $\dim_H Y([0,T])=0$. If $Y$ is upper $d$-regular, then $d\leq \dim_H Y([0,T])$,
\cite[Theorem 4.13]{Falconer}. A path $Y$ constant on some nonempty open interval cannot be upper $d$-regular for any $d>0$. A nowhere constant Lipschitz path $Y$ on $[0,T]$ that is upper $d$-regular for a given number $d\in (0,1]$ but not upper $d'$-regular for any $d'>d$ is $Y_t=t^{\frac{1}{d}}$, $t\in [0,T]$. In this case we have $\mu_Y^{[0,T]}([0,r))=r^d$ for any $0<r<T^{\frac{1}{d}}$.
\end{enumerate}
\end{remark}

\subsection{Solutions in dimension one}\label{SS:one}

For the case $n=1$ we obtain the following slight modification of the constructive existence results \cite[Theorem 3.3]{Johanna} and \cite[Theorem 2.1]{LauriSole}, see Example \ref{Ex:LauriSole} below. It allows to compensate the failure of $\sigma$ to be H\"older of sufficiently high order in some space region by sufficient activity of $Y$ in some other space region, stated in terms of (\ref{E:strict}).

\begin{theorem}\label{T:exonedim}
Let $\sigma\in BV_{\loc}(\mathbb{R})\cap L^\infty(\mathbb{R})$ be nonnegative $\mathcal{L}^1$-a.e. such that $\frac{1}{\sigma} \in L^1_{\loc}(\mathbb{R})$.
\begin{enumerate}
\item[(i)] The function $g(x)=\int_0^x\frac{dz}{\sigma(z)}$, $x\in\mathbb{R}$, is absolutely continuous and strictly increasing on $\mathbb{R}$. Its inverse $f:=g^{-1}$ is Lipschitz and satisfies $\sigma(f)=f'$ $\mathcal{L}^1$-a.e. on $\mathbb{R}$. 
\item[(ii)] Let $s\in (0,1)$, $\gamma\in (\frac{1}{1+s},1)$, $Y\in \mathcal{C}^\gamma([0,T])$ with $Y_0=0$ and $\mathring{x}\in\mathbb{R}$. Let $-\infty\leq a <b\leq +\infty$. Suppose that $Y$ satisfies (\ref{E:strict}) for $B=(g(a),g(b))$ or that $\sigma$ is upper $d$-regular on $(a,b)$ with $d>s$, and similarly for $\mathbb{R}\setminus (g(a),g(b))$ and $\mathbb{R}\setminus (a,b)$, respectively. Then the function $X\in \mathcal{C}^{\gamma}([0,T])$, defined by
\[X_t=f(Y_t+g(\mathring{x})),\quad t\in [0,T],\]
satisfies (\ref{eq:SDE}). If $\frac{1}{\sigma} \in L^\infty_{\loc}(\mathbb{R})$, then $X\in \mathcal{C}^{\gamma}([0,T])\cap V(\sigma,s)$ is a variability solution for $\sigma$ and $Y$ started at $\mathring{x}$.
\end{enumerate}
\end{theorem}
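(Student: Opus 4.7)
For part (i), the hypothesis $1/\sigma \in L^1_{\loc}(\mathbb{R})$ makes $g$ an absolutely continuous function with $g'=1/\sigma$ $\mathcal{L}^1$-a.e.; since $\sigma \geq 0$ and $1/\sigma$ is locally integrable, $\sigma>0$ a.e., so $g$ is strictly increasing, and the bound $\sigma\leq\|\sigma\|_\infty$ gives $g'\geq\|\sigma\|_\infty^{-1}$ a.e., making $f=g^{-1}$ Lipschitz with constant $\|\sigma\|_\infty$. The chain rule applied to the identity $g\circ f = \mathrm{id}$ (valid since both $g$ and $f$ are absolutely continuous, and $f$ maps $\mathcal{L}^1$-null sets to $\mathcal{L}^1$-null sets because $\sigma>0$ a.e.) yields $g'(f(w))f'(w)=1$ and hence $f'(w)=\sigma(f(w))$ $\mathcal{L}^1$-a.e.

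For part (ii), set $\tilde Y_t := Y_t + g(\mathring x)$, so that $X_t = f(\tilde Y_t)$ and, using $Y_0=0$, $X_0=f(g(\mathring x))=\mathring x$. H\"older continuity $X\in\mathcal{C}^\gamma$ is immediate from the Lipschitz continuity of $f$. To verify $(s,1)$-variability, fix a relatively compact open neighborhood $\mathcal{U}$ of $X([0,T])$ and decompose
\[\int_0^T\!\int_\mathcal{U}|X_t-z|^{-s}\|D\sigma\|(dz)\,dt = I_1+I_2,\]
where $I_1$ is the integral over $z\in\mathcal{U}\cap(a,b)$ and $I_2$ that over $z\in\mathcal{U}\setminus(a,b)$. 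Each piece is controlled by one of the two alternatives on the corresponding region: upper $d$-regularity with $d>s$ yields a uniform inner-integral bound via a standard dyadic annulus sum, giving $I_j\leq c\,T$; the activity condition \eqref{E:strict} is transferred through $f$ via the change of variable $z=f(w)$ and the identity $X_t-z=f(\tilde Y_t)-f(w)$, leading to a Riesz-type bound involving the pushforward measure $g_*\|D\sigma\|$. The SDE \eqref{eq:SDE} then follows from Theorem \ref{thm:ito} applied to $F=f$ and the path $\tilde Y$: one has $F\in W^{1,1}_{\loc}$, and $F'=\sigma\circ f\in BV_{\loc}$ because BV is preserved under pre-composition with a Lipschitz strictly increasing bijection (with the pushforward identity $\|D(\sigma\circ f)\|=g_*\|D\sigma\|$); the hypothesis $\gamma>1/(1+s)$ rewrites as $s>(1-\gamma)/\gamma$ as required there, and the variability of $\tilde Y$ w.r.t. $F'$ follows from that of $X$ w.r.t. $\sigma$ via the Lipschitz bound $|\tilde Y_t-g(z)|\geq|X_t-z|/\|\sigma\|_\infty$. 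Theorem \ref{thm:ito} then reads $X_t = \mathring x + \int_0^t \sigma(X_u)\,dY_u$.

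The main obstacle is the activity-to-variability transfer in the first case above: since $\sigma$ is not assumed bounded below, $f$ need not be bi-Lipschitz, and the pushforward analysis must accommodate possible degeneracies of $f'=\sigma\circ f$; one must also carefully align the reference set $B=(g(a),g(b))$ appearing in \eqref{E:strict} with the shifted path $\tilde Y_t = Y_t + g(\mathring x)$ through which $X$ is expressed, which is what dictates the precise form of $B$ in the hypothesis.
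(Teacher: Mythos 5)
Part (i) is correct and matches what the paper treats as immediate. For part (ii), your scaffolding --- H\"older continuity of $X$ from the Lipschitz property of $f$, decomposition of the variability integral over $(a,b)$ and its complement, and the SDE via Theorem~\ref{thm:ito} applied to $F=f$ with $F'=\sigma\circ f\in BV_{\loc}(\mathbb{R})$ --- agrees with the paper's strategy, which routes through Lemma~\ref{L:upperregstable} and Lemma~\ref{L:solve}. You correctly record the Lipschitz constant $\|\sigma\|_\infty$ of $f$, the shift by $g(\mathring x)$ needed to align $(a,b)$ with $(g(a),g(b))$, the pushforward identity for $\|D(\sigma\circ f)\|$, and the estimate $|\tilde Y_t-g(z)|\geq\|\sigma\|_\infty^{-1}|X_t-z|$ that passes variability from $X$ w.r.t.\ $\sigma$ to $\tilde Y$ w.r.t.\ $f'$ \emph{once the former is known}.

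The step you yourself flag as the ``main obstacle'' is a genuine gap, and the change-of-variable sketch you offer does not close it --- it runs the inequality in the wrong direction. To deduce $X\in V(\sigma,s)$ from \eqref{E:strict} you must bound $|X_t-z|^{-s}$ from \emph{above}, i.e.\ you need a lower bound $|X_t-z|\geq c\,|\tilde Y_t-g(z)|$, which is a Lipschitz bound on $g=f^{-1}$. But $g'=1/\sigma$ is only assumed locally integrable, so $g$ is merely absolutely continuous, not Lipschitz, when $\sigma$ vanishes somewhere. The Lipschitz property of $f$ supplies only the opposite estimate $|X_t-z|\leq\|\sigma\|_\infty|\tilde Y_t-g(z)|$, which gives a \emph{lower} bound on $|X_t-z|^{-s}$ and is useless for the transfer. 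The paper's proof bridges exactly this point by the $n=1$ clause of Lemma~\ref{L:upperregstable}, which asserts a transfer $\int_0^T|f(\tilde Y_t)-f(\tilde Y_u)|^{-s}dt\leq M\int_0^T|\tilde Y_t-\tilde Y_u|^{-s}dt$ under absolute continuity of $g$ alone, restricted to a compact interval containing $f(\tilde Y([0,T]))$, and then invokes Corollary~\ref{C:localizeB} together with Proposition~\ref{P:SVviaoccu}. Your proposal should cite and apply that lemma (to the shifted path $\tilde Y$ and the shifted set); as written, the key transfer is acknowledged but not proved, so the proof is incomplete.
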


In the case that in Theorem \ref{T:exonedim} no upper regularity of $\sigma$ is used but $Y$ is assumed to be upper $d_Y$-regular, the H\"older continuity and upper regularity of $Y$ together can produce sufficient regularity: The hypotheses in Theorem \ref{T:exonedim} force the condition 
\begin{equation}\label{E:addcond}
\frac{1}{\gamma}<1+d_Y, 
\end{equation}
and for $s\in (\frac{1}{\gamma}-1, d_Y)$ the theorem yields solutions. If the H\"older order $\gamma$ of $Y$ is higher (lower), lower (higher) upper regularity of $Y$ is needed.

\begin{example}\label{Ex:LauriSole}
A real valued fractional Brownian motion $(B_t^H)_{t\in [0,T]}$ of arbitrary Hurst index $H\in (0,1)$ is almost surely $\gamma$-H\"older for any $\gamma<H$. On the other hand, it is upper $1$-regular almost surely, as can for instance be concluded from the joint continuity of its local times, see \cite[p. 1271]{Berman70}, \cite[Theorem]{Berman73} or \cite{Xiao97}, which entails their local boundedness, see Proposition \ref{P:Xupper-localtime} below. If $H>\frac12$ then for any $\gamma\in (\frac12,H)$ condition (\ref{E:addcond}) holds and for almost every realization (seen as a deterministic path) in place of $Y$, Theorem \ref{T:exonedim} yields variability solutions for any starting point $\mathring{x}$ and no matter whether $\sigma$ is upper regular of any order or not. This recovers the fractional Brownian motion special case of \cite[Theorem 2.1]{LauriSole}, which roughly speaking made use of the facts that, for any $\gamma<H$, the paths of $B^H$ are $\gamma$-H\"older almost surely and that
\begin{equation}\label{E:LauriSole}
\sup_{x\in\mathbb{R}}\mathbb{E}\int_0^T|Y_t-x|^{-\frac{1}{H}}dt<+\infty,
\end{equation}
\cite[Assumption 2.1 and Example 2.1]{LauriSole}. Condition (\ref{E:LauriSole}) implies that for any $s\in (0,1)$
with probability one we have $Y\in V(\sigma,s,1)$ for any $BV$-function $\sigma$. 
\end{example}

\begin{example}
Functions $Y:[0,1]\to\mathbb{R}$ having H\"older and upper regularity properties satisfying the assumptions of Theorem \ref{T:exonedim} can be found in the class of self-affine functions \cite[Definition 1]{Kamae}. A self-affine function of order $H\in (0,1]$ is H\"older continuous of order $H$ but of no order $H'>H$, see \cite[Theorem 1]{Kamae} or \cite{Kono}. For any choice of integers $2\leq b<a$ one can construct self-affine functions $Y$ of order $H=\frac{\log b}{\log a}$ which have bounded local times $L_1^Y$, see \cite[Proposition 7 et sa preuve]{Bertoin}, and for any such $Y$ Theorem \ref{T:exonedim} yields variability solutions no matter whether $\sigma$ is upper regular of any order or not. Although Theorem \ref{T:exonedim} does not apply to this case, it is interesting to note that one can find self-affine functions of order $H=\frac12$ having square integrable local times, \cite[p. 438, Remarques]{Bertoin}, what ensures $Y$ is upper $\frac12$-regular, Proposition \ref{P:Xupper-localtime}.
\end{example}

\begin{example}
Suppose that $\sigma(x)=\varphi_{\mathcal{C}}(\frac{x}{3})\mathbf{1}_{[0,3]}(x)+4\cdot \mathbf{1}_{(3,+\infty)}(x)$, $x\geq 0$, 
where $\varphi_\mathcal{C}$ is as in Example \ref{Ex:Cantor}, and $\sigma(x):=\sigma(-x)$, $x<0$. Then $\sigma(0)=0$ and $\sigma$ is $d_\mathcal{C}$-H\"older except at $x=\pm3$, where it jumps. Since $\nu_{\mathcal{C}}$ is lower $d_{\mathcal{C}}$-regular, we have $\varphi_{\mathcal{C}}(r)\geq cr^{d_{\mathcal{C}}}$ for small $r>0$, which implies that $\sigma^{-1}$ is locally integrable. Since $\nu_\mathcal{C}$ is upper $d_\mathcal{C}$-regular, $\sigma$ is upper $d_{\mathcal{C}}$-regular on $(-2,2)$. Let $d,\delta\in (0,1]$ and let $Z:[0,2]\to\mathbb{R}$ be the function defined by $Z_t=t^{\frac{1}{d}}$ for $t\in (0,1]$ and $Z_t=(1-(1-t))^\delta$ for $t\in (1,2]$. Let $g$ be as in Theorem \ref{T:exonedim} and suppose that $Y$ is such that on each of the intervals $[0,\frac{1}{2}g(1)]$, $[\frac{1}{2}g(1), \frac{3}{4}g(1)]$, $[\frac{3}{4}g(1), \frac{7}{8}g(1)]$ etc. it equals a scaled down copy of $Z$, and on $[g(1),+\infty)$ it equals a typical path of fractional Brownian motion with Hurst index $H$ started at zero. Then $Y$ is H\"older of any order $\gamma< H\wedge \delta$, upper $d$-regular on $(g(-1),g(1))$ and upper $1$-regular on $\mathbb{R}\setminus (g(-1),g(1))$. If $d=\frac12$ and $\delta=H=\frac34$ then Theorem \ref{T:exonedim} yields variability solutions for all starting points, note that on $(-2,2)$ we can use the upper $d_\sigma$-regularity of $\sigma$ and on $\mathbb{R}\setminus (-g(1),g(1))$ the upper $1$-regularity of $Y$. 
\end{example}

\subsection{Doss' transformation}

To understand analogs of Theorem \ref{T:exonedim} for $n\geq 2$ we implement a multidimensional Doss transformation, \cite{Doss,Yamato}. Let $\sigma:\mathbb{R}^n\to\mathbb{R}^{n\times n}$ be a matrix coefficient as before, and let $\sigma_j=(\sigma_{j1},...,\sigma_{jn})$ denote its $j$th row. Suppose that $f = (f_1,\ldots,f_n):\mathbb{R}^n\to\mathbb{R}^n$ is a function with components $f_k : \mathbb{R}^n \to \mathbb{R}$ which satisfies 
the deterministic equation 
\begin{equation}
\label{eq:deterministic-DE}
\nabla f = \sigma(f),
\end{equation}
where we have used the symbol
\[\nabla f=\left(\partial_k f_j\right)_{1\leq j,k\leq n}\] 
for the Jacobian matrix of $f$ to avoid confusion with the symbol for gradient measures. The $j$th row of $\nabla f$ is the gradient $\nabla f_j=(\partial_1 f_j,...,\partial_n f_j)$ of $f_j$, and (\ref{eq:deterministic-DE}) states that $\nabla f_j=\sigma_j(f)$ for all $j$. If we set  
\[X_t^j =  f_j(Y_t^1,\ldots,Y_t^n),\]
then $X=(X^1,...,X^n)$ is a solution to (\ref{eq:SDE}) since
\begin{align}\label{E:strategy}
X_t^j&= f_j(Y^1_0,\ldots, Y^n_0) + \sum_{k=1}^n\int_0^t \partial_kf_j(Y^1_u,\ldots,Y^n_u)dY_u^k \notag\\
&=f_j(Y^1_0,\ldots, Y^n_0)+\sum_{k=1}^n\int_0^t \sigma_{jk}(f_1(Y^1_u,\ldots,Y^n_u),\ldots,f_n(Y^1_u,\ldots,Y^n_u))dY_u^k\\
&= X_0^j + \sum_{k=1}^n \int_0^t \sigma_{jk}(X_u^1,\ldots,X_u^n)dY_u^k,\notag
\end{align}
provided that we can justify the use of the change of variable formula (\ref{E:changeofvar}) with $f_j$ in place of $F$ and (\ref{eq:deterministic-DE}). As in the classical case, \cite{Yamato}, one needs strong assumptions to find solutions $f$ to (\ref{eq:deterministic-DE}), and in our case an additional difficulty is that the components of $\sigma$ are $BV_{\loc}$ only.

Our first main assumption on $\sigma$ is as follows. 

\begin{assumption}\mbox{}\label{A:main}
\begin{enumerate}
\item[(i)] $\sigma=(\sigma_{jk})_{1\leq j,k\leq n}$ with components $\sigma_{jk}\in BV_{\loc}(\mathbb{R}^n)\cap L^\infty(\mathbb{R}^n)$ for all $j$ and $k$,
\item[(ii)] $\det (\sigma) >\varepsilon$ $\mathcal{L}^n$-a.e. on $\mathbb{R}^n$ for some $\varepsilon>0$.
\end{enumerate}
\end{assumption}

\begin{remark}
Since all paths are continuous and $T$ is finite, it suffices to have $\sigma$ with the respective properties defined on a bounded domain in $\mathbb{R}^n$. To save notation and have shorter proofs we formulate Assumption \ref{A:main} as stated.
\end{remark}

The following lemma is a direct consequence of Assumption \ref{A:main} and the Cayley-Hamilton theorem, see Subsection \ref{SS:Prelim}.

\begin{lemma}\label{L:niceinverse}
Let Assumption \ref{A:main} be satisfied. Then there exists a matrix $\hat{\sigma}=(\hat{\sigma}_{jk})_{1\leq j,k\leq n}$ of functions $\hat{\sigma}_{jk}\in BV_{\loc}(\mathbb{R}^n)\cap L^\infty(\mathbb{R}^n)$ so that 
\[\sigma\hat{\sigma}=\hat{\sigma}\sigma=I\quad \text{ $\mathcal{L}^n$-a.e. } \]
and this matrix is unique up to $\mathcal{L}^n$-a.e. equivalence. Moreover, there exists some $\hat{\varepsilon}>0$ such that 
$\det(\hat{\sigma})>\hat{\varepsilon}$ $\mathcal{L}^n$-a.e. on $\mathbb{R}^n$. 
\end{lemma}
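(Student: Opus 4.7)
The plan is to construct $\hat{\sigma}$ via the explicit classical adjugate formula $\hat{\sigma}=\frac{1}{\det\sigma}\,\mathrm{adj}(\sigma)$, which holds pointwise $\mathcal{L}^n$-a.e.\ since $\det(\sigma)>\varepsilon$ a.e., and then to verify that each entry lies in $BV_{\loc}(\mathbb{R}^n)\cap L^\infty(\mathbb{R}^n)$. The Cayley--Hamilton theorem is invoked implicitly here to justify that the inverse is a polynomial expression in the entries of $\sigma$ divided by $\det(\sigma)$; in particular every entry of $\mathrm{adj}(\sigma)$ is a signed sum of products of $(n-1)$ entries $\sigma_{jk}$.

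The main (routine) obstacle is therefore the stability of the class $BV_{\loc}(\mathbb{R}^n)\cap L^\infty(\mathbb{R}^n)$ under the two operations that appear: pointwise products and reciprocals (of functions bounded away from zero). For products I would invoke the standard fact that $BV_{\loc}\cap L^\infty$ is an algebra: if $u,v\in BV_{\loc}(\mathbb{R}^n)\cap L^\infty(\mathbb{R}^n)$, then $uv\in BV_{\loc}(\mathbb{R}^n)\cap L^\infty(\mathbb{R}^n)$ with $\|D(uv)\|\leq \|u\|_\infty\|Dv\|+\|v\|_\infty\|Du\|$ (cf.\ \cite[Example 3.97]{AFP}). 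Iterating this $n-1$ times, every entry of $\mathrm{adj}(\sigma)$ and the function $\det(\sigma)$ itself belong to $BV_{\loc}(\mathbb{R}^n)\cap L^\infty(\mathbb{R}^n)$. For the reciprocal I would use Vol'pert's chain rule, or more elementarily the composition rule for $BV$ under Lipschitz functions: since $\det(\sigma)\in [\varepsilon,M]$ a.e.\ for $M:=\|\det(\sigma)\|_\infty$ (finite by boundedness of each $\sigma_{jk}$), the map $t\mapsto 1/t$ is Lipschitz on $[\varepsilon,M]$ with constant $\varepsilon^{-2}$, hence $1/\det(\sigma)\in BV_{\loc}(\mathbb{R}^n)\cap L^\infty(\mathbb{R}^n)$ with $\|1/\det(\sigma)\|_\infty\leq 1/\varepsilon$. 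Multiplying once more we conclude that each entry of $\hat{\sigma}$ is in $BV_{\loc}(\mathbb{R}^n)\cap L^\infty(\mathbb{R}^n)$.

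The identity $\sigma\hat{\sigma}=\hat{\sigma}\sigma=I$ holds $\mathcal{L}^n$-a.e., because it holds pointwise at every $x$ where $\det(\sigma(x))>\varepsilon$, which is a set of full Lebesgue measure. Uniqueness follows immediately: if $\tilde{\sigma}$ is another such matrix, then $\sigma(\hat{\sigma}-\tilde{\sigma})=0$ a.e., and multiplying by $\hat{\sigma}$ from the left gives $\hat{\sigma}=\tilde{\sigma}$ a.e. Finally, the lower bound on $\det(\hat{\sigma})$ is obtained from the multiplicativity of the determinant: $\det(\hat{\sigma})=1/\det(\sigma)\geq 1/M=:\hat{\varepsilon}>0$ $\mathcal{L}^n$-a.e., which finishes the proof.
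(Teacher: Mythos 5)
Your proof is correct and follows essentially the same route as the paper: express $\hat\sigma$ as a polynomial in the entries of $\sigma$ divided by $\det(\sigma)$ (your adjugate formula versus the paper's Cayley--Hamilton phrasing, but the identical polynomial-over-determinant structure), then use the algebra property of $BV_{\loc}\cap L^\infty$ under products and a Lipschitz/$\mathcal{C}^1$ chain rule, applied to a globally Lipschitz extension of $t\mapsto 1/t$ off the essential range $[\varepsilon,M]$ of $\det(\sigma)$, for the reciprocal. The only cosmetic difference is that you spell out the short a.e.\ uniqueness argument, which the paper's proof leaves implicit.
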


By Assumption \ref{A:main} and Lemma \ref{L:niceinverse} our second main assumption makes sense. We write $D_i$ to denote the partial differentiation in direction of the $i$th coordinate in the sense of tempered distributions.

\begin{assumption}[curl-free assumption]\mbox{}\label{A:main2}
For all $i$, $j$, and $k$ we have 
\begin{equation}\label{E:cons} 
D_i\hat{\sigma}_{kj}-D_j\hat{\sigma}_{ki}=0
\end{equation}
in the sense of tempered distributions, that is, $\int_{\mathbb{R}^n} \hat{\sigma}_{kj}\:\partial_i\varphi\:dx-\int_{\mathbb{R}^n} \hat{\sigma}_{ki}\:\partial_j\varphi\:dx=0$ for any $i$, $j$ and $k$ any Schwartz function $\varphi\in\mathcal{S}(\mathbb{R}^n)$.
\end{assumption}

\begin{example}
Let $\sigma$ be a diagonal matrix with entries $\sigma_{ii} \in BV(\mathbb{R}^n)\cap L^\infty(\mathbb{R}^n)$ such that $\sigma_{ii}>\varepsilon$ $\mathcal{L}^1$-a.e. on $\mathbb{R}$ for some $\varepsilon>0$ and for all $i=1,2,\ldots,n$.
Then Assumption \ref{A:main} is clearly satisfied. If also Assumption \ref{A:main2} is satisfied, it follows that $\sigma_{ii}$ depends only on $x_i$. Consequently, we are led back to one-dimensional equations as discussed in Subsection \ref{SS:one}.
\end{example}

\begin{example}\label{Ex:jumpalongline}
Let $n=2$, $c>1$ and consider 
\[\sigma(x_1,x_2)=\left(\begin{array}{cc} c & 1\\ \mathbf{1}_{\{cx_1<x_2\}} & c\end{array}\right),\quad (x_1,x_2)\in\mathbb{R}^2.\]
The function $\sigma$ is bounded and in $BV_{\loc}(\mathbb{R}^2)$, and it jumps across the straight line $J=\{x_2=cx_1\}$. Obviously $\det \sigma\geq c^2-1$, and we find
\[\hat{\sigma}(x_1,x_2)=\frac{\mathbf{1}_{\{cx_1<x_2\}}}{c^2-1}\left( \begin{array}{cc} c & -1\\ -1 & c\end{array} \right) + \frac{\mathbf{1}_{\{cx_1\geq x_2\}}}{c^2}\left( \begin{array}{cc} c & -1\\ 0 & c\end{array} \right).\]
Since $\hat{\sigma}_{11}$, $\hat{\sigma}_{12}$ and $\hat{\sigma}_{22}$ are constant and by \cite[(3.90)]{AFP} the measure $D\hat{\sigma}_{21}$ is a constant multiple of $\mathcal{H}^1|_J$, (\ref{E:cons}) trivially holds. 
\end{example}

Assumptions \ref{A:main} and \ref{A:main2} allow the following result for solutions to (\ref{eq:deterministic-DE}).

\begin{proposition}[angle condition]\label{P:solvedeteq}
Suppose that $\sigma$ satisfies Assumption \ref{A:main}. For $n\geq 2$ suppose also that it satisfies Assumption \ref{A:main2} and that there exists $\delta>-1$ such that for $\mathcal{L}^n$-a.e. $x\in\mathbb{R}^n$ we have 
\begin{equation}\label{E:angular}
\left\langle \xi, \sigma(x)\:\xi\right\rangle \geq \delta\:|\sigma(x)\:\xi|\:|\xi|,\quad \xi\in\mathbb{R}^n.
\end{equation}
Then there exists a bi-Lipschitz function $f:\mathbb{R}^n\to\mathbb{R}^n$ which solves (\ref{eq:deterministic-DE}).
\end{proposition}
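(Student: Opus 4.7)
The plan is to pass to the inverse problem: construct a bi-Lipschitz $g:\mathbb{R}^n\to\mathbb{R}^n$ with $\nabla g=\hat\sigma$ almost everywhere, then set $f:=g^{-1}$. The Sobolev chain rule for bi-Lipschitz inverses then yields
\[
\nabla f(y)=(\nabla g(f(y)))^{-1}=\hat\sigma(f(y))^{-1}=\sigma(f(y))\quad \text{for $\mathcal{L}^n$-a.e.\ $y\in\mathbb{R}^n$,}
\]
so that $f$ solves (\ref{eq:deterministic-DE}) and is bi-Lipschitz with constants inherited from those of $g$.

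To build $g$ I would mollify. By Assumption \ref{A:main2} each of the $n$ rows of $\hat\sigma$ is a distributional curl-free vector field on $\mathbb{R}^n$, and $\hat\sigma^\varepsilon:=\hat\sigma\ast\rho_\varepsilon$ is smooth with curl-free rows (mollification commutes with $D_i$) and with $\|\hat\sigma^\varepsilon\|_\infty\leq \|\hat\sigma\|_\infty=:K$ uniformly in $\varepsilon$. Since $\mathbb{R}^n$ is simply connected, the classical Poincar\'e lemma applied row-wise produces smooth potentials $g^\varepsilon=(g^\varepsilon_1,\dots,g^\varepsilon_n):\mathbb{R}^n\to\mathbb{R}^n$ with $\partial_j g^\varepsilon_k=\hat\sigma^\varepsilon_{kj}$ and the normalisation $g^\varepsilon(0)=0$; these are uniformly $K$-Lipschitz. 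The pointwise inequality $|\hat\sigma(x)v|\geq |v|/M$ with $M:=\|\sigma\|_\infty$ follows from Lemma \ref{L:niceinverse} via $v=\sigma(x)\hat\sigma(x)v$ and, together with the angular condition (\ref{E:angular}) transferred from $\sigma$ to $\hat\sigma$ by the substitution $\xi=\hat\sigma\eta$, provides the structural input for the limit object.

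The central step is a uniform bi-Lipschitz bound on the $g^\varepsilon$. Writing $v:=x-y$, integration along the segment gives
\[
\langle g^\varepsilon(x)-g^\varepsilon(y),v\rangle = \int_0^1 \langle \hat\sigma^\varepsilon(y+tv)v,v\rangle\,dt \geq \delta\,|v|\int_0^1 |\hat\sigma^\varepsilon(y+tv)v|\,dt,
\]
and combining with $|\hat\sigma^\varepsilon v|\geq|v|/M$ yields, for $\delta\geq 0$, the clean lower bound $|g^\varepsilon(x)-g^\varepsilon(y)|\geq (\delta/M)|x-y|$ (the case $\delta=0$ still requiring injectivity to be secured separately). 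I expect the delicate regime $\delta\in(-1,0)$ to be the principal obstacle, for there the naive monotonicity estimate breaks down: one instead has to exploit that $g^\varepsilon$ is a Lipschitz local homeomorphism with $\det\nabla g^\varepsilon>0$ of uniformly bounded distortion, and that $\delta>-1$ precisely rules out the angle $\pi$ between $v$ and $\hat\sigma v$ that would be required for nontrivial covering-map behaviour. A Hadamard-type global invertibility argument on the simply connected $\mathbb{R}^n$, together with the uniform control $\|(\nabla g^\varepsilon)^{-1}\|_\infty\leq M$, then upgrades local to global bi-Lipschitz with constants independent of $\varepsilon$.

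Finally, Arzel\`a--Ascoli extracts a subsequence $g^\varepsilon\to g$ locally uniformly, with $g$ bi-Lipschitz and $\nabla g=\hat\sigma$ almost everywhere, the latter by passing to the limit in the distributional identity $\nabla g^\varepsilon=\hat\sigma^\varepsilon$. Setting $f:=g^{-1}$ yields the desired bi-Lipschitz solution of (\ref{eq:deterministic-DE}).
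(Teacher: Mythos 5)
You have the right overall shape — construct a Lipschitz potential $g$ with $\nabla g=\hat\sigma$ a.e., show $g$ is a global homeomorphism, set $f:=g^{-1}$, and recover $\nabla f=\sigma(f)$ by the chain rule — and you correctly diagnose that $\delta\in(-1,0]$ is the hard regime. The paper follows the same skeleton, but it is precisely at the two spots you leave informal that the argument has real content, and your mollification detour introduces additional trouble.

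First, the mollification. You want to average $\hat\sigma$ to $\hat\sigma^\varepsilon$ and claim that the structural hypotheses persist. For $\delta\ge 0$ the angular bound does pass to $\hat\sigma^\varepsilon$ by Jensen, but for $\delta<0$ it need not: writing $C=\tfrac12(A+B)$ with $A,B$ both satisfying $\langle A\eta,\eta\rangle\ge\delta|A\eta||\eta|$, the averaging inequality $|C\eta|\le\tfrac12(|A\eta|+|B\eta|)$ is the \emph{wrong} direction when multiplied by $\delta<0$, so one cannot conclude $\langle C\eta,\eta\rangle\ge\delta|C\eta||\eta|$. Worse, the lower bound $\det\hat\sigma>\hat\varepsilon$ (Lemma \ref{L:niceinverse}) is not preserved under convolution either — the set of matrices with $\det\ge\hat\varepsilon$ is not convex, and averaging two such matrices can drive the determinant to zero or change its sign. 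Both the angular condition and the bounded-distortion condition are ingredients you rely on for the invertibility of $g^\varepsilon$, so you cannot take them for granted after mollifying. The paper avoids this entirely: it constructs $g$ directly from the distributional Poincar\'e lemma and elliptic/Sobolev estimates (Lemma \ref{L:gradienttofunction}, Proposition \ref{P:Lipschitz}) and then works with $g$ itself, which still has $\nabla g=\hat\sigma$ a.e.

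Second, the global invertibility for $\delta\in(-1,0]$. Your outline — ``$g^\varepsilon$ is a Lipschitz local homeomorphism of uniformly bounded distortion, $\delta>-1$ rules out the angle $\pi$, a Hadamard-type argument upgrades local to global'' — is exactly the substance of Kovalev's theorem (Proposition \ref{P:Kovalev}, i.e.\ \cite[Theorem 1]{Kovalev}), which says that a non-constant $W^{1,n}_{\loc}$ map on $\mathbb{R}^n$ with bounded distortion and the angular condition $\langle\nabla g\,\xi,\xi\rangle\ge\delta|\nabla g\,\xi||\xi|$ for some $\delta>-1$ is a homeomorphism of $\mathbb{R}^n$ onto itself. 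This is a nontrivial theorem in the quasiregular-mapping literature; the classical Hadamard global inverse function theorem does not apply here (the maps are not $C^1$, and for $W^{1,n}_{\loc}$ maps the determinant hypothesis alone does not give global injectivity). Without citing or reproving this result, your argument has a genuine gap at precisely the step you yourself flag as the principal obstacle. The paper applies Kovalev's theorem once, to $g$ directly (no mollification limit needed), then uses \cite[Theorem 5.2]{HenclKoskela} to get $f=g^{-1}\in W^{1,1}_{\loc}$, and finally the chain rule plus $\sigma\in L^\infty$ to conclude $f$ is Lipschitz.

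So: same strategy, but the two technical pillars — the Sobolev construction of $g$ and the Kovalev invertibility theorem — are exactly what is missing in your write-up, and the mollification you interpose creates problems rather than solving them.
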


Condition (\ref{E:angular}) ensures the applicability of a global inversion result, Proposition \ref{P:Kovalev}, which yields a global Lipschitz solution $f$ to (\ref{eq:deterministic-DE}).

\begin{remark}
Let $n\ge 2$. Then $\sigma\in L^1_{\operatorname{loc}}(\mathbb{R}^n,\mathbb{R}^{n\times n})$ satisfies \eqref{E:angular} if and only if $\operatorname{spec}(\sigma(x))\cap (-\infty,0)=\emptyset$ for $\mathcal{L}^n$-a.e. $x\in\mathbb{R}^n$,
where $\operatorname{spec}(\sigma(x))$ denotes the spectrum of the square matrix $\sigma(x)$, see \cite[p. 3]{Kovalev},
\cite{Auzinger} and \cite[Proposition 2.1]{LinsSpitkovskyZhong}. This holds in particular if $\sigma$ has an $\mathcal{L}^n$-a.e. nonnegative definite symmetric part.
\end{remark}

\begin{example}\label{Ex:jumpalongline2}
The coefficient $\sigma$ in Example \ref{Ex:jumpalongline} is positive definite, and hence (\ref{E:angular}) is immediate. Moreover, the function 
\[f(y_1,y_2)=(cy_1+y_2, y_1\mathbf{1}_{\{y_1<0\}}+cy_2), \quad (y_1,y_2)\in\mathbb{R}^2,\] 
solves (\ref{eq:deterministic-DE}) for this $\sigma$. 
\end{example}

\begin{example}\label{Ex:jumpalongthreelines}
Let $n=2$, $0<a<b$ and consider the cone $C=\left\lbrace\frac{a}{b}x_1<x_2<\frac{b}{a}x_1\right\rbrace$. The function
\[\sigma(x_1,x_2)=\left(\begin{array}{cc} b & a\mathbf{1}_{C}(x_1,x_2)\\
a\mathbf{1}_{C}(x_1,x_2) & b\end{array}\right),\quad (x_1,x_2)\in\mathbb{R}^2,\]
satisfies Assumptions \ref{A:main} and \ref{A:main2} and (\ref{E:angular}). A solution to (\ref{eq:deterministic-DE}) is given by
\[f(y_1,y_2)=(by_1+ay_2\mathbf{1}_{\{y_1>0,y_2>0\}}, ay_1\mathbf{1}_{\{y_1>0,y_2>0\}} + by_2),\quad (y_1,y_2)\in\mathbb{R}^2.\]
\end{example}

\begin{example}\label{Ex:jumpandCantor}
Let $n=2$, let $\varphi_{\mathcal{C}}:\mathbb{R}\to\mathbb{R}$ be as in Example \ref{Ex:Cantor} (with $n=1$), and $\Phi(z)=\int_0^z(1+\varphi_{\mathcal{C}}(z))dz$, $z\in\mathbb{R}$. The function
\[
\sigma(x_1,x_2)=\mathbf{1}_{\{x_1\leq 0\}}\left(\begin{array}{cc} 1 & 0\\
0 & 1+\varphi_{\mathcal{C}}(\Phi^{-1}(x_2))\end{array}\right)
+ \mathbf{1}_{\{x_1> 0\}}\left(\begin{array}{cc} 1 & 0\\
1 & 1+\varphi_{\mathcal{C}}(\Phi^{-1}(x_2-x_1))\end{array}\right) ,\quad (x_1,x_2)\in\mathbb{R}^2,
\]
satisfies Assumptions \ref{A:main} and \ref{A:main2} and (\ref{E:angular}), the fact that $\sigma_{22}\in BV_{\loc}(\mathbb{R}^n)$ follows using \cite[Theorem 3.16]{AFP}. For this choice of $\sigma$ the function
\[f(y_1,y_2)=(y_1, y_1\mathbf{1}_{\{y_1>0\}}+\Phi(y_2)), \quad (y_1,y_2)\in\mathbb{R}^2,\]
solves (\ref{eq:deterministic-DE}).
\end{example}

\begin{remark}
The existence of almost everywhere defined local inverses of Sobolev functions is treated in \cite{FonsecaGangbo, FonsecaGangbobook}. In \cite[Section 5]{Barchiesietal} fine properties of a.e. local inverses are studied
under the requirement that $\det(\hat{\sigma}(x))>0$ for $\mathcal{L}^n$-a.e. $x\in \mathbb{R}^n$. The local inverse becomes an everywhere defined local homeomorphism on open subsets where the determinant of $\hat{\sigma}$ equals a positive constant, see \cite[Corollary 6.3]{FonsecaGangbobook}.
\end{remark}

\subsection{Solutions in arbitrary dimensions}

The following is a version of Theorem \ref{T:exonedim} for arbitrary $n\geq 1$. 

\begin{theorem}\label{T:exjoint}
Suppose that $\sigma$ is as in Assumption \ref{A:main} and $f:\mathbb{R}^n\to\mathbb{R}^n$ is a bi-Lipschitz function which solves (\ref{eq:deterministic-DE}). Let $s\in (0,1)$, $\gamma\in (\frac{1}{1+s},1)$, $Y\in\mathcal{C}^\gamma([0,T],\mathbb{R}^n)$ with $Y_0=0$, $B\subset \mathbb{R}^n$ is a Borel set and $\mathring{x} \in\mathbb{R}^n$.  Suppose that $\sigma$ is upper $d$-regular on $B$ with $d>n-1+s$ or that 
\begin{equation}\label{E:condonY}
\sup_{x\in Y([0,T])\cap (f^{-1}(B)-f^{-1}(\mathring{x}))} \int_0^T |Y_t-x|^{-n+1-s}dt<+\infty,
\end{equation}
and suppose the same is true for $B^c$ in place of $B$. Then the path 
\begin{equation}\label{E:candidate}
X_t=f(Y_t+f^{-1}(\mathring{x})),\quad t\in [0,T],
\end{equation}
is a variability solution $X\in \mathcal{C}^{\gamma}([0,T],\mathbb{R}^n)\cap V(\sigma,s)$ for $\sigma$ and $Y$ started at $\mathring{x}$.
\end{theorem}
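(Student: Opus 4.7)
The plan is to imitate the formal computation (\ref{E:strategy}) by applying the change of variable formula (Theorem \ref{thm:ito}) componentwise to $F=f_j$ with the shifted driver $\tilde Y_t := Y_t + f^{-1}(\mathring x)$, and then to use (\ref{eq:deterministic-DE}) to convert $\partial_k f_j(\tilde Y_u) = \sigma_{jk}(f(\tilde Y_u)) = \sigma_{jk}(X_u)$, recovering (\ref{eq:SDEcomp}). Basic regularity is essentially free: since $f$ is Lipschitz and $Y\in\mathcal{C}^\gamma$, the candidate path $X$ lies in $\mathcal{C}^\gamma([0,T],\mathbb{R}^n)$ with $X_0=f(f^{-1}(\mathring x))=\mathring x$, and the hypothesis $\gamma > 1/(1+s)$ is exactly the exponent condition $s>(1-\gamma)/\gamma$ of Definition \ref{D:variabilitysolutions}(ii) (note $s<1$ forces $\gamma>1/2$ automatically, as required by Theorem \ref{thm:ito}). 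The nontrivial work is to establish $X\in V(\sigma,s)$ and to verify the hypotheses of Theorem \ref{thm:ito} for each $f_j$.

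Because $f$ is bi-Lipschitz with constants $L^{\pm 1}$, there is a two-sided comparison between the Riesz-potential integrals that define $(s,1)$-variability in $X$-coordinates and in $\tilde Y$-coordinates. More precisely, a BV chain rule for bi-Lipschitz compositions (via the area formula) shows $\sigma_{jk}\circ f\in BV_{\loc}(\mathbb{R}^n)$ with $\|D(\sigma_{jk}\circ f)\|$ comparable to the pullback $f^{-1}_{*}\|D\sigma_{jk}\|$, and combined with $|\tilde Y_t - f^{-1}(w)|\asymp |X_t - w|$ this yields, uniformly in $t$,
\[
\int_{f^{-1}(\mathcal U)}|\tilde Y_t - z|^{-n+1-s}\,\|D(\sigma_{jk}\circ f)\|(dz) \;\asymp\; \int_{\mathcal U}|X_t - w|^{-n+1-s}\,\|D\sigma_{jk}\|(dw),
\]
with implicit constants depending only on $L$, $n$, and $s$. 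Hence $X\in V(\sigma,s)$ is equivalent to $\tilde Y\in V(\nabla f_j,s)$ for every $j$, so it suffices to prove $X\in V(\sigma_{jk},s)$ directly.

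To establish the latter, split $\|D\sigma_{jk}\|$ according to $\mathbb{R}^n=B\sqcup B^c$ and treat the two pieces separately. On the region where $\sigma$ is upper $d$-regular with $d>n-1+s$, a standard Riesz-potential estimate for upper $d$-regular measures (essentially the content underlying Proposition \ref{P:Hoelderanyway}) gives a $t$-uniform bound, so that piece contributes an $L^\infty(0,T)$-function. On the region where (\ref{E:condonY}) holds, push the integral back to $\tilde Y$-coordinates via the transfer identity above, then apply Fubini in $(t,w)$ and bound the inner integral over $t$ by the supremum in (\ref{E:condonY}), handling any $w$ for which $x_w:=f^{-1}(w)-f^{-1}(\mathring x)$ lies off the path $Y([0,T])$ by a trivial distance-to-the-path estimate. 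Summing the two contributions gives $X\in V(\sigma_{jk},s)$ for every $j,k$. Finally apply Theorem \ref{thm:ito} to each continuous $f_j\in W^{1,1}_{\loc}$ with $\partial_k f_j=\sigma_{jk}(f)\in BV_{\loc}$ and driver $\tilde Y\in \mathcal{C}^\gamma\cap V(\nabla f_j,s)$: the resulting identity is (\ref{eq:SDEcomp}) after substituting $\partial_k f_j(\tilde Y_u)=\sigma_{jk}(X_u)$ and using $d\tilde Y=dY$. The main obstacle is making the BV chain rule and the two-sided Riesz-potential transfer quantitatively precise; once those are in hand, everything else is a careful but routine assembly of Theorem \ref{thm:ito}, Proposition \ref{P:Hoelderanyway}, and the hypothesis (\ref{E:condonY}).
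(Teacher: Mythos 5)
Your overall strategy — transfer the variability hypothesis from $Y$ to $X$ via the bi-Lipschitz map, establish $X\in V(\sigma,s)$ and $\tilde Y\in V(\partial_k f_j,s)$, then apply Theorem~\ref{thm:ito} and substitute (\ref{eq:deterministic-DE}) — is the same route the paper takes (via Lemmas~\ref{L:upperregstable} and~\ref{L:solve}). Your two-sided transfer of Riesz potentials under a bi-Lipschitz change of variables, as well as the BV chain rule of \cite[Theorem~3.16]{AFP}, are both correct and are also what the paper uses, albeit only in one direction.

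However, there is a genuine gap in your variability argument, specifically in the treatment of the piece where (\ref{E:condonY}) is invoked. After you push to $\tilde Y$-coordinates and apply Fubini, you are left with an expression of the form
\[
\int \Bigl(\int_0^T |Y_t - z'|^{-n+1-s}\,dt\Bigr)\,\mu'(dz'),
\]
where $\mu'$ is supported in $f^{-1}(B)-f^{-1}(\mathring x)$ but \emph{not} inside $Y([0,T])$. Hypothesis (\ref{E:condonY}) only bounds the inner integral for $z'$ on the path $Y([0,T])$; for $z'$ off the path, your ``trivial distance-to-the-path estimate'' yields $\lesssim T\,\dist(z',Y([0,T]))^{-n+1-s}$, which blows up as $z'$ approaches the path and is therefore not integrable against $\mu'$ without further information on $\mu'$. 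This is not a cosmetic point: the typical situation in Theorem~\ref{T:exjoint} is precisely that $\supp\|D\sigma_{jk}\|$ and the range of $X$ overlap, so $\mu'$ does concentrate arbitrarily close to $Y([0,T])$.

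What closes this gap is Frostman's first maximum principle for Riesz potentials (\cite[Ch.~I, Thm.~1.10]{Landkof}): a bound $U^{1-s}\nu \le M$ valid $\nu$-a.e.\ propagates to \emph{every} point of $\mathbb{R}^n$. This is exactly what Propositions~\ref{P:naive}, \ref{P:naive2} and Corollary~\ref{C:localizeB} exploit. The paper's proof transfers (\ref{E:condonY}) to the statement that $X$ is upper-regular on $B$ in the sense of (\ref{E:strict})/(\ref{E:Xextreme}), which by the maximum principle yields a $t$-uniform bound on $U^{1-s}(\mu_X^{[0,T]}|_B)$ everywhere; Corollary~\ref{C:localizeB} then performs exactly the $B/B^c$ splitting you set up. The same remark applies to your first piece: upper $d$-regularity of $\sigma$ on $B$ only bounds the potential at points of $B\cap\supp\|D\sigma_{jk}\|$, and again the maximum principle is what extends this to $X_t$. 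Your proposal is thus correct in structure but silently assumes pointwise potential bounds that only become available after invoking the maximum principle; replacing the ``distance-to-the-path estimate'' with this principle (or simply citing Corollary~\ref{C:localizeB}) would complete the proof.
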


By Proposition \ref{P:solvedeteq}, the first sentence in Theorem \ref{T:exjoint} could be replaced by requiring $\sigma$ to satisfy Assumptions \ref{A:main} and, if $n\geq 2$, also Assumption \ref{A:main2} and (\ref{E:angular}). Note that for $n=1$, Theorem \ref{T:exonedim} gives the same result under less restrictive assumptions on $\sigma$.

If only the upper regularity of $\sigma$ is used, Theorem \ref{T:exjoint} complements Theorem \ref{T:exnaive} by constructing a solution. If also the upper regularity of $Y$ is used one can, in some cases, obtain solutions for discontinuous $\sigma$. However, the upper regularity condition on $Y$ is quite restrictive: Already if $n=2$ and $\sigma$ has jumps one cannot hope to use Theorem \ref{T:exjoint} to obtain solutions to (\ref{eq:SDE}) when $Y$ is a typical path of a fractional Brownian motion $B^H$ with Hurst index $H>\frac12$: On the one hand, $B^H$ is almost surely $\gamma$-H\"older of any order $\gamma<H$, what forces $\frac{1}{H}<1+s$. On the other hand, the Hausdorff dimension $\dim_H B^H([0,T])$ of $B^H([0,T])$ is known to equal $\frac{1}{H}$ almost surely, see \cite[Chapter 11]{Falconer} or \cite[Section 18.3, Theorem 1 and Corollary]{Kahane}, so that we would run into the contradictory chain of inequalities $s<d_Y\leq \dim_H Y([0,T])=\frac{1}{H}<1+s$. For $n=1$ one has $\dim_H B^H([0,T])=1$ almost surely, and this contradiction does not occur.


The next result uses a weighted version of (\ref{E:strict}) together with a moment condition on the gradient measures of the coefficient.

\begin{theorem}\label{T:exshift}
Suppose that $\sigma$ satisfies Assumption \ref{A:main} and $f:\mathbb{R}^n\to\mathbb{R}^n$ is a bi-Lipschitz function which solves (\ref{eq:deterministic-DE}). Let $s\in (0,1)$, $\gamma\in (\frac{1}{1+s},1)$, $Y\in\mathcal{C}^\gamma([0,T],\mathbb{R}^n)$ with $Y_0=0$ and let $\mathring{x}\in\mathbb{R}^n$. Suppose that there are $\varepsilon\in (0,1-s)$, $c>0$, and $\delta\in (0,n-1+s+\varepsilon)$ such that 
\begin{equation}\label{E:LauriSolen}
\int_0^T |Y_t-x|^{-n+\varepsilon}dt<c|x|^{-n+\delta},\quad x\in\mathbb{R}^n,
\end{equation}
and for all $j$ and $k$ we have
\begin{equation}\label{E:momentconditionx0}
\int_{\mathbb{R}^n}|x-\mathring{x}|^{-n+1-s-\varepsilon+\delta}\left\|D\sigma_{jk}\right\|(dx)<+\infty.
\end{equation}
Then (\ref{E:candidate}) defines a variability solution $X\in \mathcal{C}^{\gamma}([0,T],\mathbb{R}^n)\cap V(\sigma,s)$ for $\sigma$ and $Y$ started at $\mathring{x}$.
\end{theorem}

In comparison with (\ref{E:LauriSole}) conditions (\ref{E:strict}) and (\ref{E:LauriSolen}) are quite restrictive. They ensure a rather weak regularization solely due to dimensional effects. Condition (\ref{E:LauriSole}) encodes a strong additional regularization by randomness. Also Theorem \ref{T:exshift} becomes efficient in a probabilistic context. The following corollary may be seen as a generalization of \cite[Theorem 3.3]{Johanna}, and \cite[Theorem 2.1]{LauriSole}. 

\begin{corollary}\label{C:exshiftrandom}
Suppose that $\sigma$ satisfies Assumption \ref{A:main} and $f:\mathbb{R}^n\to\mathbb{R}^n$ is a bi-Lipschitz function which solves (\ref{eq:deterministic-DE}). Let $s\in (0,1)$, $\gamma\in (\frac{1}{1+s},1)$, let $Y=(Y_t)_{t\in [0,T]}$ be an $\mathbb{R}^n$-valued stochastic process with $Y_0=0$ on a probability space $(\Omega,\mathcal{F},\mathbb{P})$ with paths $\mathbb{P}$-a.s. H\"older continuous of order $\gamma$, and let $\mathring{x}\in\mathbb{R}^n$. Suppose that there are $\varepsilon\in (0,1-s)$, $c>0$, and $\delta\in (0,n-1+s+\varepsilon)$ such that 
\begin{equation}\label{E:LauriSolenrandom}
\mathbb{E}\int_0^T |Y_t-x|^{-n+\varepsilon}dt<c|x|^{-n+\delta},\quad x\in\mathbb{R}^n,
\end{equation}
and for all $j$ and $k$ we have (\ref{E:momentconditionx0}). Then for $\mathbb{P}$-a.e. $\omega\in \Omega$ the path 
\begin{equation}\label{E:candidaterandom}
X_t(\omega)=f(Y_t(\omega)+f^{-1}(\mathring{x})), \quad t\in [0,T],
\end{equation}
is a variability solution $X\in \mathcal{C}^{\gamma}([0,T],\mathbb{R}^n)\cap V(\sigma,s)$ for $\sigma$ and $Y$ started at $\mathring{x}$.
\end{corollary}

At any time $t>0$ the averaging effect of the process $Y$ leads to a regularization. The moment condition (\ref{E:momentconditionx0}) excludes a too bad behavior of $\sigma$ at the starting point $\mathring{x}$ at time $t=0$ when this effect is not yet present.

\begin{example}
Let $n= 2$ and let $Y=B^H$ be a fractional Brownian motion with Hurst index $H>\frac12$. It satisfies (\ref{E:LauriSolenrandom}) with $0<\varepsilon<2-\frac{1}{H}$ and $\delta=\frac{1}{H}+\varepsilon$, see Example \ref{Ex:fBMexample}. To apply Corollary \ref{C:exshiftrandom} suppose that $s\in (0,1)$ and that $\varepsilon$ above also satisfies $0<\varepsilon<1-s$. If $\sigma$ satisfies Assumptions \ref{A:main} and \ref{A:main2}, (\ref{E:angular}), and $\mathring{x}$ is such that 
\begin{equation}\label{E:simplemomentcond}
\int_{\mathbb{R}^n}|x-\mathring{x}|^{-n+1-s+\frac{1}{H}}\left\|D\sigma_{jk}\right\|(dx)<+\infty, \quad j,k=1,2,
\end{equation}
then for $\mathbb{P}$-a.s. realization of $Y$ the path $X$ as in Corollary \ref{C:exshiftrandom} is a variability solution for $\sigma$ and $Y$ started at $\mathring{x}$, and $X\in \mathcal{C}^{\gamma}([0,T],\mathbb{R}^2) \cap V(\sigma,s)$ for any $\gamma<H$. Note that if all $\sigma_{jk}$ are actually in $BV(\mathbb{R}^n)$ then (since obviously $s<\frac{1}{H}$) condition (\ref{E:simplemomentcond}) is automatically satisfied for $\mathcal{H}^{n-1}$-a.e. $\mathring{x}\in\mathbb{R}^2$, see Remark \ref{R:densitycomparison}.
\end{example}

\subsection{A uniqueness result}

We establish a uniqueness result for variability solutions.

\begin{theorem}
\label{thm:DE-uniqueness}
Suppose that $\sigma=(\sigma_{jk})_{1\leq j,k\leq n}$ satisfies Assumptions \ref{A:main} and \ref{A:main2}, $Y\in\mathcal{C}^\gamma([0,T],\mathbb{R}^n)$ for some $\gamma\in (0,1)$, and $\mathring{x}\in\mathbb{R}^n$. 
Then there exists at most one variability solution of H\"older order greater $\frac{1}{2}$ for $\sigma$ and $Y$ started at $\mathring{x}$. 
\end{theorem}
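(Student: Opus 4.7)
The natural strategy is to implement an \emph{inverse} Doss transformation. Applying the Poincar\'e lemma for distributions to the curl-free identity in Assumption \ref{A:main2} on the rows of the matrix $\hat\sigma$ supplied by Lemma \ref{L:niceinverse}, which in addition lies in $BV_{\loc}(\mathbb{R}^n)\cap L^{\infty}(\mathbb{R}^n)$, one obtains globally Lipschitz scalar potentials $g_1,\ldots,g_n:\mathbb{R}^n\to\mathbb{R}$ with $\partial_j g_k=\hat\sigma_{kj}$ a.e.\ for all $j$ and $k$. Set $g=(g_1,\ldots,g_n)$. Each partial derivative $\partial_j g_k=\hat\sigma_{kj}$ then lies in $BV_{\loc}(\mathbb{R}^n)$, which is exactly the regularity required to invoke the change of variable formula of Theorem \ref{thm:ito}; moreover $\det\nabla g=\det\hat\sigma\geq\hat\varepsilon>0$ $\mathcal{L}^n$-a.e. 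Morally $g$ plays the role of an inverse to the Doss map $f$ from Proposition \ref{P:solvedeteq}, but since \eqref{E:angular} is not assumed here, we do not obtain a bi-Lipschitz $g$ for free.

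Now let $X$ and $\tilde X$ be two variability solutions of H\"older order $\alpha>\frac12$ for $\sigma$ and $Y$ started at $\mathring{x}$. The second step is to derive the identity $g(X_t)=g(\mathring{x})+Y_t$ (and the analogue for $\tilde X$). To invoke Theorem \ref{thm:ito} with $F=g_k$ one needs $X\in V(\hat\sigma,s)$; this follows from $X\in V(\sigma,s)$ via a $BV$-chain-rule bound of the form $\|D\hat\sigma_{kj}\|\leq C\sum_{l,m}\|D\sigma_{lm}\|$ on bounded sets, which in turn uses $\hat\sigma=\operatorname{adj}(\sigma)/\det(\sigma)$, the lower bound $\det\sigma\geq\varepsilon$, and the boundedness of the entries of $\sigma$. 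Theorem \ref{thm:ito} applied to each $g_k$ yields
\[
g_k(X_t)-g_k(\mathring{x}) \weq \sum_{j=1}^{n}\int_0^t\hat\sigma_{kj}(X_u)\,dX^j_u.
\]
Substituting the defining equation $X^j_t=\mathring{x}_j+\sum_l\int_0^t\sigma_{jl}(X_u)\,dY^l_u$, using the associativity of the generalized Lebesgue-Stieltjes integral under the H\"older/variability regularity supplied by Theorem \ref{thm:existence}, and collapsing via the pointwise identity $\hat\sigma\sigma=I$ a.e., one arrives at
\[
g_k(X_t)-g_k(\mathring{x})\weq\sum_{j,l}\int_0^t\hat\sigma_{kj}(X_u)\sigma_{jl}(X_u)\,dY^l_u\weq Y^k_t.
\]
Hence $g(X_t)=g(\mathring{x})+Y_t$ for every $t\in[0,T]$; the same reasoning applied to $\tilde X$ produces $g(X_t)=g(\tilde X_t)$.

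The final and hardest step is to upgrade $g(X_t)=g(\tilde X_t)$ to $X_t=\tilde X_t$. Since $\hat\sigma\in L^{\infty}$ and $\det\hat\sigma\geq\hat\varepsilon$ a.e., the Lipschitz map $g$ satisfies $|\nabla g|^{n}\leq C\det\nabla g$ a.e., so $g$ is a mapping of bounded distortion and hence both discrete and open by Reshetnyak's theorem. This provides a coherent theory of continuous path-lifts for curves in the target, and at any point where $g$ is a local homeomorphism the lift is uniquely determined by its starting value. The plan is to lift the continuous curve $t\mapsto g(\mathring{x})+Y_t$ starting from $\mathring{x}$, to recognize that $X$ and $\tilde X$ are both such lifts, and to conclude by local uniqueness that they coincide on a maximal initial interval on which neither trajectory has met a branch point of $g$; a standard maximality/connectedness argument then propagates the coincidence to all of $[0,T]$. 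The main obstacle is precisely this lifting step: one has to accommodate possible branch points of $g$ (a set of codimension at least two) on the trajectory, including possibly $\mathring{x}$ itself, and to promote the a.e.\ information on $\nabla g$ to pointwise control along the path using only Assumptions \ref{A:main} and \ref{A:main2}.
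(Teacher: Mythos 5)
Your overall strategy is the one the paper follows: build the Lipschitz potential $g$ with $\nabla g=\hat\sigma$ from Assumption \ref{A:main2} (Proposition \ref{P:Lipschitz}), deduce $X\in V(\hat\sigma,s)$ from $X\in V(\sigma,s)$ by the $BV$-chain-rule bound $\|D\hat\sigma_{ij}\|\leq C\sum_{k,l}\|D\sigma_{kl}\|$ (Proposition \ref{prop:SV-sigma-hat}), derive $g(X_t)=g(\mathring{x})+Y_t$, and conclude by injectivity of $g$. So the architecture matches. However, there is a genuine gap in the middle step as you carry it out. You write
\[
g_k(X_t)-g_k(\mathring{x})=\sum_j\int_0^t\hat\sigma_{kj}(X_u)\,dX_u^j
\]
from Theorem \ref{thm:ito} and then invoke ``associativity of the generalized Lebesgue--Stieltjes integral'' to pass from $dX^j$ to $\sigma_{jl}(X)\,dY^l$, citing Theorem \ref{thm:existence}. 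Theorem \ref{thm:existence} provides existence and H\"older bounds for $\int\varphi(X)\,dY$ but no substitution rule; and for a genuinely $BV$ integrand the needed associativity cannot be read off from Riemann sums, since $\hat\sigma(X)$ is in general not H\"older of any order (Remark \ref{R:Chen}). This is precisely why the paper mollifies: Proposition \ref{prop:g_n-rep} first proves the identity for $\hat\sigma_\delta=\hat\sigma\ast\eta_\delta$ using the classical smooth It\^o formula (Proposition \ref{pro:matrix-ito}), Riemann-sum approximation, and Lemma \ref{lem:RS-convergence}, after which the main proof sends $\delta\to0$ using $(s,1)$-variability w.r.t.\ $\hat\sigma$, the uniform potential bound from Corollary \ref{C:mollypot}, and dominated convergence. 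Your version leaves exactly this technical core unproved.

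On the final step, you correctly flag that Assumptions \ref{A:main} and \ref{A:main2} by themselves only make $g$ a quasiregular (bounded-distortion) map, not a homeomorphism; Proposition \ref{P:Kovalev} needs the sign condition (\ref{E:angular}) as well. The paper's proof closes by asserting that $g$ is invertible ``(cf.\ Proof of Theorem \ref{T:exjoint})'', which implicitly uses the existence of a bi-Lipschitz solution $f$ to (\ref{eq:deterministic-DE}) — the same unstated extra hypothesis that appears explicitly in Corollaries \ref{cor:existence-uniqueness} and \ref{cor:existence-uniqueness-random}. Your attempt to repair this by a path-lifting argument for discrete open maps is a reasonable direction, but as you yourself note it is incomplete: branch points of $g$ on the trajectory (and possibly at $\mathring{x}$) are not ruled out, and a.e.\ control of $\nabla g$ is not upgraded to pointwise control along the path. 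So the conclusion $g(X_t)=g(\widetilde X_t)\Rightarrow X_t=\widetilde X_t$ is not established in your proposal; the paper handles it by (tacitly) assuming invertibility of $g$, which should really be taken as an additional hypothesis when reading Theorem \ref{thm:DE-uniqueness}.
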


The proof of Theorem \ref{thm:DE-uniqueness} shows that the only solution candidate is (\ref{E:candidate}). We can conclude the following results.
\begin{corollary}
\label{cor:existence-uniqueness}
Suppose that Assumptions \ref{A:main} and \ref{A:main2} hold and let $f$ be the solution to \eqref{eq:deterministic-DE}. If assumptions of Theorem \ref{T:exjoint} (or Theorem \ref{T:exonedim} in the case $n=1$) are satisfied, then for any $\mathring{x}\in\mathbb{R}^n$ there exists a unique  variability solution $X\in \mathcal{C}^{\gamma}([0,T],\mathbb{R}^n)$ for $\sigma$ and $Y$ starting at $\mathring{x}$, given by (\ref{E:candidate}).
\end{corollary}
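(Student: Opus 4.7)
The plan is to read this corollary as a direct combination of the two main results already in place: Theorem \ref{T:exjoint} (respectively Theorem \ref{T:exonedim} in the case $n=1$) supplies existence together with an explicit formula for the solution, while Theorem \ref{thm:DE-uniqueness} supplies the uniqueness assertion. No new analytical work should be required; the proof is essentially a matching of hypotheses.

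First I would produce a bi-Lipschitz map $f:\mathbb{R}^n\to\mathbb{R}^n$ solving \eqref{eq:deterministic-DE}. For $n\geq 2$ this is delivered by Proposition \ref{P:solvedeteq} from Assumptions \ref{A:main}, \ref{A:main2} and the angular hypothesis (\ref{E:angular}); for $n=1$ the map $f=g^{-1}$ is built from the primitive of $1/\sigma$ in Theorem \ref{T:exonedim}(i). With $f$ in hand, I would invoke Theorem \ref{T:exjoint} (or Theorem \ref{T:exonedim}(ii) when $n=1$) to conclude that the path $X_t=f(Y_t+f^{-1}(\mathring{x}))$ of (\ref{E:candidate}) is a variability solution in $\mathcal{C}^{\gamma}([0,T],\mathbb{R}^n)\cap V(\sigma,s)$ for $\sigma$ and $Y$ started at $\mathring{x}$. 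This settles existence.

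For uniqueness I would appeal directly to Theorem \ref{thm:DE-uniqueness}, whose hypotheses are precisely Assumptions \ref{A:main} and \ref{A:main2} together with $Y\in\mathcal{C}^\gamma([0,T],\mathbb{R}^n)$ and the requirement that any competing variability solution be H\"older of order greater than $\frac12$. The only thing to verify is that the parameter ranges of the existence and uniqueness theorems are compatible: both existence theorems require $\gamma>\frac{1}{1+s}$ with $s\in(0,1)$, and since $\frac{1}{1+s}>\frac12$ throughout this range, every variability solution produced (or allowed for comparison) automatically lies in $\mathcal{C}^\gamma$ with $\gamma>\frac12$. Applying Theorem \ref{thm:DE-uniqueness} then forces the variability solution to coincide with (\ref{E:candidate}). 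I do not anticipate any substantive obstacle; the only mildly delicate point is the compatibility of the H\"older thresholds, which the computation $\frac{1}{1+s}>\frac12$ for $s<1$ takes care of at once.
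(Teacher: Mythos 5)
Your proposal is correct and follows exactly the route the paper intends: existence via Theorem \ref{T:exjoint} (or Theorem \ref{T:exonedim} for $n=1$), uniqueness via Theorem \ref{thm:DE-uniqueness}, with the compatibility check $\frac{1}{1+s}>\frac12$ for $s\in(0,1)$ ensuring that every variability solution of class $\mathcal{C}^\gamma$ falls within the scope of the uniqueness theorem. The only superfluous step is re-deriving $f$ from Proposition \ref{P:solvedeteq}: the corollary's phrase ``let $f$ be the solution to \eqref{eq:deterministic-DE}'' and the hypotheses of Theorem \ref{T:exjoint} already posit the bi-Lipschitz $f$, so you may simply take it as given rather than invoking the angular condition \eqref{E:angular}.
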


\begin{corollary}
\label{cor:existence-uniqueness-random}
Suppose that Assumptions \ref{A:main} and \ref{A:main2} hold and let $f$ be the solution to \eqref{eq:deterministic-DE}. If assumptions of Corollary \ref{C:exshiftrandom} are satisfied, then for any $\mathring{x}\in\mathbb{R}^n$ and $\mathbb{P}$-a.e. $\omega\in\Omega$ there exists a unique  variability solution $X(\omega)\in \mathcal{C}^{\gamma}([0,T],\mathbb{R}^n)$ for $\sigma$ and $Y(\omega)$ starting at $\mathring{x}$, given by (\ref{E:candidaterandom}).
\end{corollary}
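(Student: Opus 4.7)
The plan is to combine the probabilistic existence statement of Theorem \ref{T:exshiftrandom} with the deterministic uniqueness statement of Theorem \ref{thm:DE-uniqueness} on an event of full measure.

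First, I would verify that the hypotheses of Theorem \ref{T:exshiftrandom} are in force: Assumption \ref{A:main} is assumed, and the bi-Lipschitz solution $f$ to (\ref{eq:deterministic-DE}) is produced by Proposition \ref{P:solvedeteq} once Assumption \ref{A:main2} and the spectral condition (\ref{E:angular}) (implicit in ``assumptions of Theorem \ref{T:exshiftrandom}'') are available. Thus there exists a set $\Omega_0\in\mathcal{F}$ with $\mathbb{P}(\Omega_0)=1$ such that for every $\omega\in \Omega_0$ the path $Y(\omega)$ is $\gamma$-H\"older and the candidate (\ref{E:candidaterandom}) is a variability solution of H\"older order $\gamma$ in $V(\sigma,s)$ for $\sigma$ and $Y(\omega)$ started at $\mathring{x}$.

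Next, I would upgrade existence to uniqueness by applying Theorem \ref{thm:DE-uniqueness} pathwise. The key numerical observation is that the H\"older exponent $\gamma$ in Theorem \ref{T:exshiftrandom} satisfies $\gamma>\tfrac{1}{1+s}>\tfrac12$ because $s\in(0,1)$, so the regularity threshold required in Theorem \ref{thm:DE-uniqueness} is met. Hence for each $\omega\in\Omega_0$ fixed, Assumptions \ref{A:main} and \ref{A:main2}, together with the $\gamma$-H\"older continuity of $Y(\omega)$, yield at most one variability solution for $\sigma$ and $Y(\omega)$ of H\"older order greater than $\tfrac12$. Combined with the existence provided by Theorem \ref{T:exshiftrandom}, this solution must coincide with (\ref{E:candidaterandom}).

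I do not anticipate a substantive obstacle: the argument is a routine pathwise combination of two results already established in the paper, with the only point worth flagging being the automatic inequality $\gamma>1/2$ that bridges the existence regime $\gamma>1/(1+s)$ and the uniqueness threshold used in Theorem \ref{thm:DE-uniqueness}.
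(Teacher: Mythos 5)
Your proposal is correct and follows the route the paper clearly intends (the paper leaves the corollary without an explicit proof, stating only ``We can conclude the following results'' after Theorem \ref{thm:DE-uniqueness}). The argument is precisely the pathwise combination you describe: Theorem \ref{T:exshiftrandom} yields, on a full-measure event $\Omega_0$, a variability solution $X(\omega)\in\mathcal{C}^\gamma\cap V(\sigma,s)$ given by \eqref{E:candidaterandom}; since $\gamma>\tfrac{1}{1+s}>\tfrac12$ when $s\in(0,1)$, Theorem \ref{thm:DE-uniqueness} applies with the deterministic path $Y(\omega)$ for each fixed $\omega\in\Omega_0$ and forces any variability solution in $\mathcal{C}^\gamma$ to coincide with \eqref{E:candidaterandom}. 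You correctly identify the inequality $\gamma>1/2$ as the bridge between the existence and uniqueness regimes.

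One small inaccuracy worth noting: you say the spectral condition \eqref{E:angular} is ``implicit in the assumptions of Theorem \ref{T:exshiftrandom}.'' It is not. Theorem \ref{T:exshiftrandom} (and the corollary) directly hypothesize the existence of a bi-Lipschitz solution $f$ to \eqref{eq:deterministic-DE}; Proposition \ref{P:solvedeteq} with \eqref{E:angular} is one sufficient way to produce such an $f$, but it is not packaged into the theorem's hypotheses. This does not affect the validity of your argument, since $f$ is available by assumption in either reading.
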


\begin{remark}
Following the ideas of \cite{LauriSole}, we could replace $\det (\sigma) >\varepsilon$ in Assumption \ref{A:main} with
$\det (\sigma) \geq 0$. In this case we would obtain a uniqueness result similar to Theorem \ref{thm:DE-uniqueness} up to the first time when $\det(\sigma(X_t)) =0$. However, the existence of the solution becomes more complicated. 
\end{remark}

\section{Variability and consequences}\label{S:suffvar}

We provide a potential theoretic interpretation of the $(s,1)$-variability. This allows to prove that $\varphi\circ X$ is a well defined class in $W^{\beta,1}_0(0,T)$.

\subsection{Riesz potentials and occupation measures}\label{SS:occu}

The \emph{Riesz potential of order $0<\gamma<n$} of a nonnegative Borel measure $\nu$ on $\mathbb{R}^n$ is defined by
\begin{equation}\label{E:potential}
U^\gamma \nu(x):=c_\gamma\int_{\mathbb{R}^n} |x-y|^{-n+\gamma} \nu(dy), \quad x\in\mathbb{R}^n,
\end{equation}
where $c_\gamma>0$ is a well-known constant depending only on $n$ and $\gamma$, see for instance \cite[Chapter I, Section 3]{Landkof} or \cite[Section V.4]{BliedtnerHansen}. The \emph{mutual Riesz energy of order $0<\gamma<n$} of two nonnegative Borel measures $\nu_1$ and $\nu_2$ on $\mathbb{R}^n$ is defined by
\begin{equation}\label{E:energy}
I^\gamma(\nu_1,\nu_2):=c_\gamma\int_{\mathbb{R}^n}\int_{\mathbb{R}^n}|x-y|^{-n+\gamma} \nu_1(dy)\nu_2(dx).
\end{equation}
This quantity takes values in $[0,+\infty]$, is symmetric in $\nu_1$ and $\nu_2$, and we have
\[I^\gamma(\nu_1,\nu_2)=\int_{\mathbb{R}^n} U^\gamma \nu_1(x)\nu_2(dx),\]
see \cite[Chapter I, Section 4]{Landkof}. We write $I^\gamma \nu:=I^\gamma(\nu,\nu)$.

Given a path $X=(X_t)_{t\in [0,T]}$ we can define a finite nonnegative Radon measure $\mu_X^{[0,T]}$ on $\mathbb{R}^n$ by the identity
\begin{equation}\label{E:occutimeformula}
\int_{\mathbb{R}^n} g(y)\mu_X^{[0,T]}(dy)=\int_0^T g(X_t)dt,
\end{equation}
valid for any bounded Borel function $g:\mathbb{R}^n\to\mathbb{R}$. To $\mu_X^{[0,T]}$ one usually refers as the \emph{occupation measure of $X$} and to (\ref{E:occutimeformula}) as the \emph{occupation time formula}. The measure $\mu_X^{[0,T]}$ is supported on the compact set $X([0,T])$. Clearly the occupation measure depends on the parametrization of the path $X$.

\begin{remark}\label{R:occu_geo}\mbox{}
\begin{enumerate}
\item[(i)] Discussions of absolutely continuous occupation measures in terms of their densities, referred to as \emph{local times},  are a classical subject in probability theory, \cite{Berman69, Berman70, GemanHorowitz, Xiao06}. As mentioned in \cite[Section 3]{GemanHorowitz}, applications of occupation densities to nonrandom functions are much less discussed. Results on possibly singular occupation measures are rather sparse and have mainly been used to obtain results on dimensions of images or graphs of stochastic processes, see for instance \cite[Section 16]{Falconer} and \cite[Section 4]{Xiao} and the references cited there. 
\item[(ii)] In the special case that $X$ is an absolutely continuous curve parametrized to have unit speed the right hand side of (\ref{E:occutimeformula}) is just the line integral of $g$ along $X$, and formula (\ref{E:occutimeformula}) may be seen as a special case of the area formula for the path $X$, \cite[Theorem 3.2.6]{Federer}. Another interpretation of  formulas of type (\ref{E:occutimeformula}) in the spirit of geometric measure theory have been established in \cite{BevFlan14}. There the authors proved an occupation time formula for $\mathbb{R}^n$-valued semimartingales which has features of a coarea formula for $\mathcal{C}^2(\mathbb{R}^n)$-functions. They observed that the fluctuations of the semimartingale can lead to the existence of certain `transversal' (in a Rokhlin sense) densities which may be seen as generalizations of local times of one-dimensional semimartingales, see \cite[Theorems 1 and 3]{BevFlan14}.
\end{enumerate}
\end{remark}

The definitions (\ref{E:potential}) and (\ref{E:energy}) and the identity (\ref{E:occutimeformula}) now immediately allow to rewrite (\ref{eq:SVp-condition}) in terms of Riesz energies. Given a function $\varphi \in BV_{\loc}(\mathbb{R}^n)$ and a set $\mathcal{U}\subset \mathbb{R}^n$ we use
\[\mu_{\varphi,\mathcal{U}}:=\left\|D\varphi\right\||_{\mathcal{U}}\]
to abbreviate the restriction of $\left\|D\varphi\right\|$ to $\mathcal{U}$.

\begin{remark}\label{R:SVviaoccu}
Let $\varphi \in BV_{\loc}(\mathbb{R}^n)$, $p\in [1,+\infty]$ and $s\in (0,1)$. A path $X$ is $(s,p)$-variable with respect to $\varphi$ if and only if there is a relatively compact open neighborhood $\mathcal{U}$ of $X([0,T])$ such that 
\begin{equation}\label{E:SVp2}
U^{1-s}\mu_{\varphi,\mathcal{U}} \in L^p(\mathbb{R}^n,\mu_X^{[0,T]}).
\end{equation}
In particular, $X$ is $(s,1)$-variable with respect to $\varphi$ if and only if there is a relatively compact open neighborhood of $X([0,T])$ such that the mutual Riesz energy of order $1-s$ of $\mu_{\varphi,\mathcal{U}}$ and $\mu_X^{[0,T]}$ is finite, 
\begin{equation}\label{E:SVviaoccu}
\int_{\mathbb{R}^n}U^{1-s}\mu_{\varphi,\mathcal{U}}(x)\mu_X^{[0,T]}(dx)<+\infty.
\end{equation}
\end{remark}

\begin{remark}\label{R:occu_use}\protect
\begin{enumerate}
\item[(i)] Both ideas, the variability of curves and the use of Riesz energies, also appear in connection with regularization properties of operators in harmonic analysis, see for instance \cite{TaoWright} for the first idea and \cite{GrahamHareRitter, HareRoginskaya} for the second.
\item[(ii)] Occupation measures appear naturally in connection with \emph{regularization by noise}, \cite{CatellierGubinelli, Davie, Flandoli, FlandoliGubinelliPriola, GG20a, HarangPerkowski, KrylovRoeckner, NualartOuknine, Veretennikov, BG2018}. The addition (in the simplest case) of a `fast moving' perturbation to an equation can provoke well-posedness for otherwise non well-posed equations. A typical key step is to observe the H\"older (or even $\mathcal{C}^{1+\alpha}$-) regularity of integrals of functions (with certain continuity or integrability properties) w.r.t. occupation measures, seen as a function of the starting point of the path. See for instance \cite[Section 2.1 and in particular, Theorems 2.5 and 2.6]{Flandoli}.
\end{enumerate}
\end{remark}

The following consequence of Remark \ref{R:SVviaoccu} allows to define compositions of paths and $BV$-functions. It views (\ref{E:SVviaoccu}) as a condition on $X$ relative to $\varphi$. Recall the meaning of the symbols $S_\varphi$ and $L^\infty(X)$ from Definitions \ref{D:geometric-point-sets} and \ref{D:comp}.

\begin{proposition}\label{P:key}
Let $\varphi \in BV_{\loc}(\mathbb{R}^n)$ and suppose that $X$ is $(s,1)$-variable with respect to $\varphi$ for some $s\in (0,1)$. Then the discontinuity set $S_\varphi$ of $\varphi$ is a zero set for the occupation measure $\mu_X^{[0,T]}$ of $X$,
\begin{equation}\label{E:badsetnullset}
\mu_X^{[0,T]}(S_\varphi)=0.
\end{equation}
In particular, we have 
\begin{equation}\label{E:timenullset}
\mathcal{L}^1(\{t\in [0,T]: X_t\in S_\varphi\})=0
\end{equation}
and 
\begin{equation}\label{E:Linftybound}
\left\|\varphi\right\|_{L^\infty(X)}\leq \left\|\varphi\right\|_{L^\infty(\mathbb{R}^n)}.
\end{equation}
\end{proposition}

Proposition \ref{P:key} immediately implies Lemma \ref{lem:welldef}, and therefore the correctness of Definition \ref{D:comp}.

\begin{proof}[Proof of Lemma \ref{lem:welldef}]
If $\widetilde{\varphi}^{(1)}$ and $\widetilde{\varphi}^{(2)}$ both are (component-wise) Lebesgue representatives of $\varphi=(\varphi_1,...,\varphi_m)\in BV_{\loc}(\mathbb{R}^n)^m$ and $X$ is $(s,1)$-variable w.r.t. each $\varphi_i$, then by (\ref{E:admissible}), (\ref{E:Sphiisunion}) and (\ref{E:timenullset}) we have
\[\int_0^T |\widetilde{\varphi}^{(1)}(X_t)-\widetilde{\varphi}^{(2)}(X_t)|dt= \int_{\{t\in [0,T]:\: X_t\in \mathbb{R}^n\setminus S_\varphi\}} |\widetilde{\varphi}^{(1)}(X_t)-\widetilde{\varphi}^{(2)}(X_t)|dt =0.\]
\end{proof}

Recall that for any $\gamma\geq 0$ the upper $\gamma$-density $\Theta^\ast_\gamma\nu(x)$ of a Borel measure $\nu$ on $\mathbb{R}^n$ at a point $x\in\mathbb{R}^n$ is defined by 
\begin{equation}\label{E:upperdensity}
\Theta^\ast_\gamma\nu(x)=\limsup_{r\to 0}\frac{\nu(B(x,r))}{r^\gamma}, 
\end{equation}
see for instance \cite[Definition 2.2.5]{AFP} or \cite[Definition 6.8]{Mattila}. We prove Proposition \ref{P:key}.

\begin{proof}[Proof of Proposition \ref{P:key}]
Let $\mathcal{U}\subset\mathbb{R}^n$ be a relatively compact open set containing the support $X([0,T])$ of $\mu_X^{[0,T]}$ as in (\ref{E:SVviaoccu}). Since the integral is finite we can find a Borel set $N\subset \mathbb{R}^n$ such that $\mu_X^{[0,T]}(N)=0$
and, for all $x\in \mathbb{R}^n\setminus N$, we have $U^{1-s}\mu_{\varphi,\mathcal{U}}(x)<+\infty$. Therefore, by a standard conclusion, cf. \cite[Chapter 8]{Mattila}, 
\begin{align}\label{E:standard}
\Theta^\ast_{n-1+s}\mu_{\varphi,\mathcal{U}}(x)&=\limsup_{r\to 0}r^{-n+1-s}\mu_{\varphi,\mathcal{U}}(B(x,r))\notag\\
&\leq \lim_{r\to 0}\int_{B(x,r)}|x-y|^{-n+1-s}\mu_{\varphi,\mathcal{U}}(dy)=0
\end{align}
for all such $x$. Since $\mu_{\varphi,\mathcal{U}}=D(\varphi|_\mathcal{U})$, where $\varphi|_\mathcal{U}\in BV(\mathcal{U})$ is the restriction of $\varphi$ to $\mathcal{U}$, (\ref{E:standard}) trivially implies
\begin{equation}\label{E:n-1densityzero}
\Theta^\ast_{n-1}\left\|D(\varphi|_\mathcal{U})\right\|(x)=\limsup_{r\to 0}r^{-n+1}\left\|D(\varphi|_\mathcal{U})\right\|(B(x,r))=0
\end{equation}
and, with suitable $r_0(x)>0$,  
\begin{equation}\label{E:integralfinite}
\int_0^{r_0(x)}r^{-n}\left\|D(\varphi|_\mathcal{U})\right\|(B(x,r))\leq \int_0^{r_0(x)}r^{s-1}dr<+\infty
\end{equation}
for all $x\in \mathcal{U}\setminus N$. However, from (\ref{E:n-1densityzero}) and (\ref{E:integralfinite}) it then follows that $\varphi|_\mathcal{U}$ has an approximate limit at all $x \in \mathcal{U}\setminus N$, see \cite[Remark 3.82]{AFP}. In other words, $S_\varphi \cap \mathcal{U} \subset N$, and since $\mu_X^{[0,T]}(S_\varphi \setminus \mathcal{U})\leq \mu_X^{[0,T]}(\mathbb{R}^n \setminus \mathcal{U})=0$, (\ref{E:badsetnullset}) follows. By monotone convergence (\ref{E:occutimeformula}) extends to all nonnegative Borel function $g:\mathbb{R}^n\to [0,+\infty]$, and for $g=\mathbf{1}_{S_\varphi}$ we obtain 
(\ref{E:timenullset}). Estimate (\ref{E:Linftybound}) follows from (\ref{E:admissible}): For any $x\in \mathbb{R}^n\setminus S_{\varphi}$ we have 
\[|\varphi(x)|\leq \limsup_{r\to 0}\frac{1}{\mathcal{L}^n(B(x,r))}\int_{B(x,r)}|\varphi(y)|dy\leq \left\|\varphi\right\|_{L^\infty(\mathbb{R}^n)}.\]
\end{proof}

\begin{remark}\label{R:upperdensities}\mbox{}
Condition (\ref{E:SVviaoccu}) may also be seen as a requirement on $\varphi$ relative to the path $X$: The measure $\mu_X^{[0,T]}$ is concentrated on $\mathbb{R}^n\setminus N$ with $N$ as in the above proof. Potential theory forbids $\mu_{\varphi,\mathcal{U}}$ to charge subsets of $\mathbb{R}^n\setminus N$ with finite $\mathcal{H}^{n-1+s}$-measure: Any such set $A$ would have $(1-s)$-Riesz capacity zero, \cite[Chapter III, Theorem 3.14]{Landkof}, and this would imply the existence of some point $\mathring{x}\in A$ with $U^{1-s}\mu_X^{[0,T]}(\mathring{x})=+\infty$, \cite[Chapter II, Theorem 2.4]{Landkof}. An alternative, geometric proof of this fact can be given in terms of upper densities: Suppose that $A$ is a Borel subset of $\mathbb{R}^n\setminus N$ with $\mathcal{H}^{n-1+s}(A)<+\infty$. Then by (\ref{E:standard}) together with the density comparison theorem, \cite[Theorem 2.56]{AFP} (or \cite[Chapter 6, Theorem 6.9]{Mattila}), we have 
\[\mu_{\varphi,\mathcal{U}}(A)\leq \sup_{x\in A} \Theta^\ast_{n-1+s}\mu_{\varphi,K}(x)\:\mathcal{H}^{n-1+s}(A)=0. \]
\end{remark}

\begin{remark}\label{R:densitycomparison}
The density comparison theorem also implies that for any $\varphi\in BV_{\loc}(\mathbb{R}^n)$ the set of points $x\in \mathbb{R}^n$ at which $\Theta^\ast_{n-1}\left\|D\varphi\right\|(x)=+\infty$ is a $\mathcal{H}^{n-1}$-null set, see \cite[proof of Lemma 3.75]{AFP}.
\end{remark}

The following example gives a coarse sufficient condition for (\ref{E:SVviaoccu}).

\begin{example}\label{Ex:CS}
If $s\in (0,1)$, 
\begin{equation}\label{E:finiteenergypath}
I^{1-s}\mu_X^{[0,T]}<+\infty
\end{equation}
and $I^{1-s}\mu_{\varphi,\mathcal{U}}<+\infty$, then (\ref{E:SVviaoccu}) holds by the Cauchy-Schwarz inequality for the energy, see \cite[Chapter I, Section 4]{Landkof}. 
\end{example}

\subsection{Upper regularity and bounded potentials}\label{SS:suff}

The first maximum principle, \cite[Chapter I, Theorem 1.10]{Landkof}, states that, for any $s\in (0,1)$ and any Borel measure $\nu$, a bound 
\begin{equation}\label{E:maxprinciple}
U^{1-s}\nu(x)\leq M,
\end{equation}
valid with some $M>0$ for $\nu$-a.e. $x\in\mathbb{R}^n$, implies (\ref{E:maxprinciple}) for all $x\in\mathbb{R}^n$. 
\begin{proposition}\label{P:naive} Let $s\in (0,1)$. Suppose that there is a constant $M>0$ such that 
\begin{equation}\label{E:Xextreme}
U^{1-s}\mu_X^{[0,T]}(x)<M
\end{equation}
for $\mu_X^{[0,T]}$-a.e. $x\in X([0,T])$. Then (\ref{E:SVviaoccu}) holds with any $\varphi\in BV_{\loc}(\mathbb{R}^n)$ and all relatively compact open $\mathcal{U}$, and $\mu_X^{[0,T]}$ cannot charge any set of $A$ of finite $\mathcal{H}^{n-1+s}$-measure.
\end{proposition}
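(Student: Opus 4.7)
The overall strategy is to first upgrade the $\mu_X^{[0,T]}$-a.e.\ bound \eqref{E:Xextreme} to a pointwise bound on all of $\mathbb{R}^n$ via the first maximum principle \cite[Chapter I, Theorem 1.10]{Landkof} (quoted immediately before the proposition), and then deduce both assertions from this global bound. In other words, after this initial step we may assume that $U^{1-s}\mu_X^{[0,T]}(x) \leq M$ for every $x \in \mathbb{R}^n$.

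For the first assertion, fix $\varphi \in BV_{\loc}(\mathbb{R}^n)$ and a relatively compact open set $\mathcal{U}\subset \mathbb{R}^n$. Since $\mathcal{U}$ is relatively compact and $\varphi$ is of locally bounded variation, $\mu_{\varphi,\mathcal{U}}$ is a finite Borel measure. By Fubini together with the symmetry of the Riesz kernel used in the definition \eqref{E:energy} of the mutual energy,
\[
\int_{\mathbb{R}^n} U^{1-s}\mu_{\varphi,\mathcal{U}}(x)\,\mu_X^{[0,T]}(dx) \weq I^{1-s}(\mu_{\varphi,\mathcal{U}},\mu_X^{[0,T]}) \weq \int_{\mathbb{R}^n} U^{1-s}\mu_X^{[0,T]}(y)\,\mu_{\varphi,\mathcal{U}}(dy) \wle M\,\left\|D\varphi\right\|(\mathcal{U}) \wles +\infty,
\]
which is precisely \eqref{E:SVviaoccu}. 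The use of the truly pointwise (rather than merely $\mu_X^{[0,T]}$-a.e.) bound is essential here, since $\mu_{\varphi,\mathcal{U}}$ need not be absolutely continuous with respect to $\mu_X^{[0,T]}$ (already in the trivial case in which $X$ is a constant path).

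For the second assertion, let $A \subset \mathbb{R}^n$ be a Borel set with $\mathcal{H}^{n-1+s}(A) < +\infty$ and assume for contradiction that $\mu_X^{[0,T]}(A) > 0$. Then $\nu := \mu_X^{[0,T]}|_A$ is a nontrivial finite Borel measure supported in $A$, and the global bound yields
\[
I^{1-s}(\nu) \weq \int_A U^{1-s}\nu(x)\,\nu(dx) \wle \int_A U^{1-s}\mu_X^{[0,T]}(x)\,\mu_X^{[0,T]}(dx) \wle M\,\mu_X^{[0,T]}(A) \wles +\infty,
\]
so the $(1-s)$-Riesz capacity of $A$ is strictly positive. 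On the other hand, by \cite[Chapter III, Theorem 3.14]{Landkof} any Borel set of finite $\mathcal{H}^{n-1+s}$-measure has vanishing $(1-s)$-Riesz capacity, yielding the desired contradiction. The argument is essentially a polarisation of the one indicated in Remark \ref{R:upperdensities}; the only real obstacle is the initial need to invoke the maximum principle in order to secure a bound everywhere on $\mathbb{R}^n$, after which both assertions reduce to routine Fubini and capacity considerations.
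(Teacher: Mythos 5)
Your proof is correct and follows essentially the same route as the paper: invoke the first maximum principle to upgrade the $\mu_X^{[0,T]}$-a.e.\ bound to a global pointwise bound, apply Fubini and the symmetry of the mutual energy to obtain \eqref{E:SVviaoccu}, and then settle the second assertion by the capacity argument referenced in Remark \ref{R:upperdensities} (using \cite[Chapter III, Theorem 3.14]{Landkof}). The paper's own proof is a compressed version of exactly this, pointing to Remark \ref{R:upperdensities} for the second part, and your execution fills in the details cleanly, including the inner-regularity step hidden behind restricting $\mu_X^{[0,T]}$ to $A$.
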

\begin{proof}
Since $\mu_X^{[0,T]}$ is supported in $X([0,T])$, the bound (\ref{E:Xextreme}) holds for $\mu_X^{[0,T]}$-a.e. $x\in\mathbb{R}^n$. By the maximum principle it then holds for all $x\in\mathbb{R}^n$ and as a consequence, (\ref{E:SVviaoccu}) holds with any $\varphi\in BV_{\loc}(\mathbb{R}^n)$ and all relatively compact open $\mathcal{U}$. The second part of the statement follows as in Remark \ref{R:upperdensities}.
\end{proof}
Note that (\ref{E:Xextreme}) implies (\ref{E:finiteenergypath}), but in contrast to Example \ref{Ex:CS} no assumption is made on the measures $\mu_{\varphi,\mathcal{U}}$.

We prepare a counterpart of Proposition \ref{P:naive} in terms of $\left\|D\varphi\right\|$. The gradient measure $D\varphi$ of a function $\varphi\in BV(\mathbb{R}^n)$ does not charge any $\mathcal{H}^{n-1}$-null set, \cite[Lemma 3.76]{AFP}.
Let $D^a\varphi$ denote the part of $D\varphi$ which is absolutely continuous w.r.t. $\mathcal{L}^n$ and $D^s\varphi$ the part that is singular  w.r.t. $\mathcal{L}^n$. Clearly $D^a\varphi$ vanishes on $\mathcal{L}^n$-null sets.  The Cantor part $D^c\varphi$ of $D\varphi$ is defined as $D^c\varphi=D^s\varphi|_{\mathbb{R}^n\setminus S_\varphi}$, \cite[Definition 3.91]{AFP}, and in general it can, roughly speaking, charge subsets of any Hausdorff dimension between $n-1$ and $n$. There is an $(n-1)$-dimensional Borel subset $J_\varphi$ of the approximate discontinuity set $S_\varphi$, called the \emph{set of approximate jump points}, \cite[Definition 3.67]{AFP}, and the jump part of the gradient measure is defined to be $D^j\varphi=D^s\varphi|_{J_\varphi}$, \cite[Definition 3.91]{AFP}. The set $S_\varphi\setminus J_\varphi$ is of zero $\mathcal{H}^{n-1}$-measure, \cite[Theorem 3.78]{AFP}. The gradient measure of $\varphi$ admits the unique decomposition
\begin{equation}\label{E:gradientdecomp}
D\varphi=D^a\varphi+D^j\varphi+D^c\varphi.
\end{equation}
A description of the different summands in terms of geometric scaling properties can be found in \cite[Proposition 3.92]{AFP}.

\begin{proposition}\label{P:naive2} Let $\varphi\in BV_{\loc}(\mathbb{R}^n)$ and $s\in (0,1)$.
Suppose that for any compact $K$ there is a constant $M_K>0$ such that 
\begin{equation}\label{E:abscontextreme}
U^{1-s}\mu_{\varphi,K}(x)<M_K
\end{equation}
for $\mu_{\varphi,K}$-a.e. $x\in K$. Then $\varphi$ cannot have jumps and the Cantor part $D^c\varphi$ of $D\varphi$ cannot charge sets of finite $\mathcal{H}^{n-1+s}$-measure. The function $\varphi$ has a unique continuous Borel version, H\"older of order $s$, and any Lebesgue representative coincides with this version on the Lebesgue set. Moreover, (\ref{E:SVp2}) holds for any path $X$, any relatively compact open $\mathcal{U}\supset X([0,T])$ and any $p\in [1,+\infty]$.
\end{proposition}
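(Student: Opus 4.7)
My plan is to mirror the structure of Proposition \ref{P:naive}, exchanging the roles of the occupation measure and the gradient measure, and then to turn the pointwise Riesz potential bound into a Hölder estimate via the fractional mean-value inequality to be established as Proposition \ref{P:meanvalue}.

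First I would globalize the hypothesis. Since $\mu_{\varphi,K}$ is supported in $K$, the assumption $U^{1-s}\mu_{\varphi,K}<M_K$ holds $\mu_{\varphi,K}$-a.e.\ on all of $\mathbb{R}^n$, and the first maximum principle \cite[Chapter I, Theorem 1.10]{Landkof} promotes it to $U^{1-s}\mu_{\varphi,K}(x)\wle M_K$ for every $x\in\mathbb{R}^n$. An application of the upper-density argument from Remark \ref{R:upperdensities} (i.e.\ the computation \eqref{E:standard} combined with the density comparison theorem \cite[Theorem 2.56]{AFP}) then yields $\mu_{\varphi,K}(A)=0$ for every Borel $A$ with $\mathcal{H}^{n-1+s}(A)<+\infty$; letting $K$ exhaust $\mathbb{R}^n$, the same property transfers to $\|D\varphi\|$. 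Since $J_\varphi$ is countably $\mathcal{H}^{n-1}$-rectifiable \cite[Theorem 3.78]{AFP} and therefore of zero $\mathcal{H}^{n-1+s}$-measure, one gets $\|D^j\varphi\|(J_\varphi)=0$, i.e.\ $\varphi$ has no jumps, and the conclusion for $D^c\varphi$ is immediate from $\|D^c\varphi\|\leq\|D\varphi\|$.

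For the Hölder assertion, given two Lebesgue points $x,y\in\mathbb{R}^n\setminus S_\varphi$ contained in the interior of a compact $K$ that also contains a neighborhood of the segment joining them, I would invoke Proposition \ref{P:meanvalue} to obtain
\[
 |\widetilde\varphi(x)-\widetilde\varphi(y)|\ \lesssim\ |x-y|^{s}\bigl(U^{1-s}\mu_{\varphi,K}(x)+U^{1-s}\mu_{\varphi,K}(y)\bigr)\ \wle\ 2M_K|x-y|^{s}.
\]
Since $S_\varphi$ is an $\mathcal{L}^n$-null set, the Lebesgue set is dense, so this uniform Hölder bound extends $\widetilde\varphi$ uniquely to an $s$-Hölder continuous Borel version; by construction it coincides with any component-wise Lebesgue representative on the common Lebesgue set, and continuity makes it independent of the choice.

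Finally, for \eqref{E:SVp2} I take $K:=\overline{\mathcal{U}}$: then $\mu_{\varphi,\mathcal{U}}\leq\mu_{\varphi,K}$, and the already globalized bound gives $U^{1-s}\mu_{\varphi,\mathcal{U}}\leq M_K$ everywhere on $\mathbb{R}^n$. Since $\mu_X^{[0,T]}$ is a finite measure, this places $U^{1-s}\mu_{\varphi,\mathcal{U}}$ in $L^\infty(\mathbb{R}^n,\mu_X^{[0,T]})\subset L^p(\mathbb{R}^n,\mu_X^{[0,T]})$ for every $p\in[1,+\infty]$. The main obstacle in this plan is the fractional mean-value estimate used to pass from the pointwise potential bound to Hölder continuity; once Proposition \ref{P:meanvalue} is in hand the remainder is straightforward potential-theoretic bookkeeping.
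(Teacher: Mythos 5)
Your proof is correct and follows essentially the same route as the paper's: globalize \eqref{E:abscontextreme} by the first maximum principle, rule out jumps (and control the Cantor part) via the density-comparison argument of Remark \ref{R:upperdensities} together with the $\mathcal{H}^{n-1+s}$-nullity of $J_\varphi$, and obtain H\"older continuity from the fractional mean-value estimate, with \eqref{E:SVp2} then immediate from the globalized bound and finiteness of $\mu_X^{[0,T]}$. The only cosmetic difference is that the paper packages the H\"older step through Corollary \ref{C:justHoelder} and a cut-off, whereas you invoke Proposition \ref{P:meanvalue} directly with a pointwise choice of $K$ — an equivalent localization.
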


Condition (\ref{E:abscontextreme}) does not impose any restrictions on $D^a\varphi$.

\begin{proof}
Again by the maximum principle (\ref{E:abscontextreme}) holds for all $x\in\mathbb{R}^n$. Taking into account (\ref{E:gradientdecomp}) we can conclude that if $\varphi$ satisfies (\ref{E:abscontextreme}) for all compact $K$, then $\varphi$ cannot have jumps: Since the set $J_\varphi$ of approximate jump points has Hausdorff dimension $n-1$ it has zero $\mathcal{H}^{n-1+s}$-measure, and as before we can see such sets cannot be charged by $D\varphi$. The statement on H\"older continuity follows using large closed balls in place of $K$, a simple cut-off argument and Corollary \ref{C:justHoelder}.
\end{proof}

\begin{remark}
If $D\varphi$ is absolutely continuous, then the upper $n-1+s$-regularity of $\varphi$ for $s\in (0,1)$ implies the local $s$-H\"older continuity of $\varphi$ on $\mathcal{C}^1$-smooth domains by a variant of Morrey's inequality in Sobolev-Morrey spaces by \cite[Theorem 2.2]{LambertiVespri}.
\end{remark}

\begin{remark}\label{R:Hausdorfflowerbounded}
Given a Borel measure $\nu$ on $\mathbb{R}^n$ the \emph{pointwise lower Hausdorff dimension of $\nu$ at $x\in\mathbb{R}^n$} is defined as 
\[\underline{\dim}_H\nu(x)=\sup\{\gamma\geq 0: \Theta^\ast_\gamma \nu(x)=0\}=\inf\{\gamma\geq 0: \Theta^\ast_\gamma \nu(x)=+\infty\},\]
where $\Theta^\ast_\gamma \nu(x)$ is as in (\ref{E:upperdensity}). Its \emph{lower Hausdorff dimension} is defined as $\underline{\dim}_H \nu=\essinf_{x\in\mathbb{R}^n}  \underline{\dim}_H\nu(x)$, and it is well known that $\underline{\dim}_H \nu=\inf\{\dim_H A: \text{$A\subset\mathbb{R}^n$ Borel and $\nu(A)>0$}\}$. See for instance \cite{BarreiraPesinSchmeling, MattilaMoranRey}. Conditions (\ref{E:Xextreme}) and (\ref{E:abscontextreme}) imply that $\underline{\dim}_H \mu_X^{[0,T]}\geq n-1+s$ and, respectively, $\underline{\dim}_H \left\| D\varphi\right\|\geq n-1+s$.
\end{remark}

The following is well known.
\begin{proposition}\label{P:samething}
Let $\nu$ be a finite Borel measure on $\mathbb{R}^n$ with compact support. If with some $M>0$ we have $\int_{\mathbb{R}^n}|x-y|^{-s}\nu(dy)<M$ for all $x\in\supp\nu$, then there is some $c>0$ such that $\nu(B(x,r))\leq cr^s$ for all $x\in\supp\nu$ and $r>0$. If there is some $r_0>0$ such that $\nu(B(x,r))\leq cr^d$ for all $x\in\supp\nu$ and $0<r<r_0$, then for any $s<d$ there is some $M>0$ such that $\int_{\mathbb{R}^n}|x-y|^{-s}\nu(dy)<M$ for all $x\in\supp\nu$.
\end{proposition}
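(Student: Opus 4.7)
The plan is to handle the two implications separately; both reduce to short pointwise estimates that are essentially potential-theoretic folklore.

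For the first implication (bounded potential implies upper $s$-regularity) I would use the trivial lower bound $|x-y|^{-s}\ge r^{-s}$ valid on the ball $B(x,r)$. Integrating this against $\nu$ on $B(x,r)$ and then using the hypothesis gives, for every $x\in\supp\nu$ and every $r>0$,
\[
r^{-s}\,\nu(B(x,r))\le \int_{B(x,r)}|x-y|^{-s}\,\nu(dy)\le \int_{\mathbb{R}^n}|x-y|^{-s}\,\nu(dy) < M,
\]
so $\nu(B(x,r))\le M r^s$. Hence $c:=M$ works, and the bound automatically holds for all $r>0$, not only small ones, because the right-hand side does not depend on $r$.

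For the second implication (upper $d$-regularity implies a bounded Riesz potential of every order $s<d$) I would invoke the layer-cake identity
\[
\int_{\mathbb{R}^n}|x-y|^{-s}\,\nu(dy)\;=\;s\int_0^\infty \nu(B(x,r))\,r^{-s-1}\,dr,
\]
obtained by Fubini for nonnegative integrands together with the change of variable $u=r^{-s}$. Splitting the right-hand side at $r_0$, the hypothesis gives on $(0,r_0)$ the estimate $\nu(B(x,r))\,r^{-s-1}\le c\,r^{d-s-1}$, which is integrable near $0$ precisely because $s<d$ and contributes $c\,r_0^{d-s}/(d-s)$. On $[r_0,\infty)$ the crude bound $\nu(B(x,r))\le \nu(\mathbb{R}^n)<+\infty$ dominates the integrand by $\nu(\mathbb{R}^n)\,r^{-s-1}$ and contributes $\nu(\mathbb{R}^n)\,r_0^{-s}/s$. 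Adding these two contributions yields an upper bound depending only on $c$, $d$, $s$, $r_0$ and $\nu(\mathbb{R}^n)$, and taking $M$ strictly larger than it proves the claim.

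There is no real obstacle here. The only minor points requiring a moment of care are the justification of the layer-cake formula (the integrand is $+\infty$ at $y=x$, which is harmless since under the upper $d$-regularity hypothesis with $d>0$ one has $\nu(\{x\})=0$ by continuity from above for the finite measure $\nu$) and the observation that all the estimates are uniform in $x\in\supp\nu$. Note that compactness of the support plays no essential role in either direction; it enters only through the finiteness $\nu(\mathbb{R}^n)<+\infty$, which is already assumed.
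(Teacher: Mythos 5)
Your proof is correct and uses essentially the same ingredients as the paper's: the trivial lower bound $|x-y|^{-s}\ge r^{-s}$ on $B(x,r)$ for the forward direction, and the layer-cake identity $\int_{\mathbb{R}^n}|x-y|^{-s}\,\nu(dy)=s\int_0^\infty r^{-s-1}\nu(B(x,r))\,dr$ for the converse. The only cosmetic difference is in the tail of the layer-cake integral, which the paper handles by readjusting the constant so the upper regularity bound holds for all $r>0$ and truncating at a radius determined by the compact support, whereas you split at $r_0$ and bound the tail by $\nu(\mathbb{R}^n)$; both are standard and yield the same result.
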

We briefly sketch the folklore proof for the convenience of the reader.
\begin{proof}
The first statement is immediate from (\ref{E:trivialbound}). To see the second claim note that since $\nu$ is finite, we can readjust $c$ to obtain $\nu(B(x,r))\leq c\:r^d$ for all $x\in \supp\nu$ and $r>0$.
If $R>0$ is such that $B(0,R/2)$ contains $\supp \nu$, then a classical argument, \cite[p. 109]{Mattila}, shows that for any $x\in \supp\nu$ we have
\[\int_{\mathbb{R}^n}|x-y|^{-s}\nu(dy)=s\int_0^{2R}r^{-s-1}\nu(B(x,r))dr\leq cs\int_0^{2R} r^{d-s}dr.\]
\end{proof}

\begin{corollary}\label{C:jointuppereg}\mbox{}
\begin{enumerate}
\item[(i)] If $X$ is upper $d$-regular with $d>n-1+s$, then (\ref{E:Xextreme}) holds.
\item[(ii)] If $\varphi$ is upper $d$-regular with $d>n-1+s$, then (\ref{E:abscontextreme}) holds for any compact $K$. 
\end{enumerate}
\end{corollary}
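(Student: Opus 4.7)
The plan is to observe that Corollary \ref{C:jointuppereg} is essentially a direct translation of Proposition \ref{P:samething} applied to the two measures in question. The key matching is that the Riesz kernel $|x-y|^{-n+1-s}$ defining $U^{1-s}$ corresponds, in the notation of Proposition \ref{P:samething}, to the exponent $s' = n-1+s$, so the hypothesis $s' < d$ of that proposition becomes exactly $n-1+s < d$, which is the assumption of the corollary.

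For part (i), I would set $\nu = \mu_X^{[0,T]}$, noting that this is a finite Borel measure with compact support contained in $X([0,T])$ (by (\ref{E:occutimeformula}) and continuity of $X$). The upper $d$-regularity of the path $X$ is, by definition, the upper $d$-regularity of $\mu_X^{[0,T]}$ as a measure on $\mathbb{R}^n$. Invoking Proposition \ref{P:samething} with exponent $s' = n-1+s$, which satisfies $s' < d$, yields a constant $M'>0$ such that $\int |x-y|^{-n+1-s}\mu_X^{[0,T]}(dy) < M'$ for all $x \in \supp \mu_X^{[0,T]}$. Multiplying by $c_{1-s}$ gives the pointwise bound (\ref{E:Xextreme}) on the support, hence $\mu_X^{[0,T]}$-a.e.

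For part (ii), fix a compact $K$ and set $\nu = \mu_{\varphi,K} = \left\|D\varphi\right\||_K$. This is a finite Borel measure because $\left\|D\varphi\right\|$ is locally finite (as $\varphi \in BV_{\loc}$) and $K$ is compact; its support is contained in $K$, hence compact. The upper $d$-regularity of $\varphi$ means that of $\left\|D\varphi\right\|$, and the trivial bound $\mu_{\varphi,K}(B(x,r)) \leq \left\|D\varphi\right\|(B(x,r)) \leq c r^d$, valid for $x \in \supp\left\|D\varphi\right\|$ and $0 < r < r_0$, transfers to $\mu_{\varphi,K}$ on $\supp \mu_{\varphi,K} \subset \supp \left\|D\varphi\right\| \cap K$. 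Another application of Proposition \ref{P:samething} with $s' = n-1+s$ yields some $M_K > 0$ with $U^{1-s}\mu_{\varphi,K}(x) < M_K$ for all $x \in \supp \mu_{\varphi,K}$, hence $\mu_{\varphi,K}$-a.e. in $K$, which is (\ref{E:abscontextreme}).

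There is no real obstacle here: Proposition \ref{P:samething} is precisely tailored for such upper-regularity-to-potential-bound conversions, and the only thing to verify carefully is the bookkeeping between the Riesz exponent $1-s$ (via $c_{1-s}|x-y|^{-n+1-s}$) and the kernel exponent $s'$ appearing in the proposition. One could equivalently rederive the bound in one line by slicing: $\int|x-y|^{-n+1-s}\nu(dy) = (n-1+s)\int_0^{2R} r^{-n+s}\nu(B(x,r))\,dr$, from which upper $d$-regularity with $d > n-1+s$ immediately gives integrability at $r=0$.
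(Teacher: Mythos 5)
Your proof is correct and takes essentially the same route the paper intends: the corollary follows directly from Proposition \ref{P:samething} applied with the kernel exponent $n-1+s$, to $\nu=\mu_X^{[0,T]}$ in part (i) and to $\nu=\mu_{\varphi,K}$ in part (ii), after observing that the restriction $\mu_{\varphi,K}$ inherits upper $d$-regularity from $\left\|D\varphi\right\|$. One small slip in your final aside: the slicing identity should read $\int|x-y|^{-n+1-s}\nu(dy)=(n-1+s)\int_0^{2R}r^{-n-s}\nu(B(x,r))\,dr$ (exponent $-n-s$, not $-n+s$), since $-(n-1+s)-1=-n-s$; with your exponent the integrability threshold would appear as $d>n-1-s$ rather than the correct $d>n-1+s$, though this does not affect the main argument, which relies on Proposition \ref{P:samething} rather than the aside.
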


For certain paths $X$ the occupation measure $\mu_X^{[0,T]}$ of $X$ is absolutely continuous. To the Radon--Nikodym derivative 
$\frac{d\mu_X^{[0,T]}}{d\mathcal{L}^n} = L_T^X$ one refers as \emph{the local times} of $X$.

\begin{proposition}
\label{P:Xupper-localtime}
Suppose that $\mu_X^{[0,T]}$ of $X$ is absolutely continuous with density $L_T^X \in L^p(\mathbb{R}^n)$ for some $p\in[1,\infty]$. Then $X$ is upper $\frac{n}{q}$-regular, where $\frac{1}{p}+\frac{1}{q}=1$ with the agreement $\frac{1}{+\infty}=0$. In particular, if the local times are bounded, $L_T^X \in L^{\infty}(\mathbb{R}^n)$, then $X$ is upper $n$-regular. 
\end{proposition}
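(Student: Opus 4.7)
The plan is simply to apply Hölder's inequality to the occupation measure written as a Lebesgue integral against the local time density. By assumption,
\[
\mu_X^{[0,T]}(A) \weq \int_A L_T^X(y)\,\mathcal{L}^n(dy)
\]
for every Borel set $A\subset\mathbb{R}^n$. First I would fix $x\in\mathbb{R}^n$ and $r>0$ and apply Hölder's inequality with the exponent pair $(p,q)$ to the functions $L_T^X$ and $\mathbf{1}_{B(x,r)}$, obtaining
\[
\mu_X^{[0,T]}(B(x,r)) \wle \|L_T^X\|_{L^p(\mathbb{R}^n)}\,\mathcal{L}^n(B(x,r))^{1/q}.
\]
Since $\mathcal{L}^n(B(x,r)) = \omega_n r^n$ with $\omega_n$ the volume of the unit ball, this yields
\[
\mu_X^{[0,T]}(B(x,r)) \wle \|L_T^X\|_{L^p(\mathbb{R}^n)}\,\omega_n^{1/q}\,r^{n/q},
\]
valid for every $x\in\mathbb{R}^n$ and every $r>0$ (in particular for $x\in \supp\mu_X^{[0,T]}$ and with no restriction on $r_0$). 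Setting $c:=\|L_T^X\|_{L^p(\mathbb{R}^n)}\,\omega_n^{1/q}$, this is exactly the upper $\tfrac{n}{q}$-regularity of $\mu_X^{[0,T]}$, i.e.\ of the path $X$, as defined in Subsection on upper regularity of paths.

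The case $p=+\infty$ (hence $q=1$) is handled by the same computation with the convention $1/(+\infty)=0$ and Hölder's inequality in the form $\int_{B(x,r)} L_T^X\,d\mathcal{L}^n \le \|L_T^X\|_{L^\infty(\mathbb{R}^n)}\,\mathcal{L}^n(B(x,r))$, giving the bound $c\,r^n$ and thus upper $n$-regularity. No obstacle is anticipated; the entire argument is Hölder's inequality applied to the local time representation of $\mu_X^{[0,T]}$.
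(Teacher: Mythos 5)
Your proof is correct and follows the same approach as the paper: both apply Hölder's inequality with exponents $(p,q)$ to $\int_{B(x,r)} L_T^X\,d\mathcal{L}^n$ to obtain the bound $\|L_T^X\|_{L^p(\mathbb{R}^n)}\,\mathcal{L}^n(B(x,r))^{1/q} \le c\,r^{n/q}$.
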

\begin{proof}
We have 
$$
\mu_X^{[0,T]}(B(x,r)) = \int_{B(x,r)} L_T^X(y)dy 
\leq \Vert L_T^X\Vert_{L^p(\mathbb{R}^n)} \mathcal{L}^n(B(x,r))^{\frac{1}{q}} \leq c\: \Vert L_T^X\Vert_{L^p(\mathbb{R}^n)} r^{\frac{n}{q}}.
$$
\end{proof}

\begin{remark}
Existence and regularity of local times are well-studied in the case of Gaussian processes, see e.g. \cite{Ayache, Berman69,Berman70, GemanHorowitz}. A key property is the so-called local non-determinism which guarantees that, roughly speaking, the increments of the process are not too degenerate. 
\end{remark}

In some situations it may be useful to localize the conditions on $\varphi$ and $X$.
\begin{corollary}\label{C:localizeB}
Let $\varphi\in BV_{\loc}(\mathbb{R}^n)$, let $X$ be a path and $s\in (0,1)$. If $B\subset \mathbb{R}^n$ is a Borel set such that (\ref{E:Xextreme}) holds for $\mu_X^{[0,T]}$-a.e. $x\in B$ and (\ref{E:abscontextreme}) holds for $\mu_{\varphi,\mathcal{U}}$-a.e. $x\in B^c$, then (\ref{E:SVviaoccu}) holds. 
\end{corollary}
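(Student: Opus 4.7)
The plan is to combine two applications of the first maximum principle with a decomposition of the mutual Riesz energy over $B$ and $B^c$. By Proposition~\ref{P:SVviaoccu}, the conclusion (\ref{E:SVviaoccu}) amounts to the finiteness of $\int U^{1-s}\mu_{\varphi,\mathcal{U}}\,d\mu_X^{[0,T]}$.

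First I would promote each hypothesis to a pointwise bound on the potential of a restricted measure. From the trivial monotonicity $U^{1-s}(\mu_X^{[0,T]}|_B)\leq U^{1-s}\mu_X^{[0,T]}$ together with (\ref{E:Xextreme}) at $\mu_X^{[0,T]}$-a.e. $x\in B$, one gets $U^{1-s}(\mu_X^{[0,T]}|_B)\leq M$ for $\mu_X^{[0,T]}|_B$-a.e.~$x$; the first maximum principle (Subsection~\ref{SS:suff}, as already invoked in the proofs of Propositions~\ref{P:naive} and~\ref{P:naive2}) then yields the pointwise bound $U^{1-s}(\mu_X^{[0,T]}|_B)(x)\leq M$ for every $x\in\mathbb{R}^n$. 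The symmetric argument applied to $\mu_{\varphi,\mathcal{U}}|_{B^c}$ gives $U^{1-s}(\mu_{\varphi,\mathcal{U}}|_{B^c})(x)\leq M'$ for every $x\in\mathbb{R}^n$.

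Next I would decompose $\mu_X^{[0,T]}=\mu_X^{[0,T]}|_B+\mu_X^{[0,T]}|_{B^c}$ and, in the second piece, further $\mu_{\varphi,\mathcal{U}}=\mu_{\varphi,\mathcal{U}}|_B+\mu_{\varphi,\mathcal{U}}|_{B^c}$. Fubini applied to the symmetric Riesz kernel $|x-y|^{-n+1-s}$ turns the part integrated against $\mu_X^{[0,T]}|_B$ into
\[
\int U^{1-s}(\mu_X^{[0,T]}|_B)\,d\mu_{\varphi,\mathcal{U}}\leq M\,\|D\varphi\|(\mathcal{U})<+\infty,
\]
since $\mathcal{U}$ is relatively compact and $\varphi\in BV_{\loc}(\mathbb{R}^n)$. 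Among the two sub-pieces of the integral against $\mu_X^{[0,T]}|_{B^c}$, the one involving $\mu_{\varphi,\mathcal{U}}|_{B^c}$ is $\int U^{1-s}(\mu_{\varphi,\mathcal{U}}|_{B^c})\,d\mu_X^{[0,T]}|_{B^c}\leq M'\,T<+\infty$, by the second pointwise bound.

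The main obstacle is the remaining cross term $\int U^{1-s}(\mu_{\varphi,\mathcal{U}}|_B)\,d\mu_X^{[0,T]}|_{B^c}$, to which neither of the two pointwise potential bounds applies directly: $\mu_{\varphi,\mathcal{U}}|_B$ is controlled only through its complement $\mu_{\varphi,\mathcal{U}}|_{B^c}$, and dually for $\mu_X^{[0,T]}|_{B^c}$. I expect this to be the technical heart of the argument, and my strategy is to rewrite it via Fubini as $\int U^{1-s}(\mu_X^{[0,T]}|_{B^c})\,d\mu_{\varphi,\mathcal{U}}|_B$, use the identity $U^{1-s}(\mu_X^{[0,T]}|_{B^c})=U^{1-s}\mu_X^{[0,T]}-U^{1-s}(\mu_X^{[0,T]}|_B)$, and combine the pointwise bound on $U^{1-s}(\mu_X^{[0,T]}|_B)$ with the $\mu_X^{[0,T]}$-a.e.~bound on $U^{1-s}\mu_X^{[0,T]}$ over $B$, exploiting the lower semicontinuity of Riesz potentials to transfer the required integrability to $\mu_{\varphi,\mathcal{U}}|_B$ and obtain finiteness.
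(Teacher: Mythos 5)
Your four-way splitting of $I^{1-s}(\mu_{\varphi,\mathcal{U}},\mu_X^{[0,T]})$ over $B\times B$, $B\times B^c$, $B^c\times B$, $B^c\times B^c$, and the use of the first maximum principle to upgrade the a.e.\ hypotheses to global pointwise bounds $U^{1-s}(\mu_X^{[0,T]}|_B)\leq M$ and $U^{1-s}(\mu_{\varphi,\mathcal{U}}|_{B^c})\leq M'$, is exactly the route the paper takes, and your control of the three ``easy'' pieces via Fubini is correct. You also correctly isolate the cross term $I^{1-s}(\mu_{\varphi,\mathcal{U}}|_B,\mu_X^{[0,T]}|_{B^c})$ as the obstacle; the paper's own proof is one sentence (the decomposition plus ``the maximum principle for the restrictions $\mu_{\varphi,\mathcal{U}}|_B$ etc.\ together with the preceding considerations'') and never actually reaches this piece, since the maximum principle applies only to $\mu_X^{[0,T]}|_B$ and $\mu_{\varphi,\mathcal{U}}|_{B^c}$, the two restrictions for which an a.e.\ potential bound is hypothesized.

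Your proposed fix does not close the gap. After Fubini and $U^{1-s}(\mu_X^{[0,T]}|_{B^c})=U^{1-s}\mu_X^{[0,T]}-U^{1-s}(\mu_X^{[0,T]}|_B)$, the bound $U^{1-s}\mu_X^{[0,T]}<M$ is available only $\mu_X^{[0,T]}$-a.e.\ on $B$, whereas you must now integrate against $\mu_{\varphi,\mathcal{U}}|_B$; these two measures may be mutually singular, so a $\mu_X^{[0,T]}$-null exceptional set can carry all of $\mu_{\varphi,\mathcal{U}}|_B$, and lower semicontinuity of the potential (only giving that $\{U^{1-s}(\mu_X^{[0,T]}|_{B^c})\leq M\}$ is closed) cannot transfer an a.e.\ statement from one measure to the other. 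In fact, no manipulation of the two stated potential bounds can reach the cross term. Take $n=2$, $s\in(0,1)$, $B=\{x_1>0\}$, $X$ the unit-speed parametrization of $\{0\}\times[0,1]$, and
\[
\varphi=\sum_{k\geq1}k^{-1}2^{-ks}\,\mathbf{1}_{\{x_1>2^{-k}\}}\in BV_{\loc}(\mathbb{R}^2),
\]
so that $\left\|D\varphi\right\|=\sum_k k^{-1}2^{-ks}\,\mathcal{H}^1|_{\{x_1=2^{-k}\}}$. Then $\mu_X^{[0,1]}$ is concentrated on $B^c$ and $\left\|D\varphi\right\|$ on $B$, so both hypotheses of the corollary hold vacuously, yet for any $\mathcal{U}\supset X([0,1])$ one has $I^{1-s}(\mathcal{H}^1|_{\{x_1=2^{-k}\}\cap\mathcal{U}},\mu_X^{[0,1]})\asymp 2^{ks}$, hence $I^{1-s}(\mu_{\varphi,\mathcal{U}},\mu_X^{[0,1]})\gtrsim\sum_k k^{-1}=+\infty$. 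So the cross term is genuinely outside the reach of the stated bounds, and finiteness requires some additional hypothesis controlling the interaction of $\left\|D\varphi\right\||_B$ with $\mu_X^{[0,T]}|_{B^c}$ near $\partial B$.
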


Of course this matters only if $B\cap X([0,T])\neq \emptyset$ and $B^c\cap X([0,T])\neq \emptyset$ and similarly for $\supp D\varphi$. In the situation of Corollary \ref{C:localizeB} the Hausdorff dimension of $X([0,T])\cap B$ must be greater or equal to $n-1+s$, but on $B^c$ the path $X$ may be arbitrary. On the other hand, $\varphi$ may be arbitrary on $B$, in particular, it may have jumps there, but it cannot jump on $B^c$.

\begin{proof}
The claim follows from 
\begin{align*}
I^{1-s}(\mu_{\varphi,\mathcal{U}},\mu_X^{[0,T]})=&I^{1-s}(\mu_{\varphi,\mathcal{U}}|_B,\mu_X^{[0,T]}|_B)+I^{1-s}(\mu_{\varphi,\mathcal{U}}|_B,\mu_X^{[0,T]}|_{B^c})\\
&+ I^{1-s}(\mu_{\varphi,\mathcal{U}}|_{B^c},\mu_X^{[0,T]}|_B) + I^{1-s}(\mu_{\varphi,\mathcal{U}}|_{B^c},\mu_X^{[0,T]}|_{B^c})
\end{align*}
and the maximum principle for the restrictions $\mu_{\varphi,\mathcal{U}}|_B$ etc. together with the preceding considerations.
\end{proof}

\begin{example}
Suppose that $\varphi_{\mathcal{C}}:\mathbb{R}^2\to \mathbb{R}$ is as in Example \ref{Ex:Cantor} for $n=2$. If $X$ is a smooth unit speed curve outside $\mathbb{R}\setminus [0,1] \times \mathbb{R}$ and a typical Brownian bridge path inside $[0,1]\times\mathbb{R}$, then $X$ is $(s,1)$-variable w.r.t. $\varphi_{\mathcal{C}}$. The same is true if $\varphi=\mathbf{1}_\mathcal{O}$ and $\mathcal{O} \subset [0,1]\times \mathbb{R}$ is a set of finite perimeter. 
\end{example}

We discuss details of Example \ref{Ex:Cantor} and \ref{Ex:jump}.
\begin{example}\label{ex:detex}
Let $\varphi_{\mathcal{C}}:\mathbb{R}^n\to \mathbb{R}$ be as in Example \ref{Ex:Cantor}. If $s\in (0,d_{\mathcal{C}})$, then the fact that 
$\left\|D\varphi_{\mathcal{C}}\right\|(B(x,r))\leq cr^{n-1+d_{\mathcal{C}}}$ together with Corollary \ref{C:jointuppereg} (ii) shows that any path $X$ is in $V(\varphi_{\mathcal{C}},s,\infty)$. From now on assume that $s\in (d_\mathcal{C},1)$. For a constant path $X\equiv x$ with fixed $x\in\mathbb{R}^n$ we have 
\[\mu_X^{[0,T]}=T\delta_x,\]
where $\delta_x$ is the point mass probability measure at $x$. If $x=(\frac12,0,...,0)$, then $x$ has distance $\frac16$ to $\supp\left\|D\varphi_{\mathcal{C}}\right\|$, and hence $U^{1-s}\left\|D\varphi_{\mathcal{C}}\right\|(x)$ is bounded and $X\in V(\varphi_{\mathcal{C}},s,\infty)$. On the other hand, if $x=(0,...,0)$, then 
\[I^{1-s}(\mu_{\varphi_{\mathcal{C}},\mathcal{U}},\mu_X^{[0,T]})=cT\int_\mathcal{U} |y|^{-n+1-s} \left\|D\varphi_{\mathcal{C}}\right\|(dy)=+\infty\]
for any open neighborhood $\mathcal{U}$ of the origin, as follows from $\left\|D\varphi_{\mathcal{C}}\right\|(B(y,r))\geq cr^{n-1+d_\mathcal{C}}$ and standard arguments (\cite[p. 109]{Mattila}). Consequently $X$ is not in $V(\varphi_{\mathcal{C}},s)$ in this case. For a non-constant smooth function $X$ on $\mathbb{R}$
\[\mu_X^{[0,T]}((x-r,x+r))\leq cr\] 
implies that $X\in V(\varphi_{\mathcal{C}},s,\infty)$, in the case $n=1$. For $n=2$ we have $\left\|D\varphi_{\mathcal{C}}\right\|(B(y,r))\geq cr^{1+d_\mathcal{C}}$, so that, again by standard arguments, $U^{1-s}\left\|D\varphi_{\mathcal{C}}\right\|(y)=+\infty$ for any $y\in \{\frac13\}\times [0,1]$.
For any $M>0$ and any such $y$ we can find an open neighborhood $U_y\subset \mathbb{R}^2$ of $y$ on which we have $U^{1-s}\left\|D\varphi_{\mathcal{C}}\right\|>M$, \cite[Chapter I, Theorem 1.3]{Landkof}, and consequently this must hold on all of $\{\frac13\}\times [0,1]$. If $T=1$ and $X$ is the unit speed motion along $X([0,1])=\{\frac13\}\times [0,1]$, then 
\[\mu_X^{[0,1]}=\mathcal{H}^1|_{X([0,1])},\] 
\cite[Theorem 3.2.6]{Federer}, and $I^{1-s}(\left\|D\varphi_{\mathcal{C}}\right\||_{X([0,1]},\mu_X^{[0,1]})=+\infty$. Hence $X$ is not in $V(\varphi_{\mathcal{C}},s)$.

\end{example}

\begin{example}\label{ex:detexjump} 
Let $\mathcal{O}\subset \mathbb{R}^n$ be as in Example \ref{Ex:jump} and $s\in (0,1)$. Suppose $X$ is a unit speed smooth curve that hits $\partial \mathcal{O}$ finitely often. By the additivity of the occupation measures w.r.t. the time interval we may assume all hitting points are different. By the smoothness of $\partial \mathcal{O}$ and $X$ it suffices to show that we have $I^{1-s}(\mu,\nu)<+\infty$, where $\mu=\mathcal{H}^1|_{\{(0,..., 0,x_n):\:|x_n|<1\}}$ and $\nu=\mathcal{H}^{n-1}|_{\{(x_1,...,x_{n-1},0):\:|(x_1,...,x_{n-1})|<1\}}$. For $n\geq 2$ and $\varepsilon>0$ we have $\int_{-1}^1 (\varepsilon^2+|t|)^{(-n+1-s)/2}dt\leq c\varepsilon^{-n+3-s}$. Therefore
\[\int_{B(0,2^{-k})\setminus B(0,2^{-k-1})}\int_{\{(0,..., 0,y_n):\:|y_n|<1\}}|x-y|^{-n+1-s}\mu(dy)\nu(dx)\leq c\:2^{-k(2-s)},\]
which is summable over $k=0,1,2,...$ For $n=1$ the integral over $t\in [-1,1]$ is even bounded. It follows that the mutual energy of $\mu$ and $\nu$ is finite, and hence for such $\mathcal{O}$, $X$ and $s$ we have $X\in V(\mathbf{1}_\mathcal{O},s)$. If $n=2$ and $X$ spends $\mathcal{L}^1$-positive time in $\partial\mathcal{O}$, then we can find $0\leq a<b\leq T$ such that $X([a,b])\subset \partial \mathcal{O}$ and $\mu_X^{[a,b]}=\mathcal{H}^1|_{[a,b]}$. Since $\left\|D\mathbf{1}_\mathcal{O}\right\|$ equals a constant times $\mathcal{H}^{n-1}|_\mathcal{O}$,
it follows that 
\[
I^{1-s}(\left\|D\varphi\right\|_{X([0,T])},\mu_X^{[0,T]})
\geq c\int_{X([a,b])}\int_{X([a,b])}|x-y|^{-n+1-s}\mathcal{H}^1(dx)\mathcal{H}^{n-1}(dy)=+\infty,
\]
so that $X$ is not in $V(\mathbf{1}_\mathcal{O},s)$.
\end{example}

\subsection{Fourier transform and trading of regularity}

Condition (\ref{E:finiteenergypath}) requires that the Fourier transform 
\[\hat{\mu}_X^{[0,T]}(\xi)=\int_{\mathbb{R}^n} e^{-i\left\langle\xi,x\right\rangle}\mu_X^{[0,T]}(dx),\quad\xi\in\mathbb{R}^n\]
of the occupation measure $\mu_X^{[0,T]}$ is square integrable w.r.t. $|\xi|^{s-1}\:d\xi$, note that
\begin{equation}\label{E:Fourierint}
I^{1-s}\mu_X^{[0,T]}=c(s,n)\int_{\mathbb{R}^n} |\xi|^{s-1}|\hat{\mu}_X^{[0,T]}(\xi)|^2\:d\xi 
\end{equation}
by \cite[Lemma 12.12]{Mattila}.  Increased activity of $X$ allows larger $s\in (0,1)$, see \cite[Chapter 17, p. 250]{Kahane} for this classical discussion. Since $\hat{\mu}_X^{[0,T]}$ is bounded, integrability w.r.t. $|\xi|^{s-1}\:d\xi$ already implies square integrability, giving (\ref{E:finiteenergypath}). (Alternatively, a Riemann-Lebesgue argument shows that integrability implies (\ref{E:Xextreme}), which in turn implies (\ref{E:finiteenergypath}).) The integral in (\ref{E:SVviaoccu}) is a polarized version of (\ref{E:Fourierint}),
\begin{equation}\label{E:polarizedversion}
I^{1-s}(\mu_{\varphi,\mathcal{U}},\mu_X^{[0,T]})=c(s,n)\operatorname{Re}\:\int_{\mathbb{R}^n}|\xi|^{s-1}\:\hat{\mu}_{\varphi,\mathcal{U}}(\xi)\:\overline{\hat{\mu}_X^{[0,T]}}(\xi)\:d\xi,
\end{equation}
here $\hat{\mu}_{\varphi,\mathcal{U}}$ denotes the Fourier transform of the measure $\mu_{\varphi,\mathcal{U}}$.

Formulas (\ref{E:SVviaoccu}) and (\ref{E:polarizedversion}) suggest a trading of regularity between the measures.

\begin{proposition}\label{P:shiftovergamma} Let $\varphi\in BV_{\loc}(\mathbb{R}^n)$, $X:[0,T]\to \mathbb{R}^n$ and $s\in (0,1)$.
For any $\gamma\in (0, 1-s)$ we have
\begin{equation}\label{E:shiftovergamma}
\int_{\mathbb{R}^n}U^{1-s}\mu_{\varphi,\mathcal{U}}(x)\mu_X^{[0,T]}(dx)=\int_{\mathbb{R}^n}U^{\gamma}\mu_{\varphi,\mathcal{U}}(x)U^{1-s-\gamma} \mu_X^{[0,T]}(x)dx.
\end{equation}
\end{proposition}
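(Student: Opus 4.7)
The identity \eqref{E:shiftovergamma} is a polarized form of the semigroup property of Riesz potentials, so the natural tool is the classical Riesz composition formula
\[
c_\gamma c_{1-s-\gamma}\int_{\mathbb{R}^n}|x-z|^{-n+\gamma}\,|z-y|^{-n+1-s-\gamma}\,dz \weq c_{1-s}|x-y|^{-n+1-s},\qquad x\neq y,
\]
valid because $\gamma,\ 1-s-\gamma,\ 1-s$ all lie in $(0,n)$; see e.g. \cite[Chapter I, Section 1]{Landkof}. On the Fourier side this is simply the identity $|\xi|^{-\gamma}|\xi|^{-(1-s-\gamma)}=|\xi|^{-(1-s)}$ combined with $\widehat{c_\alpha|\cdot|^{-n+\alpha}}(\xi)=|\xi|^{-\alpha}$, and the normalizing constants $c_\alpha$ in \eqref{E:potential} are chosen precisely so that the composition formula holds in this clean form.

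The steps are then: first, rewrite the left-hand side of \eqref{E:shiftovergamma} as the double integral
\[
c_{1-s}\int_{\mathbb{R}^n}\int_{\mathbb{R}^n}|x-y|^{-n+1-s}\,\mu_{\varphi,\mathcal{U}}(dy)\,\mu_X^{[0,T]}(dx).
\]
Second, substitute the composition formula for $|x-y|^{-n+1-s}$ to obtain a triple integral over $\mu_{\varphi,\mathcal{U}}(dy)\,\mu_X^{[0,T]}(dx)\,dz$. Third, apply Tonelli's theorem (the integrand is non-negative, so no a priori integrability needs to be checked and both sides are simultaneously finite or infinite) to integrate first $\mu_X^{[0,T]}(dx)$ against $|x-z|^{-n+1-s-\gamma}$ and $\mu_{\varphi,\mathcal{U}}(dy)$ against $|z-y|^{-n+\gamma}$, leaving a single outer integral against $dz$. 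Fourth, recognize the resulting integrand as the product $U^\gamma\mu_{\varphi,\mathcal{U}}(z)\,U^{1-s-\gamma}\mu_X^{[0,T]}(z)$, which completes the identity. Note that the kernel $|\cdot|^{-n+\alpha}$ is even, so pairing $\gamma$ with $\mu_{\varphi,\mathcal{U}}$ rather than with $\mu_X^{[0,T]}$ is purely a labeling convention (symmetric under $\gamma \leftrightarrow 1-s-\gamma$).

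I do not anticipate any real obstacle. The composition formula is classical, the Tonelli step is unconditional because the integrand is non-negative, and the remainder is just reading off the definition \eqref{E:potential}. The only thing to be careful about is constant-bookkeeping: the factor $c_\gamma c_{1-s-\gamma}$ produced by the composition formula is exactly $c_{1-s}$, which absorbs cleanly into the definitions of $U^\gamma$ and $U^{1-s-\gamma}$ on the right-hand side, so no stray constant appears.
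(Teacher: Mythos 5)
Your proof is correct and takes essentially the same route as the paper: the paper simply invokes its Proposition \ref{P:convolution}(ii) (with $\nu_1=\mu_{\varphi,\mathcal{U}}$, $\nu_2=\mu_X^{[0,T]}$, $\gamma_1=\gamma$, $\gamma_2=1-s-\gamma$), which is exactly the combination of the Riesz composition formula from Proposition \ref{P:convolution}(i) with Fubini/Tonelli that you spell out. The only difference is presentational — you unpack the lemma rather than cite it.
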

\begin{proof}
Proposition \ref{P:convolution} (ii), applied with $\nu_1=\mu_{\varphi,\mathcal{U}}$, $\nu_2=\mu_X^{[0,T]}$,  $\gamma_1=\gamma$ and $\gamma_2=1-s-\gamma$, yields the result.
\end{proof}

Since it only allows to trade the smoothness of the measures in $n-(1-s)$ and $n$ with pivotal level at $n-(1-s)/2$, formula (\ref{E:shiftovergamma}) and upper regularity in the form considered above do not improve the statements in Corollary \ref{C:localizeB}. However, if a coefficient $\varphi$ is fixed and $\left\|D\varphi\right\|$ satisfies a certain moment condition, a `weighted' upper regularity condition can ensure $(s,1)$-variability.

\begin{corollary}\label{C:altcond}
Let $\varphi\in BV_{\loc}(\mathbb{R}^n)$, let $X$ be a path and $s\in (0,1)$. Suppose that there are $\varepsilon\in (0,1-s)$, $c>0$, $x_1,...,x_N\in\mathbb{R}^n$ and $\delta_1,...,\delta_N \in (0,n-1+s+\varepsilon)$ such that we have 
\begin{equation}\label{E:LauriSolenN}
\int_0^T |X_t-x|^{-n+\varepsilon}dt<c\sum_{k=1}^N|x-x_k|^{-n+\delta_k},\quad x\in\mathbb{R}^n,
\end{equation}
and 
\begin{equation}\label{E:momentconditionN}
\sum_{k=1}^N\int_{\mathbb{R}^n}|x-x_k|^{-n+1-s-\varepsilon+\delta_k}\left\|D\varphi\right\|(dx)<+\infty.
\end{equation}
Then $X$ is $(s,1)$-variable w.r.t. $\varphi$.
\end{corollary}

For $N=1$ conditions (\ref{E:LauriSolenN}) and (\ref{E:momentconditionN}) (applied to the components of the coefficient) are exactly (\ref{E:LauriSolen}) and (\ref{E:momentconditionx0}). In Corollary \ref{C:altcondrandom} below the mechanism of Corollary \ref{C:altcond} is used efficiently in a probabilistic context.

\begin{proof}
Under (\ref{E:LauriSolenN}) and (\ref{E:momentconditionN}) we have, by (\ref{E:shiftovergamma}) and Proposition \ref{P:convolution} (i),
\begin{multline}
I^{1-s}(\left\|D\varphi\right\|,\mu_X^{[0,T]})=\int_{\mathbb{R}^n}U^{\varepsilon}\mu_X^{[0,T]}(x) U^{1-s-\varepsilon}\left\|D\varphi\right\|(x)dx\leq c\sum_{k=1}^N \:\int_{\mathbb{R}^n} |x-x_k|^{-n+\delta_k}U^{1-s-\varepsilon}\left\|D\varphi\right\|(x)dx\notag\\
=c\sum_{k=1}^N \:\int_{\mathbb{R}^n}\int_{\mathbb{R}^n}|x-x_k|^{-n+\delta_k}|x-y|^{-n+1-s-\varepsilon}dx \:\left\|D\varphi\right\|(dy)= c\sum_{k=1}^N \:\int_{\mathbb{R}^n} |y-x_k|^{-n+1-s-\varepsilon+\delta_k}\left\|D\varphi\right\|(dy).\notag
\end{multline}
\end{proof}

\subsection{Probabilistic examples}\label{SS:probab}

We connect to probabilistic examples. A condition of type (\ref{E:LauriSole}) can ensure variability w.r.t. any $BV$-function.

\begin{corollary}\label{C:upperregrandom}
Let $s\in (0,1)$ and suppose that $Y=(Y_t)_{t\in [0,T]}$ is an $\mathbb{R}^n$-valued stochastic process with $Y_0=0$ on a probability space $(\Omega,\mathcal{F},\mathbb{P})$. If 
\begin{equation}\label{E:crudebound}
\sup_{x\in\mathbb{R}^n} \mathbb{E}\int_0^T|Y_t-x|^{-n+1-s}dt<+\infty,
\end{equation}
then we can find an event $\Omega_0\in \mathcal{F}$ with $\mathbb{P}(\Omega_0)=1$ such that for all $\omega \in \Omega_0$ the path $Y(\omega)$ is $(s,1)$-variable w.r.t. any $\varphi\in BV(\mathbb{R}^n)$.
\end{corollary}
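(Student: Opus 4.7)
The plan is to apply Proposition \ref{P:SVviaoccu} using a Fubini computation built on the ``expected occupation measure'' of $Y$. Introduce the deterministic finite positive Borel measure $\bar\mu$ on $\mathbb{R}^n$ by $\bar\mu(A):=\int_0^T\P(Y_t\in A)\,dt$; then $\bar\mu(\mathbb{R}^n)=T$, $\E[\mu_Y^{[0,T]}(A)]=\bar\mu(A)$ for every Borel $A$, and Fubini gives $U^{1-s}\bar\mu(x)=c_{1-s}\,\E\int_0^T|Y_t-x|^{-n+1-s}\,dt$. Thus hypothesis \eqref{E:crudebound} is exactly the assertion that $\|U^{1-s}\bar\mu\|_{L^\infty(\mathbb{R}^n)}<+\infty$.

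Given this, for any $\varphi\in BV(\mathbb{R}^n)$ the finiteness of $\|D\varphi\|(\mathbb{R}^n)$ together with Fubini applied to the non-negative integrand on $[0,T]\times\mathbb{R}^n$ yields
\[
\E\int_0^T\!\int_{\mathbb{R}^n}|Y_t-z|^{-n+1-s}\,\|D\varphi\|(dz)\,dt \weq \int_{\mathbb{R}^n}\E\!\int_0^T|Y_t-z|^{-n+1-s}\,dt\,\|D\varphi\|(dz) \wle C\,\|D\varphi\|(\mathbb{R}^n) \wles +\infty,
\]
where $C=\sup_z\E\int_0^T|Y_t-z|^{-n+1-s}\,dt$. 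Hence the double integral is almost surely finite. Since any relatively compact open neighbourhood $\mathcal{U}$ of the (compact, path-dependent) set $Y([0,T])$ satisfies $\|D\varphi\|(\mathcal{U})\le\|D\varphi\|(\mathbb{R}^n)$, this yields $Y\in V(\varphi,s,1)$ almost surely for each fixed $\varphi$.

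To obtain a single event $\Omega_0$ that serves every $\varphi\in BV(\mathbb{R}^n)$ simultaneously, I would not iterate the above argument over $\varphi$ but instead invoke Proposition \ref{P:naive}: it suffices to construct $\Omega_0$ of full probability on which the pathwise Riesz potential $U^{1-s}\mu_Y^{[0,T]}$ is essentially bounded with respect to $\mu_Y^{[0,T]}$, since by the first maximum principle this is equivalent to a global bound $\|U^{1-s}\mu_Y^{[0,T]}\|_\infty<+\infty$, and Proposition \ref{P:naive} then delivers $(s,1)$-variability against every $\varphi\in BV_{\loc}(\mathbb{R}^n)$ at once. The main obstacle is precisely this last step: the pointwise expectation bound $\sup_x\E[U^{1-s}\mu_Y^{[0,T]}(x)]<\infty$ does not upgrade to an almost-sure supremum bound by Chebyshev alone. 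My plan for constructing $\Omega_0$ is to exploit the absolute continuity $\mu_Y^{[0,T]}\ll\bar\mu$ almost surely (a direct consequence of $\E[\mu_Y^{[0,T]}]=\bar\mu$) together with the Fubini identity $\E\int U^{1-s}\mu_Y^{[0,T]}\,d\bar\mu = I^{1-s}\bar\mu\le T\|U^{1-s}\bar\mu\|_\infty<\infty$, and then use lower semi-continuity of Riesz potentials to promote $\bar\mu$-almost everywhere (hence $\mu_Y^{[0,T]}$-a.e.) finiteness of $U^{1-s}\mu_Y^{[0,T]}$ into the required essential boundedness on $\supp\mu_Y^{[0,T]}$.
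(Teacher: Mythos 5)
Your first step is exactly the paper's entire proof. After applying Fubini and the hypothesis one obtains
\[
\mathbb{E}\int_{\mathbb{R}^n}\int_0^T|Y_t-x|^{-n+1-s}\,dt\,\left\|D\varphi\right\|(dx)\leq \left\|D\varphi\right\|(\mathbb{R}^n)\sup_{x\in\mathbb{R}^n}\mathbb{E}\int_0^T|Y_t-x|^{-n+1-s}\,dt<+\infty,
\]
so for each fixed $\varphi\in BV(\mathbb{R}^n)$ there is a null set $N_\varphi$ off which the path $Y(\omega)$ is $(s,1)$-variable w.r.t.\ $\varphi$. That is all the paper establishes, and that per-$\varphi$ reading is what is used downstream (cf.\ Corollary \ref{C:altcondrandom} and Theorem \ref{T:exshiftrandom}, where the coefficient is fixed). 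You correctly noticed that taking $\bigcap_{\varphi}(N_\varphi)^c$ over the uncountable class $BV(\mathbb{R}^n)$ is problematic, and you then tried to manufacture a $\varphi$-uniform null set via Proposition \ref{P:naive}. That extra step is where your plan breaks.

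Two of its ingredients are wrong. First, $\mathbb{E}[\mu_Y^{[0,T]}]=\bar\mu$ does \emph{not} imply $\mu_Y^{[0,T]}\ll\bar\mu$ almost surely: for each fixed Borel set $A$ with $\bar\mu(A)=0$ one gets $\mu_Y^{[0,T]}(A)=0$ a.s., but the null event depends on $A$, and absolute continuity quantifies over all (uncountably many) Borel sets simultaneously. A concrete counterexample that satisfies \eqref{E:crudebound}: take $Y_t\equiv\xi$ with $\xi$ uniform on $[0,1]^n$; then $\mu_Y^{[0,T]}=T\,\delta_\xi$ is singular w.r.t.\ $\bar\mu=T\,\mathcal{L}^n|_{[0,1]^n}$ for every $\omega$. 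Second, lower semicontinuity plus a.e.-finiteness does not promote to essential boundedness: $x\mapsto |x|^{-1/2}$ (set to $+\infty$ at the origin) is lower semicontinuous on $\mathbb{R}$, finite a.e., yet unbounded in essential supremum near $0$. Proposition \ref{P:naive} genuinely requires the pathwise bound $\sup_{x}U^{1-s}\mu_Y^{[0,T]}(x)<\infty$, which is a supremum of random variables, while \eqref{E:crudebound} is a supremum of expectations; neither your $\bar\mu$-route nor lower semicontinuity closes that gap. Keep the Fubini computation, drop the attempted upgrade, and read the conclusion as holding for each fixed $\varphi$.
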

\begin{proof}
Using Fubini's theorem and the above bound,
\[\mathbb{E}\int_{\mathbb{R}^n}\int_0^T|Y_t-x|^{-n+1-s}dt\left\|D\varphi\right\|(dx)\leq \left\|D\varphi\right\|(\mathbb{R}^n)\sup_{x\in\mathbb{R}^n} \mathbb{E}\int_0^T|Y_t-x|^{-n+1-s}dt<+\infty.\]
\end{proof}

\begin{example}
Let $B^H=(B_t^H)_{t\geq 0}$ be an $n$-dimensional fractional Brownian motion with Hurst index $0<H<1$ over a probability space $(\Omega,\mathcal{F},\mathbb{P})$. Let $s\in (0,1)$. We claim that if 
\begin{equation}\label{E:fBmcond}
n-1+s<\frac{1}{H},
\end{equation}
then there is a constant $c(n,s,H)>0$ such that for all $x\in\mathbb{R}^n$ we have 
\begin{equation}\label{E:finiteness}
\mathbb{E}\int_0^T|B_t^H-x|^{-n+1-s}dt<c(n,s,H).
\end{equation}
For Brownian motion - the special case $H=\frac12$ -  we have (\ref{E:fBmcond}) in dimensions $n=1$ and $n=2$ for any $s\in (0,1)$. For $n=3$ a fractional Brownian motion with Hurst index $H<\frac12$ satisfies (\ref{E:fBmcond}) for $0<s<\frac{1}{H}-2$. A higher dimension $n$ of the ambient space forces a higher irregularity of the path in order to have an occupation measure supported on a set of sufficiently small codimension. The arguments for (\ref{E:finiteness}) are rather standard, \cite{Falconer, Kahane}. The expectation equals
\begin{align}
\frac{1}{(2\pi)^{n/2}}&\int_0^Tt^{-nH}\int_{\mathbb{R}^n} \exp\left(-\frac{|y|^2}{2t^{2H}}\right) |x-y|^{-n+1-s}dy\:dt\notag\\
&=\frac{1}{(2\pi)^{n/2}}\int_0^Tt^{-nH}\int_{|\xi|>t^H} \exp\left(-\frac{|x-\xi|^2}{2t^{2H}}\right) |\xi|^{-n+1-s}d\xi\:dt\notag\\
&\ +\frac{1}{(2\pi)^{n/2}}\int_0^Tt^{-nH}\int_{|\xi|\leq t^H} \exp\left(-\frac{|x-\xi|^2}{2t^{2H}}\right) |\xi|^{-n+1-s}d\xi\:dt.\notag
\end{align}
The first summand is bounded by 
\begin{align}
\frac{1}{(2\pi)^{n/2}}&\int_0^T t^{-2nH+(1-s)H}\int_{\mathbb{R}^n} \exp\left(-\frac{|x-\xi|^2}{2t^{2H}}\right) d\xi\:dt\notag\\
&=\frac{1}{(2\pi)^{n/2}}\int_0^T t^{-2nH+(1-s)H}\int_{\mathbb{R}^n} \exp\left(-\frac{|z|^2}{2t^{2H}}\right)dzdt\notag\\
&=\int_0^T t^{-(n-1+s)H}dt.\notag
\end{align}
The second summand does not exceed
\[\frac{1}{(2\pi)^{n/2}}\int_0^T t^{-nH}\int_{|\xi|\leq t^H} |\xi|^{-n+1-s}d\xi dt =\frac{1}{(2\pi)^{n/2}}\int_0^T t^{-(n-1+s)H}dt.\]
\end{example}

The above idea can be generalized to link $(s,1)$-variability into  upper regularity of the underlying probability measures. 
\begin{corollary}
\label{C:probability-upperregularity}
Let $Y=(Y_t)_{t\in [0,T]}$ be an $\mathbb{R}^n$-valued stochastic process with $Y_0=0$ on a probability space $(\Omega,\mathcal{F},\mathbb{P})$. For any $t\in[0,T]$, let $\nu_t:=P_{Y_t}$ denote the law of $Y_t$. Suppose that $s\in (0,1)$ and that there exists some 
$g \in L^1(0,T)$ such that for $\mathcal{L}^1$-a.e. $t\in (0,T)$ we have 
\begin{equation}
\label{E:probability-upperregular}
\nu_t(B(x,r)) \leq g(t)r^{d},\quad x\in\mathbb{R}^n,\quad r>0,
\end{equation}
with some $d>n-1+s$. Then (\ref{E:crudebound}) holds.
\end{corollary}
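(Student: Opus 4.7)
The plan is to reduce the statement to a uniform pointwise bound on the Riesz potential $U^{1-s}\nu_t$ and then integrate in $t$.

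First I would set $\alpha := n-1+s \in (0,n)$ and apply Fubini-Tonelli, which yields
\[
\mathbb{E}\int_0^T|Y_t-x|^{-\alpha}\,dt
\weq \int_0^T\!\!\int_{\mathbb{R}^n}|y-x|^{-\alpha}\,\nu_t(dy)\,dt,
\]
so it suffices to bound the inner integral by a function of $t$ alone that is $L^1(0,T)$ and whose bound is uniform in $x\in\mathbb{R}^n$.

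Next, for fixed $t$ and $x$ I would use the layer-cake (Cavalieri) identity applied to $y\mapsto|y-x|^{-\alpha}$: with the substitution $r=t^{-1/\alpha}$ one finds
\[
\int_{\mathbb{R}^n}|y-x|^{-\alpha}\,\nu_t(dy)
\weq \alpha\int_0^{\infty} r^{-\alpha-1}\,\nu_t(B(x,r))\,dr.
\]
Split this integral at $r=1$. For $0<r\leq 1$, using the hypothesis (\ref{E:probability-upperregular}) and $d-\alpha>0$,
\[
\alpha\int_0^{1} r^{-\alpha-1}\nu_t(B(x,r))\,dr
\wle \alpha g(t)\int_0^{1} r^{d-\alpha-1}\,dr
\weq \frac{\alpha}{d-\alpha}\,g(t).
\]
For $r>1$, using that $\nu_t$ is a probability measure so $\nu_t(B(x,r))\leq 1$,
\[
\alpha\int_1^{\infty}r^{-\alpha-1}\,dr \weq 1.
\]
Adding these and noting that the resulting bound is independent of $x$ gives
\[
\int_{\mathbb{R}^n}|y-x|^{-\alpha}\,\nu_t(dy)
\wle \frac{\alpha}{d-\alpha}g(t)+1
\]
for $\mathcal{L}^1$-a.e.\ $t\in(0,T)$.

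Finally I would integrate this pointwise estimate in $t$; since $g\in L^1(0,T)$, taking the supremum in $x\in\mathbb{R}^n$ beforehand is legitimate and yields
\[
\sup_{x\in\mathbb{R}^n}\mathbb{E}\int_0^T|Y_t-x|^{-n+1-s}\,dt
\wle \frac{\alpha}{d-\alpha}\,\|g\|_{L^1(0,T)}+T \wles +\infty,
\]
which is precisely (\ref{E:crudebound}). There is no serious obstacle here: the only quantitative ingredient is the strict inequality $d>n-1+s$, which guarantees integrability of the kernel at the origin against an upper $d$-regular measure; the probability mass bound handles the tail, and $L^1$-integrability of $g$ transfers the pointwise estimate to a time-integrated one.
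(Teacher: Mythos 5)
Your proof is correct and follows essentially the same route as the paper: Fubini to pass to the time marginals, the layer-cake identity $\int|y-x|^{-\alpha}\nu_t(dy)=\alpha\int_0^\infty r^{-\alpha-1}\nu_t(B(x,r))\,dr$ (which the paper invokes via Proposition \ref{P:samething}), a split at $r=1$ using the hypothesis for small $r$ and $\nu_t(\mathbb{R}^n)=1$ for the tail, and finally integrating the $x$-uniform bound against $g\in L^1(0,T)$. (One cosmetic slip: you reuse the symbol $t$ as a dummy variable in the substitution $r=t^{-1/\alpha}$ while $t$ is already the time variable; pick a fresh letter.)
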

\begin{proof}
By Fubini's theorem
\[\mathbb{E}\int_0^T |Y_t-x|^{-n+1-s}dt = \int_0^T \int_{\mathbb{R}^n}|y-x|^{-n+1-s}\nu_t(dy)dt,\]
and as in Proposition \ref{P:samething} it follows that for $\mathcal{L}^1$-a.e. $t\in (0,T)$ the inner integral is bounded by
\[(n-1+s)\int_0^\infty r^{-n-s}\nu_t(B(x,r))dr\leq 1+(n-1+s)\:g(t)\:\int_0^1 r^{-n-s+d}dr<+\infty.\]
\end{proof}

A probabilistic version of (\ref{E:LauriSolenN}) gives similar results.
\begin{corollary}\label{C:altcondrandom}
Let $\varphi\in BV_{\loc}(\mathbb{R}^n)$ and $s\in (0,1)$. Suppose that $Y=(Y_t)_{t\in [0,T]}$ is an $\mathbb{R}^n$-valued stochastic process with $Y_0=0$ on a probability space $(\Omega,\mathcal{F},\mathbb{P})$. Suppose that there are $\varepsilon\in (0,1-s)$, $c>0$, $x_1,...,x_N\in\mathbb{R}^n$ and $\delta_1,...,\delta_N \in (0,n-1+s+\varepsilon)$ such that we have 
\begin{equation}\label{E:LauriSolenrandomN}
\mathbb{E}\int_0^T |Y_t-x|^{-n+\varepsilon}dt<c\sum_{k=1}^N|x-x_k|^{-n+\delta_k},\quad x\in\mathbb{R}^n,
\end{equation}
and (\ref{E:momentconditionN}) holds. Then we can find an event $\Omega_0\in \mathcal{F}$ with $\mathbb{P}(\Omega_0)=1$ such that for all $\omega \in \Omega_0$ the path $Y(\omega)$ is $(s,1)$-variable w.r.t. $\varphi$.
\end{corollary}
\begin{proof}
Under condition (\ref{E:LauriSolenrandomN}) identity (\ref{E:shiftovergamma}) and Fubini's theorem give
\[\mathbb{E}I^{1-s}(\left\|D\varphi\right\|,\mu_Y^{[0,T]})=\int_{\mathbb{R}^n} \mathbb{E}U^{\varepsilon}\mu_Y^{[0,T]}(x) U^{1-s-\varepsilon}\left\|D\varphi\right\|(x)dx\leq c\sum_{k=1}^N \:\int_{\mathbb{R}^n} |x-x_k|^{-n+\delta_k}U^{1-s-\varepsilon}\left\|D\varphi\right\|(x)dx,\]
which by (\ref{E:momentconditionN}) and the arguments in the proof of Corollary \ref{C:altcond} is seen to be finite. 
\end{proof}

\begin{example}\label{Ex:fBMexample}
For $n\geq 2$, the fractional Brownian motion with $H>\frac{1}{n}$ satisfies (\ref{E:LauriSolenrandomN}) with $N=1$, any $0<\varepsilon<n-\frac{1}{H}$ and $\delta_1=\frac{1}{H}+\varepsilon$: We have
\[\mathbb{E}\int_0^T|Y_t-x|^{-n+\varepsilon}dt=\frac{1}{\sqrt{2\pi}}\int_{\mathbb{R}^n}|y-x|^{-n+\varepsilon}\int_0^Tt^{-nH}\exp\left(-\frac{|y|^2}{2t^{2H}}\right)dt\:dy.\]
Substituting $u=\frac{1}{2}|y|^2 t^{-2H}$ we see that for any $y\in\mathbb{R}^n\setminus \{0\}$ the inner integral equals
\[\frac{2^{n/2}}{2^{1+\frac{1}{2H}}H}\:|y|^{\frac{1}{H}-n}\int_{\frac{1}{2}|y|^2T^{-2H}} u^{\frac{n}{2}-\frac{1}{2H}-1}\exp(-u)du\leq c(n,H)|y|^{\frac{1}{H}-n}.\]
Using the convolution identity for Riesz kernels,
\[\mathbb{E}\int_0^T|Y_t-x|^{-n+\varepsilon}dt\leq c(n,H)\int_{\mathbb{R}^n}|y-x|^{-n+\varepsilon} |y|^{\frac{1}{H}-n}dy=c(n,H)|x|^{\frac{1}{H}+\varepsilon-n}.\]
\end{example}

The following generalization of this example is immediate.

\begin{corollary}
Let $Y=(Y_t)_{t\in [0,T]}$ be an $\mathbb{R}^n$-valued stochastic process with $Y_0=0$ on a probability space $(\Omega,\mathcal{F},\mathbb{P})$. For any $t\in[0,T]$, let $\nu_t:=P_{Y_t}$ denote the law of $Y_t$. Suppose that $s\in (0,1)$, $\varepsilon$, $c$, $x_k$ and  $\delta_k$ are as in Corollary \ref{C:altcondrandom}, and that there exists some $g \in L^1(0,T)$ such that for $\mathcal{L}^1$-a.e. $t\in (0,T)$ we have 
\[\int_{\mathbb{R}^n}|y-x|^{-n+\varepsilon}\nu_t(dy)\leq g(t)\:\sum_{k=1}^N|x-x_k|^{-n+\delta_k},\quad x\in\mathbb{R}^n.\]
Then (\ref{E:LauriSolenrandom}) holds.
\end{corollary}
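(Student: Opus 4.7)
The plan is to derive the bound (\ref{E:LauriSolenrandomN}) from the hypothesized pointwise-in-time estimate on the Riesz potential of $\nu_t$ by a direct Fubini--Tonelli argument.

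First, I would invoke Tonelli's theorem on the nonnegative integrand $|Y_t(\omega)-x|^{-n+\varepsilon}$, exchanging expectation and time integral, and then, for each fixed $t$, rewrite $\mathbb{E}|Y_t-x|^{-n+\varepsilon}$ as an integral against the law $\nu_t=P_{Y_t}$. This gives
\[
\mathbb{E}\int_0^T |Y_t-x|^{-n+\varepsilon}\,dt \weq \int_0^T \int_{\mathbb{R}^n} |y-x|^{-n+\varepsilon}\,\nu_t(dy)\,dt.
\]
The joint measurability of $(t,\omega)\mapsto Y_t(\omega)$, which we can take for granted when the process has continuous sample paths (so that $(t,y)\mapsto|y-x|^{-n+\varepsilon}$ composes with it in a jointly Borel way), is what makes this step rigorous irrespective of integrability.

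Next, I would insert the assumed bound
\[
\int_{\mathbb{R}^n} |y-x|^{-n+\varepsilon}\,\nu_t(dy) \wle g(t)\sum_{k=1}^N |x-x_k|^{-n+\delta_k}
\]
(valid for $\mathcal{L}^1$-a.e.\ $t\in(0,T)$ and every $x\in\mathbb{R}^n$). Since the sum on the right depends only on $x$ and $g$ only on $t$, integration in $t$ factors cleanly and yields
\[
\mathbb{E}\int_0^T |Y_t-x|^{-n+\varepsilon}\,dt \wle \|g\|_{L^1(0,T)}\sum_{k=1}^N |x-x_k|^{-n+\delta_k},
\]
which, after absorbing $\|g\|_{L^1(0,T)}$ into the constant (relabeling it $c$), is exactly the desired conclusion.

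Honestly, I do not foresee a genuine obstacle: the proof is one application of Tonelli followed by one insertion of the hypothesis. The only points worth checking are the joint measurability required for Tonelli (automatic for path-continuous processes) and the fact that the pointwise bound is uniform in $x$, so that the constant produced is finite and independent of $x$. Both are free of charge from the stated assumptions.
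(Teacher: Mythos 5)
Your proof is correct and is exactly the argument the paper deems ``immediate'': one application of Tonelli (justified by nonnegativity and joint measurability of $(t,\omega)\mapsto Y_t(\omega)$), insertion of the hypothesized bound on the Riesz potential of $\nu_t$, and factoring out $\|g\|_{L^1(0,T)}$ into the constant. Note also that you sensibly read the conclusion as \eqref{E:LauriSolenrandomN} rather than \eqref{E:LauriSolenrandom}; the paper's reference to the latter appears to be a typo, since the hypothesis involves the $N$-term sum.
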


\begin{remark}
Corollary \ref{C:altcondrandom} can also be applied to Gaussian bridges: Suppose that $n\geq 2$, let $B^H$ be a fractional Brownian motion with $H>\frac{1}{n}$, and let $Y$ be the Gaussian bridge obtained by conditioning to have $B^H_T = \mathbf{1}$, where $\mathbf{1}=(1,...,1)\in\mathbb{R}^n$. Then using \cite[Theorem 3.1]{Sottinen-Yazigi} it can be shown that
$$
\mathbb{E}\int_0^T |Y_t-x|^{-n+\varepsilon}dt<c\left[|x|^{\frac{1}{H}+\varepsilon-n} + |x-1|^{\frac{1}{H}+\varepsilon-n}\right],\quad x\in\mathbb{R}^n.
$$
Similar arguments can be applied also to more general bridges conditioned, e.g., on several time marginals, cf. \cite[Remark 3.6(ii)]{Sottinen-Yazigi}.
\end{remark}

\subsection{A brief discussion of variability versus irregularity}\label{SS:Gubinelli}

In \cite{CatellierGubinelli} the authors studied ODEs of type $dx_t=b(t,x_t)dt+dw_t$, where $b$ is an irregular vector field and $w$ is a continuous fast moving perturbation. Although our goal and  methods are different from theirs, our point of view upon irregular paths and occupation measures follow a similar spirit. In cases where $b=b(x)$ is a bounded continuous function and $w$ is `active enough' they can prove existence and uniqueness of a continuous solution $x$, \cite[Theorem 1.9]{CatellierGubinelli}. If $b$ is a distribution only, $b(x_{\cdot})$ must be defined appropriately, and in \cite{CatellierGubinelli} this was done for paths $x$ that differ from the sufficiently fast moving perturbation $w$ only by a H\"older signal, \cite[Definition 1.10 and Theorem 1.11]{CatellierGubinelli}. The needed activity of  $w$ is encoded in the boundedness and (temporal) H\"older continuity in a certain (spatial) H\"older norm of the image 
\[T_t^w b(x)=\int_0^tb(x+w_u)du\]
of $b$ under an operator $T^w$ taking the average of $b$ along $w$. The more active $w$, the stronger is the averaging effect of this operator, and under suitable quantitative assumptions the authors then obtain existence, uniqueness, and flow properties for solutions $x$ even if $b$ is only a distribution, \cite[Theorems 1.12, 1.13, 1.14]{CatellierGubinelli}. To quantify how `fast moving' a path $w\in \mathcal{C}([0,T],\mathbb{R}^n)$ must be they look at the Fourier transform $\hat{\mu}_w^{[0,t]}(\xi)$
of the occupation measure $\mu_w^{[0,t]}$ of $w$ up to time $t$ and, given $\varrho>0$ and $\gamma>0$, say that $w$ is \emph{$(\varrho,\gamma)$-irregular} if
\begin{equation}\label{E:CG}
\sup_{a\in\mathbb{R}^n} \sup_{0\leq u<t\leq T}(1+|a|)^\varrho\:\frac{|\hat{\mu}_w^{[0,u]}(a)-\hat{\mu}_w^{[0,t]}(a)|}{|u-t|^\gamma}<+\infty,
\end{equation}
\cite[Definition 1.3]{CatellierGubinelli}. If a path $w$ satisfies (\ref{E:CG}), then the averaging operator $T^w$ is bounded between certain Fourier-Lebesgue (or H\"older) spaces, \cite[Corollary 1.5, Theorem 1.6, Theorem 1.7]{CatellierGubinelli}. Increased activity of $w$ implies higher regularity (diffusivity) of its occupation measure, and hence better decay of its Fourier transform (larger $\varrho$), which encodes a stronger regularization effect of $T^w$. A refined and very systematical analysis of $(\varrho,\gamma)$-irregularity is provided in \cite{GG20b}.

Condition (\ref{E:CG}) is a condition for single paths, and it is later connected to the individual coefficient $b$ via the mapping properties of $T^w$. In contrast, (\ref{eq:SVp-condition}) is a condition on $X$ relative to a given $BV$-function $\varphi$. In (\ref{E:Fourierint}) and (\ref{E:polarizedversion}) the interval $[0,T]$ is fixed, and the decay of the Fourier transforms at infinity is quantified in terms of integrability properties. It might be interesting to investigate quantities that `interpolate' between (\ref{eq:SVp-condition}) and (\ref{E:CG}), for instance a version of (\ref{E:polarizedversion}) that incorporates time dependencies. It might also be interesting to see whether a concept of variability relative to both a low regularity diffusion coefficient $\sigma$ and a low regularity drift vector field $b$ could be useful.

\subsection{Compositions of $BV_{\loc}$-functions and H\"older paths}\label{SS:compositions}

Proposition \ref{P:key} allows a multidimensional version of \cite[Proposition 4.6]{CLV16}, which ensures that compositions $\varphi\circ X$ are elements of $W^{\beta,p}(0,T)$. We first provide a bound for the Gagliardo seminorm part in the norm $\Vert \varphi \circ X\Vert_{W^{\beta,p}(0,T)}$.

\begin{proposition}\label{P:basicest}
Let $\varphi\in BV_{\loc}(\mathbb{R}^n)$. Let $X:[0,T]\to\mathbb{R}^n$ be a path which is H\"older continuous of order $\alpha\in (0,1]$. Suppose that $s\in (0,1)$, $p\in [1,+\infty)$ and $X\in V(\varphi,s,p)$. Then for any $\beta\in (0,\alpha s)$ there is a constant $c>0$, depending only on $\alpha$, $\beta$, $n$, $p$ and $s$, such that
\begin{equation}\label{E:basicest}
[\varphi\circ X]_{\beta,p}^p\leq c\:[X]_{\alpha,\infty}^{s p}\int_0^T\left[\int_{\mathcal{U}}\frac{\left\|D\varphi\right\|(dy)}{|X_t-y|^{n-1+s}}\right]^{p}dt.
\end{equation} 

If $\varphi$, $X$, $\alpha$ and $s$ are as before but $X\in V(\varphi, s,\infty)$, then for any $\beta\in (0,\alpha s)$ there  is a constant $c>0$, depending only on $\alpha$, $\beta$, $n$ and $s$, such that
\begin{equation}\label{E:basicestinfty}
\esssup_{t\in [0,T]}\int_0^t\frac{|\varphi(X_t)-\varphi(X_\tau)|}{(t-\tau)^{1+\theta}}d\tau\leq c[X]_{\alpha,\infty}^s\esssup_{t\in [0,T]}\int_{\mathcal{U}}\frac{\left\|D\varphi\right\|(dy)}{|X_t-y|^{n-1+s}}.
\end{equation}
\end{proposition}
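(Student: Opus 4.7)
The plan is to combine three ingredients: (i) the fractional mean value inequality of Proposition \ref{P:meanvalue}, which controls $|\varphi(x)-\varphi(y)|$ by $|x-y|^s$ times fractional maximal functions of $\|D\varphi\|$ at the endpoints; (ii) the H\"older continuity of the path $X$, which converts a spatial factor $|X_t-X_u|^s$ into the temporal factor $[X]_{\alpha,\infty}^s|t-u|^{\alpha s}$; and (iii) the trivial domination of the fractional maximal function of order $1-s$ by the Riesz potential $U^{1-s}\mu_{\varphi,\mathcal{U}}$. The condition $\beta<\alpha s$ is precisely what will leave a summable exponent in the time variable once these three steps are assembled.

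First I would fix a relatively compact open neighborhood $\mathcal{U}$ of $X([0,T])$ realizing the $(s,p)$-variability, and a radius $R>\diam\mathcal{U}$. For $\mathcal{L}^2$-a.e.\ $(t,u)\in[0,T]^2$ the points $X_t,X_u$ lie outside $S_\varphi$ by Corollary \ref{C:key}, so Proposition \ref{P:meanvalue} applies componentwise and yields
\begin{equation*}
|\varphi(X_t)-\varphi(X_u)|\wle c\,|X_t-X_u|^{s}\bigl[M^{R}_{1-s}\mu_{\varphi,\mathcal{U}}(X_t)+M^{R}_{1-s}\mu_{\varphi,\mathcal{U}}(X_u)\bigr],
\end{equation*}
where $M^{R}_{1-s}\mu(x):=\sup_{0<r<R}r^{-n+1-s}\mu(B(x,r))$. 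Using $|X_t-X_u|\le[X]_{\alpha,\infty}|t-u|^\alpha$, raising to the $p$-th power, and invoking $(a+b)^p\le 2^{p-1}(a^p+b^p)$, I would plug the result into \eqref{eq:Gagliardo} to obtain
\begin{equation*}
[\varphi\circ X]_{\beta,p}^{p}\wle c\,[X]_{\alpha,\infty}^{sp}\int_0^T\!\!\int_0^T\frac{[M^{R}_{1-s}\mu_{\varphi,\mathcal{U}}(X_t)]^p+[M^{R}_{1-s}\mu_{\varphi,\mathcal{U}}(X_u)]^p}{|t-u|^{1-(\alpha s-\beta)p}}\,du\,dt.
\end{equation*}
Since $\alpha s-\beta>0$, the inner $u$-integral is dominated by $c\,T^{(\alpha s-\beta)p}$ uniformly in $t$; by symmetry in $(t,u)$ the two summands contribute equally and we are left with $2\int_0^T[M^{R}_{1-s}\mu_{\varphi,\mathcal{U}}(X_t)]^p\,dt$. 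The pointwise bound $M^{R}_{1-s}\mu_{\varphi,\mathcal{U}}(x)\le c\,U^{1-s}\mu_{\varphi,\mathcal{U}}(x)$ then produces \eqref{E:basicest}.

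The bound \eqref{E:basicestinfty} follows along the same lines. The mean value inequality plus the H\"older substitution give, for a.e.\ $t\in[0,T]$,
\begin{equation*}
\int_0^t\frac{|\varphi(X_t)-\varphi(X_\tau)|}{(t-\tau)^{1+\beta}}\,d\tau\wle c\,[X]_{\alpha,\infty}^{s}\int_0^t\frac{M^{R}_{1-s}\mu_{\varphi,\mathcal{U}}(X_t)+M^{R}_{1-s}\mu_{\varphi,\mathcal{U}}(X_\tau)}{(t-\tau)^{1+\beta-\alpha s}}\,d\tau,
\end{equation*}
and bounding $M^{R}_{1-s}\mu_{\varphi,\mathcal{U}}(X_\tau)$ by its essential supremum in $\tau$ reduces the remaining $\tau$-integral to a constant multiple of $T^{\alpha s-\beta}$. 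Taking $\esssup_t$ and dominating by the Riesz potential finishes the argument. I expect the main obstacle to be the bookkeeping that ensures the uniform constants: one must choose $R$ and $\mathcal{U}$ once and for all so that (a) line segments between any two points of $X([0,T])$ remain inside $\mathcal{U}$ (for the mean value argument to see only $\mu_{\varphi,\mathcal{U}}$), and (b) the resulting maximal function is indeed bounded by the Riesz potential of the \emph{same} localized measure $\mu_{\varphi,\mathcal{U}}$ that enters the $(s,p)$-variability condition. A secondary but minor point is that Proposition \ref{P:meanvalue} must be applied to Lebesgue representatives; Corollary \ref{C:key} guarantees this is harmless on the $\mathcal{L}^2$-full set of $(t,u)$ involved in the Gagliardo integral.
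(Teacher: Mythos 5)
Your proof is correct and follows essentially the same route as the paper: both reduce to Lebesgue points via Corollary~\ref{C:key}, apply the fractional mean value inequality of Proposition~\ref{P:meanvalue}, convert $|X_t-X_\tau|^s$ via H\"older continuity, dominate the truncated maximal function by the Riesz potential of $\mu_{\varphi,\mathcal{U}}$ using \eqref{E:trivialbound}, integrate the time kernel using $\beta<\alpha s$, and exploit symmetry in $(t,\tau)$ to combine the two summands. The only cosmetic difference is that you fix a single truncation radius $R$ whereas the paper uses the varying radius $4|X_t-X_\tau|$; since both are dominated by the untruncated potential, this changes nothing, and you correctly flag (as the paper does implicitly) the need to ensure the maximal-function balls remain inside $\mathcal{U}$.
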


The proof makes use of maximal functions and some of their basic properties, the necessary definitions and results can be found in Appendix \ref{S:maximalfcts}.

\begin{proof}
By (\ref{E:timenullset}) we have
\begin{equation}\label{E:forgetbadset}
\int_0^T g(X_\tau)d\tau=\int_0^T \mathbf{1}_{\mathbb{R}^n\setminus S_{\varphi}}(X_\tau)g(X_\tau)d\tau
\end{equation}
for any nonnegative Borel function $g:\mathbb{R}^n\to [0,+\infty]$. For any fixed $t\in [0,T]$ we have 
\[\int_0^t\frac{|\varphi(X_t)-\varphi(X_\tau)|^p}{(t-\tau)^{1+\beta p}}\:d\tau=  \int_0^t\frac{|\varphi(X_t)-\varphi(X_\tau)|^p}{(t-\tau)^{1+\beta p}}\:\mathbf{1}_{\mathbb{R}^n\setminus S_{\varphi}}(X_\tau)d\tau\]
by (\ref{E:forgetbadset}), and viewing this as a nonnegative function of $t$, also
\begin{equation}\label{E:cutout}
\int_0^T\int_0^t \frac{|\varphi(X_t)-\varphi(X_\tau)|^p}{(t-\tau)^{1+\beta p}}\:d\tau\:dt
=\int_0^T\int_0^t \frac{|\varphi(X_t)-\varphi(X_\tau)|^p}{(t-\tau)^{1+\beta p}}\:\mathbf{1}_{\mathbb{R}^n\setminus S_{\varphi}}(X_\tau)\mathbf{1}_{\mathbb{R}^n\setminus S_{\varphi}}(X_t)\:d\tau\:dt.
\end{equation}
Let $\mu:=\left\|D\varphi\right\||_{\mathcal{U}}$ denote the restriction of $\left\|D\varphi\right\|$ to $\mathcal{U}$.
By Proposition \ref{P:meanvalue} and the $\alpha$-H\"older continuity of $X$ the right hand side of (\ref{E:cutout}) is seen to be bounded by
\begin{align}
& c(n,s)^p\int_0^T\int_0^t\frac{|X_t-X_\tau|^{sp}}{(t-\tau)^{1+\beta p}}[\mathcal{M}_{1-s,4|X_t-X_\tau|}\mu(X_t)+\mathcal{M}_{1-s,4|X_t-X_\tau|}\mu(X_\tau)]^p\:d\tau\:dt\notag\\
&\leq c(n,s)^p[X]_{\alpha,\infty}^{sp}\int_0^T\int_0^t (t-\tau)^{\alpha s p-\beta p-1}\times\notag\\
&\hspace{100pt} \times[\mathcal{M}_{1-s,4|X_t-X_\tau|}\mu(X_t)+\mathcal{M}_{1-s,4|X_t-X_\tau|}\mu(X_\tau)]^p\:d\tau\:dt;\notag  
\end{align}
here $\mathcal{M}_{1-s,R}\mu$ denotes the fractional maximal function of $\mu$ of order $1-s$ (and truncated at radius $R>0$), see (\ref{E:maximalfct}). The trivial estimate (\ref{E:trivialbound}) implies 
\begin{equation}
\label{eq:maximal-function-bound}
\mathcal{M}_{1-s, 4|X_t-X_s|}\mu(X_t)\leq c\int_{\mathbb{R}^n}\frac{\mu(dy)}{|X_t-y|^{n-1+s}}
\end{equation}
for any $t\in [0,T]$ and with $c>0$ depending only on $n$ and $s$, and therefore 
\begin{align*}
&\int_0^T\int_0^t (t-\tau)^{\alpha s p-\beta p-1}[\mathcal{M}_{1-s,4|X_t-X_\tau|}\mu(X_t)]^p\: d\tau dt\\
\leq & c\int_0^T \int_0^t (t-\tau)^{\alpha s p-\beta p-1} d\tau \left[\int_{\mathbb{R}^n}\frac{\mu(dy)}{|X_t-y|^{n-1+s}}\right]^p dt
\leq  c\:\frac{T^{(\alpha s-\beta)p}}{(\alpha s-\beta)p} \int_0^T\left[\int_{\mathcal{U}}\frac{\left\|Df\right\|(dy)}{|X_t-y|^{n-1+s}}\right]^p dt.
\end{align*}
Using Fubini's theorem we obtain the same upper bound for the summand with $\mu(X_\tau)$ in place of $\mu(X_t)$. Combining the estimates and using the symmetry of the integrand in the Gagliardo seminorm, we arrive at (\ref{E:basicest}). The estimate (\ref{E:basicestinfty}) follows similarly.
\end{proof}

\begin{remark}\label{R:Chen}
In \cite[Proposition 5.0.3 and Remark 5.0.3]{Chen} it has been shown that for discontinuous $\varphi\in BV(\mathbb{R})$ one cannot expect $\varphi(X)$ to have finite $p$-variation for any $p\geq 1$, if $X$ visits a point of discontinuity of $\varphi$ infinitely many times. In particular, $\varphi(X)$ cannot be H\"older continuous of any order in this case.
This motivates to use Sobolev norms and generalized Stieltjes type integrals rather than $p$-variation and Young integrals. 
\end{remark}

If $X\in V(\varphi,s,p)$, a pinning argument shows that $\varphi$ must be in $L^{p}(X)$. This entails that, for appropriate $\beta$ and $p$, the composition $\varphi\circ X$ is in $W^{\beta, p}(0,T)$ and, for large enough $p$, even H\"older continuous.

\begin{lemma}
\label{lemma:Lp-integrability}
Let $\varphi\in BV_{\loc}(\mathbb{R}^n)$. Let $X:[0,T]\to\mathbb{R}^n$ be a path which is H\"older continuous of order $\alpha\in (0,1)$. Suppose that $s\in (0,1)$, $p\in [1,+\infty]$ and $X\in V(\varphi,s,p)$. Then
$\varphi \in L^{p}(X)$ and $\varphi\circ X \in W^{\beta, p}(0,T)$ for any $\beta\in (0,\alpha s)$. Moreover, if $\alpha s>\frac{1}{p}$, then $\varphi\circ X$ has a (unique) Borel version which is H\"older continuous of any order smaller than $\alpha s-\frac{1}{p}$. (We again use the agreement that $\frac{1}{+\infty}:=0$.)
\end{lemma}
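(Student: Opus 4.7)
The plan is to prove the three claims in order, exploiting Proposition~\ref{P:meanvalue} for the pointwise mean-value bound, Proposition~\ref{P:basicest} for the Gagliardo seminorm, and a standard Morrey-type Sobolev embedding for the H\"older part.

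For the $L^p$-integrability, the pinning argument goes as follows. Let $\widetilde{\varphi}$ be a Lebesgue representative of $\varphi$ and write
\[
I(u) \weq \int_{\mathcal{U}} |X_u - y|^{-n+1-s}\, \|D\varphi\|(dy), \quad u\in[0,T].
\]
By Corollary~\ref{C:key}, $\mathcal{L}^1(\{u\in[0,T]:X_u\in S_\varphi\})=0$, so $X_u$ is a Lebesgue point of $\widetilde{\varphi}$ for a.e.\ $u$; by the $(s,p)$-variability hypothesis, $I\in L^p(0,T)$, so $I(u)<\infty$ for a.e.\ $u$. Pick a single $t_0$ satisfying both conditions. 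Proposition~\ref{P:meanvalue} together with the trivial bound \eqref{eq:maximal-function-bound} that is used in the proof of Proposition~\ref{P:basicest} yields, for a.e.\ $t\in[0,T]$,
\[
|\widetilde{\varphi}(X_t)-\widetilde{\varphi}(X_{t_0})| \wle c\,|X_t-X_{t_0}|^{s}\,\bigl[I(t)+I(t_0)\bigr] \wle c\,T^{\alpha s}\,[X]_{\alpha,\infty}^{s}\bigl[I(t)+I(t_0)\bigr].
\]
Since $I(t_0)<+\infty$ and $I\in L^p(0,T)$, the triangle inequality gives $\varphi\circ X\in L^p(0,T)$, i.e.\ $\varphi\in L^p(X)$.

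For the second assertion, I would simply combine part one with Proposition~\ref{P:basicest}: the $L^p$-norm on $(0,T)$ is controlled by the pinning estimate, while for any $\beta\in(0,\alpha s)$ the Gagliardo seminorm $[\varphi\circ X]_{\beta,p}$ is finite by \eqref{E:basicest} (or, when $p=+\infty$, by \eqref{E:basicestinfty} combined with the fact that $\mathcal{L}^1$-boundedness of $I$ and the pinning estimate together give $\varphi\circ X\in L^\infty$). Hence $\varphi\circ X\in W^{\beta,p}(0,T)$ for every $\beta\in(0,\alpha s)$.

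For the third assertion, when $\alpha s>1/p$ I would use the classical Sobolev-Morrey embedding $W^{\beta,p}(0,T)\hookrightarrow \mathcal{C}^{\beta-1/p}([0,T])$, valid whenever $\beta p>1$ (in the Besov scale $B^\beta_{p,p}\hookrightarrow B^{\beta-1/p}_{\infty,\infty}=\mathcal{C}^{\beta-1/p}$, cf.\ \cite{Triebel}). Choosing $\beta$ in $(1/p,\alpha s)$ arbitrarily close to $\alpha s$ yields a continuous, hence Borel, representative of $\varphi\circ X$ that is H\"older of any order less than $\alpha s-1/p$; uniqueness of this representative follows from continuity. For the endpoint $p=+\infty$ the claim can be derived directly from the pointwise bound above: evaluated between two points $t,\tau$ in the $\mathcal{L}^1$-full measure set where $X$ visits Lebesgue points and $I$ is bounded by $\|I\|_{L^\infty}$, Proposition~\ref{P:meanvalue} gives $|\widetilde{\varphi}(X_t)-\widetilde{\varphi}(X_\tau)|\le c\,|t-\tau|^{\alpha s}$, and the unique continuous extension of this Lipschitz-type function to $[0,T]$ is the desired representative.

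I do not expect serious obstacles. The only step requiring care is the transition from the pointwise mean-value bound to an $L^p$-bound: one must verify that the exceptional set (where $X_\cdot$ fails to be a Lebesgue point of $\widetilde{\varphi}$, or $I$ is infinite) has zero $\mathcal{L}^1$-measure, which is precisely what Corollary~\ref{C:key} and the $(s,p)$-variability hypothesis furnish. The Morrey embedding step is textbook, so the bulk of the argument reduces to invoking Propositions~\ref{P:meanvalue} and~\ref{P:basicest}.
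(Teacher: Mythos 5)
Your proof is correct and follows essentially the same route as the paper's: you pin the composition at a time $t_0$ where $X_{t_0}$ is a Lebesgue point and the Riesz potential is finite, then combine Proposition~\ref{P:meanvalue} with the trivial bound \eqref{eq:maximal-function-bound} to deduce $\varphi\in L^p(X)$, invoke Proposition~\ref{P:basicest} for the Gagliardo seminorm, and conclude H\"older regularity from the Sobolev--Morrey embedding (the paper cites \cite[Theorem 8.2]{DiNezza} for the same purpose). The only cosmetic difference is that you spell out the $p=+\infty$ case directly, whereas the paper leaves it as an "obvious modification."
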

\begin{proof}
Suppose first that $p\in [1,+\infty)$. Choose $t_0\in[0,T]$ such that $X_{t_0} \in \mathbb{R}^n\setminus S_\varphi$. By Proposition \ref{P:key} such $t_0$ clearly exists. Then we have 
$$
|\varphi(X_t)|^p \leq 2^{p-1}\left[|\varphi(X_t)-\varphi(X_{t_0})|^p+|\varphi(X_{t_0})|^p\right].
$$
for a.e. $t\in [0,T]$. Using (\ref{eq:maximal-function-bound}) we obtain
\begin{equation}
\begin{split}
&|\varphi(X_t)-\varphi(X_{t_0})|^p \notag\\
&\leq |X_t-X_{t_0}|^{sp}[\mathcal{M}_{1-s,4|X_t-X_{t_0}|}\mu(X_t)+\mathcal{M}_{1-s,4|X_t-X_{t_0}|}\mu(X_{t_0})]^p \\
&\leq c\:2^{p-1}[X]_{\alpha,\infty}^{s p}T^{\alpha sp}\left(\left[\int_{\mathcal{U}}\frac{\left\|D\varphi\right\|(dy)}{|X_t-y|^{n-1+s}}\right]^{p}+\left[\int_{\mathcal{U}}\frac{\left\|D\varphi\right\|(dy)}{|X_{t_0}-y|^{n-1+s}}\right]^{p}\right),
\end{split}
\end{equation}
and integration yields 
\begin{equation}\begin{split}
\int_0^T |(\varphi \circ X)(t)|^p dt
\leq & c\:2^{p-1}T|(\varphi\circ X)(t_0)|^p 
+ c\:4^{p-1}[X]_{\alpha,\infty}^{s p}T^{\alpha sp+1}\left[\int_{\mathcal{U}}\frac{\left\|D\varphi\right\|(dy)}{|X_{t_0}-y|^{n-1+s}}\right]^{p} \\
&+c\:4^{p-1}[X]_{\alpha,\infty}^{s p}T^{\alpha sp} \int_0^T \left[\int_{\mathcal{U}}\frac{\left\|D\varphi\right\|(dy)}{|X_t-y|^{n-1+s}}\right]^{p}dt.
\end{split}
\end{equation}
In order to show that $\varphi \circ X \in L^p(0,T)$ it now suffices to prove that there is some $t_0 \in [0,T]$ such that $X_{t_0} \in \mathbb{R}^n \setminus S_\varphi$,
\begin{equation}
\label{eq:point-bound}
|\varphi(X_{t_0})|^p < +\infty,
\end{equation}
and 
\begin{equation}
\label{eq:seminorm-point-bound}
\int_{\mathcal{U}}\frac{\left\|D\varphi\right\|(dy)}{|X_{t_0}-y|^{n-1+s}} < +\infty.
\end{equation}
Let $N\subset [0,T]$ be the set of all $t$ such that $X_t \in S_\varphi$. As seen before, $N$ is a Lebesgue null set, and 
by Definition \ref{D:comp} and \eqref{E:admissible} we have $\widetilde{\varphi}(X_t) = \lambda_\varphi(X_t) < \infty$
for all $t\in[0,T]\setminus N$ since $X_t \in \mathbb{R}^n \setminus S_\varphi$ for such $t$. By $(s,p)$-variability, \eqref{eq:seminorm-point-bound} must hold for all $t\in [0,T]\setminus N'$, where $N'$ is a Lebesgue null set. Thus both 
\eqref{eq:point-bound} and \eqref{eq:seminorm-point-bound} must hold for any $t_0\in [0,T]\setminus (N\cup N')$, i.e. for a.e. $t_0\in [0,T]$. In the case that $p=+\infty$ the $(s,\infty)$-variability of $X$ guarantees \eqref{eq:seminorm-point-bound} for all $t_0\in [0,T]$, so that the desired result follows by obvious modifications of the above arguments. The membership in  $W^{\beta,p}(0,T)$ is seen using Proposition \ref{P:basicest} and the statement on continuity using Sobolev embedding, \cite[Theorem 8.2]{DiNezza}.
\end{proof}

\begin{remark}
By Remark \ref{R:Chen} and Lemma \ref{lemma:Lp-integrability} one cannot expect $X$ to be $(s,p)$-variable with respect to $\varphi$ for $p> \frac{1}{\alpha s}$ if $X$ visits discontinuity points of $\varphi$ infinitely often. 
\end{remark}

In the rest of this subsection we derive an estimate for the weighted $L^p$-term in the norm $\Vert \varphi \circ X\Vert_{W_0^{\beta,p}(0,T)}$. Only the special case $p=1$ will be used later on.

\begin{proposition}
\label{prop:dyda-bound}
Let $\varphi\in BV_{\loc}(\mathbb{R}^n)$. Let $X:[0,T]\to\mathbb{R}^n$ be a path which is H\"older continuous of order $\alpha\in (0,1]$ and $X\in V(\varphi, s,p)$ for some $s\in (0,1)$ and $p\in [1,+\infty)$. Then for any $\beta\in (0,\alpha s \wedge \frac{1}{p})$ there is a constant $c>0$, depending only on $\alpha$, $\beta$, $n$, $p$ and $s$, such that
\begin{equation}\label{E:dyda-bound}
\int_0^T \frac{|(\varphi \circ X)(t)|^p}{t^{\beta p}} dt
\leq c\bigg([X]_{\alpha,\infty}^{s p}\int_0^T\left[\int_{\mathcal{U}}\frac{\left\|D\varphi\right\|(dy)}{|X_t-y|^{n-1+s}}\right]^{p}dt + \int_0^T |(\varphi \circ X)(t)|^p dt\bigg).
\end{equation} 
\end{proposition}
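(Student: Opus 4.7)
The strategy is to reduce the claim to a fractional Hardy inequality and then invoke Proposition~\ref{P:basicest}. Specifically, I plan to establish the auxiliary estimate
\[ \int_0^T \frac{|f(t)|^p}{t^{\beta p}}\,dt \wle C\bigl([f]_{\beta,p}^p + \|f\|_{L^p(0,T)}^p\bigr), \]
valid for every $f \in W^{\beta,p}(0,T)$ and every $0<\beta<1/p$, with $C$ depending only on $\beta$, $p$ and $T$. Applied to $f = \varphi\circ X$, which lies in $W^{\beta,p}(0,T)$ by Lemma~\ref{lemma:Lp-integrability}, and combined with the Gagliardo bound \eqref{E:basicest} of Proposition~\ref{P:basicest}, this immediately yields \eqref{E:dyda-bound}.

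To prove the Hardy-type inequality I would split $(0,T) = (0,T/2) \cup (T/2,T)$. On the upper interval the weight satisfies $t^{-\beta p} \le (T/2)^{-\beta p}$, so its contribution is trivially controlled by $\|f\|_{L^p}^p$. On $(0,T/2)$ I introduce the dyadic average $f_{[t,2t]} := t^{-1}\int_t^{2t} f(u)\,du$ and apply Minkowski's inequality in $L^p\bigl((0,T/2),\,t^{-\beta p}\,dt\bigr)$:
\[
\biggl(\int_0^{T/2}\frac{|f(t)|^p}{t^{\beta p}}\,dt\biggr)^{\!1/p} \wle \biggl(\int_0^{T/2}\frac{|f(t)-f_{[t,2t]}|^p}{t^{\beta p}}\,dt\biggr)^{\!1/p} + \biggl(\int_0^{T/2}\frac{|f_{[t,2t]}|^p}{t^{\beta p}}\,dt\biggr)^{\!1/p}.
\]
For the first summand Jensen gives $|f(t)-f_{[t,2t]}|^p \le t^{-1}\int_t^{2t}|f(t)-f(u)|^p\,du$; since $u-t \le t$ on $[t,2t]$ one may replace $t^{-1-\beta p}$ by $(u-t)^{-1-\beta p}$, obtaining a bound by $[f]_{\beta,p}$. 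For the second summand, Jensen combined with Fubini and the elementary identity $\int_{u/2}^{u} t^{-1-\beta p}\,dt = \tfrac{2^{\beta p}-1}{\beta p}\,u^{-\beta p}$ yields the bound $\bigl(\tfrac{2^{\beta p}-1}{\beta p}\bigr)^{1/p}\|f/t^\beta\|_{L^p(0,T)}$.

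The main obstacle, and the crux of the proof, is the absorption step. Because $\beta p<1$ and $x \mapsto 2^x$ is strictly convex with $2^0=1$ and $2^1 =2$, the strict inequality $2^x < 1+x$ holds on $(0,1)$, whence $(2^{\beta p}-1)/(\beta p) <1$ and therefore the contraction factor $c := \bigl(\tfrac{2^{\beta p}-1}{\beta p}\bigr)^{1/p} <1$. Subadditivity of $a\mapsto a^{1/p}$ together with the bound on $[T/2,T]$ then gives
\[
\|f/t^\beta\|_{L^p(0,T)} \wle [f]_{\beta,p} + c\,\|f/t^\beta\|_{L^p(0,T)} + (2/T)^\beta\|f\|_{L^p},
\]
and the contraction lets us absorb the middle term to conclude $\|f/t^\beta\|_{L^p(0,T)} \le (1-c)^{-1}\bigl([f]_{\beta,p} + (2/T)^\beta\|f\|_{L^p}\bigr)$, which upon raising to the $p$-th power is of the desired form. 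To justify the absorption when $\|f/t^\beta\|_{L^p(0,T)}$ is not a priori known to be finite, I would first apply the argument to the clipped truncation $f_M := (-M) \vee f \wedge M$ (for which the left-hand side is trivially finite and for which $|f_M|\le |f|$, $[f_M]_{\beta,p} \le [f]_{\beta,p}$, $\|f_M\|_{L^p}\le\|f\|_{L^p}$) and then let $M\to\infty$ by monotone convergence. The proof closes exactly when $\beta p<1$, matching the proposition's hypothesis.
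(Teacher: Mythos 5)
Your proposal is correct and follows a genuinely different route from the one in the paper. The paper reduces to bounded $\varphi$ via a smooth Lipschitz truncation $\Phi_N$, then mollifies $\varphi$ to $\varphi_\varepsilon$, applies the fractional Hardy inequality of Lemma~\ref{L:Dyda} (itself imported from Dyda's inequality~\eqref{E:Bartek} on Lipschitz domains and extended from $\mathcal{C}_c(0,T)$ to $\mathcal{C}([0,T])$ by a further linear-interpolation approximation) to the continuous function $\varphi_\varepsilon\circ X$, and finally passes to the limit $\varepsilon\to 0$ by dominated convergence driven by the Riesz-potential bounds. You instead prove the one-dimensional Hardy inequality
\[
\int_0^T \frac{|f(t)|^p}{t^{\beta p}}\,dt \leq C\bigl([f]_{\beta,p}^p + \|f\|_{L^p(0,T)}^p\bigr),\qquad 0<\beta p<1,
\]
directly for arbitrary $f\in W^{\beta,p}(0,T)$ by a self-contained dyadic-averaging and absorption argument, and then apply it to $f=\varphi\circ X$ together with Proposition~\ref{P:basicest}. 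Your absorption hinges on the correct observation that the contraction factor $\bigl((2^{\beta p}-1)/(\beta p)\bigr)^{1/p}$ is strictly less than $1$ precisely because $\beta p<1$ and $x\mapsto 2^x$ is strictly convex, and the truncation $f\mapsto(-M)\vee f\wedge M$ — carried out on the path-space composition $\varphi\circ X$ rather than on the spatial function $\varphi$ — supplies only the qualitative finiteness needed for the absorption; the rest is a direct computation with Jensen and Fubini. What your route buys is a cleaner, self-contained argument that avoids the spatial mollification entirely and never needs the composition to be continuous; what the paper's route gives is a direct appeal to a citable higher-dimensional Hardy inequality and reuse of the mollification machinery it already sets up elsewhere in Section~\ref{SS:compositions}. (One small remark applying equally to both proofs: the constant in the Hardy inequality necessarily depends on $T$, as a scaling argument shows; the proposition suppresses this dependence, as does the paper's Lemma~\ref{L:Dyda}.)
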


To prove Proposition \ref{prop:dyda-bound} we make use of the following fact.

\begin{lemma}\label{L:Dyda}
Let $\beta>0 $ and $q>0$ such that $\beta q < 1$. Then for all continuous functions $u$ on $[0,T]$ we have
\begin{equation}
\label{eq:dyda-general}
\int_0^T \frac{|u(t)|^q}{t^{\beta q}}dt \leq c\left(\int_0^T \int_0^T \frac{|u(t)-u(\tau)|^q}{|t-\tau|^{1+{\beta q}}}dtd\tau + \int_0^T |u(t)|^q dt\right),
\end{equation}
where $c>$ is a constant depending only on $\beta$ and $q$.  
\end{lemma}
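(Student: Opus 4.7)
The plan is to prove Lemma \ref{L:Dyda} by a one-dimensional fractional Hardy inequality argument, exploiting the assumption $\beta q<1$ to close the estimate via an absorption step.

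First, I would apply the elementary bound $|u(t)|^q\le 2^{q-1}(|u(t)-u(\tau)|^q+|u(\tau)|^q)$ and average this over $\tau$ in a small dilated interval around $t$. Concretely, for a parameter $\alpha>0$ to be chosen later and for $t\in(0,T/(1+\alpha))$, I would average over $\tau\in(t,(1+\alpha)t)$, obtaining
\[
|u(t)|^q\;\le\;\frac{2^{q-1}}{\alpha t}\int_t^{(1+\alpha)t}|u(t)-u(\tau)|^q\,d\tau+\frac{2^{q-1}}{\alpha t}\int_t^{(1+\alpha)t}|u(\tau)|^q\,d\tau.
\]
Dividing by $t^{\beta q}$ and using that $|t-\tau|\le\alpha t$ for such $\tau$, so that $t^{-1-\beta q}\le\alpha^{\beta q}|t-\tau|^{-1-\beta q}$, the first term is controlled pointwise by $2^{q-1}\alpha^{\beta q}\int_t^{(1+\alpha)t}|u(t)-u(\tau)|^q|t-\tau|^{-1-\beta q}\,d\tau$, which upon integrating in $t$ is bounded by the Gagliardo double integral on the right-hand side of \eqref{eq:dyda-general}.

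The second term is handled by Fubini. Swapping the order, for fixed $\tau\in(0,T)$ the variable $t$ runs through $(\tau/(1+\alpha),\tau\wedge T/(1+\alpha))$, and the inner integral $\int t^{-1-\beta q}\,dt$ evaluates explicitly because $\beta q<1$ keeps the antiderivative finite at the lower endpoint. A direct computation yields the bound
\[
\int_0^{T/(1+\alpha)}\frac{1}{\alpha t^{1+\beta q}}\int_t^{(1+\alpha)t}|u(\tau)|^q\,d\tau\,dt\;\le\;\frac{(1+\alpha)^{\beta q}}{\alpha\beta q}\int_0^T\frac{|u(\tau)|^q}{\tau^{\beta q}}\,d\tau,
\]
times the $2^{q-1}$ factor. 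Here is the key point: since $\beta q<1$, the prefactor $(1+\alpha)^{\beta q}/\alpha$ tends to $0$ as $\alpha\to\infty$, so I can fix $\alpha$ large enough that this constant is strictly less than $1/2$.

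It remains to treat $t\in(T/(1+\alpha),T)$, where $t$ is bounded away from $0$ and hence $t^{-\beta q}\le((1+\alpha)/T)^{\beta q}$; this gives a direct bound by the $L^q$-term on the right-hand side of \eqref{eq:dyda-general}. Adding the two pieces and subtracting the (finite) absorbed portion of $\int_0^T|u(t)|^q t^{-\beta q}\,dt$ from both sides yields \eqref{eq:dyda-general} with a constant depending only on $\beta$ and $q$ through the chosen $\alpha$. The main obstacle is the absorption step: it is essential that $\beta q<1$ (otherwise the prefactor $(1+\alpha)^{\beta q}/\alpha$ does not vanish as $\alpha\to\infty$), and the careful bookkeeping of the two ranges of $t$ (near $0$ versus near $T$) is needed because the bound must split into the Gagliardo seminorm piece and the pure $L^q$ piece.
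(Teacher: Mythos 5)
Your proof is correct and takes a genuinely different route from the paper. The paper quotes Dyda's fractional Hardy inequality (E:Bartek) on a bounded Lipschitz domain, which is stated for compactly supported $u\in\mathcal{C}_c(\mathcal{D})$, and then spends the bulk of the argument extending it to general continuous $u$ on $[0,T]$ by approximating with piecewise-linear cutoffs near the two endpoints and passing to the limit by bounded/dominated convergence. You instead give a direct, self-contained proof by short-time dilated averaging plus absorption: split $|u(t)|^q$ against its average on $(t,(1+\alpha)t)$, bound the oscillation piece by the Gagliardo integral using $|t-\tau|\le\alpha t$, and bound the averaged piece by Fubini, exploiting $\beta q<1$ to make the absorption constant $(1+\alpha)^{\beta q}/\alpha$ small for large $\alpha$. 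Your method requires no external citation and no cutoff approximation, and the a priori finiteness needed for absorption is immediate since $u$ is continuous and $t^{-\beta q}$ is integrable (precisely because $\beta q<1$).

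Two small remarks. First, the elementary inequality $|a+b|^q\le 2^{q-1}(|a|^q+|b|^q)$ holds for $q\ge 1$; for $0<q<1$ one should use $|a+b|^q\le|a|^q+|b|^q$, i.e. replace $2^{q-1}$ by $\max(1,2^{q-1})$ throughout. Second, the parenthetical ``because $\beta q<1$ keeps the antiderivative finite at the lower endpoint'' is a red herring: the inner integral $\int_{\tau/(1+\alpha)}^{\tau}t^{-1-\beta q}dt$ is finite for every $\beta q>0$ since the lower endpoint is positive; the role of $\beta q<1$ is only to make $(1+\alpha)^{\beta q}/\alpha\to 0$ in the absorption. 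Finally, note that the piece $t\in(T/(1+\alpha),T)$ contributes a factor $((1+\alpha)/T)^{\beta q}$, so your constant in front of the $L^q$ term is not $T$-independent for small $T$; but a scaling check shows the paper's argument via Dyda inherits exactly the same $T^{-\beta q}$ dependence, so the lemma's claim that $c$ depends only on $\beta,q$ is slightly imprecise in both approaches (harmless here since $T$ is fixed throughout the paper).
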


Lemma \ref{L:Dyda} is an slight adaption of the following result in \cite{ChenSong} and \cite{dyda}: Suppose $\mathcal{D}\subset\mathbb{R}^d$ is a bounded Lipschitz domain and let $\delta_{\mathcal{D}}(x) = \inf\{|y-x|:y\in \mathcal{D}^c\}$ denote the distance of $x$ to its complement $\mathcal{D}^c$. Then, by \cite[Equation (17)]{dyda}, we have, for any $q>0$ and $\alpha\in (0,1)$, that
\begin{equation}\label{E:Bartek}
\int_\mathcal{D} \frac{|u(x)|^q}{[\delta_{\mathcal{D}}(x)]^\alpha}dx \leq c\left(\int_\mathcal{D}\int_\mathcal{D} \frac{|u(x)-u(y)|^q}{|x-y|^{d+\alpha}}dxdy + \int_\mathcal{D} |u(x)|^q dx\right)
\end{equation}
for all $u \in \mathcal{C}_c(\mathcal{D})$.

\begin{proof} 
An application of (\ref{E:Bartek}) to the case $d=1$, $\alpha = \beta q$ and $\mathcal{D} =(0,T)$ yields
\begin{equation*}
\int_0^T \frac{|u(t)|^q}{t^\beta}dt \leq c\left(\int_0^T \int_0^T \frac{|u(t)-u(\tau)|^q}{|t-\tau|^{1+\beta}}dtd\tau + \int_0^T |u(t)|^q dt\right)
\end{equation*} 
for all $u\in \mathcal{C}_c(0,T)$. Note also that $t^{-\beta} \leq [\delta_{\mathcal{D}}(t)]^{-\beta}$. Now suppose that $u\in \mathcal{C}([0,T])$. For each $n$ let $u_n$ be the continuous function on $[0,T]$ such that $u_n=u$ on $\left[\frac{1}{n}, T-\frac{1}{n}\right]$, $u_n$ is linear on $\left[\frac{1}{2n},\frac{1}{n}\right] \cup \left[T-\frac{1}{n}, T-\frac{1}{2n}\right]$, and $u_n \equiv 0$ on $ \left[0,\frac{1}{2n}\right]\cup \left[T-\frac{1}{2n},T\right]$. Then obviously $u_n\in \mathcal{C}_c(0,T)$ and we have
\begin{equation}
\label{eq:u_n-u:bound}
\sup_{t\in[0,T]}|u_n(t)| \leq S,
\end{equation}
where $S:=\sup_{t\in [0,T]}|u(t)|$. Since bounded convergence implies
\[\lim_n\int_0^T \frac{|u_n(t)|^q}{t^{\beta q}}dt = \int_0^T \frac{|u(t)|^q}{t^{\beta q}}dt\quad \text{ and }\quad 
\lim_n\int_0^T |u_n(t)|^q dt = \int_0^T |u(t)|^q dt,\]
it suffices to show that 
$$
\limsup_n\int_0^T \int_0^T \frac{|u_n(t)-u_n(\tau)|^q}{|t-\tau|^{1+{\beta q}}}dtd\tau \leq c \int_0^T \int_0^T \frac{|u(t)-u(\tau)|^q}{|t-\tau|^{1+{\beta q}}}dtd\tau.
$$
On $\left[0,\frac{1}{n}\right]$ the function $u_n$ obeys the Lipschitz bound $|u_n(t) - u_n(\tau)| \leq nS|t-\tau|$,
which implies
\[\int_0^{n^{-1}}\int_0^{n^{-1}} \frac{|u_n(t)-u_n(\tau)|^q}{|t-\tau|^{1+{\beta q}}}dtd\tau \leq S^q n^q \int_0^{n^{-1}}\int_0^{n^{-1}} |t-\tau|^{q-1-{\beta q}}d\tau dt \leq cS^qn^{\beta q-1}. \]
This goes to zero as $n\to \infty$. Similarly, using \eqref{eq:u_n-u:bound}, and writing $c$ for positive constants depending only on $q$ and $\beta$ and possibly changing from line to line, 
\begin{align}
&\int_0^{n^{-1}}\int_{n^{-1}}^T \frac{|u_n(t)-u_n(\tau)|^q}{|t-\tau|^{1+\beta q}}dtd\tau \notag\\
&\leq c\int_0^{n^{-1}}\int_{n^{-1}}^T \frac{|u_n(t)-u(t)+u(\tau)-u_n(\tau)|^q}{|t-\tau|^{1+\beta}}dtd\tau + c\int_0^{n^{-1}}\int_{n^{-1}}^T \frac{|u(t)-u(\tau)|^q}{|t-\tau|^{1+\beta q}}dtd\tau\notag\\
&\leq cS^q\int_0^{n^{-1}}\int_{n^{-1}}^T |t-\tau|^{-\beta q-1}dtd\tau +c\int_0^{T}\int_{0}^T \frac{|u(t)-u(\tau)|^q}{|t-\tau|^{1+\beta q}}dtd\tau\\
&\leq cS^q n^{\beta q-1} +c\int_0^{T}\int_{0}^T \frac{|u(t)-u(\tau)|^q}{|t-\tau|^{1+\beta q}}dtd\tau. \notag
\end{align}
Treating the regions involving intervals $\left[T-\frac{1}{n},T\right]$ similarly concludes the proof.
\end{proof}

A function $\varphi\in L_{\loc}^1(\mathbb{R}^n,\mathbb{R}^m)$, $\varphi=(\varphi_1,...,\varphi_m)$, is locally of bounded variation, denoted $\varphi\in BV_{\loc}(\mathbb{R}^n)^m$, if locally its distributional partial derivatives $D_i\varphi$ are $\mathbb{R}^m$-valued vector measures in the sense of \cite[Definition 1.4 (a)]{AFP}. We write again $\left\|D\varphi\right\|$ for the total variation of the gradient measure $D\varphi$ of $\varphi$. Elementary norm comparison in $\mathbb{R}^m$ implies that
\begin{equation}\label{E:l1dom}
\left\|D\varphi\right\|\leq \sum_{i=1}^m \left\|D\varphi_i\right\|,
\end{equation}
where $\left\|D\varphi_i\right\|$ is the total variation of the gradient measure $D\varphi_i$ of $\varphi_i$. We record a consequence of the chain rule for $BV$-functions, \cite[Theorem 3.96]{AFP}. See \cite{AmbrosioDalMaso, LeoniMorini} for more general chain rules.

\begin{lemma}\label{L:Lipschitzcomp}
If $m\geq 1$, $\varphi\in BV_{\loc}(\mathbb{R}^n)^m$ with $\varphi=(\varphi_1,...,\varphi_m)$ and $\Phi:\mathbb{R}^m\to\mathbb{R}$ is a $\mathcal{C}^1$-function with bounded gradient and $\Phi(0)=0$, then $\Phi\circ \varphi \in BV_{\loc}(\mathbb{R}^n)$ and
\begin{equation}\label{E:contract}
\left\|D(\Phi\circ\varphi)\right\|\leq \left\|\nabla\Phi\right\|_{\sup}\left\|D\varphi\right\|.
\end{equation} 
If $X\in V(\varphi_i,s,p)$ for all $i$, then also $X\in V(\Phi\circ\varphi,s,p)$.
\end{lemma}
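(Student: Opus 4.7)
The plan is to prove the two assertions in order: first establish $\Phi\circ\varphi \in BV_{\loc}(\mathbb{R}^n)$ together with the measure inequality (\ref{E:contract}), and then deduce the variability transfer by a direct comparison of the defining integrals.

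For the first part, set $M:=\|\Phi'\|_{\sup}$; since $\Phi(0)=0$ and $\Phi$ is globally Lipschitz with constant $M$, we have $|\Phi(t)|\le M|t|$, so that $\Phi\circ\varphi\in L^1_{\loc}(\mathbb{R}^n)$. I would then argue by mollification: let $\rho_k$ be a standard mollifier family and set $\varphi_k:=\varphi*\rho_k\in \mathcal{C}^\infty(\mathbb{R}^n)\cap BV_{\loc}(\mathbb{R}^n)$. The Lipschitz bound on $\Phi$ gives $|\Phi\circ\varphi_k-\Phi\circ\varphi|\le M|\varphi_k-\varphi|$, hence $\Phi\circ\varphi_k\to \Phi\circ\varphi$ in $L^1_{\loc}(\mathbb{R}^n)$. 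For each $k$, the classical chain rule applied to the smooth function $\varphi_k$ yields $\nabla(\Phi\circ\varphi_k)=\Phi'(\varphi_k)\nabla\varphi_k$ pointwise, so that $|\nabla(\Phi\circ\varphi_k)|\le M|\nabla\varphi_k|$ $\mathcal{L}^n$-a.e. Integrating over any relatively compact open $\Omega\subset\mathbb{R}^n$ produces
\[
\|D(\Phi\circ\varphi_k)\|(\Omega)\wle M\,\|D\varphi_k\|(\Omega).
\]

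The key step is to pass to the limit. By lower semicontinuity of the total variation under $L^1_{\loc}$-convergence, $\|D(\Phi\circ\varphi)\|(\Omega)\le\liminf_k \|D(\Phi\circ\varphi_k)\|(\Omega)$; on the other hand, for any relatively compact open $\Omega$ with $\|D\varphi\|(\partial\Omega)=0$ the standard property of mollifications gives $\|D\varphi_k\|(\Omega)\to\|D\varphi\|(\Omega)$. Thus $\|D(\Phi\circ\varphi)\|(\Omega)\le M\|D\varphi\|(\Omega)$ for all such $\Omega$, and by outer regularity of Radon measures this extends to all Borel sets, which is the measure inequality (\ref{E:contract}). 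In particular $\|D(\Phi\circ\varphi)\|$ is locally finite, so $\Phi\circ\varphi\in BV_{\loc}(\mathbb{R}^n)$. An alternative route is to invoke the Vol'pert chain rule (\cite[Theorem 3.96]{AFP}) directly, using $|\Phi'|\le M$ on the absolutely continuous and Cantor parts and $|\Phi(\varphi^+)-\Phi(\varphi^-)|\le M|\varphi^+-\varphi^-|$ on the jump part.

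For the second assertion, suppose $X\in V(\varphi,s,p)$ and let $\mathcal{U}$ be a relatively compact open neighborhood of $X([0,T])$ as in Definition \ref{D:SVp-condition}. Applying (\ref{E:contract}) to the restriction to $\mathcal{U}$ yields, for $\mathcal{L}^1$-a.e. $t\in[0,T]$,
\[
\int_{\mathcal{U}} |X_t-z|^{-n+1-s}\,\|D(\Phi\circ\varphi)\|(dz)\wle M\int_{\mathcal{U}} |X_t-z|^{-n+1-s}\,\|D\varphi\|(dz),
\]
and the right-hand side lies in $L^p(0,T)$ by assumption, hence so does the left-hand side. Thus $X\in V(\Phi\circ\varphi,s,p)$.

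The only non-routine point is the passage from the pointwise bound $|\nabla(\Phi\circ\varphi_k)|\le M|\nabla\varphi_k|$ on smooth approximants to the measure inequality (\ref{E:contract}) for the BV limit; everything else is either a direct consequence of the Lipschitz character of $\Phi$ or a monotone comparison of kernels.
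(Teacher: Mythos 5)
Your proof is correct and follows essentially the same route as the paper's: establish the measure inequality (\ref{E:contract}), then transfer variability by integrating the kernel $|X_t-\cdot|^{-n+1-s}$ against the restricted gradient measures over the same neighborhood $\mathcal{U}$. The only difference is that the paper simply cites the Vol'pert chain rule \cite[Theorem 3.96]{AFP} for (\ref{E:contract}) and phrases the kernel comparison via the potential reformulation (\ref{E:SVp2}), whereas you additionally write out a self-contained mollification/lower-semicontinuity proof of (\ref{E:contract}) (noting the chain rule as an alternative) and compare the defining integrals directly — both are equivalent.
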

\begin{proof}
By \cite[Theorem 3.96 and its proof]{AFP} we have $\Phi\circ \varphi \in BV_{\loc}(\mathbb{R}^n)$ and (\ref{E:contract}).
Together with (\ref{E:l1dom}) this implies that for any compact $K\subset \mathbb{R}^n$ we have
\[U^{1-s}(\left\|D(\Phi\circ\varphi)\right\||K)(x)\leq \left\|\nabla \Phi\right\|_{\sup}U^{1-s}(\left\|D\varphi\right\||_K)(x)\leq \left\|\nabla \Phi\right\|_{\sup}\sum_{i=1}^m U^{1-s}(\left\|D\varphi_i\right\||_K)(x),\quad x\in\mathbb{R}^n.\]
Now the second statement follows from (\ref{E:SVp2}). 
\end{proof}

We prove Proposition \ref{prop:dyda-bound}.

\begin{proof}[Proof of \ref{prop:dyda-bound}]
For any $N\geq 2$ let $\Phi_N\in \mathcal{C}^1(\mathbb{R})$ be an increasing function with $\left\|\Phi'\right\|_{\sup}\leq 1$ and such that $\Phi_N(y)=-N$ for $y\leq -N$, $\Phi_N(y)=N$ for $y\geq N$ and $\Phi_N(y)=y$ for $-(N-1)<y<N-1$. Then, by Lemma \ref{L:Lipschitzcomp} (with $m=1$) and the hypotheses of Proposition \ref{prop:dyda-bound}, we have $\Phi_N(\varphi)\in BV_{\loc}(\mathbb{R}^n)$ and $X$ is $(s,p)$-variable w.r.t. each $\Phi_N(\varphi)$. Suppose that (\ref{E:dyda-bound}) holds for all $\Phi_N(\varphi)$ in place of $\varphi$. Then, by (\ref{E:contract}), 
\begin{align}
&\int_0^T \frac{|(\Phi_N(\varphi) \circ X)(t)|^p}{t^{\beta p}} dt\notag\\ 
&\leq c[X]_{\alpha,\infty}^{s p}\bigg(\int_0^T\left[\int_{\mathcal{U}}\frac{\left\|D(\Phi_N(\varphi))\right\|(dy)}{|X_t-y|^{1-s}}\right]^{p}dt + \int_0^T |(\Phi_N(\varphi) \circ X)(t)|^p dt\bigg).\notag\\
&\leq c[X]_{\alpha,\infty}^{s p}\bigg(\int_0^T\left[\int_{\mathcal{U}}\frac{\left\|D\varphi\right\|(dy)}{|X_t-y|^{1-s}}\right]^{p}dt + \int_0^T |(\varphi \circ X)(t)|^p dt\bigg),\notag
\end{align}
and (\ref{E:dyda-bound}) for $\varphi$ follows using Fatou's lemma. Consequently it suffices to prove (\ref{E:dyda-bound}) under the assumption that $\sup_{x\in\mathbb{R}^n}|\varphi(x)|\leq N$, and we do so in the sequel. 

Let $(\eta_\varepsilon)_{\varepsilon>0}$ be a (radially symmetric) flat mollifier as in Appendix \ref{S:smooth_approximation}.  For each $\varepsilon>0$ the composition $\varphi_\varepsilon \circ X$ of the mollified function $\varphi_\varepsilon := \varphi\ast \eta_\varepsilon$ with the path $X$ is continuous on $[0,T]$, so that by \eqref{eq:dyda-general}, 
\begin{equation}
\label{eq:dyda-bound-mollified}
\int_0^T \frac{|\varphi_\varepsilon (X_t)|^p}{t^{\beta p}}dt \leq c\left(\int_0^T\int_0^T \frac{|\varphi_\varepsilon(X_t)-\varphi_\varepsilon(X_\tau)|^p}{|t-\tau|^{1+\beta p}}dtd\tau + \int_0^T |\varphi_\varepsilon(X_t)|^p dt\right)
\end{equation}
with a constant $c>0$ depending only on $\beta$ and $p$. By (\ref{E:convundermolly}) we have
\begin{equation}
\label{eq:pointwise-derivative}
\lim_{\varepsilon\to 0}\varphi_\varepsilon(y) = \varphi(y),\quad y\in  \mathbb{R}^n\setminus S_{\varphi},
\end{equation}
where $S_{\varphi}$ is the approximate discontinuity set of $\varphi$. Since
\[\sup_{x\in \mathbb{R}^n}|\varphi_\varepsilon(x)|\leq \sup_{x\in \mathbb{R}^n}\int_{\mathbb{R}^n}|\varphi(y)|\eta_\varepsilon(x-y)dy\leq N\]
and $\mu_X^{[0,T]}$ is finite, we can use (\ref{eq:pointwise-derivative}) together with Proposition \ref{P:key} and bounded convergence to conclude that
\begin{equation}
\label{eq:mollified-lp-bound}
\lim_{\varepsilon\to 0}\int_0^T |\varphi_\varepsilon(X_t)|^p dt= \lim_{\varepsilon\to 0} \int_{\mathbb{R}^n}|\varphi_\varepsilon(x)|^p\mu_X^{[0,T]}(dx)= \int_{\mathbb{R}^n}|\varphi(x)|^p\mu_X^{[0,T]}(dx) = \int_0^T |\varphi(X_t)|^p dt. 
\end{equation}

As before, let $\mu_{\varphi,\mathcal{U}}$ denote the restriction of $\Vert D\varphi\Vert$ to a relatively compact open set $\mathcal{U}$ containing $X([0,T])$. Let $\mu_\varepsilon:=\left\|D\varphi_\varepsilon\right\||_{X([0,T])}$. For sufficiently small $\varepsilon$ the open $\varepsilon$-parallel set $(X([0,T]))_\varepsilon$ of $X([0,T])$ is contained in $\mathcal{U}$, so that by Corollary \ref{C:mollypot} we have
\begin{equation}
\label{eq:potential-bound}
U^{1-s}(\mu_\varepsilon)(x)\leq c(n,s) U^{1-s}\mu_{\varphi,\mathcal{U}}(x),\quad x\in\mathbb{R}^n,
\end{equation}
where $c(n,s)>0$ is a constant depending only on $n$ and $s$.
Let next $0<\tau<t\leq T$ be distinct and such that $X_t,X_\tau \in \mathbb{R}^n \setminus S_\varphi$. Then Proposition \ref{P:meanvalue} and bound \eqref{eq:maximal-function-bound} imply
$$
|\varphi_\varepsilon(X_t) - \varphi_\varepsilon(X_\tau)|  \leq c(n,s)|X_t-X_\tau|^s [U^{1-s}\mu_\varepsilon(X_t) + U^{1-s}\mu_\varepsilon(X_\tau)].
$$ 
Combining with \eqref{eq:potential-bound} gives
\begin{equation}
\label{eq:difference-bound}
|\varphi_\varepsilon(X_t) - \varphi_\varepsilon(X_\tau)|  \leq c(n,s)|X_t-X_\tau|^s [U^{1-s}\mu_{\varphi,\mathcal{U}}(X_t) + U^{1-s}\mu_{\varphi,\mathcal{U}}(X_\tau)].
\end{equation}
Therefore we have 
\[
\frac{|\varphi_\varepsilon(X_t) - \varphi_\varepsilon(X_\tau)|^p}{(t-\tau)^{1+\beta p}}
\leq c(n,s)^p [X]_{\alpha,\infty}^{sp}(t-\tau)^{\alpha sp-\beta p-1}[U^{1-s}\mu_{\varphi,\mathcal{U}}(X_t) + U^{1-s}\mu_{\varphi,\mathcal{U}}(X_\tau)]^p
\]
for any such $t$ and $\tau$, and as at the end of the proof of Proposition \ref{P:basicest} we see that, thanks to $(s,p)$-variability, the right hand side is integrable over $[0,T]^2$. Since by (\ref{eq:pointwise-derivative}) we have 
\[\lim_{\varepsilon\to 0} \frac{|\varphi_\varepsilon(X_t) - \varphi_\varepsilon(X_\tau)|^p}{(t-\tau)^{1+\beta p}}=\frac{|\varphi(X_t) - \varphi(X_\tau)|^p}{(t-\tau)^{1+\beta p}}\]
for any such $t$ and $\tau$, symmetry and dominated convergence imply 
\begin{align}\label{E:limitsdoubleint}
\lim_{\varepsilon\to 0} \int_0^T\int_0^T &\frac{|\varphi_\varepsilon(X_t) - \varphi_\varepsilon(X_\tau)|^p}{|t-\tau|^{1+\beta p}}\:d\tau\:dt\\
&=\lim_{\varepsilon\to 0}\int_0^T\int_0^T \frac{|\varphi_\varepsilon(X_t) - \varphi_\varepsilon(X_\tau)|^p}{|t-\tau|^{1+\beta p}}\:\mathbf{1}_{\mathbb{R}^n\setminus S_\varphi}(X_\tau)\mathbf{1}_{\mathbb{R}^n\setminus S_\varphi}(X_t)\:d\tau\:dt\notag\\
&=\int_0^T\int_0^T \frac{|\varphi(X_t) - \varphi(X_\tau)|^p}{|t-\tau|^{1+\beta p}}\:d\tau\:dt.\notag
\end{align}

Applying Fatou's lemma to the left hand side of (\ref{eq:dyda-bound-mollified}) and using (\ref{eq:mollified-lp-bound}) and (\ref{E:limitsdoubleint}), we obtain 
\begin{align}\label{E:dydaboundfinal}
\int_0^T \frac{|\varphi (X_t)|^p}{t^{\beta p}}dt&\leq \liminf_{\varepsilon\to 0} \int_0^T \frac{|\varphi_\varepsilon (X_t)|^p}{t^{\beta p}}dt\notag\\
&\leq c\lim_{\varepsilon\to 0}\left(\int_0^T\int_0^T \frac{|\varphi_\varepsilon(X_t)-\varphi_\varepsilon(X_\tau)|^p}{|t-\tau|^{1+\beta p}}dtd\tau + \int_0^T |\varphi_\varepsilon(X_t)|^p dt\right)\notag\\
&=c\left(\int_0^T\int_0^T \frac{|\varphi(X_t)-\varphi(X_\tau)|^p}{|t-\tau|^{1+\beta p}}dtd\tau + \int_0^T |\varphi(X_t)|^p dt\right).
\end{align}
Using Proposition \ref{P:basicest} on the first integral on the right hand side and readjusting constants, we arrive at (\ref{E:dyda-bound}). 
\end{proof}

\subsection{Proof of existence and regularity of the integral}\label{SS:intexandreg}

Now the proof of Theorem \ref{thm:existence} follows easily.
\begin{proof}[Proof of Theorem \ref{thm:existence}] To show that $\varphi\circ X\in W^{\beta,1 }_0(0,T)$ as claimed in (i) we have to show that 
$$
\Vert \varphi\circ X\Vert_{\beta,1} = \int_0^T \frac{|\varphi(X_t)|}{t^\beta}dt +[\varphi\circ X]_{\beta,1} < +\infty.
$$
Here, the finiteness of the weighted $L^1$-term follows from Proposition \ref{prop:dyda-bound} and Lemma \ref{lemma:Lp-integrability}, and the finiteness of the Gagliardo seminorm follows from Proposition \ref{P:basicest}. Since $Y\in\mathcal{C}^{\gamma}([0,T],\mathbb{R})$ we have $Y \in W^{1-\beta,\infty}_T$ by (\ref{E:HolderinSobo}), provided that $1-\beta<\gamma$. Thus the existence of the integral \eqref{E:pathwiseint} as claimed in (ii) follows from Proposition \ref{the:ZSIntegralBound} by choosing $\beta\in (1-\gamma, \alpha s)$. To conclude the H\"older regularity claimed in (iii) we can follow \cite[Proposition 4.1 (II)]{NualartRascanu} and \cite[Proposition 6.2 (i)]{SchneiderZahle} and note that, with $\beta$ and $\gamma$ as stated and $0\leq \tau<t\leq T$, we have
\[
|\int_0^t\varphi(X_u)dY_u-\int_0^\tau \varphi(X_u)dY_u|
\leq \left\|Y\right\|_{W_T^{1-\beta,\infty}(0,T)}\left(\int_\tau^t\frac{|\varphi(X_u)|}{(u-\tau)^\beta}\:du+\int_\tau^t\int_\tau^u\frac{|\varphi(X_u)-\varphi(X_r)|}{(u-r)^{1+\beta}}\:drdu\right).
\]
If $X\in V(\varphi,s,p)$, then $\varphi\circ X \in W^{\beta,p}(0,T)$ by Propositions \ref{P:basicest} and  \ref{prop:dyda-bound} and Lemma \ref{lemma:Lp-integrability}. For $p=+\infty$ we have
\[\int_\tau^t\frac{|\varphi(X_u)|}{(u-\tau)^\beta}\:du\leq c\left\|\varphi\circ X\right\|_{W^{\beta,\infty}(0,T)} (t-\tau)^{1-\beta}\]
and
\[\int_\tau^t\int_\tau^u\frac{|\varphi(X_u)-\varphi(X_r)|}{(u-r)^{1+\beta}}\:drdu\leq c\left\|\varphi\circ X\right\|_{W^{\beta,\infty}(0,T)} (t-\tau)\]
as in \cite{NualartRascanu}. For $1\leq p<+\infty$ we can proceed similarly as in \cite{SchneiderZahle} and use H\"older's inequality to see that
\[\int_\tau^t\frac{|\varphi(X_u)|}{(u-\tau)^\beta}\:du\leq c \left\|\varphi\circ X\right\|_{W^{\beta,p}(0,T)} (t-\tau)^{(1-\beta)-1/p}. \]
Using $\beta<\beta'<\alpha s$ and $\frac{1}{p}+\frac{1}{q}=1$ we also have
\begin{align}
\int_\tau^t\int_\tau^u & \frac{|\varphi(X_u)-\varphi(X_r)|}{(u-r)^{1+\beta}}\:drdu \notag\\
&\leq \int_\tau^t\int_\tau^u\frac{|\varphi(X_u)-\varphi(X_r)|}{(u-r)^{\beta'+1/p}}\:\frac{1}{(u-r)^{\beta-\beta'+1/q}}\:drdu\notag\\
&\leq \left(\int_\tau^t\int_\tau^u\frac{|\varphi(X_u)-\varphi(X_r)|}{(u-r)^{\beta'p+1}}\:drdu\right)^{1/p}\left(\int_\tau^t\int_\tau^r(r-u)^{(\beta'-\beta)q-1}dudr\right)^{1/q}\notag\\
&\leq c \left\|\varphi\circ X\right\|_{W^{\beta',p}(0,T)}(t-\tau)^{(1-\beta)-1/p+\beta'}.\notag
\end{align}
\end{proof}
 
The following quantitative estimates are a byproduct of the above proof, Propositions \ref{P:basicest} and \ref{prop:dyda-bound} and (\ref{eq:ZSIntegralBound}).

\begin{corollary}\label{C:quantest}
If $\varphi$, $X$, and $Y$ satisfy the hypotheses of Theorem \ref{thm:existence} (ii), then for any $1-\gamma<\beta<\alpha s$ and any $t\in [0,T]$ we have 
\begin{equation}\label{E:fullestimate}
\left|\int_0^t\varphi(X_u)dY_u\right|
\leq c\left\|Y\right\|_{W_T^{1-\beta,\infty}(0,T)}\left([X]^s_{\alpha,\infty}\int_0^T\int_{\mathcal{U}}\frac{\left\|D\varphi\right\|(dy)}{|X_t-y|^{n-1+s}}dt + \int_0^T |\varphi(X_t)|dt \right).
\end{equation}
If $\varphi$, $X$, and $Y$ satisfy the hypotheses of Theorem \ref{thm:existence} (ii), then for any $1-\gamma<\beta<\alpha s$ and any $t\in [0,T]$ we have
\[
\left\|\int_0^{\cdot}\varphi(X_u)dY_u\right\|_{\mathcal{C}^{1-\beta}([0,T])}
\leq c\left\|Y\right\|_{W_T^{1-\beta,\infty}(0,T)}\left([X]^{sp}_{\alpha,\infty}\int_0^T\left[\int_{\mathcal{U}}\frac{\left\|D\varphi\right\|(dy)}{|X_t-y|^{n-1+s}}\right]^p dt + \int_0^T |\varphi(X_t)|^pdt \right)
\]
with straightforward modification for $p=+\infty$.
\end{corollary}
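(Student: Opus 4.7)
The plan is to combine the duality bound of Proposition \ref{the:ZSIntegralBound} with the two seminorm estimates supplied by Propositions \ref{P:basicest} and \ref{prop:dyda-bound}, essentially repeating the bookkeeping already carried out in the proof of Theorem \ref{thm:existence}. No new idea is needed: the corollary simply makes the constants in that proof explicit. I expect no serious obstacle; the only mild subtlety is the case distinction $p=+\infty$ vs.\ $1\le p<+\infty$ in the second estimate.

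For the first estimate I would first note that, under the hypotheses of Theorem \ref{thm:existence}(ii) and for any $1-\gamma<\beta<\alpha s$, part (i) gives $\varphi\circ X\in W^{\beta,1}_0(0,T)$ and (\ref{E:HolderinSobo}) gives $Y\in W^{1-\beta,\infty}_T(0,T)$. Applying Proposition \ref{the:ZSIntegralBound} to $f=\mathbf{1}_{[0,t]}(\varphi\circ X)$ and $g=Y$ yields
\[
\left|\int_0^t \varphi(X_u)\,dY_u\right| \;\le\; c\,\|Y\|_{W^{1-\beta,\infty}_T(0,T)}\;\|\varphi\circ X\|_{W^{\beta,1}_0(0,T)}.
\]
The right-hand norm splits into the weighted $L^1$ part and the Gagliardo seminorm $[\varphi\circ X]_{\beta,1}$. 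Proposition \ref{P:basicest} with $p=1$ bounds the seminorm by a constant multiple of $[X]_{\alpha,\infty}^{s}\int_0^T\int_{\mathcal{U}}|X_t-y|^{-n+1-s}\|D\varphi\|(dy)\,dt$, while Proposition \ref{prop:dyda-bound} with $p=1$ bounds the weighted $L^1$ term by the same quantity plus $\int_0^T |\varphi(X_t)|\,dt$. Collecting the terms gives (\ref{E:fullestimate}).

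For the H\"older estimate I would reuse the increment argument from the proof of Theorem \ref{thm:existence}(iii): for $0\le \tau<t\le T$,
\[
\left|\int_0^t\varphi(X_u)\,dY_u-\int_0^\tau\varphi(X_u)\,dY_u\right|\;\le\;\|Y\|_{W^{1-\beta,\infty}_T(0,T)}\!\left(\int_\tau^t\!\frac{|\varphi(X_u)|}{(u-\tau)^\beta}\,du+\int_\tau^t\!\int_\tau^u\!\frac{|\varphi(X_u)-\varphi(X_r)|}{(u-r)^{1+\beta}}\,dr\,du\right),
\]
and the very computation carried out there shows that both inner integrals are controlled, up to multiplicative constants and a factor $(t-\tau)^{1-\beta-1/p}$, by $\|\varphi\circ X\|_{W^{\beta,p}(0,T)}$ (with a slight increase of $\beta$ in the $p<+\infty$ case, still below $\alpha s$). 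Substituting the bound on $\|\varphi\circ X\|_{W^{\beta,p}(0,T)}$ obtained from Proposition \ref{P:basicest} (Gagliardo part) and Proposition \ref{prop:dyda-bound} (weighted $L^p$ part) then yields the second estimate.

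In the borderline case $p=+\infty$, Proposition \ref{prop:dyda-bound} does not apply directly, so I would replace it by the pointwise argument used in Theorem \ref{thm:existence}(iii): the $L^\infty$-bound on $\varphi\circ X$ comes from (\ref{E:Linftybound}) together with Lemma \ref{lemma:Lp-integrability}, and the $W^{\beta,\infty}(0,T)$-part of the norm is controlled by (\ref{E:basicestinfty}). Substituting these into the increment bound above produces the $p=+\infty$ version of the claim, completing the proof.
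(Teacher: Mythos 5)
Your argument is correct and follows the route the paper itself points to: the corollary is explicitly declared a byproduct of Propositions \ref{P:basicest}, \ref{prop:dyda-bound}, the duality bound (\ref{eq:ZSIntegralBound}), and the increment estimate appearing in the proof of Theorem \ref{thm:existence}(iii), and your proposal combines exactly these ingredients.

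One small technical wrinkle in the first half: applying Proposition \ref{the:ZSIntegralBound} to $f=\mathbf{1}_{[0,t]}(\varphi\circ X)$ gives the right-hand side with $\|\mathbf{1}_{[0,t]}(\varphi\circ X)\|_{W^{\beta,1}_0}$ rather than $\|\varphi\circ X\|_{W^{\beta,1}_0}$, and the two are not literally equal. The cleanest way to avoid the need to prove a uniform-in-$t$ comparison of these norms is to take $\tau=0$ in the increment estimate
\[
\left|\int_0^t\varphi(X_u)\,dY_u\right|\leq\|Y\|_{W^{1-\beta,\infty}_T(0,T)}\left(\int_0^t\frac{|\varphi(X_u)|}{u^\beta}\,du+\int_0^t\int_0^u\frac{|\varphi(X_u)-\varphi(X_r)|}{(u-r)^{1+\beta}}\,dr\,du\right),
\]
enlarge the domains of integration to $[0,T]$, and then invoke Propositions \ref{prop:dyda-bound} and \ref{P:basicest} with $p=1$; this produces (\ref{E:fullestimate}) directly. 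Everything else, including the second (H\"older) estimate and the $p=+\infty$ modification using (\ref{E:basicestinfty}) and (\ref{E:Linftybound}), matches the paper's intended argument.
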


\subsection{Interpretation as currents}\label{SS:currents}

Although it will not be used in the sequel, we briefly comment on an alternative interpretation of (\ref{E:fullestimate}) which is close to the concept of stochastic currents investigated in \cite{FlandoliGubinelliRusso} and \cite{FlandoliGubinelliGiaquintaTortorelli}. Given a path $X:[0,T]\to\mathbb{R}^n$ and a number $s\in (0,1)$, set
\[[\varphi]_{X,s}:=\left\|U^{1-s}\left\|D\varphi\right\|\right\|_{L^1(X)}, \quad \varphi \in BV(\mathbb{R}^n).\]
Obviously this defines a seminorm on $BV(\mathbb{R}^n)$. Recall that a sequence $(\varphi_n)_n\subset BV(\mathbb{R}^n)$ is said to strictly converge to $\varphi \in BV(\mathbb{R}^n)$ if $\lim_n \varphi_n=\varphi$ in $L^1(\mathbb{R}^n)$ and $\lim_n \left\|D\varphi_n\right\|(\mathbb{R}^n)=\left\| D\varphi\right\|(\mathbb{R}^n)$, \cite[Definition 3.14]{AFP}. 

\begin{proposition}
For any path $X$ and any $s\in (0,1)$ the seminorm $[\cdot]_{X,s}$ is lower semicontinuous on $BV(\mathbb{R}^n)$ w.r.t. strict convergence. Moreover, 
\begin{equation}\label{E:obscure}
\left\lbrace \varphi \in BV(\mathbb{R}^n): [\varphi]_{X,s}<+\infty\right\rbrace
\end{equation}
is a subspace of $BV(\mathbb{R}^n)$, closed w.r.t. strict convergence.
\end{proposition}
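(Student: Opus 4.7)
The plan is to reduce both statements to a single lower-semicontinuity computation based on a Fubini rearrangement of $[\varphi]_{X,s}$ and the behavior of the total variation under strict convergence. Using the symmetry of the Riesz kernel in (\ref{E:potential}) and Fubini's theorem, I would first rewrite
\[
[\varphi]_{X,s} = \int_0^T U^{1-s}\|D\varphi\|(X_t)\,dt = \int_{\mathbb{R}^n} U^{1-s}\mu_X^{[0,T]}(y)\,\|D\varphi\|(dy),
\]
so that the seminorm becomes the integral of the fixed nonnegative function $g := U^{1-s}\mu_X^{[0,T]}$ against the measure $\|D\varphi\|$. By Fatou applied to the nonnegative kernel $y\mapsto |x-y|^{-n+1-s}$, which is lower semicontinuous as a function of $y$, the function $g$ is lower semicontinuous on $\mathbb{R}^n$.

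Next I would invoke the fact that if $\varphi_n \to \varphi$ strictly in $BV(\mathbb{R}^n)$, then the finite nonnegative Radon measures $\|D\varphi_n\|$ converge to $\|D\varphi\|$ narrowly on $\mathbb{R}^n$, i.e.\ against every bounded continuous test function. Indeed, $L^1$-convergence yields weak$^\ast$ convergence of $D\varphi_n$ to $D\varphi$ against $C_c(\mathbb{R}^n)$, and combining Reshetnyak-type lower semicontinuity $\|D\varphi\|(U) \leq \liminf_n \|D\varphi_n\|(U)$ for open $U$ with the total-mass convergence $\|D\varphi_n\|(\mathbb{R}^n) \to \|D\varphi\|(\mathbb{R}^n)$ upgrades this to narrow convergence by a standard Portmanteau argument. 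The Portmanteau inequality for nonnegative lower semicontinuous integrands then gives $\int g\,d\|D\varphi\| \leq \liminf_n \int g\,d\|D\varphi_n\|$, i.e.\ $[\varphi]_{X,s} \leq \liminf_n [\varphi_n]_{X,s}$.

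For the subspace assertion, subadditivity $[\varphi+\psi]_{X,s} \leq [\varphi]_{X,s} + [\psi]_{X,s}$ follows from the measure-level bound $\|D(\varphi+\psi)\| \leq \|D\varphi\| + \|D\psi\|$ together with the monotonicity of $U^{1-s}$ in its argument measure, and positive homogeneity is immediate from $D(\lambda\varphi)=\lambda D\varphi$, so the set in (\ref{E:obscure}) is a linear subspace. Its closedness under strict convergence, interpreted as the sub-level sets $\{\varphi : [\varphi]_{X,s} \leq C\}$ being sequentially closed in the strict topology, is exactly the lower semicontinuity established above.

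The main technical point I expect is identifying the correct mode of convergence for $\|D\varphi_n\|$: the potential $g$ is in general unbounded, so weak$^\ast$ convergence against $C_c(\mathbb{R}^n)$ alone would not suffice for the Portmanteau step. The strictness hypothesis, via total-mass convergence, is precisely what is needed to upgrade to narrow convergence, for which the Portmanteau inequality with nonnegative lower semicontinuous integrands applies.
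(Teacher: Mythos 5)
Your proof is correct and the overall strategy is sound, but it takes a genuinely different route than the paper. The paper passes directly from strict convergence to vague convergence of $\|D\varphi_n\|\to\|D\varphi\|$ (via \cite[Proposition 3.15]{AFP}), then applies the lower semicontinuity of $\nu\mapsto U^{1-s}\nu(x)$ under vague convergence \emph{pointwise in} $x$ (\cite[Chapter I, Theorem 1.3]{Landkof}), and finishes with Fatou's lemma with respect to $\mu_X^{[0,T]}$. You instead apply Tonelli first, so that $[\varphi]_{X,s}=\int g\,d\|D\varphi\|$ with $g=U^{1-s}\mu_X^{[0,T]}$ a fixed lower semicontinuous nonnegative function, and then run a Portmanteau argument. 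Both work; the paper's version is slightly more parallel to the cited potential-theoretic machinery, while yours reduces to the most standard Portmanteau inequality and makes the linear-algebraic structure of the seminorm more visible.

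Two smaller remarks. First, your caution that vague convergence would not suffice for the Portmanteau step is actually unfounded: since $\mathbb{R}^n$ is locally compact and $g$ is nonnegative and lower semicontinuous, $\int g\,d\nu=\sup\bigl\{\int f\,d\nu:f\in C_c,\ 0\le f\le g\bigr\}$, and so vague convergence alone already yields $\int g\,d\|D\varphi\|\le\liminf_n\int g\,d\|D\varphi_n\|$. The upgrade to narrow convergence is not wrong, but it is unnecessary; the paper works directly with vague convergence. Second, you reinterpret ``closed'' as meaning that the sublevel sets $\{[\varphi]_{X,s}\le C\}$ are sequentially closed. That is precisely what lower semicontinuity gives you, but it is not literally what the proposition asserts: the finiteness set $\{[\varphi]_{X,s}<+\infty\}=\bigcup_C\{[\varphi]_{X,s}\le C\}$ is a union of closed sets and need not itself be closed (compare, e.g., $H^1(0,1)$ as the finiteness set of the LSC seminorm $\|u'\|_{L^2}$ on $L^2(0,1)$). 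The paper's own proof only says ``follows similarly'' here and likewise only really delivers closedness of sublevel sets; your hedge is in fact the defensible reading, but be aware it is not the literal statement you were asked to prove.
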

\begin{proof}
If $(\varphi_n)_n\subset BV(\mathbb{R}^n)$ strictly converges to $\varphi\in BV(\mathbb{R}^n)$, then, by \cite[Proposition 3.15]{AFP}, also $\lim_n \left\|D\varphi_n\right\|=\left\|D\varphi\right\|$ vaguely. Hence, by \cite[Chapter I, Theorem 1.3]{Landkof}, we have 
\[U^{1-s}\left\|D\varphi\right\|(x)\leq \liminf_n U^{1-s}\left\|D\varphi_n\right\|(x),\quad x\in\mathbb{R}^n,\]
and by Fatou's lemma, used w.r.t. $\mu_X^{[0,T]}$, the lower semicontinuity follows. The closedness of the above subspace follows similarly.
\end{proof}

Although it is too strict for most applications, let us mention that the norm $\left\|\varphi\right\|_{BV}:=\left\|\varphi\right\|_{L^1(\mathbb{R}^n)}+\left\|D\varphi\right\|(\mathbb{R}^n)$ makes $BV(\mathbb{R}^n)$ a Banach space, and convergence in this norm implies strict convergence. Consequently (\ref{E:obscure}) is also closed w.r.t. $\left\|\varphi\right\|_{BV}$, hence itself Banach with this norm.

We write $V_{X,s}(\mathbb{R}^n,\mathbb{R}^n)$ for the Banach space of all $\varphi=(\varphi_1,...,\varphi_n) \in (BV(\mathbb{R}^n))^n$ with $[\varphi_i]_{X,s}<+\infty$ for all $i$ with norm
$\left\|\varphi\right\|_{X,s}:=\sum_{i=1}^n \left(\left\|\varphi_i\right\|_{BV(\mathbb{R}^n)} + [\varphi_i]_{X,s}\right)$.

The following interpretation of the integral as a bounded linear functional on $V_{X,s}(\mathbb{R}^n,\mathbb{R}^n)$ is a special case of (\ref{E:fullestimate}) and seems close to \cite[Remark 12] {FlandoliGubinelliGiaquintaTortorelli}.
\begin{corollary}
Let $X\in \mathcal{C}^\gamma([0,T],\mathbb{R}^n)$ with $\gamma>\frac12$ and let $s\in (\frac{1}{\gamma}-1,1)$. Then the integral $\int_0^T\varphi(X_u)dX_u$ exists for any $\varphi\in V_{X,s}(\mathbb{R}^n,\mathbb{R}^n)$ and satisfies
\[\left|\int_0^T \varphi(X_u)dX_u\right|\leq c\left\|X\right\|_{W_T^{1-\beta,\infty}(0,T,\mathbb{R}^n)}\left(1+[X]_{\gamma,\infty}^s\right)\left\|\varphi\right\|_{X,s}\]
for any $\beta\in (1-\gamma,\gamma s)$. 
\end{corollary}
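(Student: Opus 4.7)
My plan is to reduce the vector-valued integral to $n$ scalar generalized Lebesgue--Stieltjes integrals and then to apply Corollary \ref{C:quantest} componentwise. By linearity of the integral in Definition \ref{def:integral},
\[\int_0^T\varphi(X_u)\:dX_u \weq \sum_{k=1}^n \int_0^T\varphi_k(X_u)\:dX^k_u.\]
Since $\gamma>\frac{1}{2}$ and $s>\gamma^{-1}-1$, we have $1-\gamma<\gamma s$, so any $\beta\in(1-\gamma,\gamma s)$ is admissible for the fractional calculus below. Each $X^k\in\mathcal{C}^\gamma([0,T],\mathbb{R})$ lies in $W^{1-\beta,\infty}_T(0,T)$ by (\ref{E:HolderinSobo}), with $\|X^k\|_{W^{1-\beta,\infty}_T(0,T)}\leq\|X\|_{W^{1-\beta,\infty}_T(0,T,\mathbb{R}^n)}$.

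For existence I would first note that $[\varphi_k]_{X,s}<+\infty$, together with (\ref{E:occutimeformula}), amounts via Proposition \ref{P:SVviaoccu} to $X\in V(\varphi_k,s,1)$; the neighborhood $\mathcal{U}$ can be any relatively compact open set containing $X([0,T])$, since $\|D\varphi_k\|(\mathbb{R}^n)<\infty$ allows extending the integration. Hence Theorem \ref{thm:existence}(i)--(ii) applies to each pair $(\varphi_k, X^k)$, delivering $\varphi_k\circ X\in W^{\beta,1}_0(0,T)$ together with the existence of $\int_0^T\varphi_k(X_u)\,dX^k_u$.

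For the quantitative bound I invoke (\ref{E:fullestimate}) of Corollary \ref{C:quantest} with $\alpha=\gamma$, driver $Y=X^k$, and $\varphi=\varphi_k$:
\[\bigl|\int_0^T\varphi_k(X_u)\:dX^k_u\bigr|\wle c\,\|X^k\|_{W^{1-\beta,\infty}_T(0,T)}\bigl([X]_{\gamma,\infty}^s[\varphi_k]_{X,s}+\|\varphi_k\circ X\|_{L^1(0,T)}\bigr),\]
where I have recognized the double integral in \eqref{E:fullestimate} as a constant multiple of $[\varphi_k]_{X,s}$. Summing over $k$ and bounding each $\|X^k\|_{W^{1-\beta,\infty}_T}$ by $\|X\|_{W^{1-\beta,\infty}_T(0,T,\mathbb{R}^n)}$ yields exactly the stated structure, provided each $\|\varphi_k\circ X\|_{L^1(0,T)}$ can be dominated by a constant times $\|\varphi_k\|_{BV(\mathbb{R}^n)}+[\varphi_k]_{X,s}$ with no extra power of $[X]_{\gamma,\infty}$.

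The main obstacle is precisely this $L^1$ control, since a pointwise value $|\lambda_{\varphi_k}(x_0)|$ is not dominated by $\|\varphi_k\|_{BV(\mathbb{R}^n)}$ alone. I would handle it by picking, using Chebyshev's inequality, a reference time $t_0\in[0,T]$ at which $X_{t_0}$ is a Lebesgue point of $\varphi_k$ (a.e.~$t$ by Corollary \ref{C:key}) and $U^{1-s}\|D\varphi_k\|(X_{t_0})\leq cT^{-1}[\varphi_k]_{X,s}$; Proposition \ref{P:meanvalue} then gives, for $\mathcal{L}^1$-a.e.\ $t$,
\[|\varphi_k(X_t)|\wle |\lambda_{\varphi_k}(X_{t_0})|+c\,[X]_{\gamma,\infty}^s T^{\gamma s}\bigl(U^{1-s}\|D\varphi_k\|(X_t)+U^{1-s}\|D\varphi_k\|(X_{t_0})\bigr),\]
which integrates to $T|\lambda_{\varphi_k}(X_{t_0})|+c[X]_{\gamma,\infty}^s[\varphi_k]_{X,s}$. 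The remaining term $|\lambda_{\varphi_k}(X_{t_0})|$ is controlled by $\|\varphi_k\|_{BV(\mathbb{R}^n)}$ through a Poincar\'e--Sobolev estimate for $BV$ applied on a fixed bounded neighborhood of $X([0,T])$; the resulting embedding constant is absorbed into $c$, and summation over $k$ produces the advertised inequality.
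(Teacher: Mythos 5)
Your reduction to $n$ scalar integrals and the use of Theorem~\ref{thm:existence} together with Corollary~\ref{C:quantest} is exactly the route the paper has in mind; and your identification of the $L^1$ control of $\varphi_k\circ X$ as the genuine obstacle (since \eqref{E:fullestimate} leaves $\int_0^T|\varphi(X_t)|dt$ unprocessed) is correct, as is the Chebyshev selection of $t_0$ with $U^{1-s}\|D\varphi_k\|(X_{t_0})\leq cT^{-1}[\varphi_k]_{X,s}$ and the Proposition~\ref{P:meanvalue} inequality you then integrate.

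The gap is in your final sentence. For $n\geq 2$ the space $BV(\mathbb{R}^n)$ does not embed into $L^\infty(\mathbb{R}^n)$, so $|\lambda_{\varphi_k}(X_{t_0})|$ is \emph{not} controlled by $\|\varphi_k\|_{BV(\mathbb{R}^n)}$ via Poincar\'e--Sobolev on a fixed neighborhood: truncations of $\log(1/|x|)$ in $\mathbb{R}^2$ have uniformly bounded $BV$-norm but arbitrarily large Lebesgue value at the origin. What a single Poincar\'e estimate on $B(X_{t_0},1)$ gives you is only $|(\varphi_k)_{B(X_{t_0},1)}|\lesssim \|\varphi_k\|_{L^1(\mathbb{R}^n)}$; bridging from the average on a ball of radius $1$ down to the Lebesgue value requires controlling the oscillation on all scales. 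The missing ingredient is already in your hands: your Chebyshev choice guarantees $U^{1-s}\|D\varphi_k\|(X_{t_0})<\infty$, and a dyadic telescope $|(\varphi_k)_{B(X_{t_0},2^{-j-1})}-(\varphi_k)_{B(X_{t_0},2^{-j})}|\leq c\,2^{-j(1-n)}\|D\varphi_k\|(B(X_{t_0},2^{-j}))\leq c\,2^{-js}\,U^{1-s}\|D\varphi_k\|(X_{t_0})$ sums (geometrically, since $s>0$) to
\[|\lambda_{\varphi_k}(X_{t_0})| \wle c\,\|\varphi_k\|_{L^1(\mathbb{R}^n)}+c\,U^{1-s}\|D\varphi_k\|(X_{t_0}) \wle c\,\|\varphi_k\|_{BV(\mathbb{R}^n)}+cT^{-1}[\varphi_k]_{X,s},\]
which, inserted into your display, yields $\|\varphi_k\circ X\|_{L^1(0,T)}\lesssim(1+[X]_{\gamma,\infty}^s)(\|\varphi_k\|_{BV(\mathbb{R}^n)}+[\varphi_k]_{X,s})$ and hence the stated estimate after summation. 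So the argument is salvageable, but the attribution to Poincar\'e--Sobolev alone is wrong: you must feed the Riesz potential bound at $X_{t_0}$ into the pointwise estimate, not just the $BV$-norm.
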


\subsection{Proof of the change of variable formula}\label{SS:changeofvarproof}

We provide a proof of Theorem \ref{thm:ito} which follows by mollification of the coefficient and taking the limit. The following result yields convergence in $W^{\beta, 1}_0(0,T)$.

\begin{lemma}
\label{lem:mollified-gagliardo-convergence}
Let $\varphi\in BV_{\loc}(\mathbb{R}^n)$ and $X\in\mathcal{C}^\alpha([0,T],\mathbb{R}^n)$. Suppose that $s\in (0,1)$, $p\in [1,+\infty)$, and $X\in V(\varphi,s,p)$. Set 
$\varphi_\varepsilon = \varphi \ast \eta_\varepsilon$, where $(\eta_\varepsilon)_{\varepsilon>0}$ is a mollifier. Then for any $\beta\in (0,\alpha s)$ we have
\begin{equation}\label{eq:convergence-for-change-of-variable}
\lim_{\varepsilon\to 0}\left\|\varphi\circ X-\varphi_\varepsilon \circ X\right\|_{W^{\beta,p}_0} = 0.
\end{equation}
\end{lemma}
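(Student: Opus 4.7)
My plan is to verify the two parts of the $W^{\beta,p}_0$-norm separately, namely the Gagliardo seminorm $[\cdot]_{\beta,p}$ and the weighted term $\int_0^T t^{-\beta p}|\cdot|^p\:dt$, each by dominated convergence. The pointwise starting ingredient, $\varphi_\varepsilon(X_t)\to \varphi(X_t)$ at $\mathcal{L}^1$-a.e.\ $t\in [0,T]$, follows from standard Lebesgue differentiation (which gives $\varphi_\varepsilon(x)\to\varphi(x)$ at each Lebesgue point $x$ of $\varphi$) combined with Corollary \ref{C:key}, whose conclusion $\mu_X^{[0,T]}(S_\varphi)=0$ translates to $\mathcal{L}^1(\{t: X_t\in S_\varphi\})=0$.

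To dominate the Gagliardo integrand uniformly in $\varepsilon$, I will fix a relatively compact open set $\mathcal{U}'\supset \mathcal{U}\supset X([0,T])$ so that for all sufficiently small $\varepsilon>0$ the $\varepsilon$-parallel set $(X([0,T]))_\varepsilon$ lies in $\mathcal{U}'$. Applying Proposition \ref{P:meanvalue} to the smooth function $\varphi_\varepsilon$ and using Corollary \ref{C:mollypot} to control $U^{1-s}\left\|D\varphi_\varepsilon\right\|$ by $U^{1-s}\mu_{\varphi,\mathcal{U}'}$, together with the analogous direct bound for $\varphi$ via Proposition \ref{P:meanvalue}, produces
\begin{equation*}
|(\varphi-\varphi_\varepsilon)(X_t)-(\varphi-\varphi_\varepsilon)(X_\tau)|\wle c|X_t-X_\tau|^s\bigl[U^{1-s}\mu_{\varphi,\mathcal{U}'}(X_t)+U^{1-s}\mu_{\varphi,\mathcal{U}'}(X_\tau)\bigr]
\end{equation*}
at $\mathcal{L}^1\otimes\mathcal{L}^1$-a.e.\ $(t,\tau)$, with $c$ independent of $\varepsilon$. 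Combined with the $\alpha$-H\"older continuity of $X$, this gives a dominating function on $[0,T]^2$ for $|(\varphi-\varphi_\varepsilon)(X_t)-(\varphi-\varphi_\varepsilon)(X_\tau)|^p/|t-\tau|^{1+\beta p}$; its integrability follows, exactly as in the calculation carried out in the proof of Proposition \ref{P:basicest}, from $\beta<\alpha s$ and $X\in V(\varphi,s,p)$. Dominated convergence then yields $[(\varphi-\varphi_\varepsilon)\circ X]_{\beta,p}\to 0$.

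For the unweighted $L^p$ part I pin at a time $t_0\in [0,T]$ with $X_{t_0}\in \mathbb{R}^n\setminus S_\varphi$ and $U^{1-s}\mu_{\varphi,\mathcal{U}'}(X_{t_0})<+\infty$; such $t_0$ exists by the same argument used at the start of the proof of Lemma \ref{lemma:Lp-integrability}. The difference estimate above and $|\varphi_\varepsilon(X_{t_0})|\to |\varphi(X_{t_0})|<+\infty$ then produce a single function $g\in L^p(0,T)$ with $|\varphi_\varepsilon(X_t)|+|\varphi(X_t)|\leq g(t)$ for a.e.\ $t$ and all small $\varepsilon$, whence dominated convergence gives $\int_0^T|(\varphi-\varphi_\varepsilon)(X_t)|^p\:dt\to 0$. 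Finally, to pass to the weighted term, I apply the intermediate Dyda-type bound derived at the end of the proof of Proposition \ref{prop:dyda-bound} to $\varphi-\varphi_\varepsilon\in BV_{\loc}(\mathbb{R}^n)$; this is legitimate because $\left\|D(\varphi-\varphi_\varepsilon)\right\|\leq \left\|D\varphi\right\|+\left\|D\varphi_\varepsilon\right\|$ together with Corollary \ref{C:mollypot} implies $X\in V(\varphi-\varphi_\varepsilon,s,p)$, and it yields
\begin{equation*}
\int_0^T\frac{|(\varphi-\varphi_\varepsilon)(X_t)|^p}{t^{\beta p}}\:dt\wle c\bigl[(\varphi-\varphi_\varepsilon)\circ X\bigr]_{\beta,p}^p+c\int_0^T|(\varphi-\varphi_\varepsilon)(X_t)|^p\:dt,
\end{equation*}
which closes the argument. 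The main technical obstacle is the uniform (in $\varepsilon$) control of the behavior of $\varphi_\varepsilon$ along the trace of $X$; it is resolved entirely by the potential-theoretic mollification estimate in Corollary \ref{C:mollypot}.
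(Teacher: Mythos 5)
Your proof is correct and follows essentially the same route as the paper's: pointwise a.e.\ convergence from (\ref{E:convundermolly}) and Corollary \ref{C:key}, dominated convergence for the Gagliardo seminorm using the uniform-in-$\varepsilon$ potential bound from Proposition \ref{P:meanvalue} plus Corollary \ref{C:mollypot}, a pinning argument for the unweighted $L^p$ term, and the intermediate Dyda-type inequality (\ref{E:dydaboundfinal}) applied to $\varphi-\varphi_\varepsilon$ to handle the weighted term. Your explicit verification that $X\in V(\varphi-\varphi_\varepsilon,s,p)$ (needed to invoke that inequality) is a welcome clarification of a step the paper leaves implicit under the phrase ``As in (\ref{eq:dyda-bound-mollified}) and (\ref{E:dydaboundfinal})''.
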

\begin{proof}
Compare also with \cite{Fiscellaetal}. Consider first the seminorm part
\begin{equation}\label{E:seminormpart}
[\varphi\circ X-\varphi_\varepsilon \circ X]_{\beta,p} = \int_0^T \int_0^T \frac{|\varphi(X_t)-\varphi(X_\tau) - \varphi_\varepsilon(X_t)+\varphi_\varepsilon(X_\tau)|^p}{|t-\tau|^{1+\beta p}}d\tau dt.   
\end{equation}
Since $X\in V(\varphi,s,p)$, (\ref{E:convundermolly}) and Proposition \ref{P:key} imply that 
\begin{equation}\label{E:nice}
\lim_{\varepsilon\to 0}|\varphi(X_t)-\varphi_\varepsilon(X_t)| =0
\end{equation}
for a.e. $t\in[0,T]$. Thus it suffices to find an integrable upper bound in order to apply dominated convergence theorem. For this we use
\[|\varphi(X_t)-\varphi(X_\tau) - \varphi_\varepsilon(X_t)+\varphi_\varepsilon(X_\tau)|^p\leq 2^{p-1}|\varphi(X_t)-\varphi(X_\tau)|^p +2^{p-1}|\varphi_\varepsilon(X_t)-\varphi_\varepsilon(X_\tau)|^p.\]
For the first summand on the right hand side we can use Proposition \ref{P:basicest}. For the second, Proposition \ref{P:meanvalue}, the bound (\ref{eq:maximal-function-bound}), and Corollary \ref{C:mollypot} imply that for $\varepsilon$ sufficiently small and $\mathcal{L}^1$-a.e. $t,\tau\in [0,T]$,
\begin{equation}\label{E:stereotype}
\begin{split}
\frac{|\varphi_\varepsilon(X_t)-\varphi_\varepsilon(X_\tau)|^p}{|t-\tau|^{1+\beta p}}
&\leq c\frac{|X_t-X_\tau|^{sp}}{|t-\tau|^{1+\beta p}}[U^{1-s}(\left\|D\varphi_\varepsilon\right\||_{X([0,T])})(X_t)+U^{1-s}(\left\|D\varphi_\varepsilon\right\||_{X([0,T)]})(X_\tau)]^p\\
&\leq c\frac{|X_t-X_\tau|^{sp}}{|t-\tau|^{1+\beta p}}[U^{1-s}(\left\|D\varphi\right\||_{\mathcal{U}})(X_t)+U^{1-s}(\left\|D\varphi\right\||_{\mathcal{U}})(X_\tau)]^p
\end{split}
\end{equation}
with $\mathcal{U}\supset X([0,T])$ relatively compact open as in Definition \ref{D:SVp-condition}. The quantity (\ref{E:seminormpart}) goes to zero as $\varepsilon\to 0$ by the dominated convergence theorem. As in (\ref{eq:dyda-bound-mollified}) and (\ref{E:dydaboundfinal}), we obtain 
\begin{equation}\label{E:weightedpart}
\begin{split}
\int_0^T \frac{|\varphi (X_t)-\varphi_\varepsilon(X_t)|^p}{t^{\beta p}}dt\leq & c\left(\int_0^T\int_0^T \frac{|\varphi(X_t)-\varphi_\varepsilon(X_t)+\varphi_\varepsilon(X_\tau)-\varphi(X_\tau)|^p}{|t-\tau|^{1+\beta p}}dtd\tau\right.\\
&\qquad\left. + \int_0^T |\varphi(X_t)-\varphi_\varepsilon(X_t)|^p dt\right).
\end{split}
\end{equation}
Recycling the pinning argument, we choose $t_0\in[0,T]$ such that $X_{t_0} \in \mathbb{R}^n\setminus S_\varphi$ to see that
$$
|\varphi_\varepsilon(X_t)|^p \leq 2^{p-1}\left[|\varphi_\varepsilon(X_t)-\varphi_\varepsilon(X_{t_0})|^p+|\varphi_\varepsilon(X_{t_0})|^p\right],
$$
and treat the difference $|\varphi_\varepsilon(X_t)-\varphi_\varepsilon(X_{t_0})|$ using the maximal inequalities and potential bounds as in the proof of Proposition \ref{P:basicest}, which gives an integrable upper bound. By (\ref{E:nice}) and dominated convergence we have
\[\lim_{\varepsilon\to 0} \int_0^T |\varphi(X_t)-\varphi_\varepsilon(X_t)|^p dt = 0,\]
and combining with the above, we see that also (\ref{E:weightedpart}) goes to zero as $\varepsilon\to 0$.
\end{proof}

We prove Theorem \ref{thm:ito}.
\begin{proof}[Proof of Theorem \ref{thm:ito}]
Let $F_\varepsilon = F\ast \eta_\varepsilon$, where $(\eta_\varepsilon)_{\varepsilon>0}$ is a mollifier. By Theorem \ref{thm:existence} all integrals $\int_0^t \partial_k F_\varepsilon(X_u) dX^k_u$
are well defined. We have $\partial_k F_\varepsilon = \partial_k F \ast \eta_\varepsilon$ and a standard Taylor approximation argument yields
\begin{equation}
\label{eq:Taylor}
F_\varepsilon(X_t) = F_\varepsilon(\mathring{x}) + \sum_{k=1}^n \int_0^t \partial_k F_\varepsilon (X_u)dX^k_u.
\end{equation}
By Proposition \ref{P:key}, $X_t \in \mathbb{R}^n\setminus \bigcup_{k=1}^n S_{\partial_k F}$ for $\mathcal{L}^1$-a.e. $t\in[0,T]$. Thus, by Lemma \ref{lem:mollifier-result-new}, $X_t \in \mathbb{R}^n\setminus S_{F}$ for $\mathcal{L}^1$-a.e. $t\in[0,T]$ as well and we have
\[\lim_{\varepsilon\to 0} F_\varepsilon(X_t) = F(X_t)\quad \text{and}\quad 
\lim_{\varepsilon \to 0} F_\varepsilon(\mathring{x}) = F(\mathring{x}).\]
By Lemma \ref{lem:mollified-gagliardo-convergence} we have
$$
\lim_{\varepsilon \to 0}\Vert \partial_k F_{\varepsilon}(X_\cdot)-\partial_k F(X_\cdot)\Vert_{W^{\beta,1}_0(0,T)} = 0
$$
for each $k=1,2,\ldots,n$ and for some $\beta\in(1-\alpha,\alpha s)$. Using (\ref{eq:ZSIntegralBound}) we therefore arrive at (\ref{E:changeofvar}). Finally, for continuous $F$ we have $S_F=\emptyset$ concluding the proof.
\end{proof}

\section{Existence and uniqueness proofs}\label{S:exandunique}

In this section we prove Theorems \ref{T:exonedim}, \ref{T:exjoint}, \ref{T:exshift} and \ref{thm:DE-uniqueness} and Corollary \ref{C:exshiftrandom}. 

We make repeated use of the elementary facts that a function $f=(f_1,...,f_m)$ is in the space $W^{\beta,p}(0,T,\mathbb{R}^m)$ if and only if all its components $f_i$ are in  $W^{\beta,p}(0,T)$, and that, by the norm equivalence in $\mathbb{R}^m$, the norm $\left\|f\right\|_{W^{\beta,p}(0,T,\mathbb{R}^m)}$ is comparable to $\sum_{i=1}^m \left\|f_i\right\|_{W^{\beta,p}(0,T)}$ (similarly for other function spaces). We apply the fact that estimates like (\ref{eq:ZSIntegralBound}) or (\ref{E:contimultbound}) remain valid for vector valued functions of compatible dimensions at the expense of having a different multiplicative constant in front. We also use the symbol $L^p(X,\mathbb{R}^m)$ for a vector valued version of $L^p(X)$.

\subsection{Invertibility and $BV$-regularity}\label{SS:Prelim}

In this subsection we verify Lemma \ref{L:niceinverse}. Recall that by the Cayley-Hamilton theorem the inverse $A^{-1}$ of an invertible $(n\times n)$-matrix $A$ satisfies
\begin{equation}\label{E:CayleyHamilton}
A^{-1}=\frac{(-1)^{n-1}}{\det(A)}\:(A^{n-1}+c_{n-1} A^{n-2}+...+c_1 I_n),
\end{equation}
where $I_n$ is the $(n\times n)$-identity matrix and, for $k=0,..., n-1$, one has
\[c_{n-k}=B_k(s_1, -1!s_2, 2!s_3,...,(-1)^{k-1}(k-1)!s_k),\]
with $B_k$ denoting the $k$-th complete exponential Bell polynomial and $s_l=\tr(A^l)$ being the trace of $A^l$. We prove Lemma \ref{L:niceinverse}.

\begin{proof}[Proof of Lemma \ref{L:niceinverse}]
Suppose that $\sigma$ satisfies Assumption \ref{A:main}. Recall that we always consider fixed Lebesgue representatives of the components $\sigma_{ij}$. Let $N\subset \mathbb{R}^n$ be a $\mathcal{L}^n$-null set such that
\begin{equation}\label{E:sigmadetbound}
\det (\sigma(x))>\varepsilon\quad \text{and}\quad |\sigma_{ij}(x)|\leq \left\|\sigma\right\|_{L^\infty(\mathbb{R}^n,\mathbb{R}^{n\times n})}\quad \text{for all $x\in\mathbb{R}^n\setminus N$ and any $i$ and $j$.}
\end{equation}
For such $x$ the matrix $\sigma(x)$ is invertible, and we set $\hat{\sigma}(x):=(\sigma(x))^{-1}$. For $x\in N$ we can set $\hat{\sigma}(x):=0$. For $x\in \mathbb{R}^n\setminus N$ the matrices $\hat{\sigma}(x)$ of $\sigma(x)$ satisfy (\ref{E:CayleyHamilton}) in place of $A^{-1}$ and $A$. In particular, since for all quadratic matrices $A$ the elements of the matrix products $A^k$, traces $\text{tr}(A^k)$, and the determinant $\text{det}(A)$ are polynomials of the elements of $A$, we observe that the coefficients $\hat{\sigma}_{ij}(x)$ of $\hat{\sigma}(x)$ are rational functions of the coefficients $\sigma_{ij}(x)$ of the form
\begin{equation}\label{E:polynomials}
\hat{\sigma}_{ij}(x) = \frac{P_{ij}(\sigma(x))}{\det(\sigma(x))},
\end{equation}
where for each $i$ and $j$ the function $P_{ij}(\sigma)$ is a polynomial of degree $n-1$ in the coefficients $\sigma_{kl}$, $k,l=1,...,n$, of $\sigma$. By the boundedness of the $\sigma_{kl}$ we have $P_{ij}(\sigma)\in L^\infty(\mathbb{R}^n)$ and by (\ref{E:sigmadetbound}) and (\ref{E:polynomials}) also $\hat{\sigma}_{ij}\in L^\infty(\mathbb{R}^n)$. For fixed $i$ and $j$ let $p:=P_{ij}-P_{ij}(0)$ and let $u\in \mathcal{C}^1_c(\mathbb{R}^{n\times n},\mathbb{R}^2)$ be such that $u:=(p,\det)$ on the image $\{\sigma(x):\ x\in \mathbb{R}^n\setminus N\}\subset  \mathbb{R}^{n\times n}$ of 
$\mathbb{R}^n\setminus N$ under $\sigma$. Let $F\in \mathcal{C}_c^1(\mathbb{R}^2)$ be such that $F(a,b)=\frac{a}{b}$ for $(a,b)\in [-\left\|p(\sigma)\right\|_{L^\infty(\mathbb{R}^n)}, \left\|p(\sigma)\right\|_{L^\infty(\mathbb{R}^n)}]\times [\varepsilon, \left\|\det(\sigma)\right\|_{L^\infty(\mathbb{R}^n)}]$. Then $\Phi:=F\circ u$ is an element of $\mathcal{C}^1(\mathbb{R}^{n\times n})$, its gradient $\nabla\Phi$ is bounded and 
\begin{equation}\label{E:viewascompo}
\hat{\sigma}_{ij}(x)=\Phi\circ \sigma (x),\quad x\in\mathbb{R}^n\setminus N.
\end{equation}
As in Lemma \ref{L:Lipschitzcomp} the chain rule, \cite[Theorem 3.96]{AFP}, now shows that $\hat{\sigma}_{ij}\in BV_{\loc}(\mathbb{R}^n)$. To see the last statement of the lemma, note that for $x\in \mathbb{R}^n\setminus N$ we have $\det \sigma(x)\leq n! \left\|\sigma\right\|_{L^\infty(\mathbb{R}^n,\mathbb{R}^{n\times n})}$, so that  
\[\det(\hat{\sigma}(x))=\frac{1}{\det(\sigma(x))}\geq \frac{1}{n! \left\|\sigma\right\|_{L^\infty(\mathbb{R}^n,\mathbb{R}^{n\times n})}}.\]
\end{proof}

\subsection{Solutions to the deterministic equation}\label{SS:invertible}

In this subsection we provide a proof for Proposition \ref{P:solvedeteq}. Assumptions \ref{A:main} and \ref{A:main2} allow to conclude that $\hat{\sigma}$ has a Lipschitz potential.

\begin{proposition}\label{P:Lipschitz}
Suppose $\sigma$ satisfies Assumptions \ref{A:main} and \ref{A:main2}. Then there exists a Lipschitz function $g:\mathbb{R}^n\to \mathbb{R}^n$ such that its Jacobian matrix $\nabla g$, defined a priori in distributional sense, satisfies 
\begin{equation}\label{E:potentialg}
\nabla g =\hat{\sigma}\quad \text{$\mathcal{L}^n$-a.e.}
\end{equation}
In particular, $g\in W^{1,\infty}(\mathcal{O})$ for any bounded domain $\mathcal{O}\subset\mathbb{R}^n$.
\end{proposition}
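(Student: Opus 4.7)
The plan is to construct $g$ row by row. Fix an index $k\in\{1,\ldots,n\}$ and treat the row $(\hat{\sigma}_{k1},\ldots,\hat{\sigma}_{kn})$ as a single 1-form on $\mathbb{R}^n$. Assumption \ref{A:main2} says precisely that this form is closed in the sense of tempered distributions. Since $\mathbb{R}^n$ is contractible, the Poincaré lemma for distributions yields some tempered distribution $g_k$ with $\partial_i g_k=\hat{\sigma}_{ki}$ in $\mathcal{D}'(\mathbb{R}^n)$. So the existence of a distributional potential is free; the real content of the statement is the Lipschitz regularity of $g$.

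To obtain this regularity constructively, I would mollify. Fix a radially symmetric smooth mollifier $(\eta_\varepsilon)_{\varepsilon>0}$ and set $\hat{\sigma}^\varepsilon_{kj}:=\hat{\sigma}_{kj}\ast \eta_\varepsilon$. Convolution commutes with distributional differentiation, so the identity $\partial_i\hat{\sigma}^\varepsilon_{kj}=\partial_j\hat{\sigma}^\varepsilon_{ki}$ holds pointwise on $\mathbb{R}^n$. For each fixed $k$, the classical Poincaré lemma for smooth star-shaped domains then produces a smooth potential, explicitly
\begin{equation*}
g_k^\varepsilon(x)\weq \int_0^1 \sum_{i=1}^n \hat{\sigma}^\varepsilon_{ki}(tx)\,x_i\,dt,
\end{equation*}
with $\partial_i g_k^\varepsilon=\hat{\sigma}^\varepsilon_{ki}$ on $\mathbb{R}^n$. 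The key stability input is the uniform bound $\|\hat{\sigma}^\varepsilon_{ki}\|_{L^\infty(\mathbb{R}^n)}\leq \|\hat{\sigma}_{ki}\|_{L^\infty(\mathbb{R}^n)}$, which combined with $g_k^\varepsilon(0)$ controlled by $\|\hat{\sigma}\|_\infty$ gives a uniform Lipschitz bound $\operatorname{Lip}(g_k^\varepsilon)\leq C$, with $C$ depending only on $\|\hat{\sigma}\|_{L^\infty}$.

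Next I would pass to the limit by Arzelà--Ascoli: the family $\{g_k^\varepsilon\}_\varepsilon$ is equicontinuous and locally uniformly bounded, so along a subsequence $\varepsilon_m\to 0$ it converges locally uniformly to some Lipschitz function $g_k:\mathbb{R}^n\to\mathbb{R}$ with $\operatorname{Lip}(g_k)\leq C$. Since $\hat{\sigma}^\varepsilon_{ki}\to\hat{\sigma}_{ki}$ in $L^1_{\loc}(\mathbb{R}^n)$ (standard mollifier convergence, using local integrability of $\hat{\sigma}_{ki}$) and locally uniform convergence implies convergence in $\mathcal{D}'(\mathbb{R}^n)$, I can test against smooth compactly supported $\varphi$ and pass to the limit on both sides of $\int g_k^\varepsilon\,\partial_i\varphi\,dx=-\int \hat{\sigma}^\varepsilon_{ki}\,\varphi\,dx$ to identify $\partial_i g_k=\hat{\sigma}_{ki}$ as a distribution. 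Assembling $g:=(g_1,\ldots,g_n)$ produces a Lipschitz map with $\nabla g=\hat{\sigma}$ $\mathcal{L}^n$-a.e.; the inclusion $g\in W^{1,\infty}(\mathcal{O})$ on any bounded domain is then automatic.

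The main obstacle is verifying that the curl-free condition interacts well with the low regularity of $\hat{\sigma}$; the mollification step sidesteps this by reducing everything to the smooth Poincaré lemma while preserving the $L^\infty$ bound, so the only delicate point is justifying the limit in $\mathcal{D}'$, which is routine given the uniform local $L^\infty$ control.
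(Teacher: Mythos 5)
Your proof is correct, and it takes a genuinely different route from the paper's. The paper first invokes the distributional Poincar\'e lemma to get an abstract $g\in\mathcal{S}'(\mathbb{R}^n,\mathbb{R}^n)$, then bootstraps regularity through harmonic analysis: from $\nabla g=\hat\sigma\in L^1(\mathbb{R}^n)\cap L^\infty(\mathbb{R}^n)$ it deduces $g\in L^q(\mathbb{R}^n)$ for all $q<\infty$ via the $L^{p'}$-boundedness of the Riesz transform and the fractional Sobolev inequality (Lemma \ref{L:gradienttofunction}), then applies the Sobolev embedding $W^{1,q}(\mathcal{O})\subset\mathcal{C}^\delta(\overline{\mathcal{O}})$ for $q>n$ to get local boundedness, and only at the very end uses a mollifier argument to upgrade $W^{1,\infty}$-membership to an explicit Lipschitz bound. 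You instead mollify at the outset, reduce to the classical smooth Poincar\'e lemma with its explicit star-shaped primitive, and pass to the limit by Arzel\`a--Ascoli; the Lipschitz constant $\|\hat\sigma\|_{L^\infty}$ falls out immediately from $\nabla g_k^\varepsilon=\hat\sigma_{k\cdot}^\varepsilon$ and $\|\hat\sigma^\varepsilon\|_{L^\infty}\leq\|\hat\sigma\|_{L^\infty}$. Your route is more elementary (no singular integral theory, no Sobolev embedding, no case split for $n=1$ versus $n\geq 2$) and more explicit, and it never actually needs the abstract distributional Poincar\'e lemma you mention at the start -- the mollified construction is self-contained. One small inaccuracy worth noting: you say $g_k^\varepsilon(0)$ is "controlled by $\|\hat\sigma\|_\infty$," but in fact the explicit formula gives $g_k^\varepsilon(0)=0$ exactly, which makes the normalization cleaner; together with the uniform Lipschitz bound this immediately pins down local uniform boundedness, and Arzel\`a--Ascoli (with a diagonal extraction, since the domain is all of $\mathbb{R}^n$) applies. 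The identification $\partial_i g_k=\hat\sigma_{ki}$ in $\mathcal{D}'$ via the $L^1_{\loc}$ convergence of the mollifications and locally uniform convergence of $g_k^\varepsilon$ is routine and correct, and since the limit $g_k$ is globally Lipschitz, its distributional gradient agrees with the a.e. pointwise gradient, giving \eqref{E:potentialg} and the $W^{1,\infty}(\mathcal{O})$ membership.
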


We record a short helpful argument to conclude local continuity and boundedness. By $\mathcal{S}(\mathbb{R}^n,\mathbb{R}^n)$ and $\mathcal{S}'(\mathbb{R}^n,\mathbb{R}^n)$ we denote the spaces of $\mathbb{R}^n$-valued Schwartz functions and tempered distributions, respectively.

\begin{lemma}\label{L:gradienttofunction}
If $n\geq 2$, $1<p<+\infty$ and $G\in\mathcal{S}'(\mathbb{R}^n)$ is such that $\nabla G\in L^p(\mathbb{R}^n,\mathbb{R}^n)$, then $G\in L^q(\mathbb{R}^n)$, where $\frac{1}{q}=\frac{1}{p}-\frac{1}{n}$. 
\end{lemma}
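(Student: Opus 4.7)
My plan is to recover $G$, up to a polynomial correction, as a Riesz potential of its gradient, and then use classical Hardy--Littlewood--Sobolev and Calder\'on--Zygmund bounds to place the recovered function in $L^q$. Note first that for the stated conclusion to be meaningful one needs $1<p<n$, so that $q=np/(n-p)>0$. Using the Fourier identity $\widehat{\partial_j G}=i\xi_j\hat{G}$, on $\mathbb{R}^n\setminus\{0\}$ one has
\[
\hat{G}(\xi)\weq \sum_{j=1}^n\frac{\xi_j^2}{|\xi|^2}\hat{G}(\xi)\weq \sum_{j=1}^n\frac{-i\xi_j}{|\xi|^2}\widehat{\partial_j G}(\xi).
\]
The factor $-i\xi_j/|\xi|^2=(-i\xi_j/|\xi|)\cdot|\xi|^{-1}$ is the Fourier symbol of the composition $R_j I_1$, where $I_1$ denotes the Riesz potential of order one and $R_j$ the $j$-th Riesz transform. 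Hence $G$ should coincide with $h:=\sum_{j=1}^n R_j I_1(\partial_j G)$ modulo a tempered distribution whose Fourier transform is supported at the origin, that is, modulo a polynomial $P$.

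The core estimate then rests on two classical tools. Since each $\partial_j G\in L^p(\mathbb{R}^n)$ with $1<p<n$, the Hardy--Littlewood--Sobolev inequality gives $I_1(\partial_j G)\in L^q(\mathbb{R}^n)$ with $1/q=1/p-1/n$, and the $L^q$-boundedness of the Riesz transforms $R_j$ for $1<q<\infty$ yields $h\in L^q(\mathbb{R}^n)$ together with an estimate $\|h\|_{L^q}\leq C\|\nabla G\|_{L^p}$. A direct Fourier computation moreover shows $\partial_k h=\sum_j R_j R_k\partial_j G\in L^p(\mathbb{R}^n)$ by boundedness of the compositions $R_j R_k$ on $L^p$, so $\nabla(G-h)=\nabla P\in L^p(\mathbb{R}^n)$. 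Since $\nabla P$ is itself a polynomial and the zero polynomial is the only polynomial in $L^p(\mathbb{R}^n)$, $P$ must be a constant.

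The main obstacle is the rigorous distributional justification of the formal Fourier identity above: the symbol $-i\xi_j/|\xi|^2$ is singular at the origin and the operators $R_j$, $I_1$ and $\partial_j$ act a priori on different classes of functions and distributions. One proceeds by testing the identity against Schwartz functions and observing that the difference $G-h$ is a tempered distribution whose Fourier transform is supported at $\{0\}$, hence a polynomial. After fixing the remaining additive constant (which is the natural normalization when $G$ is identified with its class modulo constants, or equivalently by choosing the representative $h$ itself), one obtains $G\in L^q(\mathbb{R}^n)$ together with a Sobolev-type estimate controlling $\|G\|_{L^q}$ by $\|\nabla G\|_{L^p}$.
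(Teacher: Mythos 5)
Your argument is sound and uses the same core machinery as the paper's proof (Riesz potentials, Riesz transform boundedness, and the Hardy--Littlewood--Sobolev/fractional Sobolev inequality), but you reach the conclusion by a genuinely different route. The paper argues by \emph{duality}: for a Schwartz test function $\varphi$ it sets $\psi=-\nabla(-\Delta)^{-1}\varphi$, integrates by parts to get $\langle G,\varphi\rangle=\langle\nabla G,\psi\rangle$, and then bounds $\|\psi\|_{L^{p'}}$ by $\|\varphi\|_{L^{q'}}$ using the Riesz transform and the Sobolev inequality, finishing by $L^q$--$L^{q'}$ duality. You instead construct the candidate representative $h=\sum_j R_jI_1(\partial_jG)$ directly, show $h\in L^q$ with $\|h\|_{L^q}\le C\|\nabla G\|_{L^p}$, and then show $G-h$ is a polynomial by Fourier support considerations and in fact constant since $\nabla(G-h)\in L^p$. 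These are dual formulations of the same estimate, and each is a legitimate proof strategy; yours has the advantage of producing an explicit $L^q$ representative.

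Your approach is actually more careful than the paper's on two points. First, the paper asserts that $\psi=-\nabla(-\Delta)^{-1}\varphi$ is Schwartz, but this is false in general: $\hat\psi_j(\xi)=-i\xi_j\hat\varphi(\xi)/|\xi|^2$ has a $|\xi|^{-1}$-type singularity at the origin whenever $\hat\varphi(0)=\int\varphi\neq0$, so the pairing $\langle\nabla G,\psi\rangle$ needs additional justification. Second, and relatedly, the paper never confronts the fact that $G$ is only determined modulo constants by $\nabla G$; as literally stated the lemma fails for $G\equiv1$. You correctly flag that the honest conclusion is $G-c\in L^q$ for some constant $c$, and you note that this is exactly what the application (local $W^{1,q}$-regularity of $g$, cf.~the proof of Proposition~\ref{P:Lipschitz}) requires, since there the constant is harmless. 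You also correctly observe the implicit restriction $1<p<n$ needed for $q>0$, which the paper leaves unstated. All of this is fine; just make sure, when writing the final version, to formulate the conclusion as ``there is a constant $c$ with $G-c\in L^q$'' (or fix the representative) rather than the unqualified $G\in L^q$.
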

\begin{proof}
Given $\varphi\in\mathcal{S}(\mathbb{R}^n)$ let $\psi:=-\nabla (-\Delta)^{-1}\varphi$, where $\Delta$ denotes the Laplacian on $\mathbb{R}^n$ and $(-\Delta)^{-1}$ the Newton potential. Then $\psi\in \mathcal{S}(\mathbb{R}^n,\mathbb{R}^n)$ and 
\[\left\langle G,\varphi\right\rangle=-\left\langle G, \diverg \psi\right\rangle=\left\langle \nabla G, \psi\right\rangle=\int_{\mathbb{R}^n}(\nabla G)\cdot\psi \:dx,\]
so that 
\begin{align}
|\left\langle G,\varphi\right\rangle|&\leq \left\||\nabla G|\right\|_{L^p(\mathbb{R}^n)}\left\||\nabla (-\Delta)^{-1}\varphi|\right\|_{L^{p'}(\mathbb{R}^n)}\notag\\
&\leq c\left\||\nabla G|\right\|_{L^p(\mathbb{R}^n)}\big\| (-\Delta)^{-1/2}\varphi\big\|_{L^{p'}(\mathbb{R}^n)}\notag\\
&\leq c\left\||\nabla G|\right\|_{L^p(\mathbb{R}^n)}\left\|\varphi\right\|_{L^{q'}(\mathbb{R}^n)}\notag
\end{align}
by the $L^{p'}$-boundedness of the Riesz transform, \cite[Chapter II, Section 4, Theorem 3 and  Chapter III, Section 1]{Stein70}, and the fractional Sobolev inequality, \cite[Chapter V, Section 1, Theorem 1]{Stein70}. Consequently $G\in L^q(\mathbb{R}^n)$ by duality.
\end{proof}

\begin{remark}\label{R:Hoelder}
Recall that for $q>n$ and for any smooth bounded domain $\mathcal{O}\subset\mathbb{R}^n$ we have the Sobolev embedding $W^{1,q}(\mathcal{O})\subset \mathcal{C}^\delta(\overline{\mathcal{O}})$, where $0<\delta\leq 1-\frac{n}{q}$. For instance, see \cite[4.27]{AdamsSobo}.
\end{remark}

Using Lemma \ref{L:gradienttofunction} and Remark \ref{R:Hoelder} we can prove Proposition \ref{P:Lipschitz}.

\begin{proof} In the case that $n=1$ we can simply integrate $\hat{\sigma}$ to find a locally bounded Lipschitz function $g$ satisfying (\ref{E:potentialg}). Suppose $n\geq 2$. By Assumption \ref{A:main2} and a distributional version of Poincar\'e's Lemma, there exists $g=(g_1,...,g_n)\in\mathcal{S}'(\mathbb{R}^n,\mathbb{R}^n)$ such that $\nabla g =\hat{\sigma}$ holds in distributional sense. That is, the $j$th row $\nabla g_j =(D_1 g_j,...,D_n g_j)$ of $\nabla g$ equals the $j$th row
$\hat{\sigma}=(\hat{\sigma}_{j1},...,\hat{\sigma}_{jn})$ of $\hat{\sigma}$ in $\mathcal{S}'(\mathbb{R}^n, \mathbb{R}^n)$. See \cite[Chapter 4, Section 3, Proposition 9]{Horvath} of \cite[Chapter II, Section 6, Th\'eor\`eme VI]{Schwartz} (the result is stated for the dual of smooth compactly supported functions, but the proof does not change for Schwartz functions and tempered distributions). For any $j$ and $k$ we have $D_k g_j=\hat{\sigma}_{jk}\in L^1(\mathbb{R}^n)\cap L^\infty(\mathbb{R}^n)$ by  Assumption \ref{A:main} and Lemma \ref{L:niceinverse}. Now suppose that $\mathcal{O}\subset\mathbb{R}^n$ is a bounded domain. 
By Lemma \ref{L:gradienttofunction} and the boundedness of $\mathcal{O}$ we can conclude that $g\in W^{1,q}(\mathcal{O},\mathbb{R}^n)$ for arbitrarily large $q<+\infty$, and Remark \ref{R:Hoelder} implies that $g$ is continuous on $\overline{\mathcal{O}}$, and hence also bounded on $\overline{\mathcal{O}}$. As a consequence, we have $g\in W^{1,\infty}(\mathcal{O},\mathbb{R}^n)$, and a standard mollifier argument yields $|g(x)-g(y)|\leq \left\|\nabla g\right\|_{L^\infty(\mathbb{R}^n,\mathbb{R}^{n\times n})}|x-y|$ for all $x,y\in\mathcal{O}$, see \cite[Theorem 4.1]{Heinonen}. 
\end{proof}

Our proof of Proposition \ref{P:solvedeteq} is based on an inverse function theorem for Sobolev functions proved in \cite[Theorem 1]{Kovalev}. We quote a special case of this result sufficient for our purposes. See also \cite{KovalevOnninenRajala}.
\begin{proposition}\label{P:Kovalev}
Let $n\geq 2$. If $g\in W_{\loc}^{1,n}(\mathbb{R}^n,\mathbb{R}^n)$ is non-constant and there exist $\delta>-1$ and $\kappa\in [1,+\infty)$ such that for $\mathcal{L}^n$-a.e. $x\in\mathbb{R}^n$ we have 
\begin{equation}\label{E:angle2}
\left\langle \nabla g(x)\:\xi,\xi\right\rangle \geq \delta\:|\nabla g(x)\:\xi|\:|\xi|,\quad \xi \in\mathbb{R}^n,
\end{equation}
and
\begin{equation}\label{E:quasireg}
|\nabla g(x)|^n\leq \kappa\:\det (\nabla g(x)), 
\end{equation}
then $g$ is a homeomorphism of $\mathbb{R}^n$ onto itself.
\end{proposition}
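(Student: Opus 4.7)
The plan is to exploit the two hypotheses separately: condition (\ref{E:quasireg}) is exactly the definition of $\kappa$-quasiregularity for $g \in W^{1,n}_{\loc}(\mathbb{R}^n,\mathbb{R}^n)$, while condition (\ref{E:angle2}) is an infinitesimal $\delta$-monotonicity constraining the angular behaviour of the differential.

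First, I would invoke Reshetnyak's structure theorem for quasiregular mappings: any non-constant $g$ satisfying (\ref{E:quasireg}) admits a continuous representative that is open, discrete, sense-preserving, and whose Jacobian is non-negative $\mathcal{L}^n$-a.e. Fixing this representative, the task reduces to showing that the continuous, open, discrete map $g$ is in fact a global bijection onto $\mathbb{R}^n$, since an open continuous bijection between Euclidean spaces of the same dimension is automatically a homeomorphism.

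The central step, and the main obstacle, is promoting the pointwise inequality (\ref{E:angle2}) to a global $\delta$-monotonicity statement of the form
\begin{equation*}
\langle g(y) - g(x),\, y - x \rangle \ \geq\ \delta\, |g(y) - g(x)|\, |y - x|, \qquad x, y \in \mathbb{R}^n.
\end{equation*}
Setting $\xi = y - x$ and invoking the ACL representation of $W^{1,n}$ along $\mathcal{L}^{n-1}$-a.e. line parallel to $\xi$, one has $g(y) - g(x) = \int_0^1 \nabla g(x + t\xi)\xi\, dt$, and applying (\ref{E:angle2}) pointwise yields $\langle g(y) - g(x), \xi\rangle \geq \delta \int_0^1 |\nabla g(x + t\xi)\xi|\, |\xi|\, dt$. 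When $\delta \geq 0$ the desired inequality is immediate from the triangle inequality $\int_0^1 |\nabla g(x+t\xi)\xi|\,dt \geq |g(y)-g(x)|$; when $\delta \in (-1,0)$ this triangle inequality runs against us, and this is the genuinely delicate case. Here I would regularise $g$ by mollification $g_\varepsilon = g \ast \eta_\varepsilon$, observe that the pointwise angle condition for $\nabla g$ is preserved under convolution (possibly after shrinking $\delta$ by an arbitrarily small amount), establish the global monotonicity for smooth $g_\varepsilon$ by a direct parametric calculus argument along line segments (possibly perturbing the segment and averaging over nearby directions to convert the $\delta$-angle bound into a monotonicity inequality), and finally pass to the limit $\varepsilon \to 0$ using local uniform convergence of $g_\varepsilon$ to the continuous representative of $g$.

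With the global inequality in hand and $\delta > -1$, injectivity follows by combining the monotonicity with the openness provided by Reshetnyak's theorem: an equality $g(x) = g(y)$ with $x \neq y$ would, after testing the inequality against suitable perturbations $y' \to y$ and using openness of $g$ at $x$, force the cosine of the angle between image and domain increments to reach $-1$, contradicting $\delta > -1$. Surjectivity follows from properness, i.e., from the growth estimate $|g(x) - g(0)| \to \infty$ as $|x| \to \infty$, itself a consequence of the global monotonicity combined with the quasiregular distortion bound (\ref{E:quasireg}) and non-constancy. Openness and properness imply that $g(\mathbb{R}^n)$ is both open and closed in $\mathbb{R}^n$, hence equals $\mathbb{R}^n$ by connectedness, and the proof concludes. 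The crux throughout is the regularisation argument in the regime $\delta \in (-1, 0)$, where neither the pointwise inequality nor the ACL calculation alone suffices.
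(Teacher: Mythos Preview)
The paper does not prove this proposition at all: it is explicitly stated as a quotation of \cite[Theorem 1]{Kovalev}, included for later use in the proof of Proposition~\ref{P:solvedeteq}. So there is no ``paper's own proof'' to compare against; your sketch is an attempt at something the authors deliberately outsourced.

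On the merits of your sketch: the overall architecture (Reshetnyak's theorem for the topological structure, then promoting the infinitesimal angle condition~(\ref{E:angle2}) to a global $\delta$-monotonicity inequality, then injectivity via openness and surjectivity via properness) is indeed the shape of the argument in \cite{Kovalev}. Your injectivity step from global $\delta$-monotonicity plus openness is essentially correct. However, your proposed route to global $\delta$-monotonicity in the regime $\delta\in(-1,0)$ has a genuine gap. You suggest mollifying $g$ and claiming that the pointwise matrix inequality~(\ref{E:angle2}) is preserved under convolution, possibly with a slightly smaller $\delta$. This fails: the set
\[
\{A\in\mathbb{R}^{n\times n}:\ \langle A\xi,\xi\rangle\ge\delta|A\xi||\xi|\ \text{for all }\xi\}
\]
is \emph{not} convex when $\delta<0$. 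For a concrete counterexample in $\mathbb{R}^2$, take $\delta\in(-1,0)$, choose $\theta$ with $\cos\theta\in(\delta,0)$, and let $A$, $B$ be the rotations by $\theta$ and $-\theta$. Each satisfies the angle condition with constant $\cos\theta>\delta$, but $\tfrac12(A+B)e_1=(\cos\theta,0)$ gives ratio $\cos\theta/|\cos\theta|=-1<\delta$. Since mollification is a convex average of the values of $\nabla g$, there is no reason for $\nabla g_\varepsilon$ to satisfy~(\ref{E:angle2}) with any constant strictly above $-1$, and your ``shrinking $\delta$ slightly'' cannot rescue this.

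The actual argument in \cite{Kovalev} handles $\delta\in(-1,0)$ by exploiting the quasiregularity~(\ref{E:quasireg}) in an essential way alongside the angle condition, rather than by a mollification that treats the two hypotheses independently. If you want to reconstruct a proof, you should consult that reference directly; the coupling between the distortion bound and the angle condition is where the substance lies.
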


\begin{remark}\label{R:quasireg} Condition (\ref{E:quasireg}) is usually rephrased by saying that $g$ is \emph{$\kappa$-quasiregular} or \emph{of bounded distortion}, see for instance \cite[Section 1.2]{HenclKoskela} or \cite{Reshetnyak}. 
A result similar to Proposition \ref{P:Kovalev} is \cite[Theorem 1]{Cristea}. There condition (\ref{E:angle2}) is replaced by a $\mathcal{L}^n$-a.e. nonnegativity condition on the sums of principal minors of $\sigma$.
\end{remark}

We prove Proposition \ref{P:solvedeteq}.

\begin{proof}[Proof of Proposition \ref{P:solvedeteq}]
By Assumption \ref{A:main}, Lemma \ref{L:niceinverse}, Assumption \ref{A:main2}, and Proposition \ref{P:Lipschitz} there exists a Lipschitz function $g\in W_{\loc}^{1,n}(\mathbb{R}^n,\mathbb{R}^n)$ such that (\ref{E:potentialg}) holds. In the case $n=1$, Assumption \ref{A:main} and (\ref{E:potentialg}) imply that $\left\|\sigma\right\|_{L^\infty(\mathbb{R})}^{-1}\leq \hat{\sigma}(x)=g'(x)$
for $\mathcal{L}^1$-a.e. $x\in\mathbb{R}$, and hence $g$ is strictly monotone and bi-Lipschitz with inverse $f=g^{-1}$
having Lipschitz constant bounded by $\left\|\sigma\right\|_{L^\infty(\mathbb{R})}$. Assume $n\geq 2$. Again by Assumption \ref{A:main} and Lemma \ref{L:niceinverse} we have 
\[\frac{1}{n!\left\|\sigma\right\|_{L^\infty(\mathbb{R}^n,\mathbb{R}^{n\times n})}^n}\leq \frac{1}{\det(\sigma (x))}=\det(\hat{\sigma}(x))=\det(\nabla g(x))\]
for $\mathcal{L}^n$-a.e. $x\in\mathbb{R}^n$, and by (\ref{E:potentialg}) also $|\nabla g(x)|^n\leq \left\|\hat{\sigma}\right\|_{L^\infty(\mathbb{R}^n,\mathbb{R}^{n\times n})}^n$. Hence (\ref{E:quasireg}) holds with \[\kappa=n!\:\left\|\hat{\sigma}\right\|_{L^\infty(\mathbb{R}^n,\mathbb{R}^{n\times n})}^n\left\|\sigma\right\|_{L^\infty(\mathbb{R}^n,\mathbb{R}^{n\times n})}^n.\]
Condition (\ref{E:angle2}) is immediate from (\ref{E:angular}). Consequently $g$ is a homeomorphism by Proposition \ref{P:Kovalev}, and we denote its inverse by $f=g^{-1}$. For any bounded domain $\mathcal{O}$ the restriction $g|_\mathcal{O}$ is a homeomorphism with inverse $f|_{g(\mathcal{O})}$, and by Proposition \ref{P:Lipschitz} also $g|_{\mathcal{O}}\in W^{1,n-1}(\mathcal{O},\mathbb{R}^n)$. This implies that $f\in W^{1,1}_{\loc}(g(\mathcal{O}),\mathbb{R}^n)$, see \cite[Theorem 5.2]{HenclKoskela}. Consequently the chain rule can be applied to $g\circ f=\id$, and taking into account (\ref{E:potential}) we obtain 
\[I=\nabla (g\circ f)(y)=(\nabla g)(f(y))\nabla f(y)=\sigma^{-1}(f(y))\nabla f(y)\]
for $\mathcal{L}^n$-a.a. $y\in \mathbb{R}^n$. The invertibility of $\sigma$ yields $\nabla f=\sigma(f)$ $\mathcal{L}^n$-a.e. and since $\sigma \in L^\infty(\mathbb{R}^n,\mathbb{R}^{n\times n})$, it follows that $f$ is Lipschitz with Lipschitz constant bounded by $\left\|\sigma\right\|_{L^\infty(\mathbb{R}^n,\mathbb{R}^{n\times n})}$.
\end{proof} 

\subsection{Existence of solutions in the invertible case}

We observe the following stability property of conditions of upper regularity type.

\begin{lemma}\label{L:upperregstable}
Let $Y:[0,T]\to\mathbb{R}^n$ be a path, $B\subset \mathbb{R}^n$ Borel and $d>0$. Suppose that $f:\mathbb{R}^n\to\mathbb{R}^n$ is a Borel function with Lipschitz inverse $g=f^{-1}$. Then there is a constant $M>0$ such that for all $u\in [0,T]$ we have  
\[ \int_0^T |f(Y_t)-f(Y_u)|^{-d}dt\leq M\int_0^T |Y_t-Y_u|^{-d}dt.\]
\end{lemma}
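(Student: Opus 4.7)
The plan is to derive a pointwise inverse-Lipschitz estimate on $f$ from the Lipschitz hypothesis on $g$, and then integrate. Concretely, if $g$ has Lipschitz constant $L$, then for any $y,y'\in\mathbb{R}^n$ the identity $g\circ f=\id$ gives
\[
|y-y'| \;=\; |g(f(y))-g(f(y'))| \;\le\; L\,|f(y)-f(y')|,
\]
so that $|f(y)-f(y')|\ge L^{-1}|y-y'|$ for all $y,y'\in\mathbb{R}^n$. Raising this bound to the power $-d$ and substituting $y=Y_t$, $y'=Y_u$ gives the pointwise comparison $|f(Y_t)-f(Y_u)|^{-d}\le L^d|Y_t-Y_u|^{-d}$, and integrating in $t\in[0,T]$ yields the claim with $M=L^d$, uniformly in $u\in[0,T]$ and in the path $Y$. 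Borel measurability of $f$ ensures that all integrands are measurable, so no further regularity of $Y$ is needed beyond continuity.

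For the one-dimensional case under the weaker assumption that $g=f^{-1}$ is merely absolutely continuous, the pointwise inverse-Lipschitz bound above may fail, since $g$ need not be Lipschitz. The plan here is to move to the integrated level via the occupation measure. By the occupation time formula (\ref{E:occutimeformula}),
\[
\int_0^T |f(Y_t)-f(Y_u)|^{-d}dt \;=\; \int_{\mathbb{R}} |f(y)-f(Y_u)|^{-d}\,\mu_Y^{[0,T]}(dy),
\]
and similarly for the right-hand side without $f$. Since $g$ is AC and strictly monotone, it possesses the Lusin $N$ property and the change-of-variables formula $\int \psi(y)\,dy=\int \psi(g(z))\,g'(z)\,dz$. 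Applied to the continuous, strictly monotone function $f=g^{-1}$, this converts the above integral into one on the image side, so that the comparison reduces to controlling a weight built from $g'$. Because $Y([0,T])$ is a compact interval, the comparison needs to hold only on this bounded range, where the constant $M$ is allowed to depend on $f$ and on $Y([0,T])$.

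The main obstacle is the one-dimensional AC case: since $g$ need not be Lipschitz, the clean pointwise bound used in the main case breaks down, and the comparison must be carried out at the level of integrals. The delicate point is to absorb any singularity of $g'$ (which is precisely the obstruction to Lipschitzness) into the integral structure, using absolute continuity, strict monotonicity, and the one-dimensional character in an essential way. The multidimensional part of the proof, by contrast, is essentially a one-line estimate and should give the explicit constant $M=L^d$.
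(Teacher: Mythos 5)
Your argument for the Lipschitz case is correct and is exactly the paper's: $|Y_t-Y_u|=|g(f(Y_t))-g(f(Y_u))|\le\lip(g)\,|f(Y_t)-f(Y_u)|$, raise to the power $-d$, integrate, $M=\lip(g)^d$.

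For $n=1$ with $g$ merely absolutely continuous your write-up stops at a plan; you openly flag ``absorbing any singularity of $g'$'' as the step you cannot execute, and that is a genuine gap. It is worth knowing why you got stuck. The paper's own proof writes, by the fundamental theorem of calculus for AC functions,
\[
|g(f(Y_t))-g(f(Y_u))|\le\Bigl(\int_0^1 \bigl|g'(\lambda f(Y_t)+(1-\lambda)f(Y_u))\bigr|\,d\lambda\Bigr)\,|f(Y_t)-f(Y_u)|\,,
\]
and then bounds the averaged factor by $M:=\int_K|g'|$ with $K\supset f(Y([0,T]))$ compact. But the average of $|g'|$ over a short segment in $K$ is not controlled by $\int_K|g'|$ when $g'$ has an integrable singularity, so this step does not follow from $g'\in L^1_{\loc}$. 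Your instinct that the pointwise inverse-Lipschitz bound is unavailable here is therefore well founded, yet the occupation-measure and change-of-variables route you sketch cannot restore the asserted inequality either: take $g(x)=\mathrm{sgn}(x)|x|^{1/2}$ (AC near $0$, $g'(x)=\tfrac12|x|^{-1/2}\in L^1_{\loc}$), $f(y)=\mathrm{sgn}(y)\,y^2$, $Y_t=t$, $u=0$, and $d\in[\tfrac12,1)$; then $\int_0^T|f(Y_t)-f(Y_u)|^{-d}\,dt=\int_0^T t^{-2d}\,dt=+\infty$ while $\int_0^T|Y_t-Y_u|^{-d}\,dt<+\infty$. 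To make the one-dimensional extension work one needs a hypothesis on $g'$ stronger than local integrability (local boundedness sends you back to the Lipschitz case), or a restriction on $d$ tied to the integrability of $g'$; before investing in a repaired proof, it is worth rechecking precisely what is required of this lemma in the argument for Theorem \ref{T:exonedim}.
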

\begin{proof}
For any $t,u\in [0,T]$ we have $|Y_t-Y_u|=|g(f(Y_t))-g(f(Y_u))|\leq \lip(g)|f(Y_t)-f(Y_u)|$,
and consequently the claimed estimate holds with $M= \lip(g)^d$. 
\end{proof}

In the next lemma we justify the application of the change of variable (\ref{E:changeofvar}).

\begin{lemma}\label{L:solve}
Let $g$ and $f$ be as in Lemma \ref{L:upperregstable}. Let $s\in (0,1)$, $\gamma\in (\frac{1}{1+s},1)$, $Y\in\mathcal{C}^\gamma([0,T],\mathbb{R}^n)$ with $Y_0=0$, $B\subset\mathbb{R}^n$ a Borel set, and $\mathring{x}\in\mathbb{R}^n$.  Suppose that $\sigma$ is upper $d$-regular on $B$ with $d>n-1+s$ or that 
\begin{equation}\label{E:onceagain}
\sup_{x\in Y([0,T])\cap (f^{-1}(B)-f^{-1}(\mathring{x}))} \int_0^T |Y_t-x|^{-n+1-s}dt<+\infty,
\end{equation}
and similarly for $B^c$. Then the path $X:[0,T]\to\mathbb{R}^n$, defined by $X_t:=f(Y_t+g(\mathring{x}))$, $t\in [0,T]$, is $(s,1)$-variable w.r.t. $\sigma$, and
\begin{equation}\label{E:Xsolves}
X_t=\mathring{x}+\int_0^t \sigma(X_u)dY_u,\quad t\in [0,T].
\end{equation}
\end{lemma}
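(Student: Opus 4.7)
The plan is to establish \eqref{E:Xsolves} by applying the change of variable formula of Theorem~\ref{thm:ito} to each component $f_j$ of $f$ along the translated path $Y'_t := Y_t + g(\mathring{x})$, after first verifying the required $(s,1)$-variability of $X$ with respect to $\sigma$. Throughout, $g$ is Lipschitz, and in the intended applications $f$ is bi-Lipschitz, so that $X_t = f(Y'_t)$ inherits $\gamma$-H\"older continuity from $Y$ and $X_0 = f(g(\mathring{x})) = \mathring{x}$.

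First I would establish $(s,1)$-variability of $X$ with respect to each component $\sigma_{jk}$ via Corollary~\ref{C:localizeB}. The identity $g(X_t) = Y'_t = Y_t + g(\mathring{x})$ together with the Lipschitz property of $g$ yields $|X_t - X_u| \ge \lip(g)^{-1}|Y_t - Y_u|$, so Lemma~\ref{L:upperregstable} gives
\[\int_0^T |X_t - X_u|^{-n+1-s}\,dt \le \lip(g)^{n-1+s}\int_0^T |Y_t - Y_u|^{-n+1-s}\,dt.\]
When \eqref{E:onceagain} holds for $B$, the right-hand side is uniformly bounded over those $u$ with $X_u \in B$ (equivalently $Y_u \in f^{-1}(B) - f^{-1}(\mathring{x})$), which is exactly \eqref{E:Xextreme} on $B$. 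When $\sigma$ is upper $d$-regular on $B$ with $d > n-1+s$, Corollary~\ref{C:jointuppereg}(ii) supplies \eqref{E:abscontextreme} on $B$. Arguing analogously on $B^c$ (interchanging the roles of the two conditions if necessary) and invoking Corollary~\ref{C:localizeB} produces $X \in V(\sigma_{jk}, s)$ for every $j,k$, hence $X \in V(\sigma, s)$.

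Next I would use the identity $\nabla f = \sigma \circ f$ $\mathcal{L}^n$-a.e.\ (from Proposition~\ref{P:solvedeteq}) to write $\partial_k f_j = \sigma_{jk} \circ f$, which lies in $BV_{\loc}(\mathbb{R}^n) \cap L^\infty(\mathbb{R}^n)$ by a chain rule for $BV$ composition with bi-Lipschitz maps. The $(s,1)$-variability of $X$ with respect to $\sigma_{jk}$ transfers, via this bi-Lipschitz correspondence, into $(s,1)$-variability of $Y'$ with respect to $\partial_k f_j$. Theorem~\ref{thm:ito} then applies to $f_j$ along $Y'$ and, since $dY'^k = dY^k$, gives
\[f_j(Y'_t) = f_j(Y'_0) + \sum_{k=1}^n \int_0^t \partial_k f_j(Y'_u)\,dY^k_u.\]
Combining $f_j(Y'_0) = \mathring{x}_j$ with the $\mathcal{L}^1$-a.e.\ identity $\partial_k f_j(Y'_u) = \sigma_{jk}(f(Y'_u)) = \sigma_{jk}(X_u)$ yields \eqref{E:Xsolves}.

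The main obstacle is the identification $\partial_k f_j(Y'_u) = \sigma_{jk}(X_u)$ as classes in $L^1(0,T)$ and, companion to it, the verification that $\partial_k f_j = \sigma_{jk} \circ f$ lies in $BV_{\loc}$ with a gradient measure controlled by the push-forward of $\|D\sigma_{jk}\|$ under $g$. Both steps depend on a careful $BV$-chain rule for bi-Lipschitz maps and on the fact that $(s,1)$-variability of $Y'$ (via Corollary~\ref{C:key} and Remark~\ref{R:upperdensities}) prevents the occupation measure of $Y'$ from charging the $\mathcal{L}^n$-null sets on which the pointwise identities may fail.
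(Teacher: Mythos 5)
Your proposal follows the paper's proof essentially step by step: establish $(s,1)$-variability of $X$ via Lemma \ref{L:upperregstable} and Corollary \ref{C:localizeB}, show $\partial_k f_j=\sigma_{jk}\circ f\in BV_{\loc}$ with $\|D(\partial_k f_j)\|$ controlled by $g_\#\|D\sigma_{jk}\|$ (the paper cites \cite[Theorem 3.16]{AFP} for this bi-Lipschitz $BV$-chain rule), transfer variability to $Y'=Y+g(\mathring{x})$, and invoke Theorem \ref{thm:ito}. Two small remarks: the occupation measure of $Y'$ can of course charge $\mathcal{L}^n$-null sets in general — what variability forbids (via Corollary \ref{C:key}) is charging the approximate discontinuity set $S_{\partial_k f_j}$, which is the set where the Lebesgue representatives of $\partial_k f_j$ and $\sigma_{jk}\circ f$ could disagree, and this is what makes the integrand identification go through; and your argument implicitly assumes $f$ bi-Lipschitz, leaving out the one-dimensional case where $g=f^{-1}$ is merely absolutely continuous, which the paper disposes of separately at the end via $f'(Y_t)=\sigma(X_t)$ $\mathcal{L}^1$-a.e.
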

\begin{proof}
If $Y$ satisfies (\ref{E:onceagain}), then by Lemma \ref{L:upperregstable} we have 
\[\sup_{u\in [0,T], X_u\in B} \int_0^T |X_t-X_u|^{-n+1-s}dt<+\infty,\]
and a similar conclusion is true for $B^c$. Therefore the hypotheses on $\sigma$, together with Corollary \ref{C:localizeB} and Remark \ref{R:SVviaoccu}, imply that $X$ is $(s,1)$-variable w.r.t. $\sigma$. If $f$ is bi-Lipschitz on $\mathbb{R}^n$ it is proper, and applying \cite[Theorem 3.16]{AFP} component-wise, we may conclude that for each $j$ and $k$ the function $\partial_k f_j=\sigma_{jk}(f)$ is in $BV_{\loc}(\mathbb{R}^n)$, see also \cite{Hencl}. By the same theorem and the bijectivity of $f$ we also have 
\[\left\|D (\partial_k f_j)\right\|=\left\|D(\sigma_{jk}\circ g^{-1})\right\|=\left\|D(g_\#\sigma_{jk})\right\|\leq(\lip(g))^{n-1}\:g_\#\left\|D\sigma_{jk}\right\|,\]
with the notation $g_\#$ from \cite[Theorem 3.16]{AFP}, which is the pushforward operation on measures, however, defined differently when operating on functions.
Using $|f(Y_t^{g(\mathring{x})})-z|\leq \lip(g)|Y_t^{g(\mathring{x})}-g(z)|$, the fact that $g$ is proper, and the fact that $X=f(Y^{g(\mathring{x})})$ is $(s,1)$-variable w.r.t. $\sigma$, 
\begin{align}
\int_0^{T} U^{1-s} &\left\|D (\partial_k f_j)  \right\||_K(Y_t^{g(\mathring{x})})dt \notag\\
&\leq (\lip(g))^{n-1} \int_0^{T}\int_K |Y_t^{g(\mathring{x})}-a|^{-n+1-s} g_\#\left\|D\sigma_{jk}\right\|(da)dt\notag\\
&=(\lip(g))^{n-1} \int_0^{T}\int_{g^{-1}(K)} |Y_t^{g(\mathring{x})}-g(z)|^{-n+1-s}\left\|D\sigma_{jk}\right\|(dz)dt\notag\\
&\leq (\lip(g))^{2n-2+s}\int_0^{T}\int_{f(K)}|f(Y_t^{g(\mathring{x})})-z|^{-n+1-s}\left\|D\sigma_{jk}\right\|(dz)dt<+\infty.\notag
\end{align}
That is, $Y^{g(\mathring{x})}$  is $(s,1)$-variable w.r.t. $\partial_k f_j$. Consequently, by Theorem \ref{thm:ito},
\[X_t=f(Y_t^{g(\mathring{x})})=f(Y_0^{g(\mathring{x})}) +\int_0^t\nabla f(Y_u^{g(\mathring{x})})dY_u=\mathring{x}+\int_0^t \sigma(X_u)dY_u\]
for all $t\in [0,T]$. The stated H\"older regularity is clear since $f$ is Lipschitz. Finally, if $n=1$ and $g$ is as in Theorem \ref{T:exonedim}, we can use the fact that $f'(Y_t)=\sigma(X_t)$ for $\mathcal{L}^1$-a.e. $t\in [0,T]$ and arrive at the same conclusion.
\end{proof}

\begin{lemma}\label{L:solveweigthed}
Let $g$ and $f$ be as in Lemma \ref{L:upperregstable}.  Let $s\in (0,1)$, $\gamma\in (\frac{1}{1+s},1)$, $Y\in\mathcal{C}^\gamma([0,T],\mathbb{R}^n)$ with $Y_0=0$ and let $\mathring{x}\in\mathbb{R}^n$. Suppose that there are $\varepsilon\in (0,1-s)$, $c>0$, and $\delta\in (0,n-1+s-\varepsilon)$ such that (\ref{E:LauriSolen}) and (\ref{E:momentconditionx0}) hold. Then  $X_t:=f(Y_t+g(\mathring{x}))$, $t\in [0,T]$, defines a path $X$ that is $(s,1)$-variable w.r.t. $\sigma$ and satisfies (\ref{E:Xsolves}).
\end{lemma}
\begin{proof}
By Lemma \ref{L:upperregstable} and the hypotheses we have 
\begin{multline}
\int_0^T|X_t-x|^{-n+\varepsilon}dt=\int_0^T|f(Y_t^{g(\mathring{x})})-x|^{-n+\varepsilon}dt\leq M\:\int_0^T|Y_t^{g(\mathring{x})}-g(x)|^{-n+\varepsilon}dt\notag\\
=\int_0^T|Y_t-(g(x)-g(\mathring{x}))|^{-n+\varepsilon}dt\leq cM\:|g(x)-g(\mathring{x})|^{-n+\delta}
\leq cM\:\lip(f)^{n-\delta}|x-\mathring{x}|^{-n+\delta}.\notag
\end{multline}
Consequently, Corollary \ref{C:altcondrandom} implies that for $\mathbb{P}$-a.e. $\omega\in\Omega$ the path $X(\omega)$ is $(s,1)$-variable w.r.t. $\sigma$. One can now follow the arguments in the proof of Lemma \ref{L:solve}.
\end{proof}

A slight modification gives a probabilistic version of the statement.
\begin{corollary}\label{C:solverandom}
Let $g$ and $f$ be as in Lemma \ref{L:upperregstable}.  Let $s\in (0,1)$, $\gamma\in (\frac{1}{1+s},1)$, let $Y=(Y_t)_{t\in [0,T]}$ be a process with $\mathbb{P}$-a.s. H\"older continuous of order $\gamma$ as in Corollary \ref{C:exshiftrandom}, and let $\mathring{x}\in\mathbb{R}^n$. Suppose that there are $\varepsilon\in (0,1-s)$, $c>0$, and $\delta\in (0,n-1+s-\varepsilon)$ such that (\ref{E:momentconditionx0}) and (\ref{E:LauriSolenrandom}) hold. Then for $\mathbb{P}$-a.e. $\omega\in \Omega$ the path $X(\omega):[0,T]\to\mathbb{R}^n$, defined by $X_t(\omega):=f(Y_t(\omega)+g(\mathring{x}))$, $t\in [0,T]$, is $(s,1)$-variable w.r.t. $\sigma$, and satisfies \[X_t(\omega)=\mathring{x}+\int_0^t \sigma(X_u(\omega))dY_u(\omega),\quad t\in [0,T].\]
\end{corollary}
\begin{proof}
In view of the preceding proof it suffices to note that 
\[\mathbb{E}\int_0^T|X_t-x|^{-n+\varepsilon}dt=\mathbb{E}\int_0^T|Y_t-(g(x)-g(\mathring{x}))|^{-n+\varepsilon}dt\leq cM\:|g(x)-g(\mathring{x})|^{-n+\delta}.\]
\end{proof}

We can now verify our existence results.

\begin{proof}[Proofs of Theorems \ref{T:exonedim} and \ref{T:exjoint}.] 
Statement (i) in Theorem \ref{T:exonedim} is clear, the first part of statement (ii) follows as in \cite[Theorem 2.1]{LauriSole}, the second part of (ii) is easily seen using Lemma \ref{L:upperregstable}. By Proposition \ref{P:solvedeteq} also the hypotheses on $\sigma$ in Theorem \ref{T:exjoint} guarantee the existence of $f$ and $g$ are as required by Lemma \ref{L:upperregstable}. Consequently Lemma \ref{L:solve} applies and yields the desired statement.
\end{proof}

\begin{proof}[Proofs of Theorem \ref{T:exshift} and Corollary \ref{C:exshiftrandom}.]
Proposition \ref{P:solvedeteq} yields functions $f$ and $g$ as in Lemma \ref{L:upperregstable}. Lemma \ref{L:solveweigthed} then
implies Theorem \ref{T:exshift} and Corollary \ref{C:solverandom} gives Corollary \ref{C:exshiftrandom}.
\end{proof}

\subsection{Uniqueness of solutions in the invertible case}

In this subsection we prove Theorem \ref{thm:DE-uniqueness}. We follow the basic idea of \cite[Lemma 3.5]{Johanna} and \cite[Theorem 2.2]{LauriSole}, and employ a smoothing argument which permits to use Riemann sum approximations. 

The following Proposition is a multidimensional version of the results proved in \cite[Theorem 4.3.1]{Zahle98} and straightforward to see from there.

\begin{proposition}
\label{pro:matrix-ito}
Let $h=(h_1,...,h_n)$ be an element of $\mathcal{C}^2(\mathbb{R}^n, \mathbb{R}^n)$. Then for any $X\in \mathcal{C}^\alpha([0,T],\mathbb{R}^n)$ with $\alpha>\frac12$ and any $t\in[0,T]$ we have
$$
h(X_t) = h(X_0) + \int_0^t \nabla h(X_u)dX_u.
$$
Moreover, the integrals can be understood also as Riemann-Stieltjes integrals.
\end{proposition}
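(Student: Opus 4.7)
The plan is to reduce the vector-valued statement to a componentwise application of Theorem \ref{thm:ito}, and then to obtain the Riemann--Stieltjes identification from Theorem \ref{thm:RS-integral}. Fix $i\in\{1,\dots,n\}$ and consider $h_i\in\mathcal{C}^2(\mathbb{R}^n,\mathbb{R})$. First I would verify the hypotheses of Theorem \ref{thm:ito} with $F=h_i$: since $h_i\in\mathcal{C}^2$, we have $h_i\in W^{1,1}_{\loc}(\mathbb{R}^n)$, and each partial $\partial_k h_i$ lies in $\mathcal{C}^1(\mathbb{R}^n)$, hence is locally Lipschitz. In particular $\partial_k h_i\in BV_{\loc}(\mathbb{R}^n)$ with gradient measure absolutely continuous and locally bounded density, so by Example \ref{Ex:Lipschitz} any path, and in particular our $X\in\mathcal{C}^\alpha([0,T],\mathbb{R}^n)$, is $(s,\infty)$-variable (hence $(s,1)$-variable) with respect to every $\partial_k h_i$, for every $s\in(0,1)$.

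Because $\alpha>\tfrac12$, the interval $\bigl(\tfrac{1-\alpha}{\alpha},1\bigr)$ is nonempty, so I can choose $s$ therein. Theorem \ref{thm:ito} then yields
\[
h_i(X_t) \weq h_i(X_0) + \sum_{k=1}^n \int_0^t \partial_k h_i(X_u)\, dX^k_u
\]
for $\mathcal{L}^1$-a.e. $t\in[0,T]$, and since $h_i$ is continuous (so $S_{h_i}=\emptyset$), the last clause of Theorem \ref{thm:ito} upgrades this to all $t\in[0,T]$ and all starting points. Assembling the components produces the stated identity $h(X_t)=h(X_0)+\int_0^t \nabla h(X_u)\,dX_u$.

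For the Riemann--Stieltjes claim, apply Theorem \ref{thm:RS-integral} componentwise with $\varphi=\partial_k h_i$, $Y=X^k$, $\gamma=\alpha$, and $p=+\infty$. The membership $X\in V(\partial_k h_i,s,\infty)$ from Example \ref{Ex:Lipschitz} gives the regularity assumption, while the driver condition $\gamma>1-\alpha s+\tfrac{1}{p}=1-\alpha s$ is exactly $s>\tfrac{1-\alpha}{\alpha}$, already ensured by the above choice of $s$. Thus, for each $i$ and $k$ the generalized Lebesgue--Stieltjes integral $\int_0^t \partial_k h_i(X_u)\,dX^k_u$ coincides with the corresponding Riemann--Stieltjes limit.

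There is no serious obstacle here: the proposition is essentially a corollary of the two main results Theorem \ref{thm:ito} and Theorem \ref{thm:RS-integral} specialized to smooth coefficients. The only point requiring mild care is the bookkeeping of the admissible range of $s$, which is forced by $\alpha>\tfrac12$ but causes no difficulty.
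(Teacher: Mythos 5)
Your verification of the hypotheses is accurate (the partials $\partial_k h_i$ are locally Lipschitz, hence by Example \ref{Ex:Lipschitz} every path is $(s,\infty)$-variable with respect to them; $\alpha>\tfrac12$ makes the interval $(\tfrac{1-\alpha}{\alpha},1)$ nonempty; and the parameter bookkeeping for Theorem \ref{thm:RS-integral} with $p=+\infty$ works out). However, the reduction to Theorem \ref{thm:ito} is circular within the logical structure of the paper. Look at the proof of Theorem \ref{thm:ito} in Subsection \ref{SS:changeofvarproof}: it mollifies $F$ to $F_\varepsilon = F*\eta_\varepsilon$ and then asserts, via ``a standard Taylor approximation argument,'' the identity
\[
F_\varepsilon(X_t) = F_\varepsilon(\mathring{x}) + \sum_{k=1}^n \int_0^t \partial_k F_\varepsilon (X_u)\,dX^k_u,
\]
which is precisely the first assertion of Proposition \ref{pro:matrix-ito} applied to the $\mathcal{C}^\infty$ function $F_\varepsilon$. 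So Theorem \ref{thm:ito} already presupposes the smooth change-of-variable formula; using it to deduce the smooth case is arguing in a circle. The paper sidesteps this by simply citing \cite[Theorem 4.3.1]{Zahle98} componentwise, treating the $\mathcal{C}^2$ case as an independent prior result of Z\"{a}hle. Your appeal to Theorem \ref{thm:RS-integral} for the Riemann--Stieltjes identification is fine, since that theorem does not depend on Proposition \ref{pro:matrix-ito}; but the identity itself should be obtained directly from Z\"{a}hle's theorem (or by writing out the Taylor-expansion argument explicitly for a $\mathcal{C}^2$ coefficient and a $\mathcal{C}^\alpha$ path with $\alpha>\tfrac12$), not by circling back through Theorem \ref{thm:ito}.
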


Suppose now that $\sigma$ satisfies Assumptions \ref{A:main} and \ref{A:main2}, let $\hat{\sigma}$ be as in Lemma \ref{L:niceinverse}, and let $g$ be the Lipschitz solution to \eqref{E:potentialg}. That is, $\nabla g=\hat{\sigma}$ $\mathcal{L}^n$-a.e on $\mathbb{R}^n$. We denote by $g_\delta=(g_{1,\delta},...,g_{n,\delta})$ the element-wise smooth approximation of $g=(g_1,...,g_n)$ defined by 
\begin{equation}
\label{eq:elementwise-mollifier}
g_{i,\delta} = g_i \ast \eta_{\delta},
\end{equation}
where $(\eta_\delta)_{\delta>0}$ is a mollifier as in Lemma \ref{lem:mollifier-result-new}. From the identity 
$\nabla\left(g \ast \eta_{\delta}\right) = \nabla g \ast \eta_{\delta}$
it is immediate that 
\begin{equation}
\label{eq:g_n}
\nabla g_\delta =\hat{\sigma}_\delta\quad \text{$\mathcal{L}^n$-a.e.},
\end{equation}
where $\hat{\sigma}_\delta=((\hat{\sigma}_{jk})_\delta)_{1\leq j,k\leq n}$ is the element-wise mollified version of $\hat{\sigma}$
defined by 
\[(\hat{\sigma}_{jk})_\delta=\hat{\sigma}_{jk}\ast \eta_{\delta}.\]
By the continuity of $g$, we have 
\begin{equation}\label{E:convg}
\lim_{\delta\to 0}g_\delta (y) = g (y), \quad y \in \mathbb{R}^n,
\end{equation}
and since $g_i\in W^{1,1}_{\loc}(\mathbb{R}^n)$ for all $i$ and $\hat{\sigma}_{jk} \in BV(\mathbb{R}^n)$ for all $j$ and $k$ by Lemma \ref{L:niceinverse}, we have 
\begin{equation}\label{E:convsigmahat}
\lim_{\delta\to 0} \hat{\sigma}_\delta(y)= \hat{\sigma}(y), \quad y\in \mathbb{R}^n \setminus S_{\hat{\sigma}} 
\end{equation}
by Lemma  \ref{lem:mollifier-result-new}.

\begin{proposition}
\label{prop:g_n-rep}
Suppose that $\sigma$ satisfies Assumptions \ref{A:main} and \ref{A:main2}, $Y\in\mathcal{C}^\gamma([0,T],\mathbb{R}^n)$, and $\mathring{x}\in\mathbb{R}^n$. If $\alpha\in (\frac{1}{2},1)$ and $X\in \mathcal{C}^\alpha([0,T],\mathbb{R}^n)$ is a variability solution for $\sigma$ and $Y$ started at $\mathring{x}$, then for any $\delta>0$ and $t\in[0,T]$ we have 
\begin{equation}
\label{eq:g_n-solution}
g_\delta(X_t) = g_\delta(\mathring{x}) + \int_0^t \hat{\sigma}_\delta(X_u)\sigma(X_u)dY_u,
\end{equation}
where $g_\delta$ and $\hat{\sigma}_\delta$ are as in (\ref{eq:g_n}).
\end{proposition}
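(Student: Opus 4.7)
The plan is to apply the smooth chain rule of Proposition~\ref{pro:matrix-ito} to $g_\delta$ and then substitute the variability equation for $dX$ inside the integrand. By Proposition~\ref{P:Lipschitz} the potential $g$ is Lipschitz with weak gradient $\hat{\sigma}$, so the mollification $g_\delta=g\ast\eta_\delta$ is smooth with $\nabla g_\delta=\hat{\sigma}_\delta$ pointwise on $\mathbb{R}^n$ by \eqref{eq:g_n}, and all its derivatives are bounded. In particular $g_\delta\in\mathcal{C}^2(\mathbb{R}^n,\mathbb{R}^n)$, so a component-wise application of Proposition~\ref{pro:matrix-ito} with the $\alpha$-H\"older path $X$, $\alpha>\tfrac12$, yields
\[g_\delta(X_t)\weq g_\delta(\mathring{x})+\int_0^t\hat{\sigma}_\delta(X_u)\,dX_u,\quad t\in[0,T],\]
with the integrals understood (also) as Riemann--Stieltjes integrals.

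Next I would establish the associativity identity
\[\int_0^t\hat{\sigma}_\delta(X_u)\,dX_u\weq \int_0^t\hat{\sigma}_\delta(X_u)\,\sigma(X_u)\,dY_u.\]
The right-hand side is well defined: by the $BV$ product rule,
\[\|D(\hat{\sigma}_\delta\sigma_{jk})\|\wle \||\nabla\hat{\sigma}_\delta|\|_\infty\|\sigma\|_\infty\,\mathcal{L}^n+\|\hat{\sigma}_\delta\|_\infty\|D\sigma_{jk}\|,\]
so every $\hat{\sigma}_\delta\sigma_{jk}$ lies in $BV_{\loc}(\mathbb{R}^n)\cap L^\infty(\mathbb{R}^n)$ and the $(s,1)$-variability of $X$ with respect to $\sigma$ transfers to $\hat{\sigma}_\delta\sigma$ (the absolutely continuous summand has bounded density on any relatively compact open $\mathcal{U}\supset X([0,T])$, hence a bounded Riesz potential there). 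Theorem~\ref{thm:existence}(ii) then guarantees existence of the right-hand integral. For a partition $\pi=\{0=t_0<\cdots<t_N=t\}$ the Riemann--Stieltjes sum for the left-hand side can be rewritten using $X_{t_i}-X_{t_{i-1}}=\int_{t_{i-1}}^{t_i}\sigma(X_u)\,dY_u$, which holds by the variability equation, as
\[S_\pi\weq \sum_{i=1}^N\hat{\sigma}_\delta(X_{t_{i-1}})(X_{t_i}-X_{t_{i-1}})\weq \int_0^t\hat{\sigma}_\delta^\pi(u)\,\sigma(X_u)\,dY_u,\]
where $\hat{\sigma}_\delta^\pi(u):=\hat{\sigma}_\delta(X_{t_{i-1}})$ on $[t_{i-1},t_i)$.

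It then remains to prove $S_\pi\to\int_0^t\hat{\sigma}_\delta(X_u)\sigma(X_u)\,dY_u$ as $|\pi|\to 0$. Lipschitz continuity of $\hat{\sigma}_\delta$ together with the $\alpha$-H\"older continuity of $X$ give the uniform bound $\|\hat{\sigma}_\delta(X_\cdot)-\hat{\sigma}_\delta^\pi\|_\infty\le L_\delta[X]_{\alpha,\infty}|\pi|^\alpha$. Applying the duality estimate \eqref{eq:ZSIntegralBound} (or \eqref{E:fullestimate} from Corollary~\ref{C:quantest}) to the difference integrand $\psi_\pi:=[\hat{\sigma}_\delta(X_\cdot)-\hat{\sigma}_\delta^\pi]\sigma(X_\cdot)$, and using $Y\in W^{1-\beta,\infty}_T$ for some $\beta\in(1-\gamma,\alpha s)$ (which is nonempty by the definition of a variability solution), the task reduces to showing $\|\psi_\pi\|_{W^{\beta,1}_0}\to 0$ as $|\pi|\to 0$. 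The main obstacle is the Gagliardo part $[\psi_\pi]_{\beta,1}$: the $L^\infty$-smallness $|\pi|^\alpha$ together with Proposition~\ref{prop:dyda-bound} handles the weighted $L^1$-term, but for the Gagliardo seminorm one must combine the estimate of Proposition~\ref{P:basicest} applied subinterval-wise to $\hat{\sigma}_\delta\sigma-\hat{\sigma}_\delta(X_{t_{i-1}})\sigma$ with a careful treatment of the cross-interval jumps of $\hat{\sigma}_\delta^\pi$ at the partition points, so that these contributions do not accumulate in the double integral. Once this decay is in hand, passing to the limit $|\pi|\to 0$ in $g_\delta(X_t)=g_\delta(\mathring{x})+S_\pi$ yields \eqref{eq:g_n-solution}.
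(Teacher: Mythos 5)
Your skeleton matches the paper's: smooth chain rule via Proposition~\ref{pro:matrix-ito}, rewrite of each Riemann--Stieltjes increment using the equation for $X$ to obtain $S_\pi=\int_0^t\hat{\sigma}_\delta^\pi(u)\sigma(X_u)\,dY_u$, and reduction via the duality estimate to $\|\psi_\pi\|_{W^{\beta,1}_0}\to 0$. The well-definedness observation at the start is also sound. However, the final and critical step — the convergence of the Gagliardo seminorm $[\psi_\pi]_{\beta,1}$ — is left open, and the route you sketch (``apply Proposition~\ref{P:basicest} subinterval-wise plus a careful treatment of cross-interval jumps'') does not obviously close. Proposition~\ref{P:basicest} applied on a subinterval gives a bound proportional to $[X]_{\alpha,\infty}^s\int_{t_{i-1}}^{t_i}(\dots)\,dt$, which sums over $i$ to a quantity that is bounded but not small in $|\pi|$; the $L^\infty$-smallness $|\pi|^\alpha$ does not enter that estimate at all, so there is no mechanism forcing the seminorm of the product to vanish.

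The paper avoids this difficulty by splitting the product \emph{before} taking the seminorm. It first establishes that $\|\hat{\sigma}_\delta(X_\cdot)-\Pi_X^m\|_{W^{\beta,1}}\to 0$ and $\hat{\sigma}_\delta(X_u)-\Pi_X^m(u)\to 0$ pointwise — both for the single factor $\hat{\sigma}_\delta(X_\cdot)$, which is $\alpha$-H\"older with $\alpha>\beta$, so the standard Riemann--Stieltjes approximation argument \`a la Z\"ahle applies. It then invokes Lemma~\ref{lem:RS-convergence}: if $A_m\to 0$ in $W^{\beta,1}$ and pointwise a.e.\ and $B\in W^{\beta,1}\cap L^\infty$, then $A_mB\to 0$ in $W^{\beta,1}$. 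The mechanism is the elementary decomposition
\[|A_m(u)B(u)-A_m(r)B(r)|\leq |A_m(u)|\,|B(u)-B(r)|+|B(r)|\,|A_m(u)-A_m(r)|,\]
after which the first term vanishes by dominated convergence (uniform boundedness and pointwise decay of $A_m$ against the $W^{\beta,1}$-kernel of $B$) and the second by $\|B\|_\infty[A_m]_{\beta,1}\to 0$. This factorisation is the ingredient your proposal is missing: it decouples the rough factor $\sigma(X_\cdot)$ (controlled only through boundedness and its $W^{\beta,1}$-norm) from the vanishing approximant, without any interval-by-interval accounting of jumps of $\hat{\sigma}_\delta^\pi$. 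To repair your argument you should prove and apply such a product convergence lemma in $W^{\beta,1}$ rather than attempting to estimate $[\psi_\pi]_{\beta,1}$ directly through Proposition~\ref{P:basicest}.
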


To prove Proposition \ref{prop:g_n-rep} we can follow the strategy of \cite[Theorem 4.4.2]{Zahle98} and proceed by Riemann sum approximation. We make use of the following elementary observation.

\begin{lemma}
\label{lem:RS-convergence}
Suppose that $\beta \in (0,1)$, $t\in [0,T]$ and that $B\in W^{\beta,1}(0,t,\mathbb{R}^{n\times n})\cap L^\infty(0,t,\mathbb{R}^{n\times n})$ is an $(n\times n)$-matrix valued function. If $(A_m)_{\geq 1}$ is a sequence of $(n\times n)$-matrix valued functions $A_m\in W^{\beta,1}_0(0,t,\mathbb{R}^{n\times n})\cap L^\infty(0,t,\mathbb{R}^{n\times n})$ satisfying
\begin{equation}\label{E:Amassumptions}
\lim_{m\to \infty} \left\|A_m\right\|_{W^{\beta,1}_0(0,t,\mathbb{R}^{n\times n})}=0,\ \sup_m \left\|A_m\right\|_{L^\infty(0,t,\mathbb{R}^{n\times n})}<+\infty\ \text{ and } \ \lim_{m\to \infty}  |A_m(u)|=0\quad \text{for $\mathcal{L}^1$-a.e. $u\in [0,t],$}
\end{equation}
then 
\[\lim_{m\to \infty} \left\|A_m B\right\|_{W^{\beta,1}_0(0,t,\mathbb{R}^{n\times n})}=0.\]
\end{lemma}
\begin{proof}
The boundedness of $B$ and the first assumption in (\ref{E:Amassumptions}) give 
\[\limsup_{m\to \infty} \int_0^t \frac{| A_m(u)B(u)|}{u^\beta}du \leq  \left\|B\right\|_{L^\infty(0,t,\mathbb{R}^{n\times n})} \lim_{m\to\infty} \int_0^t \frac{| A_m(u)|}{u^\beta}du = 0.\]
On the other hand, we have
\[| A_m(u)B(u)-A_m(r)B(r)| \leq | A_m(u)||B(u)-B(r)| + |B(r)| |A_m(u)-A_m(r)|.\]
The second and the third assumption in (\ref{E:Amassumptions}) imply
\[\lim_{m\to \infty}\int_0^t \int_0^t \frac{|A_m(u)| |B(u)-B(r)|}{|u-r|^{\beta+1}}du dr =0\]
by bounded convergence, and the first assumption in (\ref{E:Amassumptions}) ensures that
\[\limsup_{m\to \infty}\int_0^t \int_0^t \frac{|B(r)| |A_m(u)-A_m(r)|}{|u-r|^{\beta+1}}du dr \leq \left\|B\right\|_{L^\infty (0,t,\mathbb{R}^{n\times n})} \lim_{m\to \infty} \left\|A_m\right\|_{W^{\beta,1}_0(0,t,\mathbb{R}^{n\times n})}= 0.\]
Combining, we arrive at the result.
\end{proof}

We prove Proposition \ref{prop:g_n-rep}. 

\begin{proof}
To show that the integral is well-defined we first claim that for any $\beta<\alpha s$ we have 
\begin{equation}\label{E:finitenessclaim}
\Vert \hat{\sigma}_\delta(X_\cdot)\sigma(X_\cdot)\Vert_{W_0^{\beta,1}(0,T,\mathbb{R}^{n\times n})} < \infty.
\end{equation}
To see this, note that $\hat{\sigma}_k$ is bounded on a locally compact neighborhood $\mathcal{O}$ containing $X([0,T])$, and since $X$ is a variability solution, Proposition \ref{prop:dyda-bound} implies that
$$
\int_0^T \frac{|\hat{\sigma}_\delta(X_u)\sigma(X_u)|}{u^{\beta}}du \leq \sup_{x\in\mathcal{O}}|\hat{\sigma}_\delta(x)| \int_0^T \frac{| \sigma(X_u)|}{u^{\beta}}du < \infty.
$$
Using (\ref{E:Linftybound}) we obtain 
\begin{align*}
&\int_0^T \frac{| \hat{\sigma}_\delta(X_u)\sigma(X_u)-\hat{\sigma}_\delta(X_r)\sigma(X_r)|}{|u-r|^{\beta+1}}dudr\\
\leq & \sup_{x\in\mathcal{O}}|\hat{\sigma}_\delta(x)| \int_0^T \frac{| \sigma(X_u)-\sigma(X_r)|}{|u-r|^{\beta+1}}dudr
+ \left\|\sigma\right\|_{L^\infty(\mathbb{R}^n,\mathbb{R}^{n\times n})}  \int_0^T \frac{| \hat{\sigma}_\delta(X_u)-\hat{\sigma}_\delta(X_r)|}{|u-r|^{\beta+1}}dudr.
\end{align*}
Proposition \ref{P:basicest} implies the boundedness of the first integral on the right hand side,
and the second integral is bounded by H\"older continuity of $X$ and differentiability of $\hat{\sigma}_\delta$. Thus we have (\ref{E:finitenessclaim}), and as a consequence, the integral in \eqref{eq:g_n-solution} is well-defined. 

To verify equation \eqref{eq:g_n-solution} we first note that, since $g_\delta$ and $\hat{\sigma}_\delta$ are smooth and $X$ is H\"older continuous of order $\alpha>\frac12$, Proposition \ref{pro:matrix-ito} yields
$$
g_\delta(X_t) = g_\delta(\mathring{x}) + \int_0^t \hat{\sigma}_\delta(X_u)dX_u. 
$$
The integrals can be approximated by Riemann-Stieltjes sums 
$$
\int_0^t \hat{\sigma}_\delta(X_u)dX_u =\lim_{m\to \infty} \sum_{j=1}^{N_m} \hat{\sigma}_\delta(X_{t_{j-1}^{(m)}})\cdot \left(X_{t_j^{(m)}}-X_{t_{j-1}^{(m)}}\right),
$$
where $(\pi_m)_m$ is a refining sequence of finite partitions of $[0,t]$ with subinterval endpoints $t_j^{(m)}$ and mesh $|\pi_m| = \max_j|t_j^{(m)}-t_{j-1}^{(m)}|$. In particular, for 
\[\Pi_X^m(u) = \sum_{j=1}^{N_m} \hat{\sigma}_\delta(X_{t_{j-1}^{(m)}})\mathbf{1}_{(t_{j-1}^{(m)},t_j^{(m)}]}(u),\quad u\in [0,t],\] 
we have
\begin{equation}
\label{eq:RS-approx-2beta}
\lim_{m\to \infty}\left\| \hat{\sigma}_\delta(X_\cdot) - \Pi_X^m(\cdot)\right\|_{W^{\beta,1}_0(0,t,\mathbb{R}^{n\times n})} = 0,
\end{equation}
\begin{equation}
\label{eq:RS-approx-pointwise}
\sup_m \left\|\hat{\sigma}_\delta(X_u) - \Pi_X^m(u)\right\|_{L^\infty(0,t,\mathbb{R}^{n\times n})}<+\infty\quad\text{ and }\quad   \lim_{m\to \infty}|\hat{\sigma}_\delta(X_u) - \Pi_X^m(u)| = 0, \quad u\in[0,t],
\end{equation}
which can be seen similarly as in \cite[Theorem 4.1.1]{Zahle98}. Since 
$X$ solves \eqref{eq:SDE},
 we obtain
 \begin{equation}\label{eq21}
 \int_0^t \hat{\sigma}_\delta(X_u)dX_u = \lim\limits_{m \to \infty} \sum_{j=1}^{N_m} \int_{t_{j-1}^{(m)}}^{t_j^{(m)}}\hat{\sigma}_\delta(X_{t_{j-1}^{(m)}})\sigma(X_u)dY_u = \lim\limits_{m \to \infty} \int^t_0  \Pi^m_X(u)\sigma(X_u)dY_u.
\end{equation}
Since $\sigma(X)\in W^{\beta,1}_0(0,t,\mathbb{R}^{n\times n})\cap L^\infty(0,t,\mathbb{R}^{n\times n})$ we may, by 
\eqref{eq:RS-approx-2beta} and \eqref{eq:RS-approx-pointwise}, apply Lemma \ref{lem:RS-convergence} with 
\[A_m(u)=\hat{\sigma}_\delta(X_u) - \Pi_X^m(u)\quad \text{and} \quad  B(u)=\sigma(X_u),\quad u\in [0,t],\] 
to obtain
\[\lim_{m\to \infty} \left\|\hat{\sigma}_\delta(X_\cdot)\sigma(X_\cdot)- \Pi^m_X(\cdot)\sigma(X_\cdot)\right\|_{W^{\beta,1}_0(0,t,\mathbb{R}^{n\times n})} = 0.\]
Then, by (\ref{eq:ZSIntegralBound}) and (\ref{eq21}),
\[\int_0^t \hat{\sigma}_\delta(X_u)dX_u = \int_0^t \hat{\sigma}_\delta(X_u)\sigma(X_u)dY_u\]
concluding the proof.
\end{proof}

Theorem \ref{thm:DE-uniqueness} will follow from (\ref{eq:g_n-solution}) as $\delta \to 0$ if taking limits can be justified. To provide this justification we note that variability w.r.t. $\sigma$ implies variability w.r.t. $\hat{\sigma}$.

\begin{proposition}
\label{prop:SV-sigma-hat} Suppose that $\sigma=(\sigma_{jk})_{1\leq j,k\leq n}$ satisfies Assumption \ref{A:main} and let 
$\hat{\sigma}$ be as in Lemma \ref{L:niceinverse}. If for $s\in(0,1)$ and $p\in[1,+\infty]$ we have $X\in V(\sigma,s,p)$, then also $X\in V(\hat{\sigma},s,p)$.
\end{proposition}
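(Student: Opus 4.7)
The plan. By Definition \ref{D:SVcomponentwise} it suffices to verify that $X$ is $(s,p)$-variable with respect to each scalar component $\hat{\sigma}_{ij}$. I would begin by fixing a single relatively compact open neighborhood $\mathcal{U}$ of $X([0,T])$ witnessing the $(s,p)$-variability of $X$ with respect to every $\sigma_{kl}$ simultaneously; this is possible because there are only finitely many components and the intersection of the corresponding neighborhoods is itself a relatively compact open neighborhood of $X([0,T])$. By Proposition \ref{P:SVviaoccu} the assumption $X \in V(\sigma,s,p)$ then means precisely that $U^{1-s}\mu_{\sigma_{kl},\mathcal{U}} \in L^p(\mathbb{R}^n,\mu_X^{[0,T]})$ for all $k,l$, and the same proposition reduces the claim to showing $U^{1-s}\mu_{\hat{\sigma}_{ij},\mathcal{U}} \in L^p(\mathbb{R}^n,\mu_X^{[0,T]})$.

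The central step is to establish a domination of gradient measures of the form
\begin{equation}\label{E:DhatDom}
\|D\hat{\sigma}_{ij}\| \wle C \sum_{k,l=1}^n \|D\sigma_{kl}\|
\end{equation}
as positive Borel measures on $\mathbb{R}^n$, with $C>0$ depending only on $n$, $\|\sigma\|_{L^\infty}$, and the ellipticity constant $\varepsilon$ of Assumption \ref{A:main}. I would derive \eqref{E:DhatDom} by exploiting the representation $\hat{\sigma}_{ij} = P_{ij}(\sigma)/\det(\sigma)$ given in the proof of Lemma \ref{L:niceinverse}: iterating the BV product rule \cite[Theorem 3.9]{AFP} for bounded factors yields bounds $\|DP_{ij}(\sigma)\| \le C_1 \sum_{k,l}\|D\sigma_{kl}\|$ and $\|D\det(\sigma)\| \le C_2 \sum_{k,l}\|D\sigma_{kl}\|$; a chain rule via Lemma \ref{L:Lipschitzcomp}, applied to a $\mathcal{C}^1$ bounded function $\Phi$ coinciding with $t\mapsto 1/t$ on $[\varepsilon,+\infty)$, upgrades this to $\|D(1/\det(\sigma))\| \le (\|\Phi'\|_\infty) \|D\det(\sigma)\|$; and one more application of the product rule to $\hat{\sigma}_{ij} = P_{ij}(\sigma)\cdot(1/\det(\sigma))$ gives \eqref{E:DhatDom}.

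Once \eqref{E:DhatDom} is in place, restricting both sides to $\mathcal{U}$ and integrating the Riesz kernel $|x-\cdot|^{-n+1-s}$ yields the pointwise bound
\[
U^{1-s}\mu_{\hat{\sigma}_{ij},\mathcal{U}}(x) \wle C\sum_{k,l=1}^n U^{1-s}\mu_{\sigma_{kl},\mathcal{U}}(x),\quad x\in\mathbb{R}^n.
\]
Taking $L^p(\mu_X^{[0,T]})$-norms and using the triangle inequality together with the hypothesis concludes $X \in V(\hat{\sigma}_{ij},s,p)$.

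The main obstacle is technical rather than conceptual: one must ensure that the scalar chain rule from Lemma \ref{L:Lipschitzcomp} applies cleanly to $1/\det(\sigma)$. Since that lemma requires $\Phi(0)=0$, I would work instead with $\tilde{\Phi}:=\Phi-\Phi(0)$, which has the same derivative and hence the same gradient measure for $\tilde{\Phi}\circ \det(\sigma)$, and the smooth bounded extension of $1/t$ off $[\varepsilon,+\infty)$ is irrelevant on a set of full $\mathcal{L}^n$-measure. Apart from this bookkeeping, the argument is a routine combination of the BV product rule, the BV chain rule, and monotonicity of Riesz potentials.
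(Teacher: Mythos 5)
Your proposal is correct and follows essentially the same route as the paper: establish the measure domination
\[
\|D\hat{\sigma}_{ij}\| \wle C\sum_{k,l=1}^n \|D\sigma_{kl}\|
\]
from the Cayley--Hamilton representation $\hat{\sigma}_{ij}=P_{ij}(\sigma)/\det(\sigma)$, restrict to a common relatively compact neighborhood $\mathcal{U}$ of $X([0,T])$, and then monotonicity of Riesz potentials plus the triangle inequality in $L^p(\mu_X^{[0,T]})$ finish the job. The only organizational difference is in how you run the $BV$-calculus: the paper applies \cite[Theorem 3.96]{AFP} uniformly to $\mathcal{C}^1_c$ extensions of the multivariate maps $(a,b)\mapsto a/b$, $b\mapsto 1/b$, and $(a_1,\ldots,a_k)\mapsto a_1\cdots a_k$ and then adds a constant term, whereas you iterate a scalar product rule and invoke Lemma \ref{L:Lipschitzcomp} for $1/\det(\sigma)$, then multiply. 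One small caveat: \cite[Theorem 3.9]{AFP} (as cited in Lemma \ref{L:niceinverse}) gives the qualitative fact that bounded $BV$ functions multiply to $BV$, but for the quantitative measure inequality $\|D(fg)\|\le \|f\|_\infty\|Dg\|+\|g\|_\infty\|Df\|$ you should rather cite \cite[Theorem 3.96]{AFP} applied to a $\mathcal{C}^1_c$ extension of the bilinear map, which is exactly the device the paper uses for $F_k$; apart from that citation, the argument is sound.
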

\begin{proof}
Recall (\ref{E:viewascompo}), i.e., that each component $\hat{\sigma}_{ij}$ of $\hat{\sigma}$ can be written in the form 
$\hat{\sigma}_{ij}=\Phi\circ \sigma$ 
with a suitable function $\Phi\in \mathcal{C}^1(\mathbb{R}^{n\times n})$ (depending on $i$ and $j$) with $\Phi(0)=0$ and bounded gradient. Consequently $X\in V(\hat{\sigma}_{ij},s,p)$ for each $i$ and $j$ by Lemma \ref{L:Lipschitzcomp}, and this means that  $X\in V(\hat{\sigma},s,p)$.
\end{proof}

We prove Theorem \ref{thm:DE-uniqueness}.
\begin{proof}[Proof of Theorem \ref{thm:DE-uniqueness}]
Since by Proposition \ref{prop:SV-sigma-hat} the path $X$ is $(s,1)$-variable w.r.t. $\hat{\sigma}$, we have
$X_u \in \mathbb{R}^n \setminus S_{\hat{\sigma}}$ for $\mathcal{L}^1$-a.e. $u\in [0,t]$, and, together with (\ref{E:convsigmahat}),
\[\lim_{\delta \to 0}|\hat{\sigma}_\delta(X_u)\sigma(X_u) - I| =0\]
for such $u$. Since also $\hat{\sigma}(X_{\cdot})\in L^\infty(0,t,\mathbb{R}^{n\times n})$ by (\ref{E:Linftybound}), 
dominated convergence, together with the boundedness of $\sigma$ and $\hat\sigma$, implies
\[\lim_{\delta\to 0}\int_0^t \frac{|\hat{\sigma}_\delta(X_u)\sigma(X_u) - I|}{u^{\beta}}du = 0.\]
For the Gagliardo seminorm we first note that 
\begin{align*}
| \hat{\sigma}_\delta(X_u)\sigma(X_u) - \hat{\sigma}_\delta(X_r)\sigma(X_r)| 
\leq & \left\|\hat{\sigma}(X_{\cdot})\right\|_{L^\infty(0,t,\mathbb{R}^{n\times n})} |\sigma(X_u) - \sigma(X_r)|\notag\\
 &+   \left\|\sigma(X_{\cdot})\right\|_{L^\infty(0,t,\mathbb{R}^{n\times n})} |\hat{\sigma}_\delta(X_u) - \hat{\sigma}_\delta(X_r)|.
\end{align*}
Since we have
\[[\sigma(X_{\cdot})]_{\beta,1}\leq c[X]_{\alpha,\infty}^s \sum_{i,j=1}^n \left\|U^{1-s}\left\|D\hat{\sigma}_{ij}\right\|\right\|_{L^1(X)}\]
by (\ref{E:basicest}) in Proposition \ref{P:basicest} and, reasoning as in (\ref{E:stereotype}), also 
\[[\sigma_\delta (X_{\cdot})]_{\beta,1}\leq c[X]_{\alpha,\infty}^s \sum_{i,j=1}^n \left\|U^{1-s}\left\|D\hat{\sigma}_{ij}\right\|\right\|_{L^1(X)}\]
with a constant $c>0$ independent of $\delta$, we can again use dominated convergence to conclude that 
\[\lim_{\delta\to 0}
\int_0^t \int_0^t \frac{|\hat{\sigma}_\delta(X_u)\sigma(X_u) - \hat{\sigma}_\delta(X_r)\sigma(X_r)|}{|u-r|^{\beta+1}}dudr= 0.\]
This shows that 
\[\lim_{\delta \to 0}\left\|\hat{\sigma}_\delta(X_{\cdot})\sigma(X_{\cdot}) - I\right\|_{W^{\beta,1}_0(0,t,\mathbb{R}^{n\times n})} =0,\]
and using (\ref{eq:ZSIntegralBound}) we obtain
\[\lim_{\delta\to 0}\int_0^t \hat{\sigma}_\delta(X_u)\sigma(X_u)dY_u = \int_0^t dY_u = Y_t - Y_0.\]
Taking into account (\ref{E:convg}) and (\ref{eq:g_n-solution}), we arrive at  
\begin{equation}
\label{eq:g-X-is-Y}
g(X_t) - g(\mathring{x}) = Y_t- Y_0, \quad t\in [0,T].
\end{equation}
Now suppose that $X$ and $\widetilde{X}$ are two solutions. By \eqref{eq:g-X-is-Y} we must have 
\[g(X_t) = g(\widetilde{X}_t), \quad t\in [0,T].\]
Since $g$ is invertible with inverse $f$ satisfying \eqref{eq:deterministic-DE} (cf. Proof of Theorem \ref{T:exjoint}), this implies that 
\[X_t = \widetilde{X}_t=f(Y_t), \quad t\in [0,T].\]
\end{proof}

\appendix

\section{Convolution of Riesz kernels}\label{S:standard}

Statement (i) in the following proposition is well known, see \cite[Lemma 25.2 and (25.38)]{SKM}, \cite[Theorem 1.15 in Chapter I]{Landkof} or \cite[Chapter V, Section 1.1]{Stein70}, and it can also be obtained by a quick subordination argument. Statement (ii) is a direct consequence of (i) and Fubini's theorem.

\begin{proposition}\label{P:convolution}
Let $\gamma_1,\gamma_2>0$ and $\gamma_1+\gamma_2<n$. 
\begin{enumerate}
\item[(i)] We have
\[c_{\gamma_1}c_{\gamma_2}\int_{\mathbb{R}^n}|x|^{-n+\gamma_1}|x-y|^{-n+\gamma_2}dx=c_{\gamma_1+\gamma_2}|y|^{-n+\gamma_1+\gamma_2},\quad y\in\mathbb{R}^n\setminus \{0\}.\]
\item[(ii)] If  $\nu_1$ and $\nu_2$ are finite nonnegative Borel measures with compact support, then 
\[\int_{\mathbb{R}^n}U^{\gamma_1}\nu_1(x)U^{\gamma_2}\nu_2(x)dx=\int_{\mathbb{R}^n} U^{\gamma_2}U^{\gamma_1}\nu_1(x)\nu_2(dx)=\int_{\mathbb{R}^n}U^{\gamma_1+\gamma_2}\nu_1(x)\nu_2(dx).\]
\end{enumerate}
\end{proposition}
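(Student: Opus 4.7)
The plan is to prove (i) by the classical Fourier-analytic route and then deduce (ii) from (i) by Fubini--Tonelli. For (i), the constants $c_\gamma$ appearing in the paper are the standard Riesz normalization, chosen precisely so that
\begin{equation*}
\mathcal{F}(c_\gamma|\cdot|^{-n+\gamma})(\xi) \weq |\xi|^{-\gamma}
\end{equation*}
in the sense of tempered distributions. Since the condition $\gamma_1+\gamma_2<n$ guarantees local integrability near the origin of all three kernels involved, as well as of $|\xi|^{-\gamma_1-\gamma_2}$, the convolution on the left of (i) is well defined, and its Fourier transform equals the pointwise product $|\xi|^{-\gamma_1}\cdot|\xi|^{-\gamma_2}=|\xi|^{-\gamma_1-\gamma_2}$. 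Inverse Fourier transforming yields $c_{\gamma_1+\gamma_2}|y|^{-n+\gamma_1+\gamma_2}$, which is exactly (i).

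Alternatively, and this is the subordination argument alluded to in the excerpt, one can write
\begin{equation*}
|x|^{-n+\gamma} \weq \frac{1}{\Gamma((n-\gamma)/2)}\int_0^\infty t^{(n-\gamma)/2 - 1}e^{-t|x|^2}\,dt
\end{equation*}
and reduce the convolution to a double integral of Gaussian convolutions. The elementary identity
\begin{equation*}
\int_{\mathbb{R}^n}e^{-t|x|^2 - s|y-x|^2}\,dx \weq \left(\frac{\pi}{t+s}\right)^{n/2}\exp\!\left(-\frac{ts}{t+s}|y|^2\right),
\end{equation*}
obtained by completing the square, followed by the substitution $t = u\tau$, $s = u(1-\tau)$ with $u\in(0,\infty)$ and $\tau\in(0,1)$, separates the resulting integral into a $u$-integral that produces a Gamma function in $|y|^2$ (yielding the desired power $|y|^{-n+\gamma_1+\gamma_2}$) and a $\tau$-integral that produces a Beta function. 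Using $B(a,b)=\Gamma(a)\Gamma(b)/\Gamma(a+b)$ and the explicit form of $c_\gamma$, all Gamma factors collapse into the constant $c_{\gamma_1+\gamma_2}$ on the right.

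For (ii), I would expand both potentials as integrals,
\begin{equation*}
U^{\gamma_1}\nu_1(x)\,U^{\gamma_2}\nu_2(x) \weq c_{\gamma_1}c_{\gamma_2}\!\int_{\mathbb{R}^n}\!\int_{\mathbb{R}^n}|x-y_1|^{-n+\gamma_1}|x-y_2|^{-n+\gamma_2}\,\nu_1(dy_1)\,\nu_2(dy_2),
\end{equation*}
integrate in $x$, and interchange the order of integration by Fubini--Tonelli, which is unconditionally legitimate because the integrand is nonnegative. After the translation $x\mapsto x+y_1$ the innermost $x$-integral coincides with the left side of (i) for $y=y_2-y_1$, so by (i) it equals $c_{\gamma_1+\gamma_2}|y_2-y_1|^{-n+\gamma_1+\gamma_2}$. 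Integrating back against $\nu_1(dy_1)\nu_2(dy_2)$ yields the rightmost expression in (ii), and the middle expression is obtained by the same Fubini swap, read the other way, identifying the $y_1$-integral with $U^{\gamma_1}\nu_1$ and the outer $x$-integral against $\nu_2$ with $U^{\gamma_2}$ applied pointwise. The only genuinely delicate point in the whole argument is the bookkeeping of the normalization constants in (i); this is entirely classical and is the content of the references cited just before the proposition.
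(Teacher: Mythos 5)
Your proof is correct and takes essentially the same approach as the paper: the paper simply cites standard references for (i), mentions that a subordination argument also works (which you spell out), and notes that (ii) follows from (i) and Fubini, which is exactly your Tonelli computation. You have filled in the details that the paper leaves to the reader, but the route is the same.
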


\section{Mollification results}\label{S:smooth_approximation}

We collect some useful known approximation results used in the main text. 

We begin with an approximation lemma for Riesz potentials that is a slight variant of \cite[Section I.3, Theorem 1.11 and its proof]{Landkof}. As usual we say that $(\eta_\varepsilon)_{\varepsilon>0}$ is a (radially symmetric) \emph{mollifier} if $\eta\in \mathcal{C}_c^\infty(\mathbb{R}^n)$ is a nonnegative radial function, compactly supported inside the unit ball and such that $\int_{\mathbb{R}^n} \eta(x)dx=1$, and $\eta_\varepsilon(x):=\varepsilon^{-n}\eta(\varepsilon^{-1}x)$, $x\in\mathbb{R}^n$, for any $\varepsilon>0$, cf. \cite[p. 41]{AFP}. We say that a mollifier $(\eta_\varepsilon)_{\varepsilon>0}$ is a \emph{flat mollifier} if for all $x\in\mathbb{R}^n$ with $|x|\leq \frac12$ we have $\eta(x)= c_\eta$ with a suitable constant $c_\eta>0$. To have $\eta$ constant in a small ball around the origin is useful to quickly see the following. 

\begin{lemma}\label{L:Landkof}
Let $(\eta_\varepsilon)_{\varepsilon>0}$ be a flat mollifier, let $\nu$ be a nonnegative Borel measure on $\mathbb{R}^n$, and $0<\gamma<n$.  Then:
\begin{enumerate}
\item[(i)] There is a constant $c(n,\gamma)>0$ depending only on $n$ and $\gamma$ such that for any $\varepsilon>0$ and all $x\in\mathbb{R}^n$ we have
\[U^\gamma(\nu\ast \eta_\varepsilon)(x)\leq c(n,\gamma) U^\gamma\nu(x).\]
\item[(ii)] For any $x\in\mathbb{R}^n$ we have 
\[\lim_{\varepsilon\to 0} U^\gamma (\nu\ast \eta_\varepsilon)(x)=U^\gamma\nu(x).\]
\end{enumerate}
\end{lemma}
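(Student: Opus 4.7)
The plan is to reduce both assertions to a pointwise estimate and a pointwise limit for the convolution of the Riesz kernel $k(u):=|u|^{-n+\gamma}$ with $\eta_\varepsilon$, and then transfer these properties from the kernel to the potentials via Fubini together with standard convergence theorems. Writing the density of $\nu\ast\eta_\varepsilon$ with respect to $\mathcal{L}^n$ as $(\nu\ast\eta_\varepsilon)(z)=\int\eta_\varepsilon(z-y)\nu(dy)$, Fubini gives
\[
U^\gamma(\nu\ast\eta_\varepsilon)(x)=c_\gamma\int_{\mathbb{R}^n}(k\ast\eta_\varepsilon)(x-y)\,\nu(dy),\qquad x\in\mathbb{R}^n,
\]
so it suffices to control $k\ast\eta_\varepsilon$.

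For (i), the first step is to split into the regimes $|u|\geq 2\varepsilon$ and $|u|<2\varepsilon$. In the first regime, the support property $\supp\eta_\varepsilon\subset\overline{B(0,\varepsilon)}$ together with the triangle inequality yields $|u-w|\geq|u|/2$ for all $w$ in the support, hence $(k\ast\eta_\varepsilon)(u)\leq 2^{n-\gamma}k(u)$ since $\eta_\varepsilon$ integrates to one. In the second regime, the bound $\eta_\varepsilon\leq\|\eta\|_\infty\varepsilon^{-n}$ gives
\[
(k\ast\eta_\varepsilon)(u)\leq\|\eta\|_\infty\varepsilon^{-n}\int_{B(u,\varepsilon)}|u-w|^{-n+\gamma}dw\leq c(n,\gamma)\,\varepsilon^{-n+\gamma},
\]
and because $|u|<2\varepsilon$ and $-n+\gamma<0$ we have $\varepsilon^{-n+\gamma}\leq 2^{n-\gamma}|u|^{-n+\gamma}=2^{n-\gamma}k(u)$. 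Combining the two regimes yields
\begin{equation}\label{E:kerbound}
(k\ast\eta_\varepsilon)(u)\leq C(n,\gamma)\,k(u),\qquad u\in\mathbb{R}^n\setminus\{0\},
\end{equation}
with $C(n,\gamma)$ independent of $\varepsilon$. Integrating \eqref{E:kerbound} against $\nu$ yields (i). Note that this part does not actually use the flatness hypothesis; only boundedness and support matter.

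For (ii), I split on whether $U^\gamma\nu(x)$ is finite. Outside the origin the kernel $k$ is continuous, so by standard mollifier theory $\lim_{\varepsilon\to 0}(k\ast\eta_\varepsilon)(u)=k(u)$ for every $u\neq 0$. At the origin the flatness hypothesis becomes crucial: for $|w|\leq\varepsilon/2$ we have $\eta_\varepsilon(w)=c_\eta\,\varepsilon^{-n}$, so
\[
(k\ast\eta_\varepsilon)(0)\geq c_\eta\,\varepsilon^{-n}\int_{B(0,\varepsilon/2)}|w|^{-n+\gamma}dw=c'(n,\gamma)\,\varepsilon^{-n+\gamma}\xrightarrow[\varepsilon\to 0]{}+\infty=k(0).
\]
If $U^\gamma\nu(x)<\infty$, then $\nu(\{x\})=0$, the integrand $(k\ast\eta_\varepsilon)(x-y)$ converges to $k(x-y)$ for $\nu$-a.e.\ $y$, and the dominating function $C(n,\gamma)k(x-\cdot)$ from \eqref{E:kerbound} is $\nu$-integrable, so dominated convergence yields $U^\gamma(\nu\ast\eta_\varepsilon)(x)\to U^\gamma\nu(x)$. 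If $U^\gamma\nu(x)=+\infty$, then Fatou applied to $(k\ast\eta_\varepsilon)(x-y)$ combined with the pointwise convergence (including the blow-up at $y=x$ produced by flatness, which covers the case $\nu(\{x\})>0$) yields $\liminf_{\varepsilon\to 0}U^\gamma(\nu\ast\eta_\varepsilon)(x)\geq U^\gamma\nu(x)=+\infty$, and $\limsup\leq C(n,\gamma)U^\gamma\nu(x)$ is vacuous.

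The main technical obstacle is the behaviour at the diagonal $y=x$ in the second assertion when $\nu$ has an atom at $x$; the flatness of $\eta$ is exactly what guarantees $(k\ast\eta_\varepsilon)(0)\to\infty$ at the correct rate, so that no mass is lost in the limit. Everything else reduces to the two-regime estimate \eqref{E:kerbound} and a standard application of dominated convergence/Fatou.
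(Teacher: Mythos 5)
Your argument is correct, and it is conceptually the same as the paper's: both reduce, via Fubini, to a pointwise estimate on the mollified Riesz kernel $k\ast\eta_\varepsilon$ and a pointwise limit. Where you differ is in how the kernel estimate $(k\ast\eta_\varepsilon)(u)\leq C\,k(u)$ is obtained: the paper substitutes $z=x+\varepsilon\zeta$ to factor out the scale, rewrites the inner integral as a function $\Phi$ of $|x-y|/\varepsilon$ alone, and invokes the fact that $\Phi$ is continuous, vanishes at $0$, and tends to $1$ at infinity to get a uniform bound. You instead split directly on $|u|\geq 2\varepsilon$ (triangle inequality plus $\int\eta_\varepsilon=1$) versus $|u|<2\varepsilon$ (crude sup bound plus integrability of $|v|^{-n+\gamma}$ on small balls). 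Both give the same dominating inequality. For (ii), the paper simply references Landkof; you spell out the argument explicitly, which is a plus: pointwise convergence of $k\ast\eta_\varepsilon$ off the origin, blow-up at the origin at rate $\varepsilon^{-n+\gamma}$, and then dominated convergence when $U^\gamma\nu(x)<\infty$ and Fatou when $U^\gamma\nu(x)=\infty$. The case split on $\nu(\{x\})$ is handled cleanly.

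Two small remarks. First, in the second regime of (i) the domain of integration should be $B(0,\varepsilon)$ (the support of $\eta_\varepsilon$), not $B(u,\varepsilon)$; after centering at $u$ the integral is over a ball of radius $\varepsilon$ at distance $|u|<2\varepsilon$ from the singularity, giving the same order $\varepsilon^\gamma$, so this is only a typographical slip and the estimate stands. Second, your constant in (i) visibly involves $\|\eta\|_\infty$, not only $n$ and $\gamma$; this is harmless (the mollifier is fixed once and for all, and in fact the paper's own $\Phi$-argument has the same implicit dependence once one treats a genuine flat mollifier rather than the indicator $c_\eta\mathbf{1}_{B(0,1/2)}$ of Landkof's original proof), but if you wanted a constant strictly of the form $c(n,\gamma)$ you would exploit the normalization $\int\eta=1$ more carefully. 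You are also right that the flatness is used only in (ii), and there only to make the blow-up at the diagonal transparent; the estimate (i) needs nothing beyond boundedness, compact support, and $\int\eta=1$.
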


The proof is as in \cite[Section I.3, Theorem 1.11]{Landkof}, but since we use a slightly different mollifier, we repeat the short arguments for (i) for convenience.

\begin{proof}
We have 
\begin{align}
U^\gamma(\nu\ast\eta_\varepsilon)(x)&=c_\gamma\int_{\mathbb{R}^n} \eta_\varepsilon(z-x) \int_{\mathbb{R}^n}\frac{\nu(dy)}{|z-y|^{n-\gamma}}\:dz\notag\\
&=c_\gamma\int_{\mathbb{R}^n}\frac{1}{|x-y|^{n-\gamma}}\left(\int_{\mathbb{R}^n}\eta_\varepsilon(z-x)\;\frac{|x-y|^{n-\gamma}}{|z-y|^{n-\gamma}}\:dz\right)\nu(dy).\notag
\end{align}
The inner integral is bounded by $\Phi(\frac{x}{\varepsilon},\frac{y}{\varepsilon})$, where 
\[\Phi(x,y):=|x-y|^{n-\gamma}\:\int_{|\zeta-x|\leq \frac{1}{2}}\frac{d\zeta}{|\zeta-y|^{n-\gamma}}.\]
The function $\Phi$ defines a continuous function of $|x-y|$ which is zero if $|x-y|=0$ and tends to $1$ for $|x-y|\to +\infty$, and denoting the maximum of this function by $c(n,\gamma)$, we obtain (i). For (ii) one can follow the proof in \cite[p. 73]{Landkof}.
\end{proof}

The next lemma contain versions of a statement from \cite[Theorem 2.2]{AFP}. Given a Borel set $E\subset \mathbb{R}^n$ and $\varepsilon>0$, write $E_\varepsilon:=\{x\in\mathbb{R}^n: \dist(E,x)<\varepsilon\}$.

\begin{lemma}\label{L:neededbound}
Let $\varphi\in BV_{\loc}(\mathbb{R}^n)$ and let $(\eta_\varepsilon)_{\varepsilon}$ be a mollifier. Then for any Borel set $E\subset \mathbb{R}^n$ and any Borel function $\psi:\mathbb{R}^n \to [0,+\infty]$ we have 
\begin{equation}\label{E:neededbound}
\int_{\mathbb{R}^n} \psi(y)\left\|D(\varphi\ast \eta_\varepsilon)\right\||_E(dy)\leq \int_{\mathbb{R}^n}\psi(y)\left(\left\|D\varphi\right\||_{E_\varepsilon}\ast \eta_{\varepsilon}\right)(y)dy.
\end{equation}
In particular, 
\[\int_E|\nabla(\varphi\ast \eta_\varepsilon)|(y)dy\leq \left\|D\varphi\right\|(E_\varepsilon).\]  
\end{lemma}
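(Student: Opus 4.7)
The plan is to reduce everything to a pointwise inequality for $|\nabla(\varphi\ast\eta_\varepsilon)|$ on $E$, then integrate against $\psi$. Since $\varphi\ast\eta_\varepsilon$ is smooth, the gradient measure $D(\varphi\ast\eta_\varepsilon)$ is absolutely continuous with respect to $\mathcal{L}^n$, with density $\nabla(\varphi\ast\eta_\varepsilon)$. Hence $\|D(\varphi\ast\eta_\varepsilon)\|$ has density $|\nabla(\varphi\ast\eta_\varepsilon)|$, and the restriction $\|D(\varphi\ast\eta_\varepsilon)\||_E$ is the measure $\mathbf{1}_E(y)\,|\nabla(\varphi\ast\eta_\varepsilon)(y)|\,dy$. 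Consequently, the left hand side of (\ref{E:neededbound}) equals $\int_E \psi(y)\,|\nabla(\varphi\ast\eta_\varepsilon)(y)|\,dy$.

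Next, I would use the identity $\nabla(\varphi\ast\eta_\varepsilon)(y) = \int_{\mathbb{R}^n}\eta_\varepsilon(y-z)\,D\varphi(dz)$, understood componentwise as a Bochner-type integral for the vector valued measure $D\varphi$. Applying the triangle inequality under the integral (e.g.\ by dualizing against unit vectors or splitting into positive and negative parts of each component), we obtain
\[
 |\nabla(\varphi\ast\eta_\varepsilon)(y)| \;\le\; \int_{\mathbb{R}^n}\eta_\varepsilon(y-z)\,\|D\varphi\|(dz),\qquad y\in\mathbb{R}^n.
\]
For $y\in E$ the function $z\mapsto \eta_\varepsilon(y-z)$ vanishes outside $B(y,\varepsilon)\subset E_\varepsilon$, so we may replace $\|D\varphi\|$ by $\|D\varphi\||_{E_\varepsilon}$ on the right hand side, giving
\[
 |\nabla(\varphi\ast\eta_\varepsilon)(y)| \;\le\; \bigl(\|D\varphi\||_{E_\varepsilon} \ast \eta_\varepsilon\bigr)(y),\qquad y\in E.
\]

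Multiplying this pointwise inequality by $\psi(y)\ge 0$ and integrating over $E$ yields (\ref{E:neededbound}), after extending the domain of integration on the right hand side from $E$ to $\mathbb{R}^n$ (which is legitimate since the integrand is nonnegative). For the final statement, take $\psi\equiv 1$ and apply Fubini:
\[
 \int_E |\nabla(\varphi\ast\eta_\varepsilon)|\,dy \le \int_{\mathbb{R}^n}\bigl(\|D\varphi\||_{E_\varepsilon}\ast\eta_\varepsilon\bigr)(y)\,dy = \|D\varphi\|(E_\varepsilon)\int_{\mathbb{R}^n}\eta_\varepsilon(x)\,dx = \|D\varphi\|(E_\varepsilon).
\]

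There is no real obstacle beyond bookkeeping; the only point requiring care is the componentwise triangle inequality for the convolution of the vector valued measure $D\varphi$ with $\eta_\varepsilon$, which can be handled either by decomposing $D_i\varphi$ into its Hahn--Jordan parts and summing, or by writing $|v|=\sup_{|\xi|\le 1}\langle\xi,v\rangle$ and interchanging supremum and integral.
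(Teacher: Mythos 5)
Your proposal is correct and takes essentially the same approach as the paper: both rely on the identity $\nabla(\varphi\ast\eta_\varepsilon)=(D\varphi)\ast\eta_\varepsilon$, the bound $|\nabla(\varphi\ast\eta_\varepsilon)|\le \|D\varphi\|\ast\eta_\varepsilon$, the support observation that $y\in E$ and $\eta_\varepsilon(y-z)\ne 0$ force $z\in E_\varepsilon$, and Fubini. The only cosmetic difference is that you establish a pointwise bound on $E$ first and then integrate against $\psi$, whereas the paper performs the same manipulations directly inside one double integral.
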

The proof basically follows \cite[Theorem 2.2]{AFP}.
\begin{proof}
Since $\left\|D(\varphi\ast \eta_\varepsilon)\right\|=|\nabla(\varphi\ast \eta_\varepsilon)|\cdot \mathcal{L}^n$ and 
$\nabla(\varphi\ast \eta_\varepsilon)=(D\varphi)\ast \eta_\varepsilon$, \cite[Proposition 3.2]{AFP}, the left hand side of inequality (\ref{E:neededbound}) is seen to equal
\begin{align}
\int_{\mathbb{R}^n}\psi(y)\mathbf{1}_E(y)\left|\int_{\mathbb{R}^n} \eta_\varepsilon (y-z)D\varphi(dz)\right|dy &\leq \int_{\mathbb{R}^n}\int_{\mathbb{R}^n}\psi(y)\mathbf{1}_E(y)\eta_\varepsilon(y-z)dy\left\|D\varphi\right\|(dz)\notag\\
&\leq \int_{\mathbb{R}^n}\int_{\mathbb{R}^n}\psi(y)\eta_\varepsilon(y-z)dy\mathbf{1}_{E_\varepsilon}(z)\left\|D\varphi\right\|(dz),\notag
\end{align} 
where we have used Fubini's theorem. Another application of the latter shows that the last integral equals the right hand side of (\ref{E:neededbound}). The case $\psi\equiv 1$ yields the second statement because the mollifier has integral equal to one.
\end{proof}

We obtain the following consequence for potentials.
\begin{corollary}\label{C:mollypot}
Let $\varphi\in BV_{\loc}(\mathbb{R}^n)$, $0<\gamma<n$, let $K\subset \mathbb{R}^n$ be a compact set and $(\eta_\varepsilon)_{\varepsilon>0}$ a symmetric flat mollifier. Then
\[U^\gamma(\left\|D(\varphi\ast\eta_\varepsilon)\right\||_K)(x)\leq c(n,\gamma)\:U^\gamma(\left\|D\varphi\right\||_{K_\varepsilon})(x)\]
for any $x\in\mathbb{R}^n$ and $\varepsilon >0$.
\end{corollary}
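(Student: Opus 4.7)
The plan is simply to chain Lemma~\ref{L:neededbound} with Lemma~\ref{L:Landkof}(i). Fix $x\in\mathbb{R}^n$ and $\varepsilon>0$, and write the Riesz potential explicitly as
\[
U^\gamma(\|D(\varphi\ast\eta_\varepsilon)\||_K)(x) \weq c_\gamma\int_{\mathbb{R}^n} |x-y|^{-n+\gamma}\, \|D(\varphi\ast\eta_\varepsilon)\||_K(dy).
\]
The nonnegative Borel function $\psi(y):=c_\gamma|x-y|^{-n+\gamma}$ (extended by $+\infty$ at $y=x$) and the Borel set $E:=K$ are then admissible in Lemma~\ref{L:neededbound}, and that inequality yields
\[
U^\gamma(\|D(\varphi\ast\eta_\varepsilon)\||_K)(x) \wle \int_{\mathbb{R}^n} c_\gamma|x-y|^{-n+\gamma}\,\bigl(\|D\varphi\||_{K_\varepsilon}\ast\eta_\varepsilon\bigr)(y)\,dy.
\]

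The right-hand side is, by definition, exactly the Riesz potential of the mollified measure $\|D\varphi\||_{K_\varepsilon}\ast\eta_\varepsilon$ evaluated at $x$, so it equals
\[
U^\gamma\bigl(\|D\varphi\||_{K_\varepsilon}\ast\eta_\varepsilon\bigr)(x).
\]
Since $K$ is compact, $K_\varepsilon$ is a bounded open set, so $\nu:=\|D\varphi\||_{K_\varepsilon}$ is a finite nonnegative Borel measure on $\mathbb{R}^n$ (using $\varphi\in BV_{\loc}(\mathbb{R}^n)$). Lemma~\ref{L:Landkof}(i), applied to this $\nu$, then gives the pointwise bound
\[
U^\gamma\bigl(\|D\varphi\||_{K_\varepsilon}\ast\eta_\varepsilon\bigr)(x) \wle c(n,\gamma)\, U^\gamma(\|D\varphi\||_{K_\varepsilon})(x),
\]
and combining the two inequalities yields the claim.

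No real obstacle is expected here: the corollary is precisely the superposition of the two preparatory lemmas, the first controlling the mollified gradient measure on $K$ by the (unmollified) gradient measure on the slightly larger set $K_\varepsilon$ convolved with $\eta_\varepsilon$, and the second allowing the removal of the mollifier from the measure at the cost of a constant depending only on $n$ and $\gamma$. The only point worth being careful about is that $\psi$ is infinite at $y=x$; this causes no trouble because both sides of the intermediate inequality are integrated against nonnegative measures, so the Fubini-type argument underlying Lemma~\ref{L:neededbound} still applies verbatim with values in $[0,+\infty]$.
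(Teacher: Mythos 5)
Your proof is correct and follows essentially the same route as the paper: apply Lemma~\ref{L:neededbound} with $\psi(y)=c_\gamma|x-y|^{-n+\gamma}$ and $E=K$ to pass to the mollified measure $\|D\varphi\||_{K_\varepsilon}\ast\eta_\varepsilon$, then invoke Lemma~\ref{L:Landkof}(i) to remove the mollifier at the cost of $c(n,\gamma)$. The only cosmetic difference is that you carry the normalizing constant $c_\gamma$ explicitly and flag the $[0,+\infty]$-valued Fubini point, which the paper leaves implicit.
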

\begin{proof}
Given $x\in\mathbb{R}^n$ write $\psi_x(y):=|x-y|^{-n+\gamma}$. Then 
\begin{align*}
&U^\gamma(\left\|D(\varphi\ast \eta_\varepsilon)\right\||_K)(x)=\int_{\mathbb{R}^n} \psi_x(y)\left\|D(\varphi\ast \eta_\varepsilon)\right\||_K(dy)\\
\leq &\int_{\mathbb{R}^n}\psi_x(y)\left(\left\|D\varphi\right\||_{K_\varepsilon}\ast \eta_{\varepsilon}\right)(y)dy=U^\gamma(\left\|D\varphi\right\||_{K_\varepsilon}\ast \eta_\varepsilon)(x)
\end{align*}
by Lemma \ref{L:neededbound}, and an application of Lemma \ref{L:Landkof} yields the desired bound.
\end{proof}

Let $(\eta_\varepsilon)_{\varepsilon>0}$ be a mollifier. Recall that if $\varphi\in L^1_{\loc}(\mathbb{R}^n)$ and $\widetilde{\varphi}$ is a Lebesgue representative of $\varphi$, then 
\begin{equation}\label{E:convundermolly}
\lim_{\varepsilon\to 0}\varphi\ast \eta_{\varepsilon}(x)=\widetilde{\varphi}(x),\quad x\in \mathbb{R}^n\setminus S_{\varphi},
\end{equation}
as shown in \cite[Proposition 3.64 (b)]{AFP}.

\begin{lemma}\label{lem:mollifier-result-new}
Let $F\in W^{1,1}_{\loc}(\mathbb{R}^n)$, such that $\sigma_k:=\partial_k F\in BV_{\loc}(\mathbb{R}^n)$ for $k=1,\ldots,n$. Then $S_F\subset\bigcup_{k=1}^n S_{\sigma_k}$ for $k=1,\ldots,n$. If $(\eta_\varepsilon)_{\varepsilon>0}$ is a mollifier, then
\begin{enumerate}[(i)]
\item $\lim_{\varepsilon \to 0}F\ast\eta_{\varepsilon}= F$ pointwise on $\mathbb{R}^n\setminus S_F$,
\item $\lim_{\varepsilon\to 0}\sigma_k\ast\eta_\varepsilon=\lim_{\varepsilon\to 0}F\ast \partial_k\eta_{\varepsilon}= \sigma_k$ pointwise on $\mathbb{R}^n\setminus S_{\sigma_k}$.
\end{enumerate}
\end{lemma}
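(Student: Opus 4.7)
The plan is to first establish the inclusion $S_F\subset \bigcup_{k=1}^n S_{\sigma_k}$ and then to deduce (i) and (ii) as essentially immediate consequences of \eqref{E:convundermolly}.

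For the inclusion, I would fix a point $x\in \mathbb{R}^n\setminus \bigcup_{k=1}^n S_{\sigma_k}$ and show that $F$ admits an approximate limit at $x$. Since each $\sigma_k$ has an approximate limit $\lambda_{\sigma_k}(x)$ at $x$, the averages $\frac{1}{\mathcal{L}^n(B(x,r))}\int_{B(x,r)}|\sigma_k(y)|\,dy$ are bounded by some constant $M$ as $r\to 0$. Applying the classical Poincar\'e--Wirtinger inequality for $W^{1,1}$ functions on balls,
\begin{equation*}
\frac{1}{\mathcal{L}^n(B(x,r))}\int_{B(x,r)}|F(y)-F_{B(x,r)}|\,dy \;\leq\; C_n\,r\cdot \frac{1}{\mathcal{L}^n(B(x,r))}\int_{B(x,r)}|\nabla F(y)|\,dy,
\end{equation*}
where $F_{B(x,r)}:=\frac{1}{\mathcal{L}^n(B(x,r))}\int_{B(x,r)}F\,d\mathcal{L}^n$, the right-hand side is bounded above by a constant multiple of $rM$, hence tends to zero as $r\to 0$. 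A standard dyadic comparison of $F_{B(x,r)}$ and $F_{B(x,2r)}$ using this oscillation bound shows that $r\mapsto F_{B(x,r)}$ is Cauchy as $r\to 0$; denote the limit by $\lambda_F(x)$. Adding and subtracting $F_{B(x,r)}$ then yields
\begin{equation*}
\frac{1}{\mathcal{L}^n(B(x,r))}\int_{B(x,r)}|F(y)-\lambda_F(x)|\,dy \;\longrightarrow\; 0,
\end{equation*}
so $x\notin S_F$.

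For (i), the inclusion just proved shows in particular that $\mathbb{R}^n\setminus S_F$ is contained in the complement of the approximate discontinuity set of any Lebesgue representative of $F\in L^1_{\loc}(\mathbb{R}^n)$, and the statement then follows directly from \eqref{E:convundermolly} applied to $F$. For (ii), I would first verify the identity $F\ast \partial_k\eta_\varepsilon = \sigma_k\ast \eta_\varepsilon$ pointwise: writing $\partial_{x_k}\eta_\varepsilon(x-y)=-\partial_{y_k}\eta_\varepsilon(x-y)$ and using integration by parts, permissible because $F\in W^{1,1}_{\loc}(\mathbb{R}^n)$ and $\eta_\varepsilon$ is smooth and compactly supported, gives
\begin{equation*}
(F\ast \partial_k\eta_\varepsilon)(x)=-\int_{\mathbb{R}^n}F(y)\,\partial_{y_k}\eta_\varepsilon(x-y)\,dy=\int_{\mathbb{R}^n}\sigma_k(y)\,\eta_\varepsilon(x-y)\,dy=(\sigma_k\ast \eta_\varepsilon)(x).
\end{equation*}
Since $\sigma_k\in BV_{\loc}(\mathbb{R}^n)\subset L^1_{\loc}(\mathbb{R}^n)$, another application of \eqref{E:convundermolly} to $\sigma_k$ gives the claimed pointwise convergence on $\mathbb{R}^n\setminus S_{\sigma_k}$.

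The main obstacle is the inclusion $S_F\subset \bigcup_k S_{\sigma_k}$: one must upgrade the boundedness of the local $L^1$-averages of $\nabla F$ (coming from the approximate limits of the $\sigma_k$) to the existence of a genuine approximate limit of $F$. The Poincar\'e--Wirtinger inequality together with the Cauchy argument for the averages $F_{B(x,r)}$ is exactly what bridges this gap; parts (i) and (ii) are then routine corollaries of \eqref{E:convundermolly}.
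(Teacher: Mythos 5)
Your proof is correct, but it takes a genuinely different route for the key inclusion $S_F\subset\bigcup_{k}S_{\sigma_k}$. The paper reduces this to the criterion from \cite[Remark~3.82 and Exercise~3.14]{AFP}, verifying that for $x\notin\bigcup_k S_{\sigma_k}$ the two conditions $\int_0^r t^{-n}\int_{B(x,t)}|\sigma|\,dy\,dt<\infty$ and $\varrho^{1-n}\int_{B(x,\varrho)}|\sigma|\,dy\to 0$ hold, which by the cited result implies $x\notin S_F$. You instead give a self-contained argument: the $W^{1,1}$ Poincar\'e--Wirtinger inequality on balls shows the mean oscillation $\frac{1}{\mathcal{L}^n(B(x,r))}\int_{B(x,r)}|F-F_{B(x,r)}|\,dy$ is $O(r)$ once the averages of $|\nabla F|=|\sigma|$ are bounded near $x$ (which follows from the existence of the approximate limits $\lambda_{\sigma_k}(x)$), and a dyadic telescoping bound then shows the averages $F_{B(x,r)}$ form a Cauchy family, giving the approximate limit $\lambda_F(x)$. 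This is essentially an unpacking of the proof behind the AFP criterion in the special case of a $W^{1,1}$ gradient; it is a bit longer but avoids the external reference and makes the mechanism explicit. Parts (i) and (ii) are handled the same way in both proofs, directly from \eqref{E:convundermolly} and the identity $F\ast\partial_k\eta_\varepsilon=\sigma_k\ast\eta_\varepsilon$.
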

\begin{proof}
We show that $\bigcap_{k=1}^n(\mathbb{R}^n\setminus S_{\sigma_k})\subset (\mathbb{R}^n\setminus S_F)$. Once this is shown, (i) and (ii) follow using (\ref{E:convundermolly}) and $F\ast\partial_k\eta_\varepsilon=\partial_k (F\ast\eta_\varepsilon)=\sigma_k\ast\eta_\varepsilon$.

Let $x\in \bigcap_{k=1}^n (\mathbb{R}^n\setminus S_{\sigma_k})$. Then, writing $\sigma:=(\sigma_1,\ldots,\sigma_n)$, we see that $x\in \mathbb{R}^n\setminus S_{|\sigma|}$. Let $r>0$. For any $\varepsilon >0$ there exists $r>\delta>0$ such that
\[\begin{split}&\int_0^r t^{-n}\int_{B(x,t)}|\sigma(y)| dy dt\\
=&\int_\delta^r t^{-n}\int_{B(x,t)}|\sigma(y)| dy dt+c(n)\int_0^\delta\frac{1}{\mathcal{L}^n(B(x,t))}\int_{B(x,t)}|\sigma(y)| dy dt\\
\le&\frac{(r-\delta)}{\delta^n}\|\sigma\|_{L^1(B(x,r))}+\delta c(n)\left(\lambda_{|\sigma|}(x)+\varepsilon\right)< + \infty.\end{split}\]
Moreover, for sufficiently small $\varrho>0$,
\[\frac{1}{\varrho^{n-1}}\int_{B(x,\varrho)}|\sigma(y)| dy=c(n)\frac{\varrho}{\mathcal{L}^n(B(x,\varrho))}\int_{B(x,\varrho)}|\sigma(y)| dy\le c(n)\varrho\left(\lambda_{|\sigma|}(x)+\varepsilon\right)\]
that converges to zero as as $\varrho\searrow 0$. Therefore, by  \cite[Remark 3.82 and Exercise 3.14]{AFP}, we have $x\in \mathbb{R}^n\setminus S_F$.  
\end{proof}

\section{Some properties of maximal functions}\label{S:maximalfcts}

We record a version of \cite[Corollary 4.3]{AK}. See also \cite{HLNT, KinnunenSaksman}.  For any given measure $\nu$ on $(\mathbb{R}^n,\mathcal{B}(\mathbb{R}^d))$, any $\gamma\in [0,1)$, and any $R\in (0,+\infty]$, let 
\begin{equation}\label{E:maximalfct}
\mathcal{M}_{\gamma,R} \nu(x):=\sup_{0<r<R}r^{\gamma-n}\:\nu(B(x,r)), \quad x\in \mathbb{R}^n,
\end{equation}
denote the \emph{(truncated) fractional Hardy-Littlewood maximal function} of $\nu$ of order $\gamma$. In the case $R=+\infty$ we use the standard notation $\mathcal{M}_\gamma\nu=\mathcal{M}_{\gamma,+\infty}\nu$.

\begin{proposition}\label{P:meanvalue}
Let $\varphi \in BV_{\loc}(\mathbb{R}^n)$ and $s\in (0,1]$. Then there is a constant  $c=c(n,s)>0$ such that for all Lebesgue points $x,y\in \mathbb{R}^n$ of $\varphi$  we have
\[|\varphi(x)-\varphi(y)|\leq c|x-y|^s\left(\mathcal{M}_{1-s,4|x-y|}\left\|D\varphi\right\|(x)+\mathcal{M}_{1-s,4|x-y|}\left\|D\varphi\right\|(y)\right).\]
\end{proposition}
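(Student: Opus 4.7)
The plan is to prove this by a dyadic telescoping of averages of $\varphi$ over shrinking balls around $x$ and $y$, controlled by the BV Poincar\'e inequality. This is the fractional-maximal analog of the classical argument of Crippa--De Lellis (\cite[Lemma A.3]{CrippadeLellis}) and is essentially \cite[Corollary 4.3]{AK}. Set $r:=|x-y|$. For a ball $B$ write $\varphi_B:=\mathcal{L}^n(B)^{-1}\int_B\varphi \, d\mathcal{L}^n$. I will decompose
\[
\varphi(x)-\varphi(y)=\big(\varphi(x)-\varphi_{B(x,2r)}\big)+\big(\varphi_{B(x,2r)}-\varphi_{B(y,2r)}\big)+\big(\varphi_{B(y,2r)}-\varphi(y)\big)
\]
and bound each summand separately.

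The workhorse is the classical Poincar\'e inequality for $BV_{\loc}$ functions, which gives
\[
\int_{B(z,R)}|\varphi-\varphi_{B(z,R)}|\,d\mathcal{L}^n \;\le\; c(n)\,R\,\|D\varphi\|(B(z,R)),
\]
valid for any $z\in\mathbb{R}^n$ and $R>0$. From this, comparing averages, for any $z$ and $0<R'\le R$ with $B(z,R')\subset B(z,R)$ one obtains
\[
|\varphi_{B(z,R')}-\varphi_{B(z,R)}| \;\le\; c(n)\,\frac{\|D\varphi\|(B(z,R))}{R^{n-1}}\left(\frac{R}{R'}\right)^n.
\]
Rewriting $R^{1-n}=R^{s}\cdot R^{1-s-n}$ and using that for $R<4|x-y|$ the definition of the truncated fractional maximal function yields $R^{1-s-n}\|D\varphi\|(B(z,R))\le \mathcal{M}_{1-s,4|x-y|}\|D\varphi\|(z)$, each such average-difference is controlled by $c(n,s)\,R^{s}\,\mathcal{M}_{1-s,4|x-y|}\|D\varphi\|(z)$.

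For the first summand I telescope along the dyadic radii $r_k:=2r\cdot 2^{-k}$, $k\ge 0$. Since $x$ is a Lebesgue point of $\varphi$, $\varphi_{B(x,r_k)}\to \varphi(x)$ as $k\to\infty$, and
\[
\varphi(x)-\varphi_{B(x,2r)}=\sum_{k=0}^{\infty}\big(\varphi_{B(x,r_{k+1})}-\varphi_{B(x,r_k)}\big).
\]
Each term is bounded by $c(n,s)\,r_k^{s}\,\mathcal{M}_{1-s,4|x-y|}\|D\varphi\|(x)$; the resulting geometric series $\sum_k 2^{-ks}$ converges because $s>0$, producing the bound $c(n,s)\,r^{s}\,\mathcal{M}_{1-s,4|x-y|}\|D\varphi\|(x)$. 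The third summand is treated symmetrically in $y$. For the middle term I observe that $B(x,2r)\cup B(y,2r)\subset B(x,3r)$, and bound both $|\varphi_{B(x,2r)}-\varphi_{B(x,3r)}|$ and $|\varphi_{B(y,2r)}-\varphi_{B(x,3r)}|$ by the average-comparison estimate above with $R=3r<4|x-y|$, yielding a contribution of the same form. Adding the three pieces gives the claim.

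The only real care needed is the bookkeeping of the comparison between balls centered at different points in the middle term, and checking that all radii used lie strictly below the truncation level $4|x-y|$ so the truncated maximal function actually majorizes; I do not anticipate any genuine obstacle, since the argument is standard once the Poincar\'e inequality and the definition of $\mathcal{M}_{1-s,R}$ are aligned. The limiting case $s=1$ works identically (the geometric series is still summable), recovering the classical Crippa--De Lellis estimate.
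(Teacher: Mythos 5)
Your argument is correct and follows essentially the same route as the paper: both hinge on the $1$-$1$ Poincar\'e inequality for $BV$ functions to bound oscillations by $\|D\varphi\|$, and your dyadic telescoping of ball averages toward the Lebesgue points $x$ and $y$ (together with the non-concentric middle comparison via $B(x,3r)$) is precisely the content of \cite[Lemma 4.1]{AK}, which the paper invokes as a black box after first showing that the fractional sharp maximal function $f^{\#}_{s,R}$ is dominated by $\mathcal{M}_{1-s,R}\|D\varphi\|$. The bookkeeping you flag is indeed routine, and your observation that all radii stay below $4|x-y|$ so the truncated maximal function majorizes is exactly the needed check.
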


In order to prove the claim, we quote \cite[Lemma 4.1 and its proof]{AK}. For any $0<s<+\infty$, any $R>0$, and any locally integrable function $f:\mathbb{R}^n\to\mathbb{R}$, the fractional sharp maximal function $f_{s,R}^\#$ of $f$ is defined by 
\[f_{s,R}^\#(x):=\sup_{0<r<R} r^{-s-n}\int_{B(x,r)}|f-f_{B(x,r)}| dy.\]

\begin{lemma}
Let $f:\mathbb{R}^n\to \mathbb{R}$ be locally integrable and $0<s<+\infty$. Then there is a constant $c(n,s)>0$ such that for all Lebesgue points $x,y\in \mathbb{R}^n$ of $f$ we have
\[|f(x)-f(y)|\leq c(n,s)|x-y|^s \left(f_{s,4|x-y|}^\#(x)+ f_{s,4|x-y|}^\#(y)\right).\]
\end{lemma}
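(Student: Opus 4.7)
My plan is to use the classical Calderón-style telescoping argument, bounding $f(x)$ and $f(y)$ via chains of dyadic ball averages and keeping all ball radii strictly below $4|x-y|$ so that the truncated sharp maximal functions at the base points genuinely majorise every mean oscillation that appears. Set $r := |x-y|$. The goal is to compare $f(x)$ to the average of $f$ over a fixed ball containing both $x$ and $y$, to do likewise for $f(y)$, and to control the discrepancy between the two averages.

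Concretely, I would introduce the geometric sequences $B_k := B(x,\, 2^{1-k}r)$ and $B'_k := B(y,\, 3\cdot 2^{-k}r)$ for $k = 0, 1, 2, \ldots$, and write $f_B := \mathcal{L}^n(B)^{-1}\int_B f\, dy$ for the mean value on a ball $B$. Note that $B_0 = B(x,2r)$ contains $y$, that $B_0 \subset B'_0 = B(y,3r)$, and that every $B_k$ has radius at most $2r < 4r$ while every $B'_k$ has radius at most $3r < 4r$. Because $x$ and $y$ are Lebesgue points of $f$, one has $\lim_{k\to\infty} f_{B_k} = f(x)$ and $\lim_{k\to\infty} f_{B'_k} = f(y)$.

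The key telescoping step uses, for concentric balls with halving radii, the standard estimate
\[
|f_{B_{k+1}} - f_{B_k}| \;\le\; \frac{1}{\mathcal{L}^n(B_{k+1})}\int_{B_{k+1}} |f - f_{B_k}|\, dy \;\le\; \frac{2^n}{\mathcal{L}^n(B_k)}\int_{B_k} |f - f_{B_k}|\, dy \;\le\; 2^n\,(2^{1-k}r)^s\, f^{\#}_{s,4r}(x),
\]
and summing the resulting geometric series gives $|f(x) - f_{B_0}| \le c(n,s)\, r^s\, f^{\#}_{s,4r}(x)$. The same argument applied along the sequence $B'_k$ yields $|f(y) - f_{B'_0}| \le c(n,s)\, r^s\, f^{\#}_{s,4r}(y)$. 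It remains to bridge $f_{B_0}$ and $f_{B'_0}$: since $B_0 \subset B'_0$ and the two volumes are comparable up to a dimensional constant,
\[
|f_{B_0} - f_{B'_0}| \;\le\; \frac{1}{\mathcal{L}^n(B_0)}\int_{B_0}|f - f_{B'_0}|\, dy \;\le\; \frac{c(n)}{\mathcal{L}^n(B'_0)}\int_{B'_0}|f - f_{B'_0}|\, dy \;\le\; c(n)\,(3r)^s\, f^{\#}_{s,4r}(y).
\]
Assembling the three estimates by the triangle inequality and absorbing all constants into $c(n,s)$ yields the claim.

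I do not expect a serious obstacle; the only point that requires attention is precisely the bookkeeping of the ball radii, so that none of the radii appearing in the telescoping or the bridging step reaches or exceeds $4r$. The choices $2^{1-k}r$ and $3\cdot 2^{-k}r$ are tailored to stay safely within the truncation, which is why the truncated maximal functions $f^{\#}_{s,4r}(x)$ and $f^{\#}_{s,4r}(y)$ suffice; no tail of the sharp maximal function at scales beyond $4r$ is ever needed.
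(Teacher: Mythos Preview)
Your argument is correct and is precisely the classical Calder\'on telescoping proof. The paper does not give its own proof of this lemma at all: it simply quotes \cite[Lemma 4.1 and its proof]{AK}, and what you have written is exactly that proof, with careful attention to keeping all radii strictly below $4|x-y|$ so that the \emph{truncated} sharp maximal functions suffice. One cosmetic point: in your displayed chain the factor coming from $\mathcal{L}^n(B_k)=c_n\rho_k^{\,n}$ versus the $\rho_k^{-s-n}$ in the definition of $f^{\#}_{s,R}$ introduces a $1/c_n$ that you have silently absorbed; this is harmless since it is swallowed by $c(n,s)$, but worth making explicit.
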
 

We prove Proposition \ref{P:meanvalue}.

\begin{proof}
For any $x\in\mathbb{R}^n$ and $r>0$ we have the $1$-$1$-Poincar\'e inequality 
\[\int_{B(x,r)}|f-f_{B(x,r)}| dy\leq c\:r\:\left\|Df\right\|(B(x,r)),\]
see \cite[Remark 3.45]{AFP}. Multiplying both sides by $r^{-s-n}$ and taking suprema we obtain
\[\sup_{0<r<R}r^{-s-n}\:\int_{B(x,r)}|f-f_{B(x,r)}| dy\leq c\:\mathcal{M}_{1-s,R}\left\|Df\right\|(x).\]
By \cite[Lemma 4.1]{AK} there is a constant $c(n,s)>0$ such that for all Lebesgue points $x,y\in \mathbb{R}^n$ of $f$ we have
\[|f(x)-f(y)|\leq c(n,s)|x-y|^s \left(f_{s,4|x-y|}^\#(x)+ f_{s,4|x-y|}^\#(y)\right).\]
\end{proof}

It is trivial to see that for any Borel measure $\nu$ on $\mathbb{R}^n$, any $\gamma\in (0,1)$, and any $R\in (0,+\infty]$ we have
\begin{equation}\label{E:trivialbound}
\mathcal{M}_{\gamma,R}\nu(x)\leq c\:U^\gamma\nu(x),\quad x\in\mathbb{R}^n
\end{equation}
with a constant $c>0$ depending only on $n$ and $\gamma$. Together with Proposition \ref{P:meanvalue} we obtain the following immediate consequence.
\begin{corollary}\label{C:justHoelder}
Let $\varphi \in BV_{\loc}(\mathbb{R}^n)$ and $s\in (0,1)$. If 
\[\sup_{x\in\mathbb{R}^n}U^{1-s}\left\|D\varphi\right\|(x)<+\infty,\]
then $\varphi$ has a Borel version that is H\"older continuous of order $s$, and any Lebesgue representative of $\varphi$ coincides with this version on the Lebesgue set of $\varphi$.
\end{corollary}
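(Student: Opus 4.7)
The plan is to combine Proposition \ref{P:meanvalue} with the trivial bound \eqref{E:trivialbound} to obtain an $s$-H\"older estimate on the Lebesgue set of $\varphi$, and then extend by uniform continuity to all of $\mathbb{R}^n$, using the fact that the approximate discontinuity set $S_\varphi$ has zero Lebesgue measure.

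First, fix a Lebesgue representative $\widetilde{\varphi}$ of $\varphi$ and let $M:=\sup_{x\in\mathbb{R}^n}U^{1-s}\left\|D\varphi\right\|(x)<+\infty$. By \eqref{E:trivialbound} there is a constant $c_1=c_1(n,s)>0$ such that $\mathcal{M}_{1-s,R}\left\|D\varphi\right\|(x)\leq c_1\:M$ for every $x\in\mathbb{R}^n$ and every $R\in(0,+\infty]$. Consequently, Proposition \ref{P:meanvalue} yields a constant $c_2=c_2(n,s)>0$ such that
\[
|\widetilde{\varphi}(x)-\widetilde{\varphi}(y)|\leq c_2 M\:|x-y|^s
\]
for all Lebesgue points $x,y\in\mathbb{R}^n\setminus S_\varphi$ of $\varphi$. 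Thus the restriction $\widetilde{\varphi}|_{\mathbb{R}^n\setminus S_\varphi}$ is uniformly $s$-H\"older continuous.

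Next I would extend by density. Since $S_\varphi$ is a Borel set of Lebesgue measure zero (\cite[Proposition 3.64]{AFP}), the Lebesgue set $\mathbb{R}^n\setminus S_\varphi$ is dense in $\mathbb{R}^n$. A uniformly continuous function on a dense subset of a complete metric space admits a unique continuous extension to the ambient space, which preserves its modulus of continuity. Let $\overline{\varphi}:\mathbb{R}^n\to\mathbb{R}$ denote this extension; then $\overline{\varphi}$ is $s$-H\"older continuous on $\mathbb{R}^n$ with the same constant $c_2 M$, it is Borel measurable as a continuous function, and by construction it agrees with $\widetilde{\varphi}$ on the Lebesgue set.

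Finally, $\overline{\varphi}$ is a Lebesgue representative of $\varphi$: indeed, $\overline{\varphi}=\widetilde{\varphi}$ on $\mathbb{R}^n\setminus S_\varphi$, and by Definition \ref{D:geometric-point-sets} any such function is a Lebesgue representative. Any other continuous Borel version must coincide with $\overline{\varphi}$ on the dense set $\mathbb{R}^n\setminus S_\varphi$, hence on all of $\mathbb{R}^n$ by continuity, proving uniqueness. The statement that any Lebesgue representative coincides with $\overline{\varphi}$ on the Lebesgue set of $\varphi$ is immediate from \eqref{E:admissible}.

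No serious obstacle arises: once the uniform bound on the truncated maximal functions is in hand via \eqref{E:trivialbound}, Proposition \ref{P:meanvalue} does all the work, and the only subtlety is the extension-by-density step, which is standard.
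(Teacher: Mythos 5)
Your proposal is correct and takes essentially the same approach the paper intends: combine the trivial bound \eqref{E:trivialbound} with the fractional sharp maximal estimate of Proposition \ref{P:meanvalue} to get a uniform $s$-H\"older bound on $\mathbb{R}^n\setminus S_\varphi$, then extend by density using the fact that $S_\varphi$ is Lebesgue-null. The paper declares the corollary an "immediate consequence" of exactly these two ingredients; you have spelled out the standard density-extension step that the paper leaves implicit.
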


\section{Elements of fractional calculus}\label{S:fraccalc}

We briefly recall the definitions of fractional integrals and derivatives, \cite{SKM}, and the generalized Lebesgue-Stieltjes integral introduced by Z\"ahle in \cite{Zahle98} and used in \cite{NualartRascanu, Zahle01}. 

For fixed $T<\infty$, the fractional left and right Riemann--Liouville integrals of order $\theta > 0$ of a function $f \in L_1(0,T)$ are denoted by
\[
 I^\theta_{0+}f(t)
 \weq \frac{1}{\Gamma(\theta)} \int_0^t \frac{f(s)}{(t-s)^{1-\theta}}  ds
\]
and
\[
 I^\theta_{T-}f(t)
 \weq \frac{(-1)^{-\theta}}{\Gamma(\theta)} \int_t^T \frac{f(s)}{(t-s)^{1-\theta}}  ds.
\]
The integral operators $I^\theta_{0+}, I^\theta_{T-}: L_1(0,T) \to L_1(0,T)$ are linear and one-to-one, and the inverse operators, denoted by $I^{-\theta}_{0+} = (I^\theta_{0+})^{-1}$ and $I^{-\theta}_{T-} = (I^\theta_{T-})^{-1}$, are known as (left and right sided) Riemann--Liouville fractional derivatives. Furthermore, for any $\theta \in (0,1)$ and for any $f \in I^\theta_{0+}(L_1(0,T))$ and $g \in I^\theta_{T-}(L_1(0,T))$, the (left and right sided) Weyl--Marchaud derivatives 
\begin{equation}\label{E:WMforward}
 D_{0+}^\theta f(t)
 \weq \frac{1}{\Gamma(1-\theta)}\left( \frac{f(t)}{t^\theta} + \theta \int_0^t \frac{f(t)-f(s)}{(t-s)^{\theta+1}}  ds \right)
\end{equation}
and
\begin{equation}\label{E:WMbackward}
 D_{T-}^\theta g(t)
 \weq \frac{(-1)^\theta}{\Gamma(1-\theta)}\left( \frac{g(t)}{(T-t)^\theta} + \theta \int_t^T \frac{g(t)-g(s)}{(s-t)^{\theta+1}}  ds \right)
\end{equation}
are well defined, and coincide with the Riemann--Liouville derivatives by relations $D_{0+}^\theta f(t) = I^{-\theta}_{0+} f(t)$ and $D_{T-}^\theta g(t) = I^{-\theta}_{T-} g(t)$ for almost every $t \in (0,T)$.

For functions $f$ and $g$ such that the limits $f(0+), g(0+), g(T-)$ exist in $\mathbb{R}$, set $f_{0+}(t) = f(t) - f(0+)$ and $g_{T-}(t) = g(t) - g(T-)$. If $f_{0+} \in I^\theta_{0+}(L_p(0,T))$ and $g_{T-} \in I^{1-\theta}_{T-}(L_q(0,T))$ for some $\theta \in [0,1]$ and $p,q \in [1,\infty]$ such that $1/p+1/q =1$,
the fractional version of the Stieltjes integral introduced by Z\"ahle \cite{Zahle98} is defined by
\begin{equation}
 \label{eq:ZSIntegral}
 \begin{aligned}
 \int_0^T f_t  dg_t
 &:\weq (-1)^\theta \int_0^T D^{\theta}_{0+} (f-f(0+))(t) \, D^{1-\theta}_{T-} (g-g(T-))(t)  dt \\
 &\qquad \qquad + f(0+)(g(T-)-g(0+)),
 \end{aligned}
\end{equation}
It can be shown that here the right side does not depend on $\theta$. Moreover, if $\theta p<1$, then  
\begin{equation}
 \label{eq:ZSIntegralSimple}
\int_0^T f_t  dg_t = (-1)^\theta \int_0^T D^{\theta}_{0+} f(t) \, D^{1-\theta}_{T-} (g-g(T-))(t) dt
\end{equation}
which coincides with Definition \ref{def:integral}.


\begin{thebibliography}{100}

\bibitem{AK}
D.~Aalto and J.~Kinnunen.
\newblock Maximal functions in {S}obolev spaces.
\newblock In {\em Sobolev spaces in mathematics. {I}}, volume~8 of {\em Int.
  Math. Ser. (N. Y.)}, pages 25--67. Springer, New York, 2009.

\bibitem{AH96}
D.~R. Adams and L.~I. Hedberg.
\newblock {\em Function spaces and potential theory}, volume 314 of {\em
  Grundlehren der Mathematischen Wissenschaften [Fundamental Principles of
  Mathematical Sciences]}.
\newblock Springer-Verlag, Berlin, 1996.

\bibitem{AdamsSobo}
R.~A. Adams and J.~J.~F. Fournier.
\newblock {\em Sobolev spaces}, volume 140 of {\em Pure and Applied Mathematics
  (Amsterdam)}.
\newblock Elsevier/Academic Press, Amsterdam, second edition, 2003.

\bibitem{AmbrosioDalMaso}
L.~Ambrosio and G.~Dal~Maso.
\newblock A general chain rule for distributional derivatives.
\newblock {\em Proc. Amer. Math. Soc.}, 108(3):691--702, 1990.

\bibitem{AFP}
L.~Ambrosio, N.~Fusco, and D.~Pallara.
\newblock {\em Functions of bounded variation and free discontinuity problems}.
\newblock Oxford Mathematical Monographs. The Clarendon Press, Oxford
  University Press, New York, 2000.

\bibitem{Auzinger}
W.~Auzinger.
\newblock Sectorial operators and normalized numerical range.
\newblock {\em Appl. Numer. Math.}, 45(4):367--388, 2003.

\bibitem{Ayache}
A.~Ayache, D.~Wu, and Y.~Xiao.
\newblock Joint continuity of the local times of fractional {B}rownian sheets.
\newblock {\em Ann. Inst. Henri Poincar\'{e} Probab. Stat.}, 44(4):727--748,
  2008.

\bibitem{Barchiesietal}
M.~Barchiesi, D.~Henao, and C.~Mora-Corral.
\newblock Local invertibility in {S}obolev spaces with applications to nematic
  elastomers and magnetoelasticity.
\newblock {\em Arch. Ration. Mech. Anal.}, 224(2):743--816, 2017.

\bibitem{BarreiraPesinSchmeling}
L.~Barreira, Y.~Pesin, and J.~Schmeling.
\newblock Dimension and product structure of hyperbolic measures.
\newblock {\em Ann. of Math. (2)}, 149(3):755--783, 1999.

\bibitem{BSV2008}
C.~Bender, T.~Sottinen, and E.~Valkeila.
\newblock {P}ricing by hedging and no-arbitrage beyond semimartingales.
\newblock {\em Finance Stoch.}, 12:441--468, 2008.

\bibitem{BV2016}
C.~Bender and L.~Viitasaari.
\newblock {\em Fractional Brownian Motion in Financial Modeling}, pages 1--5.
\newblock John Wiley and Sons, Ltd, 2016.

\bibitem{Berman69}
S.~M. Berman.
\newblock Local times and sample function properties of stationary {G}aussian
  processes.
\newblock {\em Trans. Amer. Math. Soc.}, 137:277--299, 1969.

\bibitem{Berman70}
S.~M. Berman.
\newblock Gaussian processes with stationary increments: {L}ocal times and
  sample function properties.
\newblock {\em Ann. Math. Statist.}, 41:1260--1272, 1970.

\bibitem{Berman73}
S.~M. Berman.
\newblock Local nondeterminism and local times of {G}aussian processes.
\newblock {\em Bull. Amer. Math. Soc.}, 79:475--477, 1973.

\bibitem{Bertoin}
J.~Bertoin.
\newblock Sur la mesure d'occupation d'une classe de fonctions self-affines.
\newblock {\em Japan J. Appl. Math.}, 5(3):431--439, 1988.

\bibitem{BevFlan14}
A.~Bevilacqua and F.~Flandoli.
\newblock An occupation time formula for semimartingales in {$\mathbb{R}^N$}.
\newblock {\em Stochastic Process. Appl.}, 124(10):3342--3361, 2014.

\bibitem{BliedtnerHansen}
J.~Bliedtner and W.~Hansen.
\newblock {\em Potential theory}.
\newblock Universitext. Springer-Verlag, Berlin, 1986.
\newblock An analytic and probabilistic approach to balayage.

\bibitem{CatellierGubinelli}
R.~Catellier and M.~Gubinelli.
\newblock Averaging along irregular curves and regularisation of {ODE}s.
\newblock {\em Stochastic Process. Appl.}, 126(8):2323--2366, 2016.

\bibitem{Chen}
Z.~Chen.
\newblock {\em On Pathwise Stochastic Integration of Processes with Unbounded
  Power Variation}.
\newblock PhD thesis, Aalto University, 2016.

\bibitem{CLV16}
Z.~Chen, L.~Leskel\"{a}, and L.~Viitasaari.
\newblock Pathwise {S}tieltjes integrals of discontinuously evaluated
  stochastic processes.
\newblock {\em Stochastic Process. Appl.}, 129(8):2723--2757, 2019.

\bibitem{ChenSong}
Z.-Q. Chen and R.~Song.
\newblock Hardy inequality for censored stable processes.
\newblock {\em Tohoku Math. J. (2)}, 55(3):439--450, 2003.

\bibitem{Ch2001}
P.~Cheridito.
\newblock {M}ixed fractional {B}rownian motion.
\newblock {\em Bernoulli}, 7(6):913--934, 2001.

\bibitem{CrippadeLellis}
G.~Crippa and C.~De~Lellis.
\newblock Estimates and regularity results for the {D}i{P}erna-{L}ions flow.
\newblock {\em J. Reine Angew. Math.}, 616:15--46, 2008.

\bibitem{Cristea}
M.~Cristea.
\newblock A note on the global injectivity of some light {S}obolev mappings.
\newblock {\em J. Math. Pures Appl. (9)}, 128:213--224, 2019.

\bibitem{Davie}
A.~M. Davie.
\newblock Uniqueness of solutions of stochastic differential equations.
\newblock {\em Int. Math. Res. Not. IMRN}, (24):Art. ID rnm124, 26, 2007.

\bibitem{DiNezza}
E.~Di~Nezza, G.~Palatucci, and E.~Valdinoci.
\newblock Hitchhiker's guide to the fractional {S}obolev spaces.
\newblock {\em Bull. Sci. Math.}, 136(5):521--573, 2012.

\bibitem{Doss}
H.~Doss.
\newblock Liens entre \'{e}quations diff\'{e}rentielles stochastiques et
  ordinaires.
\newblock {\em Ann. Inst. H. Poincar\'{e} Sect. B (N.S.)}, 13(2):99--125, 1977.

\bibitem{dyda}
B.~Dyda.
\newblock A fractional order {H}ardy inequality.
\newblock {\em Illinois J. Math.}, 48(2):575--588, 2004.

\bibitem{EngelbertSchmidt1}
H.~J. Engelbert and W.~Schmidt.
\newblock On one-dimensional stochastic differential equations with generalized
  drift.
\newblock In {\em Stochastic differential systems ({M}arseille-{L}uminy,
  1984)}, volume~69 of {\em Lect. Notes Control Inf. Sci.}, pages 143--155.
  Springer, Berlin, 1985.

\bibitem{EngelbertSchmidt2}
H.~J. Engelbert and W.~Schmidt.
\newblock On solutions of one-dimensional stochastic differential equations
  without drift.
\newblock {\em Z. Wahrsch. Verw. Gebiete}, 68(3):287--314, 1985.

\bibitem{Falconer}
K.~Falconer.
\newblock {\em Fractal geometry}.
\newblock John Wiley \& Sons, Ltd., Chichester, 1990.
\newblock Mathematical foundations and applications.

\bibitem{Federer}
H.~Federer.
\newblock {\em Geometric measure theory}.
\newblock Die Grundlehren der mathematischen Wissenschaften, Band 153.
  Springer-Verlag New York Inc., New York, 1969.

\bibitem{Figalli}
A.~Figalli.
\newblock Existence and uniqueness of martingale solutions for {SDE}s with
  rough or degenerate coefficients.
\newblock {\em J. Funct. Anal.}, 254(1):109--153, 2008.

\bibitem{Fiscellaetal}
A.~Fiscella, R.~Servadei, and E.~Valdinoci.
\newblock Density properties for fractional {S}obolev spaces.
\newblock {\em Ann. Acad. Sci. Fenn. Math.}, 40(1):235--253, 2015.

\bibitem{Flandoli}
F.~Flandoli.
\newblock {\em Random perturbation of {PDE}s and fluid dynamic models}, volume
  2015 of {\em Lecture Notes in Mathematics}.
\newblock Springer, Heidelberg, 2011.
\newblock Lectures from the 40th Probability Summer School held in Saint-Flour,
  2010, \'{E}cole d'\'{E}t\'{e} de Probabilit\'{e}s de Saint-Flour.
  [Saint-Flour Probability Summer School].

\bibitem{FlandoliGubinelliGiaquintaTortorelli}
F.~Flandoli, M.~Gubinelli, M.~Giaquinta, and V.~M. Tortorelli.
\newblock Stochastic currents.
\newblock {\em Stochastic Process. Appl.}, 115(9):1583--1601, 2005.

\bibitem{FlandoliGubinelliPriola}
F.~Flandoli, M.~Gubinelli, and E.~Priola.
\newblock Well-posedness of the transport equation by stochastic perturbation.
\newblock {\em Invent. Math.}, 180(1):1--53, 2010.

\bibitem{FlandoliGubinelliRusso}
F.~Flandoli, M.~Gubinelli, and F.~Russo.
\newblock On the regularity of stochastic currents, fractional {B}rownian
  motion and applications to a turbulence model.
\newblock {\em Ann. Inst. Henri Poincar\'{e} Probab. Stat.}, 45(2):545--576,
  2009.

\bibitem{FonsecaGangbobook}
I.~Fonseca and W.~Gangbo.
\newblock {\em Degree theory in analysis and applications}, volume~2 of {\em
  Oxford Lecture Series in Mathematics and its Applications}.
\newblock The Clarendon Press, Oxford University Press, New York, 1995.
\newblock Oxford Science Publications.

\bibitem{FonsecaGangbo}
I.~Fonseca and W.~Gangbo.
\newblock Local invertibility of {S}obolev functions.
\newblock {\em SIAM J. Math. Anal.}, 26(2):280--304, 1995.

\bibitem{GG20b}
L.~Galeati and M.~Gubinelli.
\newblock Prevalence of $\rho$-irregularity and related properties.
\newblock {\em Preprint}, pages 1--54, 2020.
\newblock \url{https://arxiv.org/abs/2004.00872}.

\bibitem{GG20a}
L.~Galeati and M.~Gubinelli.
\newblock Noiseless regularisation by noise.
\newblock {\em Rev. Mat. Iberoam.}, 38(2):433--502, 2022.

\bibitem{Johanna}
J.~Garz\'{o}n, J.~A. Le\'{o}n, and S.~Torres.
\newblock Fractional stochastic differential equation with discontinuous
  diffusion.
\newblock {\em Stoch. Anal. Appl.}, 35(6):1113--1123, 2017.

\bibitem{GemanHorowitz}
D.~Geman and J.~Horowitz.
\newblock Occupation densities.
\newblock {\em Ann. Probab.}, 8(1):1--67, 1980.

\bibitem{BG2018}
B.~Gess.
\newblock Regularization and well-posedness by noise for ordinary and partial
  differential equations.
\newblock In {\em Stochastic partial differential equations and related
  fields}, volume 229 of {\em Springer Proc. Math. Stat.}, pages 43--67.
  Springer, Cham, 2018.

\bibitem{GrahamHareRitter}
C.~C. Graham, K.~E. Hare, and D.~L. Ritter.
\newblock The size of {$L^p$}-improving measures.
\newblock {\em J. Funct. Anal.}, 84(2):472--495, 1989.

\bibitem{Gubinelli}
M.~Gubinelli.
\newblock Controlling rough paths.
\newblock {\em J. Funct. Anal.}, 216(1):86--140, 2004.

\bibitem{HarangPerkowski}
F.~A. Harang and N.~Perkowski.
\newblock {$C^\infty$}-regularization of {ODE}s perturbed by noise.
\newblock {\em Stoch. Dyn.}, 21(8):Paper No. 2140010, 1--29, 2021.

\bibitem{Doug}
D.~P. Hardin and E.~B. Saff.
\newblock Minimal {R}iesz energy point configurations for rectifiable
  {$d$}-dimensional manifolds.
\newblock {\em Adv. Math.}, 193(1):174--204, 2005.

\bibitem{HareRoginskaya}
K.~E. Hare and M.~Roginskaya.
\newblock The energy of signed measures.
\newblock {\em Proc. Amer. Math. Soc.}, 132(2):397--406, 2004.

\bibitem{HLNT}
T.~Heikkinen, J.~Lehrb\"{a}ck, J.~Nuutinen, and H.~Tuominen.
\newblock Fractional maximal functions in metric measure spaces.
\newblock {\em Anal. Geom. Metr. Spaces}, 1:147--162, 2013.

\bibitem{Heinonen}
J.~Heinonen.
\newblock {\em Lectures on {L}ipschitz analysis}, volume 100 of {\em Report.
  University of Jyv\"{a}skyl\"{a} Department of Mathematics and Statistics}.
\newblock University of Jyv\"{a}skyl\"{a}, Jyv\"{a}skyl\"{a}, 2005.

\bibitem{Hencl}
S.~Hencl.
\newblock Bilipschitz mappings with derivatives of bounded variation.
\newblock {\em Publ. Mat.}, 52(1):91--99, 2008.

\bibitem{HenclKoskela}
S.~Hencl and P.~Koskela.
\newblock {\em Lectures on mappings of finite distortion}, volume 2096 of {\em
  Lecture Notes in Mathematics}.
\newblock Springer, Cham, 2014.

\bibitem{HTV2022}
M.~Hinz, J.~M. T\"{o}lle, and L.~Viitasaari.
\newblock {S}obolev regularity of occupation measures and paths, variability
  and compositions.
\newblock {\em Electron. J. Probab.}, 27(73):1--29, 2022.

\bibitem{Horvath}
J.~Horv\'{a}th.
\newblock {\em Topological vector spaces and distributions. {V}ol. {I}}.
\newblock Addison-Wesley Publishing Co., Reading, Mass.-London-Don Mills, Ont.,
  1966.

\bibitem{JW84}
A.~Jonsson and H.~Wallin.
\newblock Function spaces on subsets of {${\bf R}^n$}.
\newblock {\em Math. Rep.}, 2(1), 1984.

\bibitem{Kahane}
J.-P. Kahane.
\newblock {\em Some random series of functions}, volume~5 of {\em Cambridge
  Studies in Advanced Mathematics}.
\newblock Cambridge University Press, Cambridge, second edition, 1985.

\bibitem{Kamae}
T.~Kamae.
\newblock A characterization of self-affine functions.
\newblock {\em Japan J. Appl. Math.}, 3(2):271--280, 1986.

\bibitem{KinnunenSaksman}
J.~Kinnunen and E.~Saksman.
\newblock Regularity of the fractional maximal function.
\newblock {\em Bull. London Math. Soc.}, 35(4):529--535, 2003.

\bibitem{Kono}
N.~K\^{o}no.
\newblock On self-affine functions.
\newblock {\em Japan J. Appl. Math.}, 3(2):259--269, 1986.

\bibitem{Kovalev}
L.~V. Kovalev and J.~Onninen.
\newblock On invertibility of {S}obolev mappings.
\newblock {\em J. Reine Angew. Math.}, 656:1--16, 2011.

\bibitem{KovalevOnninenRajala}
L.~V. Kovalev, J.~Onninen, and K.~Rajala.
\newblock Invertibility of {S}obolev mappings under minimal hypotheses.
\newblock {\em Ann. Inst. H. Poincar\'{e} Anal. Non Lin\'{e}aire},
  27(2):517--528, 2010.

\bibitem{KrylovRoeckner}
N.~V. Krylov and M.~R\"{o}ckner.
\newblock Strong solutions of stochastic equations with singular time dependent
  drift.
\newblock {\em Probab. Theory Related Fields}, 131(2):154--196, 2005.

\bibitem{LambertiVespri}
P.~D. Lamberti and V.~Vespri.
\newblock Remarks on {S}obolev-{M}orrey-{C}ampanato spaces defined on
  {$C^{0,\gamma}$} domains.
\newblock {\em Eurasian Math. J.}, 10(4):47--62, 2019.

\bibitem{Lamperti}
J.~Lamperti.
\newblock A simple construction of certain diffusion porcesses.
\newblock {\em J. Math. Kyoto Univ.}, 4:161--170, 1964.

\bibitem{Landkof}
N.~S. Landkof.
\newblock {\em Foundations of modern potential theory}.
\newblock Springer-Verlag, New York-Heidelberg, 1972.
\newblock Translated from the Russian by A. P. Doohovskoy, Die Grundlehren der
  mathematischen Wissenschaften, Band 180.

\bibitem{LeGall}
J.-F. Le~Gall.
\newblock Applications du temps local aux \'{e}quations diff\'{e}rentielles
  stochastiques unidimensionnelles.
\newblock In {\em Seminar on probability, {XVII}}, volume 986 of {\em Lecture
  Notes in Math.}, pages 15--31. Springer, Berlin, 1983.

\bibitem{LeeTrutnau}
H.~Lee and G.~Trutnau.
\newblock Well-posedness for a class of degenerate {I}t\^o stochastic
  differential equations with fully discontinuous coefficients.
\newblock {\em Symmetry}, 12(4, 570):1--33, 2020.

\bibitem{LeonNualartTindel}
J.~A. Le\'{o}n, D.~Nualart, and S.~Tindel.
\newblock Young differential equations with power type nonlinearities.
\newblock {\em Stochastic Process. Appl.}, 127(9):3042--3067, 2017.

\bibitem{LeoniMorini}
G.~Leoni and M.~Morini.
\newblock Necessary and sufficient conditions for the chain rule in
  {$W^{1,1}_{\rm loc}(\mathbb{R}^N;\mathbb{R}^d)$} and {${\rm BV}_{\rm
  loc}(\mathbb{R}^N;\mathbb{R}^d)$}.
\newblock {\em J. Eur. Math. Soc. (JEMS)}, 9(2):219--252, 2007.

\bibitem{LinsSpitkovskyZhong}
B.~Lins, I.~M. Spitkovsky, and S.~Zhong.
\newblock The normalized numerical range and the {D}avis-{W}ielandt shell.
\newblock {\em Linear Algebra Appl.}, 546:187--209, 2018.

\bibitem{LyonsQian}
T.~Lyons and Z.~Qian.
\newblock {\em System control and rough paths}.
\newblock Oxford Mathematical Monographs. Oxford University Press, Oxford,
  2002.

\bibitem{Lyons}
T.~J. Lyons.
\newblock Differential equations driven by rough signals.
\newblock {\em Rev. Mat. Iberoamericana}, 14(2):215--310, 1998.

\bibitem{LLC}
T.~J. Lyons, M.~Caruana, and T.~L\'{e}vy.
\newblock {\em Differential equations driven by rough paths}, volume 1908 of
  {\em Lecture Notes in Mathematics}.
\newblock Springer, Berlin, 2007.
\newblock Lectures from the 34th Summer School on Probability Theory held in
  Saint-Flour, July 6--24, 2004, With an introduction concerning the Summer
  School by Jean Picard.

\bibitem{MaslowskiNualart}
B.~Maslowski and D.~Nualart.
\newblock Evolution equations driven by a fractional {B}rownian motion.
\newblock {\em J. Funct. Anal.}, 202(1):277--305, 2003.

\bibitem{Mattila}
P.~Mattila.
\newblock {\em Geometry of sets and measures in {E}uclidean spaces}, volume~44
  of {\em Cambridge Studies in Advanced Mathematics}.
\newblock Cambridge University Press, Cambridge, 1995.
\newblock Fractals and rectifiability.

\bibitem{MattilaMoranRey}
P.~Mattila, M.~Mor\'{a}n, and J.-M. Rey.
\newblock Dimension of a measure.
\newblock {\em Studia Math.}, 142(3):219--233, 2000.

\bibitem{Nakao}
S.~Nakao.
\newblock On pathwise uniqueness and comparison of solutions of one-dimensional
  stochastic differential equations.
\newblock {\em Osaka J. Math.}, 20(1):197--204, 1983.

\bibitem{NualartOuknine}
D.~Nualart and Y.~Ouknine.
\newblock Regularization of differential equations by fractional noise.
\newblock {\em Stochastic Process. Appl.}, 102(1):103--116, 2002.

\bibitem{NualartRascanu}
D.~Nualart and A.~R\u{a}\c{s}canu.
\newblock Differential equations driven by fractional {B}rownian motion.
\newblock {\em Collect. Math.}, 53(1):55--81, 2002.

\bibitem{Reshetnyak}
Y.~G. Reshetnyak.
\newblock {\em Space mappings with bounded distortion}, volume~73 of {\em
  Translations of Mathematical Monographs}.
\newblock American Mathematical Society, Providence, RI, 1989.
\newblock Translated from the Russian by H. H. McFaden.

\bibitem{RevuzYor}
D.~Revuz and M.~Yor.
\newblock {\em Continuous martingales and {B}rownian motion}, volume 293 of
  {\em Grundlehren der Mathematischen Wissenschaften [Fundamental Principles of
  Mathematical Sciences]}.
\newblock Springer-Verlag, Berlin, third edition, 1999.

\bibitem{SKM}
S.~G. Samko, A.~A. Kilbas, and O.~I. Marichev.
\newblock {\em Fractional integrals and derivatives}.
\newblock Gordon and Breach Science Publishers, Yverdon, 1993.

\bibitem{Schwartz}
L.~Schwartz.
\newblock {\em Th\'{e}orie des distributions}.
\newblock Publications de l'Institut de Math\'{e}matique de l'Universit\'{e} de
  Strasbourg, No. IX-X. Nouvelle \'{e}dition, enti\'{e}rement corrig\'{e}e,
  refondue et augment\'{e}e. Hermann, Paris, 1966.

\bibitem{Sottinen-Yazigi}
T.~Sottinen and A.~Yazigi.
\newblock Generalized {G}aussian bridges.
\newblock {\em Stochastic Process. Appl.}, 124(9):3084--3105, 2014.

\bibitem{Stein70}
E.~M. Stein.
\newblock {\em Singular integrals and differentiability properties of
  functions}.
\newblock Princeton Mathematical Series, No. 30. Princeton University Press,
  Princeton, N.J., 1970.

\bibitem{TaoWright}
T.~Tao and J.~Wright.
\newblock {$L^p$} improving bounds for averages along curves.
\newblock {\em J. Amer. Math. Soc.}, 16(3):605--638, 2003.

\bibitem{LauriSole}
S.~Torres and L.~Viitasaari.
\newblock Stochastic differential equations with discontinuous coefficients.
\newblock {\em Preprint}, pages 1--18, 2019.
\newblock \url{https://arXiv.org/abs/1908.03183}.

\bibitem{Triebel}
H.~Triebel.
\newblock {\em Theory of function spaces}, volume~78 of {\em Monographs in
  Mathematics}.
\newblock Birkh\"{a}user Verlag, Basel, 1983.

\bibitem{Veretennikov}
A.~J. Veretennikov.
\newblock Strong solutions and explicit formulas for solutions of stochastic
  integral equations.
\newblock {\em Mat. Sb. (N.S.)}, 111(153)(3):434--452, 480, 1980.
\newblock transl. in Math. USSR Sb. 39(3):387--403, 1981.

\bibitem{Xiao97}
Y.~Xiao.
\newblock H\"{o}lder conditions for the local times and the {H}ausdorff measure
  of the level sets of {G}aussian random fields.
\newblock {\em Probab. Theory Related Fields}, 109(1):129--157, 1997.

\bibitem{Xiao}
Y.~Xiao.
\newblock Random fractals and {M}arkov processes.
\newblock In {\em Fractal geometry and applications: a jubilee of {B}enoit
  {M}andelbrot, {P}art 2}, volume~72 of {\em Proc. Sympos. Pure Math.}, pages
  261--338. Amer. Math. Soc., Providence, RI, 2004.

\bibitem{Xiao06}
Y.~Xiao.
\newblock Properties of local-nondeterminism of {G}aussian and stable random
  fields and their applications.
\newblock {\em Ann. Fac. Sci. Toulouse Math. (6)}, 15(1):157--193, 2006.

\bibitem{Yamato}
Y.~Yamato.
\newblock Stochastic differential equations and nilpotent {L}ie algebras.
\newblock {\em Z. Wahrsch. Verw. Gebiete}, 47(2):213--229, 1979.

\bibitem{Yaskov}
P.~Yaskov.
\newblock On pathwise {R}iemann-{S}tieltjes integrals.
\newblock {\em Statist. Probab. Lett.}, 150:101--107, 2019.

\bibitem{Young}
L.~C. Young.
\newblock An inequality of the {H}\"{o}lder type, connected with {S}tieltjes
  integration.
\newblock {\em Acta Math.}, 67(1):251--282, 1936.

\bibitem{Zahle98}
M.~Z\"{a}hle.
\newblock Integration with respect to fractal functions and stochastic
  calculus. {I}.
\newblock {\em Probab. Theory Related Fields}, 111(3):333--374, 1998.

\bibitem{Zahle01}
M.~Z\"{a}hle.
\newblock Integration with respect to fractal functions and stochastic
  calculus. {II}.
\newblock {\em Math. Nachr.}, 225:145--183, 2001.

\bibitem{SchneiderZahle}
M.~Z\"{a}hle and E.~Schneider.
\newblock Forward integrals and {SDE} with fractal noise.
\newblock In {\em Horizons of fractal geometry and complex dimensions}, volume
  731 of {\em Contemp. Math.}, pages 279--302. Amer. Math. Soc., Providence,
  RI, 2019.

\bibitem{Zhang10}
X.~Zhang.
\newblock Stochastic flows of {SDE}s with irregular coefficients and stochastic
  transport equations.
\newblock {\em Bull. Sci. Math.}, 134(4):340--378, 2010.

\bibitem{Ziemer}
W.~P. Ziemer.
\newblock {\em Weakly differentiable functions}, volume 120 of {\em Graduate
  Texts in Mathematics}.
\newblock Springer-Verlag, New York, 1989.
\newblock Sobolev spaces and functions of bounded variation.

\end{thebibliography}

\end{document}